\newtheorem{theoremalph}{Theorem}
\newtheorem*{Main Theorem}{Main Theorem}
\newtheorem{Theorem}{Theorem}[section]
\newtheorem*{Theorem A}{Theorem A}
\newtheorem*{Theorem A'}{Theorem A'}
\newtheorem*{Theorem B'}{Theorem B'}
\newtheorem{Definition}[Theorem]{Definition}
\newtheorem{Proposition}[Theorem]{Proposition}
\newtheorem{Lemma}[Theorem]{Lemma}
\newtheorem{Remark}[Theorem]{Remark}
\newtheorem{Remark-numbered}[Theorem]{Remark}
\newtheorem{Corollary}[Theorem]{Corollary}
\newtheorem*{Claim}{Claim}
\newtheorem{Claim-numbered}[Theorem]{Claim}
 \def\NN{{\mathbb N}} 
 \def\RR{{\mathbb R}}
\def\La{\Lambda}
  \def\cG{{\cal G}} \def\cM{{\cal M}} 
  \def\cH{{\cal H}} \def\cN{{\cal N}} 
\def\cC{{\cal C}}  \def\cI{{\cal I}}  \def\cU{{\cal U}}
   \def\cP{{\cal P}} \def\cV{{\cal V}}
\def\cE{{\cal E}}    
\def\cF{{\cal F}}   \def\cR{{\cal R}} \def\cX{{\cal X}}
\newcommand{\sing}{{\operatorname{Sing}}}
\newcommand{\orb}{\operatorname{Orb}}
\def\dim{\operatorname{dim}}
\def\ind{\operatorname{Ind}}
\def\Sing{\operatorname{Sing}}
\def\orb{\operatorname{Orb}}
\def\supp{\operatorname{Supp}}
\def\ud{\operatorname{d}}
\def\e{{\varepsilon}}
\def\Det{\operatorname{det}}
\def\Leb{\operatorname{Leb}}
\def\Basin{\operatorname{Basin}}
\def\wh{\widehat}
\begin{document}

\title{On physical measures of multi-singular hyperbolic vector fields}

\author{Sylvain Crovisier, Xiaodong Wang, Dawei Yang and  Jinhua Zhang\footnote{S. Crovisier   was partially supported by  the ERC project 692925 \emph{NUHGD}.
X. Wang was partially supported by National Key R\&D Program of China (2021YFA1001900), NSFC 12071285 and Innovation Program of Shanghai Municipal Education Commission (No. 2021-01-07-00-02-E00087).
D. Yang  was partially supported by National Key R\&D Program of China (2022YFA1005801), NSFC 12171348 and NSFC 12325106.
J. Zhang was partially supported by National Key R\&D Program of China (2021YFA1001900), National Key R\&D Program of China (2022YFA1005801), NSFC 12001027 and the Fundamental Research Funds for the Central Universities.}}

\maketitle

\begin{abstract} 
Bonatti and da Luz have introduced the class of \emph{multi-singular hyperbolic} vector fields to characterize
systems whose periodic orbits and singularities do not bifurcate under perturbation
(called star vector fields).

In this paper, we study the Sina\"{\i}-Ruelle-Bowen measures for multi-singular hyperbolic vector fields: in  a
$C^1$ open and $C^1$ dense subset of multi-singular hyperbolic vector fields, each {$C^\infty$} one admits  \emph{finitely} many physical measures whose basins cover a \emph{full} Lebesgue measure subset of the manifold. Similar results are also obtained for $C^1$ generic multi-singular hyperbolic vector fields.
\medskip

%\noindent
%\emph{R\'esum\'e.}
%Bonatti et da Luz ont introduit la classe des champs de vecteurs \emph{hyperboliques multisinguliers} pour caract\'eriser les syst\`emes (appel\'es champs de vecteurs $*$) dont les orbites p\'eriodiques et les singularit\'es ne bifurquent pas par perturbation.
%
%Dans cet article, nous \'etudions les mesures de Sina\"{\i}-Ruelle-Bowen des champs de vecteurs hyperboliques multisinguliers : dans un sous-ensemble $C^1$ ouvert et $C^1$ dense de l'ensemble des champs de vecteurs hyperboliques multisinguliers, chaque syst\`eme $C^\infty$ admet un nombre \emph{fini} de mesures physiques dont l'union des bassins couvre un sous-ensemble de mesure de Lebesgue totale.
%Des r\'esultats similaires sont \'egalement obtenus pour les champs de vecteurs hyperboliques multisinguliers $C^1$ g\'en\'eriques. 
\end{abstract}
\tableofcontents

{
\section{Introduction}

One aim in differentiable dynamical systems is to describe the behavior of most orbits of typical systems. From the ergodic viewpoint, this has been realized by Sina\"{\i}-Ruelle-Bowen \cite{Sinai,Bowen-book,BR,Ruelle} for uniformly hyperbolic systems. In the case of hyperbolic vector fields, it has been proved by Bowen and Ruelle \cite{BR} that the forward orbit of Lebesgue almost every point in the manifold will converge to an ergodic measure
which belongs to a finite family of (physical) measures and which admits rich geometric and dynamical properties. Such measures are nowadays called Sina\"{\i}-Ruelle-Bowen (SRB) measures.

It is expected that such theory can be extended for large classes of systems beyond uniform hyperbolicity.
For instance this has been established for some sets of non-hyperbolic parameters with positive Lebesgue measure
inside the H\'enon family. This also has led to many important progresses in the study of partially hyperbolic systems. However there is still no complete theory.

In the case of vector fields, Morales, Pacifico and Pujals \cite{MPP} have defined the notion of ``singular hyperbolicity'' to characterize robust transitivity in dimension $3$ and singular dynamics modeled on the Lorenz attractor. Singular hyperbolicity is still a form of partial hyperbolicity allowing the existence of singularities,   i.e.,  points where the vector field vanishes. More recently, Bonatti and da Luz \cite{BdL} have introduced the ``multi-singular hyperbolicity'' to describe a robust class of vector fields satisfying a strong stability property (the star property): bifurcation phenomena cannot occur on   periodic orbits nor on singularities for any system $C^1$ close. The star property has played an important role in the exploration of the structural stability conjecture of Palis and Smale.
It has been shown that the star property coincides with the uniform hyperbolicity in the case of diffeomorphisms
and non-singular vector fields~\cite{L80,Ma82,Ma88,Ha,GW}.

The dynamics of general star vector field is still striking due to the existence of singularities.
There are many examples of such systems which are not uniformly hyperbolic:
for instance, some Cherry flows \cite{C, Palis-de Melo} in dimension $2$ and the famous Lorenz attractors \cite{Lo}
in dimension $3$ (which supports robust chaotic non-hyperbolic dynamics).
In dimension 4, Shi, Gan and Wen~\cite{SGW} have proved that $C^1$ generic star vector fields are singular hyperbolic. 
In dimension $5$, da Luz~\cite{dL} has constructed a $C^1$ open set of star vector fields which are not singular hyperbolic. From \cite{BdL} it turns out that multi-singular hyperbolicity characterizes star vector fields for a $C^1$ open and dense set of systems. The multi-singular hyperbolicity is specific to singular vector fields (i.e. has no counterpart for diffeomorphisms) and is not covered by partial hyperbolicity.

In this paper, we study the ergodic theory of multi-singular hyperbolic vector fields: we will prove that for a large set among star vector fields, there are finitely many physical SRB measures whose basins cover a set with full Lebesgue measure in the manifold.
\medskip

Let $M$ be a $C^\infty$ closed Riemannian manifold. Denote by $\cX^r(M)$ ($r\geq 1$) the space of $C^r$ vector fields on $M$. 
As mentioned previously, a $C^1$ vector field $X$ is said to be \emph{star} if there exists a $C^1$ neighborhood $\cU$ of $X$ such that, for any $Y\in\cU$, all the critical elements (i.e. periodic orbits or singularities) of the flow generated by $Y$ are hyperbolic. We denote by $\cX^*(M)$ the set of multi-singular hyperbolic vector fields on $M$ endowed with $C^1$ topology.
On a $C^1$ open and dense subset of $\cX^1(M)$, the star property is equivalent~\cite{BdL} to the multi-singular hyperbolicity, whose precise definition is stated in Section~\ref{ss.definition-multi} below. In this paper we study the dynamics for a $C^1$ open and dense set of vector fields in $\cX^1(M)$, hence we do not have to distinguish star and multi-singular hyperbolic systems.

Let $\varphi=(\varphi_t)_{t\in\RR}$ denote the flow generated by a vector field $X$. 
A $\varphi$-invariant probability measure is said to be \emph{physical} if it attracts a set with positive Lebesgue measure, i.e.,
${\rm Leb}\big(\Basin(\mu)\big)>0$, where
$$\Basin(\mu)=\bigg\{x\in M: \lim_{t\rightarrow+\infty}\frac{1}{t}\int_{0}^t\delta_{\varphi_s(x)}\ud s=\mu \bigg\}.$$

This is related to ergodic measures satisfying variational and geometric properties.
An ergodic measure $\mu$ is said to be a \emph{Sina\"{\i}-Ruelle-Bowen measure} (abbr. SRB measure) if it has at least one positive Lyapunov exponent and if its metric entropy equals the sum of its positive Lyapunov exponents. Note that  SRB measures are defined  in $C^1$-scenario, and this allows us to work on SRB measures in $C^1$-generic setting.
A classical theorem by Ledrappier and Young \cite{LY} states that for $C^{2}$ systems, the SRB property is equivalent to the absolute continuity of the conditional measures along the unstable manifolds which is usually taken as the definition of SRB measures for $C^{2}$ systems.
For any $C^2$ vector field, an ergodic hyperbolic SRB measure is physical~\cite[Th\'eor\`eme 4.9]{Le}. 

\begin{theoremalph}\label{Thm:main-ergodic}
There exists a $C^1$ open and dense subset $\cU$ of $\cX^*(M)$ such that for any $C^\infty$ vector field $Y\in \cU$, there exist 
finitely many ergodic  physical measures $\mu_1,\cdots,\mu_k$ such that 
	$$\Leb\big(\bigcup_{i=1}^k\Basin(\mu_i)\big)=\Leb(M).$$
\end{theoremalph}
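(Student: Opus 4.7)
The plan is to combine $C^1$-generic dynamics with a construction of SRB measures by averaging Lebesgue measure on unstable disks, adapted to the rescaled hyperbolic structure of Bonatti--da Luz.

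The first step is to identify, on a $C^1$ open and dense subset $\cU \subset \cX^*(M)$, finitely many quasi-attractors $\Lambda_1,\dots,\Lambda_N$ that absorb Lebesgue-almost every orbit. I would use $C^1$-generic tools (Conley theory together with a connecting-lemma type argument) to ensure that the $\omega$-limit set of a Lebesgue-typical point is contained in a unique chain-recurrence class which is Lyapunov-stable, and then use the stability of the multi-singular hyperbolic splitting together with the finiteness of hyperbolic singularities to reduce to finitely many such classes on an open-dense set of parameters.

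The second step is, for each $\Lambda_i$ and each $C^\infty$ vector field $Y\in\cU$, to construct local center-unstable disks of uniform ``rescaled'' size through regular points of $\Lambda_i$. The Bonatti--da Luz reparametrization by the singular cocycle turns the dominated splitting $E^{ss}\oplus E^{cu}$ into a uniformly hyperbolic one on the linear Poincar\'e flow, which yields invariant unstable manifolds by a graph-transform argument even near singularities. I would then push forward the induced Lebesgue measure on such a disk, Ces\`aro-average, and take a weak-$*$ limit to obtain a $\varphi$-invariant probability measure $\nu_i$ supported in $\Lambda_i$. The $C^\infty$ regularity supplies the bounded distortion needed to show that $\nu_i$ has absolutely continuous conditionals along unstable leaves, i.e.\ is a $u$-Gibbs state.

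The third step is to extract ergodic physical measures from the $\nu_i$. By Oseledets and the uniform expansion of $E^{cu}$ in the rescaled metric, every ergodic component of $\nu_i$ has positive Lyapunov exponents along $E^{cu}$; Ledrappier--Young's formula, applied to the $u$-Gibbs property, forces each hyperbolic ergodic component to be SRB, hence physical by \cite[Th\'eor\`eme~4.9]{Le}. Finiteness of such ergodic components on each $\Lambda_i$ would follow from the fact that physical measures have disjoint basins of positive Lebesgue measure, together with a Pesin-theoretic ``number of $u$-Gibbs components'' argument bounded by the dimension and the structure of the splitting. Full basin coverage would come from the absolute continuity of the strong-stable foliation: Lebesgue-a.e.\ point lying in the topological basin of $\Lambda_i$ is, up to a null set, on a strong-stable leaf of a regular point of $\Lambda_i$ which is generic for one of the SRB measures.

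The main obstacle is the second step: Pesin theory and the graph transform are built for uniformly hyperbolic systems in the ambient metric, whereas here uniformity only holds after rescaling by a cocycle that blows up near singularities. Proving that the rescaled unstable manifolds are genuine immersed $C^1$ submanifolds of uniform ambient size at $\nu_i$-almost every point, and that bounded distortion survives the rescaling well enough to give absolute continuity of both the unstable conditionals and the strong-stable holonomy, is the technical heart of the argument. A secondary difficulty is ensuring that no positive-Lebesgue set of orbits escapes into an arbitrarily small neighborhood of a singularity for an arbitrarily long proportion of time, which is what one needs to transfer the SRB statistics from unstable disks to the ambient manifold.
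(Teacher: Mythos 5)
Your proposal has two genuine gaps, and they sit exactly where the paper's real work is done. First, in your opening step you assert that $C^1$-generic tools (Conley theory plus connecting lemmas) show that Lebesgue-almost every orbit is absorbed by one of finitely many quasi-attractors. But generic dynamics only gives this for a \emph{residual} subset of $M$ (as in Theorem~\ref{Theo:generic}), not for a full Lebesgue measure subset; upgrading ``residual'' to ``full Lebesgue measure'' is precisely the content of the theorem and cannot be obtained this way. The paper gets it by contradiction: if some chain recurrence class $C_n$ that is not an attractor had a basin of positive Lebesgue measure, then Theorem~\ref{Thm:main-localized} would produce an SRB measure on it, Corollary~\ref{Cor:SRB-ergodic-class} would place an entire unstable manifold $W^u(\gamma_n)$ inside $C_n$, and the bi-Pliss-point criterion of Theorem~\ref{p.unstable-manifold-attractor} would force the limiting class to be an attractor after all. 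None of this machinery appears in your outline.

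Second, your construction of the measures $\nu_i$ presupposes a dominated splitting $E^{ss}\oplus E^{cu}$ with a sectionally expanded $E^{cu}$ on each $\Lambda_i$, together with unstable disks of uniform rescaled size and bounded distortion. Multi-singular hyperbolic classes do \emph{not} admit such a splitting in general (this is exactly what distinguishes them from singular hyperbolic sets: singularities of different indices may coexist, and there is no global invariant $E^{cu}$ bundle on the tangent flow). The ``main obstacle'' you flag at the end --- proving that rescaled unstable manifolds have uniform ambient size and that distortion control survives near singularities --- is not a technical detail to be filled in later; it is an obstruction the authors deliberately avoid by taking a variational route instead: they use Burguet's Yomdin-theoretic theorem (which is where $C^\infty$ is actually needed, not for distortion estimates) to produce a limit empirical measure whose entropy dominates the volume growth of $\wedge^k\Psi_t$, and then verify the Pesin entropy formula on the blow-up $\widehat M$, where the dominated splitting of the extended linear Poincar\'e flow does exist. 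Your $u$-Gibbs push-forward scheme, as stated, would only be expected to work on the singular hyperbolic attractors that emerge at the end of the argument, not on the general multi-singular hyperbolic sets one must handle to rule out escaping Lebesgue mass.
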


This property extends to $C^1$ generic multi-singular hyperbolic vector fields.
\begin{theoremalph}~\label{Thm:main-generic-ergodic}
There exists a dense G$_\delta$ subset $\cG$ of $\cX^*(M)$ such that each $X\in\cG$ has finitely many ergodic physical measures $\mu_1,\cdots,\mu_k$ such that $$\Leb\big(\bigcup_{i=1}^k\Basin(\mu_i)\big)=\Leb(M).$$
\end{theoremalph}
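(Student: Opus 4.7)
The plan is to deduce Theorem~\ref{Thm:main-generic-ergodic} from Theorem~\ref{Thm:main-ergodic} via a Baire category argument leveraging the $C^1$-density of $C^\infty$ vector fields in $\cX^1(M)$. Let $\cU$ denote the $C^1$ open and dense subset of $\cX^*(M)$ provided by Theorem~\ref{Thm:main-ergodic}; the set $\cU_\infty$ of $C^\infty$ vector fields in $\cU$ is then $C^1$-dense in $\cX^*(M)$, and every $X \in \cU_\infty$ satisfies the conclusion of Theorem~\ref{Thm:main-ergodic}.

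For each integer $\ell \geq 1$, let $\cV_\ell$ denote the set of $X \in \cX^*(M)$ for which there exist finitely many invariant probability measures $\mu_1,\ldots,\mu_k$ of the flow of $X$ such that the set of $x \in M$ whose forward empirical measures $\frac{1}{t}\int_0^t \delta_{\varphi_s(x)}\,ds$ accumulate, as $t \to \infty$, only on elements of $\{\mu_1,\ldots,\mu_k\}$ has Lebesgue measure strictly greater than $1 - 1/\ell$. First I would verify that each $\cV_\ell$ is $C^1$-open: weak-$*$ compactness of the space of invariant probability measures, combined with continuity of empirical measures in the vector field on bounded time scales, ensures that any orbit violating the definition for a nearby $Y$ produces a $Y$-invariant limit measure close to an $X$-invariant measure satisfying the same definition for $X$. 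Combined with the inclusion $\cU_\infty \subseteq \cV_\ell$ furnished by Theorem~\ref{Thm:main-ergodic}, this makes each $\cV_\ell$ both $C^1$-open and $C^1$-dense in $\cX^*(M)$.

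The residual set $\cG := \bigcap_{\ell \geq 1} \cV_\ell$ is then a dense $G_\delta$ subset of $\cX^*(M)$. For each $X \in \cG$, a diagonal extraction over $\ell$, together with the ergodic decomposition and weak-$*$ compactness, produces finitely many ergodic invariant measures $\mu_1,\ldots,\mu_k$ whose basins of statistical attraction cover a full Lebesgue measure subset of $M$; as there are only finitely many, each has positive-Lebesgue-measure basin, hence is physical in the sense of the statement.

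I expect the main obstacle to be the $C^1$-openness of $\cV_\ell$. This amounts to an upper semi-continuity in $X$ of the Lebesgue measure of the set of orbits with ``unexpected'' statistical behavior, and is delicate near singularities: Lebesgue-typical orbits may linger arbitrarily long in a neighborhood of a singularity before escaping, and their empirical averages are thus sensitive to perturbations of $X$. The multi-singular hyperbolic structure, which is $C^1$-robust by definition, will supply the necessary uniform control through its Liao-type pseudo-hyperbolic splitting: escape rates from singular neighborhoods depend continuously on $X$, so the large-scale itineraries of Lebesgue-typical orbits can be tracked stably under $C^1$ perturbation, allowing statistical limits to be compared across nearby vector fields.
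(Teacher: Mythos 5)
Your overall plan — deduce the generic statement from the $C^\infty$ statement (Theorem~\ref{Thm:main-ergodic}) via a Baire category argument, using the $C^1$-density of $C^\infty$ vector fields — is the same broad strategy as the paper, but the specific reduction you propose has two genuine gaps that the paper's proof is structured precisely to avoid.

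The central difficulty is the $C^1$-openness of your sets $\cV_\ell$. You define $\cV_\ell$ in terms of the statistical behavior of Lebesgue-typical orbits: the measure of the set of points whose empirical measures accumulate only on a fixed finite family. This is not a robust condition. Empirical measures of individual orbits are extremely sensitive to $C^1$ perturbation at long time scales, and the argument you sketch (weak-$*$ compactness plus finite-time continuity of empirical averages) only yields that a $Y$-invariant limit measure is the limit of $X$-empirical measures along a \emph{finite} time window; it says nothing about where the $X$-orbit eventually equidistributes, so no contradiction to membership in $\cV_\ell$ for $X$ is obtained. You acknowledge this is ``the main obstacle,'' but the proposed resolution — uniform control of escape rates near singularities — does not address the issue: the instability is not a singularity phenomenon at all, it already arises for smooth nonsingular flows, where the asymptotic statistics of an orbit can change drastically even when its finite-time itinerary is stable. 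The paper never attempts this kind of direct semi-continuity of statistical basins. Instead, it first establishes a $C^1$-robust \emph{topological} structure (Theorem~\ref{Thm:main-topology}: finitely many robustly transitive attractors, each either a sink or a singular hyperbolic homoclinic class, with trapping neighborhoods that persist and attractors that vary continuously). Topological basins of attracting sets \emph{are} open, both in the point and in the vector field, which gives the genuine lower semi-continuity of the function $L(X)$ (Lebesgue measure of the union of basins of robustly transitive attractors). The paper applies Baire to the \emph{continuity points of $L$}, not to a statistical-basin condition, and only afterwards upgrades ``topological basin'' to ``statistical basin'' via Theorem~\ref{Thm:unique-physical-on-attractor}, whose proof is itself a separate Baire argument resting on the upper semi-continuity of metric entropy over singular hyperbolic attractors (Theorem~\ref{Thm;ups-entropy}) and the uniqueness of physical measures for $C^2$ singular hyperbolic attractors (Theorem~\ref{Thm.pseudo-physical-measure-attractor}). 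None of this machinery appears in your reduction, and it is not optional: it is exactly what replaces the unjustified openness of $\cV_\ell$.

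There is a second gap, independent of the first. Even granting the openness of $\cV_\ell$, your final step (``diagonal extraction over $\ell$ produces finitely many ergodic measures'') does not follow. Membership in $\cV_\ell$ provides, for each $\ell$, a finite family $\{\mu_1^\ell,\dots,\mu_{k_\ell}^\ell\}$ covering measure $>1-1/\ell$, but the cardinality $k_\ell$ may grow without bound as $\ell\to\infty$, so $X\in\bigcap_\ell\cV_\ell$ need not admit a single finite family with full-measure basin. In the paper this is resolved by the a priori finiteness of attractors for $X\in\cU$ (Theorem~\ref{Thm:main-topology}), combined with the fact that each attractor carries a \emph{unique} physical measure for $X$ generic (Theorem~\ref{Thm:unique-physical-on-attractor}); the finite family is thus fixed in advance rather than extracted by a limiting argument.
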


Beyond uniform hyperbolicity, the $C^1$ generic ergodic theory was established for partially hyperbolic volume-preserving diffeomorphisms by Avila-Bochi-Wilkinson \cite{ABW}, Avila-Bochi \cite{AB} and Avila-Crovisier-Wilkinson \cite{ACW}. 
For dissipative cases,  there are several results concerning the existence adnd finiteness   of
physical measures for  partially hyperbolic diffeomorphisms with (1) mostly contracting/expanding center or (2) with one-dimensional center, among which we would like to mention that  the results in 
~\cite{CYZ, HYY} are obtained via  variational approach as in our paper.
Using SRB measures, our result characterizes a large class of dissipative  vector fields including the famous   Lorenz attractors and some others.

In Theorems~\ref{Thm:main-ergodic} and~\ref{Thm:main-generic-ergodic}
we have more information.
For any vector field $X$ in the open set $\cU$ provided by Theorem~\ref{Thm:main-ergodic},
there exist  finitely many attractors $\Lambda_1,\dots,\Lambda_k$, which are sinks or
singular hyperbolic attractors.
If moreover $X$ is $C^\infty$ or belongs to the dense G$_\delta$ subset $\cG$
given by Theorem~\ref{Thm:main-generic-ergodic},
the support of each ergodic physical measure $\mu_i$ coincides with one of these attractors $\Lambda_i$
and each attractor supports exactly one physical measure.
Moreover when $\Lambda_i$ is not a sink, $\mu_i$ is an SRB measure.
See the more detailed statement in Theorem~\ref{Thm:main-restated} below.

Let us notice that in the particular case of $C^2$ singular hyperbolic vector fields,
any system admits a finite collection of physical measures whose basins cover a set with full Lebesgue measure,
see~\cite{APPV,LeYa,CYZ,A}.

\smallskip

Let us discuss the main ingredients of our proofs.
There are several ways to build SRB measures for differentiable systems.
We follow a variational approach, which goes back to~\cite{CQ,Q}
and which is well-adapted to systems admitting a dominated splitting, see~\cite{CCE,CaY,LeYa,CYZ,HYY};
when there is no dominated splitting, results have been obtained for some smooth systems
using Yomdin theory~\cite{Bu,B2,BCS}. Singular hyperbolic vector fields admit a dominated splitting,
which allowed~\cite{LeYa,CYZ} to build SRB measures in this case. For multi-singular hyperbolic vector fields,
there is no dominated splitting in general and we had to face a new difficulty.
In~\cite{Bu}, the variational approach combined with Yomdin theory (for $C^\infty$ vector fields) produces measures whose entropy
is linked with the (upper) Lyapunov exponents of points in a set with positive Lebesgue measure:
without dominated splitting, this is not enough to guarantee that these measures are SRB.
Under multi-singular hyperbolicity, one can blow-up each singularity and obtain a dominated splitting over a bundle
related to the initial tangent bundle, which allows us to check Pesin entropy formula.
This gives the following result for multi-singular hyperbolic sets.
For an invariant compact set $\Lambda$, its basin is defined as:
$$\Basin(\Lambda)=\big\{x\in M: \omega(x)\subset \Lambda \big\}.$$

\begin{theoremalph}~\label{Thm:main-localized}
Let $X\in\cX^\infty(M)$ and $\Lambda$ be a multi-singular hyperbolic set which does not contain any sink. 
If $\Leb\big(\Basin(\Lambda)\big)>0$, then there is an SRB measure supported on $\Lambda$.
\end{theoremalph}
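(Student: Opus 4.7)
The plan is to construct a $\varphi$-invariant probability measure $\mu$ supported in $\Lambda$ that realizes Pesin's entropy formula, so that by the Ledrappier-Young theorem~\cite{LY} $\mu$ is automatically an SRB measure. The construction is variational in the spirit of~\cite{CQ,Q,CCE,CaY,LeYa,CYZ,HYY}, and will crucially combine Yomdin-type volume estimates (available because $X$ is $C^\infty$) with the dominated splitting obtained after blowing up the singularities (available because $\Lambda$ is multi-singular hyperbolic).

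First, for each $x\in\Basin(\Lambda)$ and $T>0$, form the empirical measure $\mu_{x,T}=\frac{1}{T}\int_0^T\delta_{\varphi_s(x)}\,\ud s$. Any weak-$*$ accumulation point is $\varphi$-invariant and, since the $\omega$-limit of points in $\Basin(\Lambda)$ lies in $\Lambda$, is supported in $\Lambda$. The objective is to select an accumulation measure $\mu$ whose metric entropy dominates its sum of positive Lyapunov exponents, and then to show that the reverse inequality (Ruelle) also holds. To produce the lower bound, I would follow Burguet-Buzzi and apply Yomdin's volume-growth theorem to the time-one map of $\varphi$: the $C^\infty$ regularity allows one to bound the entropy of a carefully chosen accumulation measure from below by the exponential volume-growth rate of typical small balls inside $\Basin(\Lambda)$, which, on a positive Lebesgue-measure subset, is controlled by the upper Lyapunov exponents of $D\varphi_t$. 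This gives an inequality of the form $h_\mu\geq \Xi(\mu)$ where $\Xi(\mu)$ is an integral of upper volume-growth rates. Without a dominated splitting, $\Xi(\mu)$ may exceed the sum of genuine positive Lyapunov exponents of $\mu$, so this alone is insufficient.

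This is the point at which the multi-singular hyperbolicity of $\Lambda$ enters decisively. Blowing up each singularity and rescaling $D\varphi_t$ by $\|X\|$ yields a well-defined linear cocycle over the (blown-up) flow that carries a dominated splitting $E^{cs}\oplus E^{cu}$ with $E^{cu}$ uniformly expanding. Applying Ruelle's inequality to this cocycle yields $h_\mu\leq \sum \lambda_i^+(\mu)$, and the domination allows one to identify the upper volume-growth rates appearing in the Yomdin step with the genuine Lyapunov exponents along $E^{cu}$. Combining with the lower bound gives Pesin's formula $h_\mu=\sum \lambda_i^+(\mu)$, whence by Ledrappier-Young the conditional measures of $\mu$ along unstable manifolds are absolutely continuous. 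The no-sink hypothesis rules out $\mu$ being concentrated on an attracting critical element, so the $E^{cu}$ direction is non-trivial and $\mu$ has at least one positive exponent; thus $\mu$ is an SRB measure supported on $\Lambda$.

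The hardest step is Step~3: bridging the Yomdin-type entropy lower bound, which is computed on the original manifold where $D\varphi_t$ blows up near $\Sing(X)$, with the Lyapunov exponents read off the rescaled blown-up cocycle. One must estimate the proportion of time that a typical orbit of the basin spends in small neighborhoods of $\Sing(X)$, and check that the two natural exponent bookkeepings (tangent versus rescaled) agree after integration against $\mu$. Carrying this comparison out under the sole assumption of multi-singular hyperbolicity, rather than the stronger singular hyperbolicity used in~\cite{LeYa,CYZ}, is the technical core of the proof.
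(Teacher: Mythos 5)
Your proposal follows essentially the same route as the paper: a Burguet/Yomdin entropy lower bound for a well-chosen limit of empirical measures, expressed through the volume growth of the linear Poincar\'e flow, combined with the dominated splitting of the extended linear Poincar\'e flow on the blow-up and a Ruelle-type upper bound there, yielding Pesin's formula and hence the SRB property via Ledrappier--Young. One caveat: the blown-up bundle $\widehat\cN^u$ is \emph{not} uniformly expanding as you assert; the paper instead proves positivity of the integrated unstable Jacobian for invariant measures charging the singular fibers (using the Lorenz-like structure of the singularities, with a separate argument for singularities of index $d-1$ that exploits the empirical nature of the limit measure), which is precisely the bookkeeping near $\Sing(X)$ that you correctly identify as the technical core.
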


This is the starting point for proving Theorems~\ref{Thm:main-ergodic} and~\ref{Thm:main-generic-ergodic}.
Their proofs use the robustness of singular hyperbolic attractors (proved in~\cite{CY})
and a characterization of attractors in the multi-singular hyperbolic setting
using bi-Pliss points (see Theorem~\ref{p.unstable-manifold-attractor}).}

\subsection*{Acknowledgments}
We would like to thank the anonymous referee for her/his valuable comments which improves a lot of the paper.
We would like to thank Yi Shi for many useful discussions and comments.

\section{Fundamental  properties of  multi-singular hyperbolicity}
Let $M$ be a $d$-dimensional closed manifold endowed with a Riemannian metric.
The set of transverse intersections between two submanifolds $V,W$ will be denoted by $V\pitchfork W$.
Given a $C^1$ vector field $X$ on $M$, we denote by $(\varphi^X_t)_{t\in\RR}$  the flow generated by $X$ and $(D\varphi_t^X)_{t\in\RR}$  the tangent flow. When there is no ambiguity, we will drop the index $X$ for simplicity.
The \emph{singular set} is $\sing(X):=\{x: X(x)=0\}$.
A point $x$ is \emph{periodic} if it is non singular and satisfies $\varphi_t(x)=x$ for some $t>0$.
A point $x$ is a \emph{critical point} if it is either a singularity or a periodic point.
The orbit of a critical point is a \emph{critical element}.

For each point $x\in M\setminus\sing(X)$, one can define the normal space at $x$,
$$\cN^X(x)=\cN(x):=\{v\in T_xM: v\bot X(x)\}.$$
This gives a bundle $\cN$ over the set $M\setminus\sing(X)$ that  is called  the \emph{normal bundle}.
\begin{Definition}
For a $C^1$ vector field $X$, the ~\emph{linear Poincar\'e flow} $(\Psi_t)_{t\in\RR}$ is defined on $\cN$ by
$$\Psi_t(v)=D\varphi_t(v)-\frac{<D\varphi_t(v), X(\varphi_t(x))>}{\|X(\varphi_t(x))\|^2}\cdot X(\varphi_t(x)) \textrm{~~ for any $x\in M\setminus\sing(X)$ and $v\in\cN(x)$}.$$ 	
\end{Definition}
In other terms, the linear Poincar\'e flow is defined as follow: for any vector in the normal bundle, one first sends it by the tangent flow and then projects it back to the normal bundle.

\subsection{Chain recurrence classes}\label{ss.chain-recurrence}
Conley's theory~\cite{Conley} decomposes the dynamics as follows.
One says that $x$ is \emph{chain attainable} from $y$ (and one denotes $y\dashv x$),
if there exists $T>0$ such that for any $\varepsilon>0$
there are $\{x_i\}_{i=0}^k$ with $k\geq 1$ and $t_i>T$ for $0\leq i<k$ satisfying:
\begin{itemize}
	\item $x_0=y$ and $x_k=x$;
	\item $\ud(\varphi_{t_i}(x_i),x_{i+1})<\e$ for $i=0,\cdots,k-1.$
\end{itemize}
A point $x$ is ~\emph{chain recurrent} if $x$ is chain attainable from itself. We denote by $\cR(X)$ the set of chain recurrent points. One defines a relation $\sim$ on $\cR(X)$ as follows
$$x\sim y \;\Leftrightarrow\; (y\dashv x~\text{ and }~x\dashv y). $$
This is a closed equivalence relation. For each point $x\in\cR(X)$, the equivalence class containing $x$ is called the \emph{chain recurrence class} of $x$ and will be denoted by $C(x)$.

The basin of an invariant compact set $\Lambda$ is defined as 
$$\Basin(\Lambda)=\big\{x\in M: \omega(x)\subset\Lambda \big\}.$$

An invariant compact set $\Lambda$ is \emph{attracting}
if there exists a neighborhood $U$ of $\Lambda$ such that $\varphi_t(\overline U)\subset U$ for some $t>0$
and $\Lambda=\bigcap_{t\in\mathbb{R}}\varphi_t(U)$.
In this case, one can replace $U$ by a neighborhood which is \emph{trapping}
i.e. which satisfies $\varphi_t(\overline U)\subset{\rm Int}(U),\forall t>0$, see~\cite[Section 5.B]{Conley}.
An invariant compact set $\Lambda$ is an \emph{attractor} if
it is an attracting transitive set.

An invariant compact set $\Lambda$ is {\it Lyapunov stable} if
for any neighborhood $U$ of $\Lambda$, there exists a smaller neighborhood $V$ of $\Lambda$ such that
$\varphi_t(V)\subset U$ for any $t\geq 0$.

A \emph{quasi-attractor} is a chain recurrence class which admits arbitrarily small trapping neighborhoods.

By~\cite[Proposition 1.7]{BC}, for any vector field $X$ in a dense G$_\delta$ subset of $\cX^1(M)$, a chain recurrence class $C$ of $X$ is a quasi-attractor if and only if $C$ is Lyapunov stable; moreover it also gives
the following property (see also \cite[Theorem A]{MP}).

\begin{Theorem}\label{Theo:generic}
There exists a dense G$_\delta$ subset $\cG$ of $\cX^1(M)$ and for any $X\in\cG$, there exists a dense G$_\delta$ subset $\cR_X\subset M$ such that for any $x\in \cR_X$, the limit set $\omega(x)$ is a quasi-attractor.
\end{Theorem}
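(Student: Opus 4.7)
The plan is to combine Conley's decomposition of the dynamics with two successive Baire category arguments, one in $\cX^1(M)$ and one in the phase space $M$, both organized around countable families of trapping neighborhoods.

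First I would construct the generic set $\cG$. By Conley's theorem, every chain recurrence class of $X$ arises as an intersection of attracting sets $\bigcap_{t\geq 0}\varphi^X_t(U)$, taken over suitable trapping neighborhoods $U$, possibly modified by complementary ``repelling'' pieces; the class is a quasi-attractor precisely when no repelling correction is needed, i.e.\ when it admits arbitrarily small trapping neighborhoods. The cited \cite[Proposition 1.7]{BC} supplies a dense $G_\delta$ subset $\cG \subset \cX^1(M)$ on which ``quasi-attractor'' is equivalent to ``Lyapunov stable chain recurrence class''; it is obtained by intersecting the Kupka--Smale condition with countably many $C^1$ open-dense conditions indexed by a countable basis of $M$, each forcing the attracting set attached to a given candidate trapping region to vary lower semi-continuously with $X$.

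Next I would fix $X\in\cG$ and produce $\cR_X\subset M$. Using the separability of $M$, choose a countable family $(U_n)_{n\geq 1}$ of trapping neighborhoods of $X$ such that every neighborhood of every quasi-attractor contains some $U_n$. For each $n$ let
$$ B_n \;=\; \bigcup_{t\geq 0}\varphi^X_{-t}(U_n) \quad\text{and}\quad A_n \;=\; \bigcap_{t\geq 0}\varphi^X_t(U_n). $$
Then $B_n$ is open and $\omega(x)\subset A_n$ for all $x\in B_n$; conversely, for $x\in M\setminus\overline{B_n}$ the whole forward orbit of $x$, hence $\omega(x)$, avoids the open set $U_n$. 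Since $\partial B_n$ is closed with empty interior, Baire's theorem ensures that
$$ \bigcap_{n\geq 1}\bigl(B_n \cup (M\setminus\overline{B_n})\bigr) $$
is a dense $G_\delta$ subset of $M$. For any $x$ in this intersection, set $I(x):=\{n:x\in B_n\}$; then $\omega(x)\subset K(x):=\bigcap_{n\in I(x)}A_n$, which by the generating property of $(U_n)$ and the choice of $\cG$ is the unique quasi-attractor containing $\omega(x)$.

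The main remaining difficulty is to sharpen the inclusion $\omega(x)\subset K(x)$ into an equality, as required by the statement: a priori $\omega(x)$ is only an internally chain-transitive closed invariant subset of the quasi-attractor $K(x)$, and in the absence of transitivity of $K(x)$ the inclusion could be strict. This is achieved in \cite[Theorem A]{MP}: on a further dense $G_\delta$ in $M$ the forward orbit actually accumulates on the entire Lyapunov stable class towards which it is attracted. Taking $\cR_X$ to be the intersection of the $G_\delta$ constructed above with this additional residual set yields $\omega(x)=K(x)$ for all $x\in\cR_X$, so $\omega(x)$ is a quasi-attractor.
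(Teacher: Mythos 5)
The paper does not actually prove Theorem~\ref{Theo:generic}: it is presented as a direct consequence of \cite[Proposition 1.7]{BC} together with \cite[Theorem A]{MP}, and your proposal ultimately rests on exactly these two citations, so the external inputs you identify are the right ones. The problem is that the machinery you insert between them does not do the work you attribute to it. The set $K(x)=\bigcap_{n\in I(x)}A_n$ is an intersection of attracting sets containing $\omega(x)$, but nothing in your construction shows that the trapping regions indexed by $I(x)$ shrink down to a single chain recurrence class; $I(x)$ may even be empty, in which case $K(x)=M$. The assertion that $K(x)$ is ``the unique quasi-attractor containing $\omega(x)$'' presupposes that $\omega(x)$ is contained in \emph{some} quasi-attractor, which is essentially the statement being proved rather than a consequence of the generating property of $(U_n)$ and the choice of $\cG$.

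The final appeal to \cite[Theorem A]{MP} also misquotes that result: it asserts that for generic $X$ and for $x$ in a residual subset of $M$ the set $\omega(x)$ is Lyapunov stable; it does not say that the orbit accumulates on the entire class towards which it is attracted, and in particular it does not yield $\omega(x)=K(x)$. To close the argument one needs, in addition, the generic fact (also coming from \cite{BC}, via the connecting lemma for pseudo-orbits) that for residual $x$ the limit set $\omega(x)$ is a whole chain recurrence class; only then does \cite[Proposition 1.7]{BC} upgrade ``Lyapunov stable chain recurrence class'' to ``quasi-attractor''. Once these ingredients are in place, the Baire argument with the sets $B_n$ becomes superfluous. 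So the proposal is not wrong in spirit, but the passage from $\omega(x)\subset K(x)$ to ``$\omega(x)$ is a quasi-attractor'' is a genuine gap, and it is exactly the gap that the correctly stated versions of the two cited results are needed to fill.
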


The following result states a local version of this property.
It follows easily from previous works and its proof will be given in the appendix.
\begin{Theorem}\label{Thm:go-to-Lyapunov}
	There exists a dense G$_\delta$ subset $\cG\subset\cX^1(M)$ with the following property.
	 For any $X\in\cG$ and any hyperbolic critical element $\gamma$ of $X$, 
  there is a dense G$_\delta$ subset $R_\gamma$ of $W^u(\gamma)$ such that the $\omega$-limit set of any point $x\in R_\gamma$ is   a quasi-attractor.
\end{Theorem}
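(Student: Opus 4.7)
The plan is to use Theorem~\ref{Theo:generic} to produce an open-dense subset $\cR_X\subset M$ of points whose $\omega$-limit is a quasi-attractor, and then to show that $W^u(\gamma)\cap\cR_X$ is dense in $W^u(\gamma)$ by descending along the Conley partial order on chain recurrence classes.

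First, I would refine $\cG$ to a dense $G_\delta$ on which: the conclusion of Theorem~\ref{Theo:generic} holds, all critical elements are hyperbolic, every $\omega$-limit set coincides with a chain recurrence class, and every quasi-attractor admits a countable decreasing basis $\{W_n\}$ of open trapping neighborhoods with $\bigcap_n W_n$ equal to the quasi-attractor. Enumerating these trapping neighborhoods over all quasi-attractors, the set
$$
\cR_X:=\{x\in M:\omega(x)\text{ is a quasi-attractor}\}
$$
coincides with $\bigcup_{n\geq 1}\bigcup_{t>0}\varphi_{-t}(W_n)$, so $\cR_X$ is \emph{open} in $M$; and by Theorem~\ref{Theo:generic} it is dense.

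Fix $X\in\cG$ and a hyperbolic critical element $\gamma$, and set $R_\gamma:=W^u(\gamma)\cap\cR_X$. Since $\cR_X$ is open in $M$, $R_\gamma$ is relatively open (hence $G_\delta$) in $W^u(\gamma)$, and $x\in R_\gamma$ exactly when $\omega(x)$ is a quasi-attractor. The remaining content is density of $R_\gamma$ in $W^u(\gamma)$, which I would establish by cases on the chain class $C(\gamma)$. If $C(\gamma)$ is Lyapunov stable, it is itself a quasi-attractor; Lyapunov stability together with the hyperbolicity of $\gamma$ forces $\omega(x)\subset C(\gamma)$, and hence $\omega(x)=C(\gamma)$ by the chain-recurrence property, for every $x$ in a neighborhood of $\gamma$ inside $W^u_{loc}(\gamma)$. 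Flow-invariance of $W^u(\gamma)$ extends this to $R_\gamma=W^u(\gamma)$.

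If $C(\gamma)$ is not Lyapunov stable, Conley theory produces a quasi-attractor $Q$ strictly downstream of $C(\gamma)$. Given any nonempty relatively open $V\subset W^u(\gamma)$, I would combine the generic chain-connection results of~\cite{BC}, the inclination lemma inside $W^u(\gamma)$, and the flow-invariance of $W^u(\gamma)$ to locate $x\in V$ whose forward orbit enters a trapping neighborhood $W_n$ of a quasi-attractor downstream of $\gamma$; this places $x$ in $R_\gamma$. The main obstacle is precisely this step: density of $R_\gamma$ in $W^u(\gamma)$ is not automatic from the density of $\cR_X$ in $M$, because $W^u(\gamma)$ is a lower-dimensional immersed submanifold. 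The resolution relies on flow-invariance of $W^u(\gamma)$ to iterate $V$ forward, together with the $C^1$-generic chain-connection structure, to follow the Conley partial order down to a quasi-attractor.
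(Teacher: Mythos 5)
Your proposal has two genuine gaps. First, the set $\cR_X=\{x:\omega(x)\text{ is a quasi-attractor}\}$ is not open, and your identification $\cR_X=\bigcup_n\bigcup_{t>0}\varphi_{-t}(W_n)$ fails: if the forward orbit of $x$ enters a trapping neighborhood $W_n$ of a quasi-attractor $Q$, you only get $\omega(x)\subset\bigcap_{t>0}\varphi_t(W_n)$, and this maximal invariant set may strictly contain $Q$ and other chain classes; $\omega(x)$ could be one of those and fail to be a quasi-attractor. For the same reason, in your Lyapunov-stable case you only obtain $\omega(x)\subset C(\gamma)$, which does not give $\omega(x)=C(\gamma)$: an $\omega$-limit set is chain transitive but can be a proper subset of the chain class containing it, and a proper invariant subset of a quasi-attractor is not a quasi-attractor. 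One needs both inclusions, and the lower bound $Q\subset\omega(x)$ is exactly the hard part.

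Second, the crux — density of $R_\gamma$ inside the lower-dimensional set $W^u(\gamma)$ — is named but not resolved. The connecting lemma for pseudo-orbits of \cite{BC} produces points of $M$ near $x$ whose true orbits shadow the chain from $x$ to the quasi-attractor; it does not produce such points \emph{inside} $W^u(\gamma)$, and the inclination lemma does not bridge this. The paper's mechanism, which is absent from your sketch, is a second Baire argument in $\cX^1(M)$: one introduces, for each triple of basis open sets, the open set $\cI_{k,m,n}$ of vector fields having a point of $W^u(\gamma)\cap U_m$ whose forward orbit meets $U_n$, together with its ``robustly not'' complement $\cN_{k,m,n}$, and puts $\cI_{k,m,n}\cup\cN_{k,m,n}$ into the residual set $\cG$. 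Then the $C^1$ connecting lemma (Theorem~\ref{Lem:connecting}) converts the pseudo-orbit connection into a true orbit from $W^u(\gamma_Z)$ to $U_n$ for a perturbation $Z$, showing $X\in\overline{\cI_{k,m,n}}$, hence $X\in\cI_{k,m,n}$ by genericity — so $X$ itself already has such points in $W^u(\gamma)$. Defining $R_\gamma^n=W_n(\gamma)\cup(W^u(\gamma)\setminus\overline{W_n(\gamma)})$ and intersecting over $n$ then yields a dense G$_\delta$ subset of $W^u(\gamma)$ on which the forward orbit visits every neighborhood of every point chain-attainable from $x$; this gives $Q\subset\omega(x)$, and the trapping neighborhoods of $Q$ give the reverse inclusion. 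Without this transfer from perturbed vector fields back to $X$, your argument does not close.
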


\subsection{Multi-singular hyperbolicity}\label{ss.definition-multi}
We first recall the notions of hyperbolicity and singular hyperbolicity.
\begin{Definition}\label{Def:hyperbolicity}
Let $X\in\mathcal{X}^1(M)$.  A $(\varphi_t)_{t\in\mathbb{R}}$-invariant compact set $\La$ is \emph{hyperbolic} if there exist a $(D\varphi_t)$-invariant continuous splitting $T_\Lambda M=E^s\oplus \mathbb{R}\cdot X\oplus E^u$ and two constants $C>0,\lambda>0$ satisfying the following properties. 
\begin{itemize}
\item $E^s$ is $(D\varphi_t)_{t\in\mathbb{R}}$-\emph{contracted}: $$\|D\varphi_t|_{E^{s}(x)}\|\leq C\cdot e^{-\lambda t}~~ \text{for all $x\in \La$ and $t\geq0$}; $$
\item $E^u$ is $(D\varphi_t)_{t\in\mathbb{R}}$-\emph{expanded}: $$\|D\varphi_{-t}|_{E^{u}(x)}\|\leq C\cdot e^{-\lambda t}~~ \text{for all $x\in \La$ and $t\geq0$}.$$
\end{itemize}
\end{Definition}

\begin{Remark}\label{Rem:hyperbolicity}
\begin{enumerate}
\item The sub-bundles $E^s$, $\mathbb{R}\cdot X$ and $E^u$ are allowed to be trivial. For instance, when $\Lambda$ is reduced to   a singularity $\sigma$ (i.e. $ X(\sigma) =0$), then the splitting is $T_\sigma M=E^s_\sigma\oplus E^u_\sigma$. When $\Lambda$ is reduced to a critical element (a singularity or a periodic orbit)
such that  $E^s=\{0\}$ (resp. $E^u=\{0\}$), then $\Lambda$ is a source (resp. sink).
\item In the hyperbolic splitting $T_\Lambda M=E^s\oplus \RR\cdot X \oplus E^u$, the dimension $\dim(E^s)$ is called the \emph{index} of $\Lambda$. 
\end{enumerate}
\end{Remark}

In their study of robustly transitive singular sets, Morales,  Pacifico and  Pujals \cite{MPP} introduced the notion of \emph{singular hyperbolicity} which very well describes the structure of Lorenz attractors.

\begin{Definition}
Let $X\in\mathcal{X}^1(M)$.  A $(\varphi_t)_{t\in\mathbb{R}}$-invariant compact set $\La$ is \emph{positively singular hyperbolic} if there exist a $(D\varphi_t)_{t\in\mathbb{R}}$-invariant splitting $T_\La M=E^{ss}\oplus E^{cu}$,   and two numbers $C>1$ and $\lambda>0$ satisfying the following properties.
\begin{itemize} 
\item Contraction of $E^{ss}$: $\|D\varphi_t|_{E^{ss}(x)}\|\leq C\cdot e^{-\lambda t}$ for all $x\in \La$ and $t\geq0$.
\item Domination:	$\|D\varphi_t|_{E^{ss}(x)}\|\cdot\|D\varphi_{-t}|_{E^{cu}(\varphi_t(x))}\|\leq C\cdot e^{-\lambda t}$ for all $x\in \La$ and $t\geq0$.
\item Sectional expansion of $E^{cu}$: for any $x\in\La$ and any 2-dimensional linear subspace $P\subset E^{cu}(x)$, one has 
$$|\Det(D\varphi_t|_P)|\geq C^{-1}\cdot e^{\lambda t}~\textrm{ for   $t\geq0$}.$$
\end{itemize}
An invariant compact set is \emph{negatively singular hyperbolic} if it is positively singular hyperbolic with respect to $-X$.  An invariant compact set is \emph{singular hyperbolic} if it is positively or negatively singular hyperbolic.
A vector field is singular hyperbolic if each of its chain recurrence class is either singular hyperbolic or a hyperbolic singularity.
\end{Definition}

The notion of Lorenz like singularity plays an important role.
\begin{Definition}~\label{Def:lorenz-like singularity}
Let $X\in\mathcal{X}^1(M)$.
A singularity $\sigma$ is \emph{Lorenz like}, if $\sigma$ admits a $(D\varphi_t)_{t\in\RR}$-invariant  splitting $T_\sigma M=E^{ss}_\sigma\oplus E^c_\sigma\oplus E^{uu}_\sigma$ with $\dim E^c_\sigma=1$ such that
the largest Lyapunov exponent $\lambda_\sigma^{ss}$ along $E_\sigma^{ss}$, the smallest Lyapunov exponent $\lambda_\sigma^{uu}$ along $E_\sigma^{uu}$ and the center Lyapunov exponent $\lambda_\sigma^c$ along $E^c$ satisfy
$$0<|\lambda_\sigma^c|<\min\{-\lambda_\sigma^{ss},\lambda_\sigma^{uu}\}.$$
\end{Definition}

\begin{Remark}\label{Rem:Lorenz-like-singularity}
Every Lorenz like singularity $\sigma$ is hyperbolic
and admits a strong stable manifold $W^{ss}(\sigma)$  tangent to $E^{ss}_\sigma$ and a strong unstable manifold
$W^{uu}(\sigma)$ tangent to $E^{uu}_\sigma$.
\end{Remark}

Let $X\in\mathcal{X}^1(M)$ and let $\La$ be an  invariant compact set for the flow $(\varphi_t)_{t\in\RR}$. 
Given $\eta,T>0$, a $(\Psi_t)_{t\in\RR}$-invariant splitting  $\cN_{\La\setminus\sing(X)}=\cE\oplus\cF$ is an  \emph{$(\eta, T)$-dominated splitting for the linear Poincar\'e flow}  if 
the spaces $\cE(x)$ (resp. $\cF$) have the same dimension and
$$\|\Psi_T|_{\cE(x)}\|\cdot \|\Psi_{-T}|_{\cF(\varphi_T(x))}\|\leq  e^{-\eta}\textrm{ for any $x\in\La\setminus\sing(X)$}.$$ 
One says that an invariant  compact  set $\Lambda$ admits a dominated splitting for the linear Poincar\'e flow if it admits an $(\eta,T)$-dominated splitting for some $\eta,T>0$.  Similarly, one can define the dominated splitting for the tangent flow $(D\varphi_t)_{t\in\mathbb{R}}$.

The dominated splitting of the linear Poincar\'e flow may not be robust due to the existence of singularities. 
This leads to the notion of \emph{singular domination} defined  in \cite[Definition 1.1]{CYZ2}.

\begin{Definition}\label{d.singular-domination}
Let $X\in\mathcal{X}^1(M)$ and $\Lambda$ be an invariant compact set of $(\varphi_t)_{t\in\mathbb{R}}$. One says that $\Lambda$ admits a \emph{singular domination} if
there exists a splitting $\cN_{\Lambda\setminus\Sing(X)}=\cE\oplus\cF$ such that:
\begin{itemize}
\item the splitting $\cN_{\Lambda\setminus\Sing(X)}=\cE\oplus\cF$ is dominated for the linear Poincar\'e flow $(\Psi_t)_{t\in\mathbb{R}}$;
\item  each $\sigma\in\Lambda\cap\Sing(X)$ satisfies one of the following properties:
\begin{itemize}
\item either  $\sigma$ admits a dominated splitting $T_\sigma M=E^{ss}_\sigma\oplus F_\sigma$ for $(D\varphi_t)_{t\in\mathbb{R}}$,  where $E^{ss}_\sigma$ is uniformly contracting with  $\dim(E^{ss}_\sigma)=\dim(\cE)$  and $W^{ss}(\sigma)\cap \Lambda=\{\sigma \}$;
\item or  $\sigma$ admits a dominated splitting $T_\sigma M=E_\sigma\oplus E^{uu}_\sigma$ for $(D\varphi_t)_{t\in\mathbb{R}}$,  where $E^{uu}_\sigma$ is uniformly expanding  with $\dim(E^{uu}_\sigma)=\dim(\cF)$  and $W^{uu}(\sigma)\cap \Lambda=\{\sigma \}$.
\end{itemize}
\end{itemize}
\end{Definition}

Recall that when $\dim(M)\leq 4$ generic star vector fields are singular hyperbolic~\cite{MPP,SGW}, while when $\dim(M)\geq 5$ da Luz~\cite{dL} constructed generic star vector fields that are not singular hyperbolic. To characterize star vector fields, Bonatti-da Luz~\cite{BdL} introduced the important   notion of   {\it multi-singular hyperbolicity} and proved that star is equivalent to multi-singular hyperbolic open and densely. 
Later on, \cite{CYZ2} gave an alternative definition which is equivalent to the one given by Bonatti-da Luz under a mild condition (see Theorems C and D in~\cite{CYZ2}) and which is based on the notion of singular domination.
This is the definition that we will use here.
\begin{Definition}\label{Def:multi-singular-hyperbolic}
Let $X\in\cX^1(M)$ and $\Lambda$ be an invariant compact set of $(\varphi_t)_{t\in\mathbb{R}}$. One says that $\Lambda$ is  \emph{multi-singular hyperbolic} if either it is a hyperbolic singularity or periodic orbit, or if:
\begin{itemize}
\item $\Lambda$ admits a singular domination $\cN^s\oplus \cN^u$ with $\dim(\cN^s)>0$ and $\dim(\cN^u)>0$;
\item  each $\sigma\in\Lambda\cap\Sing(X)$ is Lorenz like  with a  splitting $T_\sigma M=E^{ss}_\sigma\oplus E^c_\sigma\oplus E^{uu}_\sigma$ satisfying $\dim E^{ss}_\sigma=\dim\cN^s$ and $\dim E^{uu}_\sigma=\dim\cN^u$;
\item there exist a compact isolating neighborhood $V$ of $\sing(X)\cap\Lambda$  and $\eta, T>0$ such that  for any $x\in\Lambda$ and $t\in \mathbb{R}$ satisfying
$x,\varphi_t(x)\notin V$ and  $t>T$,  one has
\begin{equation}~\label{equ:hyperbolicity-away-from-singularity}
\|\Psi_{-t}|_{\cN^u(\varphi_t(x))}\|<e^{-\eta t} \textrm{ and } \|\Psi_t|_{\cN^s(x)}\|<e^{-\eta t}.
\end{equation}
\end{itemize}
\end{Definition}

{
\begin{Lemma}\label{Lem:quasi-attractor-singular-hyperbolic}
A quasi attractor $\Lambda$ which is multi-singular hyperbolic and not a sink, is singular hyperbolic.
\end{Lemma}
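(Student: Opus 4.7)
The plan is to use the Lyapunov stability of the quasi-attractor $\Lambda$ to classify its singularities, and then to extend the multi-singular hyperbolicity to full singular hyperbolicity.

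\emph{Step 1 (singularities are of ``contracting'' type).} First note that a quasi-attractor which is a single hyperbolic critical element must be Lyapunov stable, hence has trivial unstable manifold, hence is a sink; since $\Lambda$ is not a sink, we are in the main case of Definition~\ref{Def:multi-singular-hyperbolic}, with a singular domination $\cN^s\oplus\cN^u$ satisfying $\dim\cN^s,\dim\cN^u>0$. I claim every singularity $\sigma\in\Lambda$ is of the first alternative in Definition~\ref{d.singular-domination}, i.e.\ satisfies $W^{ss}(\sigma)\cap\Lambda=\{\sigma\}$. Suppose for contradiction that $\sigma$ is instead of the second alternative, so $W^{uu}(\sigma)\cap\Lambda=\{\sigma\}$. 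Since $\dim E^{uu}_\sigma=\dim\cN^u>0$, we can pick $y\in W^{uu}_{\rm loc}(\sigma)\setminus\{\sigma\}$. For any trapping neighborhood $U$ of $\Lambda$ we have $\sigma\in U$ and $\varphi_{-t}(y)\to\sigma$, so $\varphi_{-T}(y)\in U$ for large $T$; forward invariance of $\overline U$ yields $y=\varphi_T(\varphi_{-T}(y))\in U$. Because $\Lambda$ is a quasi-attractor its trapping neighborhoods form a basis of neighborhoods of the closed set $\Lambda$, so intersecting over $U$ gives $y\in\Lambda$, contradicting $W^{uu}(\sigma)\cap\Lambda=\{\sigma\}$.

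\emph{Step 2 (from multi-singular to singular hyperbolicity).} The candidate splitting $T_\Lambda M=E^{ss}\oplus E^{cu}$ is defined by setting, at each $\sigma\in\Sing(X)\cap\Lambda$, $E^{ss}_\sigma$ and $E^{cu}_\sigma:=E^c_\sigma\oplus E^{uu}_\sigma$ from the Lorenz-like structure, and away from $\Sing(X)$, taking $E^{ss}$ to be the $(D\varphi_t)$-invariant lift of $\cN^s$ and $E^{cu}:=\RR\cdot X\oplus(\text{invariant lift of }\cN^u)$. What must be checked are uniform contraction on $E^{ss}$ and sectional expansion on $E^{cu}$. For orbit segments whose endpoints lie outside the isolating neighborhood $V$ of $\Sing(X)\cap\Lambda$ and whose length exceeds $T$, these are precisely~\eqref{equ:hyperbolicity-away-from-singularity}. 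For orbit segments staying inside $V$ for a long time near some $\sigma$, the property $W^{ss}(\sigma)\cap\Lambda=\{\sigma\}$ established in Step~1 forces points of $\Lambda$ close to $\sigma$ to lie off the strong stable manifold of $\sigma$, so such orbit pieces shadow the center-unstable dynamics of $\sigma$; the desired estimates on $E^{ss}$ and on $E^{cu}$ are then inherited from the uniform hyperbolicity at $\sigma$ (with the sectional expansion coming from the combination of the expansion on the invariant lift of $\cN^u$ with the invariance of $\RR\cdot X$). Concatenating the two regimes yields uniform constants valid on all of $\Lambda$.

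The main obstacle is Step~2: one must merge the hyperbolic estimates ``outside $V$'' with those ``inside $V$'' into a single pair of uniform constants, and verify that the lift from the normal splitting $\cN^s\oplus\cN^u$ to a $(D\varphi_t)$-invariant tangent splitting is compatible with these estimates. Both difficulties are resolved by the standard invariant-section / strong-stable-manifold machinery, once the dichotomy of Step~1 has ruled out expanding-type singularities.
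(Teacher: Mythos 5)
Your Step~1 is correct and is essentially the paper's key observation: since a quasi-attractor contains the unstable sets of its points (your trapping-neighborhood argument), one has $W^{uu}(\sigma)\subset\Lambda$ with $\dim E^{uu}_\sigma=\dim\cN^u>0$, so the second alternative of Definition~\ref{d.singular-domination} fails and every singularity satisfies $W^{ss}(\sigma)\cap\Lambda=\{\sigma\}$. (The paper phrases this as: every singularity has negative center exponent, using additionally that $\Lambda$ is chain transitive and not reduced to $\sigma$, so $W^s(\sigma)$ meets $\Lambda\setminus\{\sigma\}$; if $\lambda^c_\sigma\ge 0$ then $W^s(\sigma)=W^{ss}(\sigma)$ and one contradicts your Step~1. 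You will need this small extra argument anyway, since the hypothesis you ultimately want is that all singularities of $\Lambda$ have the same index.)

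Step~2, however, is a genuine gap. Having classified the singularities, the paper does \emph{not} construct the splitting $E^{ss}\oplus E^{cu}$ by hand: it invokes Theorem~C of~\cite{CYZ2}, which is precisely the statement that a multi-singular hyperbolic set whose singularities all have the same index is singular hyperbolic. What you sketch is an attempt to reprove that theorem in a paragraph, and the sketch skips the two points where the real work lies. First, ``taking $E^{ss}$ to be the $(D\varphi_t)$-invariant lift of $\cN^s$'' presupposes that a continuous $(D\varphi_t)$-invariant subbundle of $T_\Lambda M$ projecting onto $\cN^s$ and matching $E^{ss}_\sigma$ at the singularities exists; this is exactly what fails for general multi-singular hyperbolic sets (it is why the notion is strictly weaker than singular hyperbolicity), and establishing it is the content of the blow-up/extended linear Poincar\'e flow analysis. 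Second, the sectional expansion on $E^{cu}=\RR.X\oplus\cN^u$ does \emph{not} follow from ``expansion on the lift of $\cN^u$ combined with the invariance of $\RR.X$'': for an orbit segment entering a neighborhood of $\sigma$ the flow direction is strongly contracted ($\|X(\varphi_t(x))\|\to 0$), the time spent near $\sigma$ is unbounded, and the estimates~\eqref{equ:hyperbolicity-away-from-singularity} only apply when both endpoints are outside $V$; one must show that the expansion along $E^{uu}_\sigma$ compensates the contraction along $E^c_\sigma$ via $|\lambda^c_\sigma|<\lambda^{uu}_\sigma$, through a Liao-type entry/exit-time bookkeeping (compare the claim inside the proof of Proposition~5.2 of the paper). ``Concatenating the two regimes'' is the difficulty, not the resolution. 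The proof is completed correctly and economically by citing \cite[Theorem C]{CYZ2} once you have upgraded Step~1 to ``all singularities have the same index.''
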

\begin{proof}
Since $\Lambda$ is not a sink, it is not reduced to a singularity or to a periodic orbit.
Let us consider a singularity $\sigma$ in $\Lambda$. By Definitions~\ref{Def:lorenz-like singularity} and~\ref{Def:multi-singular-hyperbolic}
it is  Lorenz-like, hyperbolic and admits a splitting $T_\sigma M=
E^{ss}_\sigma\oplus E^c_\sigma\oplus E^{uu}_\sigma$ with $\dim(E^c_\sigma)=1$,
$\dim(E^{ss}_\sigma)=\dim(\cN^s)$ and $\dim(E^{uu}_\sigma)=\dim(\cN^u)$,
where $\cN_{\Lambda\setminus \sing(X)}=\cN^s\oplus \cN^u$
is the multi-singular hyperbolic splitting on $\Lambda$.
Since $\Lambda$ is chain-transitive and not reduced to $\sigma$,
both sets $W^s(\sigma)$ and $W^u(\sigma)$
intersect $\Lambda\setminus \{\sigma\}$
(we say that $\sigma$ is active).

We claim that the center Lyapunov exponent of $\sigma$ is negative.
Otherwise $W^{s}(\sigma)$ has dimension $\dim(E^{ss}_\sigma)$,
so $W^{ss}(\sigma)\cap \Lambda\neq \{\sigma\}$.
Definition~\ref{d.singular-domination} then implies that
$W^{uu}(\sigma)\cap \Lambda=\{\sigma\}$.
But this is a contradiction since $\Lambda$ is a quasi-attractor.

Thus $\Lambda$ only contains Lorenz-like singularities with negative center Lyapunov exponent. In other words, all singularities in $\Lambda$ have the same index.
Then one can conclude by \cite[Theorem C]{CYZ2}.
\end{proof}
}

It is not difficult to see that a hyperbolic set is always singular hyperbolic and a singular hyperbolic set is always multi-singular hyperbolic. The differences appear when the set admits singularities. 
One has the following result from \cite{BdL} and \cite[Theorem C]{CYZ2}. 
\begin{Proposition}\label{Prop:multi-singular-hyperbolic-no-singularity}
Let $X\in\cX^1(M)$ and $\Lambda$ be a multi-singular hyperbolic set.
If $\Lambda\cap \Sing(X)=\emptyset$, then $\Lambda$ is hyperbolic.
\end{Proposition}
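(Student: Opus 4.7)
The plan is to exploit the hypothesis $\Lambda\cap\sing(X)=\emptyset$ to reduce the multi-singular hyperbolicity of $\Lambda$ to ordinary uniform hyperbolicity of the linear Poincar\'e flow on $\Lambda$, and then to apply the classical lift (Doering's theorem) that converts Poincar\'e-flow hyperbolicity into tangent-flow hyperbolicity in the absence of singularities. The main obstacle is the second (classical) step; the first is essentially bookkeeping.

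For the first step, I would dispose of the two trivial cases of Definition~\ref{Def:multi-singular-hyperbolic} where $\Lambda$ is a hyperbolic periodic orbit (already hyperbolic) or a hyperbolic singularity (excluded by hypothesis). In the remaining case one has a singular domination $\cN^s\oplus\cN^u$ on $\cN_\Lambda$ together with an isolating neighborhood $V$ of $\sing(X)\cap\Lambda=\emptyset$; since $\Lambda$ is compact and disjoint from $\sing(X)$, $V$ can be taken disjoint from $\Lambda$ (even $V=\emptyset$ trivially fits the definition). By invariance, every $x\in\Lambda$ and every $\varphi_t(x)$ lies in $\Lambda\subset M\setminus V$, so inequality~(\ref{equ:hyperbolicity-away-from-singularity}) applies unconditionally for all $x\in\Lambda$ and all $t>T$. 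This means $\Psi_t$ uniformly contracts $\cN^s$ and uniformly expands $\cN^u$ throughout $\Lambda$, so the linear Poincar\'e flow is uniformly hyperbolic on $\Lambda$ with the splitting $\cN^s\oplus\cN^u$.

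For the second step, I would upgrade this to uniform hyperbolicity of the tangent flow. Compactness of $\Lambda$ together with $\Lambda\cap\sing(X)=\emptyset$ ensures that $\|X\|$ is bounded below by a positive constant on $\Lambda$, so $\RR X$ is a continuous line bundle transverse to $\cN$. Writing $D\varphi_t v=\Psi_t v+a_t(v)\,X(\varphi_t(x))$ for $v\in\cN(x)$, I would construct invariant subbundles $E^s$ and $E^u$ by lifting $\cN^s$ and $\cN^u$: for each $v\in\cN^s(x)$ there should be a unique scalar $c=c(x,v)$ such that the orbit of $v+c\,X(x)$ under $D\varphi_t$ keeps its $X$-component uniformly bounded in forward time, and the existence and continuity of $c$ follow from the uniform contraction of $\Psi_t$ on $\cN^s$ together with $\inf_\Lambda\|X\|>0$; the construction on $\cN^u$ is symmetric using backward time. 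The resulting splitting $T_\Lambda M=E^s\oplus\RR X\oplus E^u$ is $(D\varphi_t)$-invariant, continuous, and hyperbolic, with rates differing from those of $\Psi_t$ only by constants controlled by $\inf_\Lambda\|X\|$ and $\sup_\Lambda\|X\|$. Since by Definition~\ref{Def:multi-singular-hyperbolic} both $\dim\cN^s$ and $\dim\cN^u$ are positive, the lifted bundles are non-trivial, so this is a genuine hyperbolic splitting in the sense of Definition~\ref{Def:hyperbolicity}.
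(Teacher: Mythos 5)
The paper does not actually prove this proposition --- it is imported from \cite{BdL} and \cite[Theorem C]{CYZ2} --- so your self-contained argument (uniform hyperbolicity of the linear Poincar\'e flow on the singularity-free set, then a Doering-type lift to the tangent flow) is the standard route and the right one in outline. However, your first step is not justified as written. Definition~\ref{Def:multi-singular-hyperbolic} is existential: you are handed one particular triple $(V,\eta,T)$, and shrinking $V$ makes the condition \emph{stronger} (more pairs $(x,t)$ must satisfy the estimate), so you cannot simply declare that $V$ "can be taken disjoint from $\Lambda$" or replaced by $\emptyset$. The given $V$ may meet $\Lambda$, and then \eqref{equ:hyperbolicity-away-from-singularity} says nothing about orbit segments that start or end inside $V$. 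What rescues the step is that $V$ is an isolating neighborhood of the empty set, so its maximal invariant set is empty; by compactness there is $T_0>0$ such that no orbit segment of length $T_0$ stays in $V$. Along the forward orbit of any $x\in\Lambda$ one then chooses times $t_0<t_1<\cdots$ outside $V$ with $T\le t_{i+1}-t_i\le T+T_0$, applies \eqref{equ:hyperbolicity-away-from-singularity} on each block, and absorbs the finitely many bounded factors into a constant; this chaining is what actually produces $\|\Psi_t|_{\cN^s(x)}\|\le Ce^{-\eta' t}$ for \emph{all} $x\in\Lambda$ and $t\ge 0$ (and symmetrically for $\cN^u$).

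The second problem is the uniqueness criterion in your lift. Requiring that the $X$-component of $D\varphi_t(v+cX(x))$ stay \emph{uniformly bounded} forward in time does not determine $c$: since $D\varphi_t(X(x))=X(\varphi_t(x))$ and $\|X\|$ is bounded on the compact set $\Lambda$, changing $c$ to $c+1$ alters the $X$-component by a bounded quantity, so every $c$ satisfies your condition as soon as one does. The correct requirement is that $D\varphi_t(v+cX(x))\to 0$: writing $D\varphi_t(v)=\Psi_t(v)+a_t(v)\,X(\varphi_t(x))$, the cocycle relation $a_{t+s}(v)=a_t(v)+a_s(\Psi_t(v))$ shows that the increments of $t\mapsto a_t(v)$ are bounded by a uniform constant (depending on $\sup_{s\in[0,1]}\|D\varphi_s\|$ and $\inf_\Lambda\|X\|>0$) times $\|\Psi_t(v)\|$, hence are summable by the exponential contraction on $\cN^s$; the limit $\lim_{t\to+\infty}a_t(v)$ exists and $c:=-\lim_{t\to+\infty}a_t(v)$ is the unique admissible choice, with $|a_t(v)+c|$ decaying exponentially. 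This yields a continuous $D\varphi_t$-invariant bundle $E^s\subset\cN^s\oplus\RR.X$ that is uniformly contracted, and symmetrically $E^u$; with these two repairs the rest of your argument goes through.
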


A $C^1$ vector field $X$ is called \emph{multi-singular hyperbolic} if each chain recurrence class is a multi-singular hyperbolic set. Denote by ${\cal X}^*(M)$  the space of $C^1$ multi-singular hyperbolic vector fields, endowed the induced topology from ${\cal X}^1(M)$.
\subsection{Properties of hyperbolic vector fields}
Let $\gamma,\gamma^\prime$ be two (non-singular) hyperbolic periodic orbits of $X\in\mathcal{X}^1(M)$. One says  that $\gamma$ and $\gamma^\prime$ are ~\emph{homoclinically related} if the stable manifold of one periodic orbit has non-empty transverse intersection with the unstable manifold of the other, and vice versa. The ~\emph{homoclinic class } of a hyperbolic periodic orbit $\gamma$ is defined as 
$$H(\gamma)=\overline{\bigcup \{\gamma^\prime: \textrm{$\gamma^\prime$ is homoclinically related to $\gamma$} \}}.$$
It is classical that $H(\gamma)$ is  a transitive set (see for instance~\cite{N}).

One summarizes the following well-known result (see for instance~\cite{W16}):

\begin{Proposition}\label{Prop:hyperbolic}
Given $X\in\mathcal{X}^1(M)$,   
each hyperbolic singularity-free chain recurrence class is a homoclinic class and all the periodic orbits contained inside are homoclinically related. 
\end{Proposition}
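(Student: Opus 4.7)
Let $\Lambda$ be a hyperbolic chain recurrence class with $\Lambda\cap\sing(X)=\emptyset$, with hyperbolic splitting $T_\Lambda M=E^s\oplus \RR.X\oplus E^u$. The plan is to reduce to the classical theory of uniformly hyperbolic sets: since $X$ does not vanish on $\Lambda$, the flow has uniformly bounded speed there, and the standard shadowing lemma for hyperbolic flows (as in Bowen--Walters or Hirsch--Pugh--Shub) applies. I will then follow the usual three-step argument: (i) periodic orbits are dense in $\Lambda$; (ii) any two of them are homoclinically related; (iii) $\Lambda$ equals the homoclinic class of any of them.

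First, I would fix $\e>0$ small and produce $\delta>0$ from the shadowing lemma so that any $\delta$-pseudo-orbit contained in a small neighborhood of $\Lambda$ is $\e$-shadowed by a true orbit, which, if it returns $\delta$-close to its starting point, can be closed up to a hyperbolic periodic orbit. Given any $x\in\Lambda$, chain recurrence produces $\delta$-pseudo-orbits from $x$ to $x$ of arbitrarily long total time; shadowing and the closing lemma then yield a periodic orbit $\e$-close to $x$. This proves density of periodic orbits in $\Lambda$, and each such periodic orbit, being $\e$-close to $\Lambda$, is hyperbolic of the same index as $\Lambda$ (by continuity of the hyperbolic splitting).

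Second, for any two periodic orbits $\gamma_1,\gamma_2\subset \Lambda$, chain transitivity of $\Lambda$ gives $\delta$-pseudo-orbits from $\gamma_1$ to $\gamma_2$ and from $\gamma_2$ to $\gamma_1$. Shadowing each of them by a true orbit produces points in $W^u(\gamma_1)\cap W^s(\gamma_2)$ and $W^u(\gamma_2)\cap W^s(\gamma_1)$. Because the splitting $E^s\oplus \RR.X\oplus E^u$ is continuous on a uniformly hyperbolic set, the stable and unstable manifolds of the shadowing orbits intersect at a uniformly bounded angle; hence these heteroclinic intersections are transverse. Iterating the $\lambda$-lemma (Hirsch--Pugh--Shub inclination lemma for flows) then turns these transverse heteroclinic points into transverse intersections of $W^s(\gamma_1)$ with $W^u(\gamma_2)$ and vice versa, so $\gamma_1$ and $\gamma_2$ are homoclinically related.

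Finally, fix any periodic orbit $\gamma\subset\Lambda$. By step two, every periodic orbit in $\Lambda$ lies in $H(\gamma)$, and by step one these are dense in $\Lambda$, so $\Lambda\subset H(\gamma)$. Conversely, $H(\gamma)$ is a chain transitive invariant set containing $\gamma\subset\Lambda$, hence it is contained in the chain recurrence class $C(\gamma)=\Lambda$. Thus $\Lambda=H(\gamma)$ is a homoclinic class and all its periodic orbits are homoclinically related. The main technical point to be careful with is the flow version of the shadowing and closing lemmas in the presence of the neutral flow direction $\RR.X$; this is where the hypothesis $\Lambda\cap\sing(X)=\emptyset$, which bounds $\|X\|$ away from zero on $\Lambda$, is essential, and it is the only place where singularity-freeness is really used.
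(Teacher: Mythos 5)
Your argument is correct and is exactly the classical one; the paper does not prove this proposition at all but simply cites it as well known from Wen's textbook \cite{W16}, so there is nothing different to compare against. The only point you gloss over is that the periodic orbits produced by shadowing are a priori only contained in a locally maximal hyperbolic neighborhood of $\Lambda$, not in $\Lambda$ itself; one still has to observe (using the heteroclinic connections of your second step, which yield $\varepsilon$-chains between any such closed orbit and any point of $\Lambda$ for every $\varepsilon>0$) that they belong to the chain recurrence class, after which your third step goes through verbatim. Also, the $\lambda$-lemma is not needed in step two: the shadowing orbits already give transverse points of $W^u(\gamma_1)\cap W^s(\gamma_2)$ and of $W^u(\gamma_2)\cap W^s(\gamma_1)$ directly.
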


An attractor contains the unstable manifolds of its hyperbolic periodic points. Conversely, one has the following:
\begin{Proposition}\label{Thm:class-to-attractor}
Let $X\in\mathcal{X}^1(M)$ and $\Lambda$ be a hyperbolic chain recurrence class which is singularity-free.
If $\Lambda$ contains the unstable manifold of a hyperbolic periodic orbit $\gamma$, then $\Lambda$ is an attractor.
\end{Proposition}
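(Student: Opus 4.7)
The plan is to show $W^u(\Lambda)\subset\Lambda$, and then use this to build a trapping neighborhood out of local stable leaves whose maximal invariant set is exactly $\Lambda$.

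Since $\Lambda$ is a hyperbolic singularity-free chain recurrence class, Proposition~\ref{Prop:hyperbolic} identifies it with the homoclinic class $H(\gamma)$ in which every periodic orbit is homoclinically related to $\gamma$; in particular periodic orbits are dense in $\Lambda$. For any periodic orbit $\gamma'\subset\Lambda$, the homoclinic relation with $\gamma$ furnishes a transverse intersection point $z\in W^s(\gamma')\pitchfork W^u(\gamma)$. The inclination ($\lambda$-)lemma applied to a small disk in $W^u(\gamma)$ through $z$ gives $C^1$-accumulation on $W^u(\gamma')$, hence $W^u(\gamma')\subset\overline{W^u(\gamma)}\subset\Lambda$. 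Combined with the density of periodic points in $\Lambda$ and the $C^1$-continuity of local unstable manifolds on a hyperbolic set, this yields $W^u_\varepsilon(x)\subset\Lambda$ for every $x\in\Lambda$; by flow invariance, $W^u(\Lambda)\subset\Lambda$.

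Next I would construct the open neighborhood $V:=\bigcup_{x\in\Lambda}W^s_\varepsilon(x)$ of $\Lambda$. At each $x\in\Lambda$, the local product structure arising from $T_xM=E^s\oplus\mathbb{R}\cdot X(x)\oplus E^u$ parameterizes an ambient neighborhood of $x$ by the three factors $W^u_\varepsilon(x)$, the local flow direction, and $W^s_\varepsilon(x)$; since $W^u_\varepsilon(x)\subset\Lambda$ and $\Lambda$ is flow-invariant, the resulting neighborhood is contained in $V$, so $V$ is indeed open. The uniform contraction of local stable leaves under the forward flow gives $\varphi_t(V)\subset V$ for every $t\geq 0$ and $\varphi_T(\overline V)\subset V$ for some large $T>0$, so $V$ is a trapping region.

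Finally I would identify $\bigcap_{t\in\mathbb{R}}\varphi_t(V)$ with $\Lambda$. Any $y$ in this intersection has its backward orbit contained in $V$, so for each $t\geq 0$ there exists $x_t\in\Lambda$ with $\varphi_{-t}(y)\in W^s_\varepsilon(x_t)$; applying $\varphi_t$ and using hyperbolic contraction, $d(y,\varphi_t(x_t))\leq\varepsilon e^{-\lambda t}$, so the sequence $\varphi_t(x_t)\in\Lambda$ converges to $y$ and $y\in\Lambda$ by closedness. Together with the transitivity of the homoclinic class, this shows that $\Lambda$ is an attractor. The most delicate part will be the first step, namely the upgrade from $W^u(\gamma')\subset\Lambda$ for periodic $\gamma'$ to $W^u_\varepsilon(x)\subset\Lambda$ for arbitrary $x\in\Lambda$, which relies on the $C^1$-continuous dependence of local unstable leaves guaranteed by the hyperbolicity of $\Lambda$.
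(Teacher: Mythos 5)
Your proof is correct, but it follows a different route from the paper's in its second half. The paper's argument is very short: from Proposition~\ref{Prop:hyperbolic} it gets $\Lambda=H(\gamma)\subset\overline{W^u(\gamma)}$, hence $\Lambda=\overline{W^u(\gamma)}$ by the hypothesis, and then it invokes the shadowing lemma to conclude directly that small neighborhoods of $\Lambda$ are eventually mapped into smaller ones, i.e.\ that $\Lambda$ is attracting. You instead prove the stronger structural fact that $\Lambda$ is saturated by unstable manifolds (via the $\lambda$-lemma applied to each periodic orbit homoclinically related to $\gamma$, density of such orbits, and $C^1$-continuity of the local unstable lamination), and then run the classical construction of a trapping region $V=\bigcup_{x\in\Lambda}W^s_\varepsilon(x)$ with $\bigcap_t\varphi_t(V)=\Lambda$. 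Your route is longer but more self-contained and makes the mechanism explicit (it is essentially the standard characterization ``a hyperbolic set is an attractor iff it is $u$-saturated and transitive''); the paper's is shorter but leans on the shadowing lemma as a black box. Two small points worth polishing in a written version: the openness of $V$ needs the tubular-family/local-product-structure argument you sketch (the stable disks through the flow-saturation of $W^u_{\mathrm{loc}}(x)$ fill a neighborhood of $x$ precisely because $W^u_{\mathrm{loc}}(x)\subset\Lambda$), and the inclusion $\varphi_t(V)\subset V$ for \emph{all} $t\ge 0$ requires an adapted metric (or restricting to $t\ge T_0$) to absorb the constant $C$ in the hyperbolicity estimates; both are standard.
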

\begin{proof}
By Proposition~\ref{Prop:hyperbolic}, $\Lambda=H(\gamma)$, and as a consequence $\Lambda$ is transitive.
By definition, $H(\gamma)\subset \overline{W^u(\gamma)}$, and  by the assumption $\overline{W^u(\gamma)}\subset \Lambda$, one  has $\Lambda=H(\gamma)=\overline{W^u(\gamma)}$. 
	
Since $\Lambda$ is hyperbolic and coincides with $\overline{W^u(\gamma)}$,
the shadowing lemma implies that for any two small neighborhoods $U,V$ of $\Lambda$
there exists $T>0$ such that $\varphi_t(U)\subset V$ for any $t>T$.
This implies that $\Lambda$ is attracting, hence is an attractor.
\end{proof}

The following is a classical result proved in~\cite[Theorem 5.6]{BR}. 
\begin{Proposition}\label{Prop:lebesgue-basin}
If $X$ is a $C^2$ vector field, then each hyperbolic singularity-free chain recurrence class $\Lambda$
is either an attractor or satisfies ${\rm Leb}(\{x:\omega(x)\subset\Lambda\})=0$.	
\end{Proposition}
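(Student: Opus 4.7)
The plan is to establish a dichotomy from the hyperbolic structure of $\Lambda$: either some local unstable leaf meets $\Lambda$ in a set of positive leaf-Lebesgue measure, in which case $\Lambda$ turns out to be an attractor; or every such intersection has zero leaf-measure, in which case a Fubini-type argument based on the absolute continuity of the stable foliation yields $\Leb(\{x:\omega(x)\subset\Lambda\})=0$. By Proposition~\ref{Prop:hyperbolic} one first writes $\Lambda=H(\gamma)$ for a hyperbolic periodic orbit $\gamma$, so that all periodic orbits of $\Lambda$ are homoclinically related to $\gamma$. Being hyperbolic, $\Lambda$ is locally maximal in some isolating neighborhood $U$, and whenever $\omega(x)\subset \Lambda$ the forward orbit of $x$ is eventually trapped in $U$; the shadowing lemma then places a sufficiently large forward iterate of $x$ in some $W^s_{loc}(y)$ with $y\in\Lambda$. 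Hence the basin sits inside $W^s(\Lambda)=\bigcup_{y\in\Lambda}W^s(y)$, and it is enough to show $\Leb(W^s(\Lambda)\cap U)=0$. Covering $\Lambda$ by finitely many boxes $B_i$ carrying the local product structure, one has $W^s(\Lambda)\cap B_i=\bigcup_{y\in W^u_{loc}(z_i)\cap \Lambda}W^s_{loc}(y)$ for suitable centers $z_i\in\Lambda$.

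Suppose first there exists $z\in \Lambda$ with $m_u(\Lambda\cap W^u_{loc}(z))>0$, where $m_u$ denotes the induced Lebesgue measure on the unstable leaf. Pick a Lebesgue density point $p$ of this intersection and iterate forward: choosing $r_n\to 0$ with $\varphi_{t_n}(B^u_{r_n}(p))$ of definite unstable radius, uniform expansion together with bounded distortion (which uses $X\in C^2$) imply that the relative $m_u$-density of $\Lambda$ inside $\varphi_{t_n}(B^u_{r_n}(p))$ tends to $1$. Extracting a limit $q\in\Lambda$ of $\varphi_{t_n}(p)$ and using closedness of $\Lambda$ yields $W^u_{loc}(q)\subset\Lambda$. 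Continuity of the unstable lamination and the shadowing lemma then produce a periodic orbit $\gamma'\subset\Lambda$ close to $q$ with $W^u_{loc}(\gamma')\subset\Lambda$, and the $\lambda$-lemma combined with the homoclinic relation between $\gamma$ and $\gamma'$ upgrades this to $\overline{W^u(\gamma)}=\overline{W^u(\gamma')}\subset\Lambda$; Proposition~\ref{Thm:class-to-attractor} then concludes that $\Lambda$ is an attractor. In the complementary case $m_u(\Lambda\cap W^u_{loc}(x))=0$ for every $x\in\Lambda$, and the absolute continuity of the stable foliation along local unstable plaques, which is precisely where the $C^2$ hypothesis enters, implies that each box contributes zero ambient Lebesgue measure; a finite sum concludes.

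The main obstacle is the density-point step of the first case: $r_n$ and $t_n$ must be calibrated so that uniform expansion plus bounded distortion transport the full $m_u$-density at $p$ to a definite-size unstable ball around the limit point $q$, and one must then lift the local unstable-leaf containment from the possibly non-periodic point $q$ to the unstable manifold of the chosen periodic orbit $\gamma$ using the homoclinic-class structure provided by Proposition~\ref{Prop:hyperbolic}.
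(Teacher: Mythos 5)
Your proof reconstructs the classical Bowen--Ruelle argument, which is precisely what the paper invokes as its proof (\cite[Theorem~5.6]{BR}): decompose $W^s(\Lambda)$ via local-product-structure boxes and combine absolute continuity of the stable lamination with a density-point argument on unstable leaves. The paper's Remark sketches a different, shorter route: a positive-Lebesgue basin yields an SRB measure on $\Lambda$ by combining \cite{CYZ} Theorem~F with Ledrappier--Young (\cite{LY}~Theorem~A), whose absolutely continuous conditionals already place an entire unstable manifold inside $\Lambda$; transitivity and hyperbolicity then give $u$-saturation. Your route is more self-contained but needs the Pugh--Shub-type absolute continuity and bounded-distortion machinery; the Remark's route outsources this to the cited theorems.

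The one step of your Case~1 that does not hold as written is the passage from $W^u_{\mathrm{loc}}(q)\subset\Lambda$ (with $q$ a possibly non-periodic limit point) to a periodic orbit $\gamma'$ with $W^u_{\mathrm{loc}}(\gamma')\subset\Lambda$. ``Continuity of the unstable lamination and the shadowing lemma'' produce periodic points near $q$ whose local unstable leaves are \emph{close to} $W^u_{\mathrm{loc}}(q)$, but proximity of a leaf does not put it inside $\Lambda$. The clean repair bypasses $\gamma'$ entirely: since $\Lambda$ is closed and invariant and $W^u_{\mathrm{loc}}(q)\subset\Lambda$, the sets $\varphi_t\big(W^u_{\mathrm{loc}}(q)\big)$ are unstable disks contained in $\Lambda$ of any prescribed size once $t$ is large. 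Because $\Lambda=H(\gamma)$, the $\lambda$-lemma (applied to periodic orbits homoclinically related to $\gamma$) shows $W^s(\gamma)$ is dense in $\Lambda$, so a large such disk meets $W^s(\gamma)$ transversally; the $\lambda$-lemma then gives $W^u(\gamma)\subset\overline{\bigcup_{t\ge 0}\varphi_t\big(W^u_{\mathrm{loc}}(q)\big)}\subset\Lambda$, and Proposition~\ref{Thm:class-to-attractor} concludes. Finally, the local maximality and local product structure that underlie your box decomposition are not automatic for a general hyperbolic set; they hold here because $\Lambda$ is a hyperbolic homoclinic class (Proposition~\ref{Prop:hyperbolic}), but this deserves a word of justification rather than an unqualified assertion.
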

\begin{Remark}
One can provide a proof of this proposition in another way: by Proposition~\ref{Prop:hyperbolic}, the set $\Lambda$ is a hyperbolic homoclinic class hence it is transitive; if $\Leb(\{x:\omega(x)\subset\Lambda\})>0$, by Theorem F in \cite{CYZ}	and Theorem A in~\cite{LY}, there exists a point $x\in \Lambda$ such that the unstable manifold $W^{u}(x)$ of $x$ is contained in $\Lambda$. Since $\Lambda$ is hyperbolic and transitive, the set $\Lambda$ is saturated by unstable manifolds, hence it is an attractor.
\end{Remark}

\subsection{Number of chain recurrence classes}

\begin{Proposition}\label{Prop:contable-class}
Multi-singular hyperbolic vector fields have at most countably many chain recurrence classes.
\end{Proposition}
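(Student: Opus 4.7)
My plan is to split the chain recurrence classes of $X$ into those that intersect $\sing(X)$ and those that do not, and show each family is at most countable. Since $X$ is multi-singular hyperbolic, every singularity is hyperbolic and therefore isolated in $M$, so $\sing(X)$ is finite by compactness of $M$. As every singularity belongs to a unique chain recurrence class, there are at most $|\sing(X)|$ classes in the first family.

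For a chain recurrence class $\Lambda$ disjoint from $\sing(X)$, Proposition~\ref{Prop:multi-singular-hyperbolic-no-singularity} gives that $\Lambda$ is hyperbolic, and Proposition~\ref{Prop:hyperbolic} then exhibits $\Lambda$ as the homoclinic class of a hyperbolic periodic orbit $\gamma_\Lambda\subset\Lambda$. Since distinct chain recurrence classes are disjoint, the assignment $\Lambda\mapsto\gamma_\Lambda$ is injective, so it suffices to show that $X$ has at most countably many periodic orbits.

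To bound the periodic orbits I would invoke the star property: since $X$ is multi-singular hyperbolic it is star, and the Liao--Ma\~n\'e--Gan--Wen theory provides uniform hyperbolicity constants $C,\lambda>0$ for the linear Poincar\'e flow along every periodic orbit of $X$. Combined with the $C^1$ bound on the flow over any compact time interval, this yields the standard consequence that for each $T>0$ the periodic orbits of period at most $T$ are uniformly separated in $M$, with an isolation radius in a cross-section bounded below by a function of $T$ and the uniform hyperbolic constants. Compactness of $M$ then makes the number of such orbits finite, so the full set of periodic orbits is a countable union of finite sets, and the proposition follows.

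The main obstacle, and the step I would be most careful with, is extracting this period-by-period isolation from the star property alone. Once the uniform hyperbolic rates on periodic orbits are in hand the argument is classical, but I would prefer to quote it as a known consequence of the Liao--Ma\~n\'e structure rather than reprove it from scratch. As a backup, should the periodic-orbit route prove awkward, one can instead use that each hyperbolic chain recurrence class is locally maximal and that $M$ is second countable: picking a countable basis $\{V_n\}$, each such class $\Lambda$ is then determined by the finite subset $F_\Lambda\subset\NN$ of basis elements covering it inside a suitable isolating neighborhood, giving an injection into the countable set of finite subsets of $\NN$.
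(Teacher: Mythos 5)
Your proposal is correct and follows essentially the same route as the paper: finitely many classes meet the (finite, hyperbolic) singular set, and by Propositions~\ref{Prop:multi-singular-hyperbolic-no-singularity} and~\ref{Prop:hyperbolic} each singularity-free class is a hyperbolic homoclinic class, so the classes inject into the set of periodic orbits. The only difference is that the paper simply quotes the standard fact that a vector field has at most countably many hyperbolic periodic orbits (each being locally isolated), whereas you invoke the heavier Liao--Ma\~n\'e star-flow estimates to get finiteness per bounded period; that works but is more than is needed.
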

\begin{proof}
Since the singularities are hyperbolic, there exist at most finitely many singularities; thus, there are only finitely many chain recurrence classes containing singularities.
Any singularity-free chain recurrence class is hyperbolic by Proposition~\ref{Prop:multi-singular-hyperbolic-no-singularity}, hence contains a hyperbolic periodic orbit. Since a  vector field has at most countably many hyperbolic periodic orbits, it has at most countably many singularity-free chain recurrence classes.
\end{proof}

Recall that every multi-singular hyperbolic vector field satisfies the star condition: no non-hyperbolic critical element can be created by $C^1$ small perturbations~\cite{BdL}.
For star vector fields, S. Liao~\cite{L} proved the finiteness of the set of sinks (see also~\cite[Theorem A]{YZ}). Thus one has the following theorem. 
\begin{Theorem}~\label{thm.finite-sink}
Multi-singular hyperbolic vector fields have at most finitely many sinks.
\end{Theorem}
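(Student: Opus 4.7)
The theorem is essentially a two-line corollary of results already quoted in the paragraph above it, so my plan is simply to assemble the ingredients rather than to produce any new argument. The key point is that multi-singular hyperbolicity, by construction, is a robust form of hyperbolicity on critical elements: by the work of Bonatti--da Luz~\cite{BdL}, every $X \in \cX^*(M)$ satisfies the star property, meaning there is a $C^1$ neighborhood $\cU$ of $X$ such that every critical element (singularity or periodic orbit) of every $Y \in \cU$ is hyperbolic. Sinks are in particular hyperbolic periodic orbits (or hyperbolic singularities with $E^u = \{0\}$), and their hyperbolicity is preserved under $C^1$ perturbation. So the first step is just to recall that $\cX^*(M) \subset \{$star vector fields$\}$.

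The second step is to invoke Liao's finiteness theorem for sinks of star vector fields~\cite{L} (alternatively~\cite[Theorem A]{YZ}), which asserts exactly that any star vector field possesses only finitely many sinks. Combining these two facts yields the conclusion.

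There is no real obstacle here; the only thing to be slightly careful about is making sure ``sink'' is understood consistently (hyperbolic critical element whose full tangent splitting has $E^u = \{0\}$), so that both the hypothesis of Liao's theorem and the conclusion about the number of sinks in the multi-singular hyperbolic setting refer to the same objects. Once that is observed, the proof reads: \emph{By~\cite{BdL}, $X \in \cX^*(M)$ is a star vector field; by~\cite{L} (see also~\cite[Theorem A]{YZ}), a star vector field admits only finitely many sinks.} \qed
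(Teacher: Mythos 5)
Your proposal is correct and is exactly the argument the paper gives: the theorem is stated there as an immediate consequence of the fact that multi-singular hyperbolic vector fields are star vector fields~\cite{BdL} together with Liao's finiteness of sinks for star vector fields~\cite{L} (see also~\cite[Theorem A]{YZ}). Nothing further is needed.
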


\subsection{Lyapunov exponents and hyperbolic measures}
Given an invariant measure, Oseledec theorem~\cite{O} associates at almost every point $x$
some numbers $\lambda_i(x)$ called \emph{Lyapunov exponents of $x$}.
\begin{Theorem}\label{Thm:oseledec}
Let $X$ be a $C^1$ vector field and $\mu$ be an invariant measure. There is a $(\varphi_t)_{t\in\mathbb{R}}$-invariant measurable  set $\Gamma$ with $\mu(\Gamma)=1$, such that for any $x\in\Gamma$, there are $k=k(x)$ numbers $\lambda_1(x)<\lambda_2(x)<\cdots<\lambda_k(x)$ and a  splitting $T_xM=E_1(x)\oplus E_2(x)\oplus\cdots\oplus E_k(x)$
satisfying:
\begin{itemize}		
\item the splitting is invariant:
$D\varphi_t(E_i(x))=E_i(\varphi_t(x))$ for any $x\in\Gamma$, any $1\le i\le k$ and $t\in\RR$; 
\item $\lim_{t\to\pm\infty}\frac{1}{t}\log\|D\varphi_t(v)\|=\lambda_i(x)$, for any $x\in \Gamma$, $1\le i\le k$ and any unit vector $v\in E_i(x)$.
\end{itemize}
If $\mu$ is ergodic, then $k$ and $\lambda_1,\cdots,\lambda_k$ are constant $\mu$-almost everywhere.
\end{Theorem}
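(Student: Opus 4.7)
The plan is to reduce the continuous-time statement to the classical discrete-time multiplicative ergodic theorem (Oseledec's original result), applied to the diffeomorphism $\varphi_1$. Since $\mu$ is $(\varphi_t)_{t \in \RR}$-invariant, it is in particular $\varphi_1$-invariant, and since $X$ is $C^1$ on the compact manifold $M$, both $\log^+\|D\varphi_1\|$ and $\log^+\|D\varphi_{-1}\|$ are bounded, so the integrability hypothesis of Oseledec's theorem is satisfied for the cocycle $n \mapsto D\varphi_n$.

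First I would apply the discrete-time theorem to obtain a $\varphi_1$-invariant full $\mu$-measure set $\Gamma_0$, a measurable function $x \mapsto k(x)$, measurable functions $\lambda_1(x)<\cdots<\lambda_{k(x)}(x)$, and a measurable $D\varphi_1$-invariant splitting $T_xM = E_1(x)\oplus\cdots\oplus E_{k(x)}(x)$ such that $\frac{1}{n}\log\|D\varphi_n(v)\| \to \lambda_i(x)$ for every unit $v \in E_i(x)$ as $n \to \pm\infty$. Next I would saturate under the flow: set $\Gamma := \bigcap_{t \in \RR} \varphi_t(\Gamma_0)$ (which has full $\mu$-measure because $\mu$ is $(\varphi_t)$-invariant and can be realised as a countable intersection by a standard measurable selection argument using density of rationals and continuity of the flow), and define $E_i(\varphi_t(x)) := D\varphi_t(E_i(x))$ for $x \in \Gamma_0$ and $t \in \RR$. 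The cocycle identity $D\varphi_{s+t} = D\varphi_s \circ D\varphi_t$ makes this consistent on overlaps. Finally, the continuous-time limit follows from the discrete one by writing $t = n + s$ with $n \in \ZZ$ and $s \in [0,1)$ and using the uniform bound $\sup_{s \in [0,1], y \in M} \|D\varphi_{\pm s}(y)\| < \infty$, which makes the error $\frac{1}{t}\log(\|D\varphi_s\|/\|D\varphi_s^{-1}\|)$ tend to $0$.

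If $\mu$ is ergodic under the flow, then each $\lambda_i(x)$ and $k(x)$ is $(\varphi_t)_{t\in \RR}$-invariant (directly from the definition, since the discrete exponents along the orbit of $x$ and of $\varphi_t(x)$ differ only by a $o(1/n)$ term), hence is $\mu$-almost everywhere constant. The only mildly delicate point is the measurability of the splitting on $\Gamma$ (rather than just on $\Gamma_0$) and the proper handling of a flow-invariant full measure set, but both are routine because $D\varphi_t$ is continuous in $(t,x)$ and the splitting is built by push-forward from $\Gamma_0$. No genuine novelty is involved: this theorem is cited as \cite{O} and the statement above is its standard adaptation from cascades to flows.
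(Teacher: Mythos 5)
This theorem is cited in the paper as the classical Oseledec multiplicative ergodic theorem~\cite{O}; the paper offers no proof, so you are supplying one rather than reproducing the paper's argument. Your reduction to the discrete-time theorem for $\varphi_1$ is the standard and correct route, and the integrability check and the interpolation $t=n+s$, $s\in[0,1)$, using $\sup_{s\in[0,1]}\|D\varphi_{\pm s}\|<\infty$, are exactly right.

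There is, however, one step that as written does not hold up: the saturation $\Gamma:=\bigcap_{t\in\RR}\varphi_t(\Gamma_0)$, justified by ``density of rationals and continuity of the flow.'' This is an uncountable intersection of measurable sets, and there is no reason it should be measurable, let alone of full measure; replacing $\RR$ by $\QQ$ produces a set that is $\varphi_q$-invariant for rational $q$ but not flow-invariant, and density of $\QQ$ in $\RR$ buys nothing because $\Gamma_0$ is merely a measurable set with no topological regularity, so $\varphi_q(x)\in\Gamma_0$ for all $q\in\QQ$ does not imply $\varphi_t(x)\in\Gamma_0$ for all $t$. The correct (and routine) fix is to define $\Gamma$ not by intersecting push-forwards of $\Gamma_0$ but \emph{intrinsically}: take $\Gamma$ to be the set of all $x\in M$ for which there exist $k(x)$, numbers $\lambda_1(x)<\cdots<\lambda_{k(x)}(x)$ and a splitting $T_xM=\bigoplus_i E_i(x)$ satisfying the two bullet points of the statement. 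Membership in this $\Gamma$ is a condition on the asymptotics of $t\mapsto D\varphi_t(x)$ alone, and the very interpolation bound you invoke shows that the condition at $x$ and at $\varphi_t(x)$ are equivalent (with $E_i(\varphi_t(x))=D\varphi_t(E_i(x))$ and the same exponents), so $\Gamma$ is automatically $(\varphi_t)_{t\in\RR}$-invariant. It is measurable and has full measure because it contains the discrete Oseledec set $\Gamma_0$ (again by your interpolation step), and the ergodicity conclusion then follows from flow-invariance of $k(\cdot)$ and $\lambda_i(\cdot)$ exactly as you say. With this adjustment the proof is complete.
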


An invariant measure is \emph{regular} if $\mu(\Sing(X))=0$. A regular ergodic measure is said to be \emph{hyperbolic} if it admits only one zero Lyapunov exponent (along the direction $\RR\cdot X$).

The \emph{index} $\ind(\mu)$ of a hyperbolic ergodic measure $\mu$ is defined as the number of its negative Lyapunov exponents (counting the multiplicities), i.e. the dimension of the space $\bigoplus\limits_{\lambda_i<0} E_i$.
The index $\ind(\gamma)$ of a hyperbolic critical element $\gamma$
is defined as the index of the unique invariant measure supported on $\gamma$.
This coincides with the index of hyperbolic sets defined in Remark~\ref{Rem:hyperbolicity}.

\begin{Definition}\label{Def:positive-sum}
Let $X\in\mathcal{X}^1(M)$ and let $\mu$ be an invariant measure of  $(\varphi_t)_{t\in\mathbb{R}}$.
For $\mu$-almost every point $x$,
the \emph{sum of its positive Lyapunov exponents} is defined as
$$\sum\lambda^+(x):=\sum_{1\le i\le k}\max(\lambda_i(x),0)\cdot\dim E_i(x).$$
If $\mu$ is ergodic, $\sum\lambda^+(x)$ is constant $\mu$-almost everywhere:
one sets $\sum\lambda^+(\mu)=\sum\lambda^+(x)$.
\end{Definition}

Under a dominated splitting, the following property holds.
\begin{Proposition}\label{Pro:bundle-integral}
Let $X\in\mathcal{X}^1(M)$ and let $\mu$ be a regular ergodic measure of $(\varphi_t)_{t\in\mathbb{R}}$
satisfying:
\begin{itemize}
\item  
there is a dominated splitting $\cN_{\supp(\mu)\setminus\sing(X)}=\cE\oplus\cF$
for the linear Poincar\'e flow; 
\item  the number of positive Lyapunov exponents of $\mu$ equals  $\dim\cF$.
\end{itemize}
Then $$\sum \lambda^+(\mu)=\int\log|\Det(D\varphi_1|_{\cF\oplus \RR\cdot X})|{\rm d}\mu=\int\log|\Det (\Psi_1|_{\cF})|{\rm d}\mu.$$
\end{Proposition}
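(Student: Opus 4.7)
The plan is to prove the two equalities separately: first identify $\cF$ with the Oseledec expanding subbundle of $\Psi$, then apply the multiplicative ergodic theorem; finally relate the linear Poincar\'e determinant to the tangent determinant along $\cF\oplus\RR.X$ via a direct linear-algebra computation.

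First I would identify $\cF$ with the positive Lyapunov subbundle of the linear Poincar\'e flow. Since $\mu$ is regular, $X$ is non-zero $\mu$-a.e., the normal bundle $\cN$ and the linear Poincar\'e flow are well-defined $\mu$-a.e. Because $D\varphi_t(X)=X\circ \varphi_t$, the direction $\RR.X$ has vanishing Lyapunov exponent, and a direct computation (or a standard application of Oseledec to the quotient bundle $TM/\RR.X\cong \cN$) shows that the Lyapunov exponents of $\Psi$ on $\cN$ are exactly those of $D\varphi$ on $TM$ with one copy of the zero exponent removed. In particular, by hypothesis the number of positive Lyapunov exponents of $\Psi$ on $\cN$ equals $\dim(\cF)$. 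Let $\hat E_1(x)\oplus\cdots\oplus \hat E_j(x)$ be the Oseledec decomposition of $\cN(x)$ under $\Psi$, with exponents $\hat\lambda_1<\cdots<\hat\lambda_j$. The dominated splitting $\cE\oplus\cF$ together with Oseledec regularity forces every Oseledec exponent attained on $\cF$ to be strictly greater than every Oseledec exponent attained on $\cE$; counting dimensions and using that the number of positive exponents equals $\dim(\cF)$, this forces $\cF(x)=\bigoplus_{\hat\lambda_i>0}\hat E_i(x)$ and $\cE(x)=\bigoplus_{\hat\lambda_i\leq 0}\hat E_i(x)$ at $\mu$-a.e. $x$.

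Next, the multiplicative ergodic theorem (applied to the cocycle $\Psi_1|_\cF$ over $\varphi_1$ on the set of Oseledec regular points) gives
$$\int\log|\Det(\Psi_1|_\cF)|\,{\rm d}\mu \;=\;\sum_{\hat\lambda_i>0}\hat\lambda_i\cdot\dim(\hat E_i)\;=\;\sum\lambda^+(\mu),$$
where the last equality uses the correspondence between the Lyapunov spectra of $\Psi$ and $D\varphi$ established above. This proves the second equality of the proposition.

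For the first equality, I would compute pointwise. For $\mu$-a.e.\ $x$, pick an orthonormal basis $\{e_1,\dots,e_k\}$ of $\cF(x)$ and adjoin $X(x)/\|X(x)\|$ to get an orthonormal basis of $\cF(x)\oplus\RR.X(x)$. Write $D\varphi_1(e_i)=\Psi_1(e_i)+\alpha_iX(\varphi_1(x))$ and $D\varphi_1(X(x)/\|X(x)\|)=X(\varphi_1(x))/\|X(x)\|$. Projecting orthogonally onto $\cF(\varphi_1(x))\oplus\RR.X(\varphi_1(x))$ and performing column operations to cancel the $\alpha_i X(\varphi_1(x))$ terms (which do not change the Gram determinant) yields
$$\bigl|\Det(D\varphi_1|_{\cF\oplus\RR.X})\bigr|(x)\;=\;\bigl|\Det(\Psi_1|_\cF)\bigr|(x)\cdot\frac{\|X(\varphi_1(x))\|}{\|X(x)\|}.$$
Taking logs and integrating against the $\varphi_1$-invariant measure $\mu$, the boundary term $\log\|X(\varphi_1(x))\|-\log\|X(x)\|$ integrates to zero provided $\log\|X\|\in L^1(\mu)$; this gives the first equality.

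The main (and only real) obstacle is the $\mu$-integrability of $\log\|X\|$. Upper integrability is trivial since $\|X\|$ is bounded on $M$, but the lower bound $\int(\log\|X\|)^-\,{\rm d}\mu<\infty$ is delicate because $\mu$ may approach $\Sing(X)$. This is however a classical fact for regular invariant measures of $C^1$ vector fields (following from Liao-type estimates on the time spent in small neighborhoods of hyperbolic singularities, together with $\mu(\Sing(X))=0$), and I would invoke it as a known ingredient rather than reprove it.
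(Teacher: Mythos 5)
Your overall route is the same as the paper's: use the domination together with the count of positive exponents to identify $\cF$ with the sum of the positive-exponent Oseledec spaces of the linear Poincar\'e flow, apply the multiplicative ergodic theorem to obtain $\int\log|\Det(\Psi_1|_{\cF})|\,{\rm d}\mu=\sum\lambda^+(\mu)$, and factor $|\Det(D\varphi_1|_{\cF\oplus\RR.X})|=|\Det(\Psi_1|_{\cF})|\cdot\|D\varphi_1|_{\RR.X}\|$ pointwise to pass between the two integrals. All of that is correct and is exactly what the paper does.

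The one weak point is your treatment of the flow-direction term. You make the first equality conditional on $\log\|X\|\in L^1(\mu)$ and propose to settle this by ``Liao-type estimates on the time spent near hyperbolic singularities''. But the proposition concerns an arbitrary $X\in\cX^1(M)$ and an arbitrary regular ergodic measure: no hyperbolicity of the singularities is assumed, so that justification is not available, and integrability of $\log\|X\|$ is not known (and should not be expected) in this generality. Fortunately you do not need it. The function you actually integrate is $g(x)=\log\|D\varphi_1|_{\RR.X(x)}\|=\log\|X(\varphi_1(x))\|-\log\|X(x)\|$, which is \emph{bounded} by $\|DX\|_{C^0}$, by Gronwall applied to $\tfrac{d}{dt}X(\varphi_t(x))=DX(\varphi_t(x))\,X(\varphi_t(x))$; hence $g\in L^1(\mu)$ whether or not $\log\|X\|$ is. Its integral is the Lyapunov exponent in the flow direction (Birkhoff), and that exponent vanishes because, by Poincar\'e recurrence, $\mu$-a.e.\ orbit returns near its regular starting point, forcing $\frac1n\log\|X(\varphi_n(x))\|\to 0$ along a subsequence and hence along the full sequence. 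Equivalently, an $L^1$ coboundary $h\circ\varphi_1-h$ with $h$ finite $\mu$-a.e.\ integrates to zero. With this replacement your argument is complete and coincides with the paper's proof.
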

\begin{proof}
The direct sum of all bundles with positive Lyapunov exponents are contained in $\cF\oplus \RR\cdot X$. Since   the flow direction provides zero Lyapunov exponents,
$$\sum \lambda^+(\mu)=\int\log|\Det(D\varphi_1|_{\cF\oplus \RR\cdot X})|{\rm d}\mu.$$
On the other hand, by the definition of the linear Poincar\'e flow $(\Psi_t)_{t\in\mathbb{R}}$,
$$\int\log|\Det(D\varphi_1|_{\cF\oplus \RR\cdot X})|{\rm d}\mu=\int\left(\log|\Det(\Psi_1|_{\cF})| + \log \|D\varphi_1|_{\mathbb{R}\cdot X}\|\right){\rm d}\mu.$$
Using again $\displaystyle\int \log \|D\varphi_1|_{\mathbb{R}\cdot X}\|{\rm d}\mu=0$, one gets
$$\int\log|\Det(D\varphi_1|_{\cF\oplus \RR\cdot X})|{\rm d}\mu=\int\log|\Det(\Psi_1|_{\cF})|{\rm d}\mu.$$
This completes the proof.
\end{proof}

In the special case of a multi-singular hyperbolic set, Proposition~\ref{Pro:bundle-integral} gives:
\begin{Proposition}\label{Prop:sum-positive-jacobian}
Let $X\in\mathcal{X}^1(M)$ and   $\Lambda$ be a multi-singular hyperbolic set with the splitting $\cN_{\Lambda\setminus{\rm Sing}(X)}=\cN^s\oplus\cN^u$. Then any regular ergodic measure $\mu$ supported on $\Lambda$ is hyperbolic and
\begin{equation}\label{e.entropy}
\sum\lambda^+(\mu)=\int\log|\Det(\Psi_1|_{\cN^{u}})|\ud\mu.
\end{equation}
For a positively singular hyperbolic set $\Lambda$, 
\eqref{e.entropy} becomes:
$\sum\lambda^+(\mu)=\int\log|\Det(D\varphi_1|_{E^{cu}})|{\rm d}\mu.$
\end{Proposition}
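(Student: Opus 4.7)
The plan is to reduce the statement to Proposition~\ref{Pro:bundle-integral}. The first hypothesis there—existence of a dominated splitting of $\Psi_t$ over $\supp(\mu)\setminus\Sing(X)$—is immediate from Definition~\ref{d.singular-domination}: the multi-singular hyperbolic splitting $\cN^s\oplus\cN^u$ is already dominated for the linear Poincar\'e flow on $\Lambda\setminus\Sing(X)$, and $\supp(\mu)\subset\Lambda$. The real task is to verify the second hypothesis: that $\mu$ has exactly $\dim\cN^u$ positive Lyapunov exponents (with multiplicity). This will simultaneously yield the claimed hyperbolicity of $\mu$.

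For this I would prove the sharper statement that every Lyapunov exponent of $\Psi_t$ along $\cN^s$ is at most $-\eta$ and along $\cN^u$ at least $\eta$, where $\eta, T, V$ are as in Definition~\ref{Def:multi-singular-hyperbolic}. Since $V$ is an isolating neighborhood of $\Sing(X)\cap\Lambda$, the maximal invariant set of $\Lambda$ inside $V$ is contained in $\Sing(X)$; because $\mu$ is regular, this forces $\mu(V^c)>0$. The Birkhoff ergodic theorem then provides, for $\mu$-a.e.\ $x$, a sequence $t_0<t_1<\cdots\to\infty$ with $\varphi_{t_n}(x)\in V^c$, $t_{n+1}-t_n>T$, and $t_n/n$ bounded (by first collecting unit time intervals during which the orbit enters $V^c$ and then thinning to guarantee the separation $T$). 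Applying~\eqref{equ:hyperbolicity-away-from-singularity} on each consecutive pair and composing the cocycle along $\cN^s$,
\begin{equation*}
\|\Psi_{t_n-t_0}|_{\cN^s(\varphi_{t_0}(x))}\| \;\leq\; \prod_{k=1}^n e^{-\eta(t_k-t_{k-1})} \;=\; e^{-\eta(t_n-t_0)}.
\end{equation*}
Taking $\log$, dividing by $t_n-t_0$ and letting $n\to\infty$ shows that the top Lyapunov exponent of $\Psi_t$ on $\cN^s$ is at most $-\eta$. The symmetric argument applied to $\Psi_{-t}$ on $\cN^u$ gives that its bottom Lyapunov exponent is at least $\eta$. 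Because the flow direction $\RR.X$ contributes a zero exponent for $D\varphi_t$, these two bounds together imply that $\mu$ is hyperbolic and that its positive Lyapunov exponents, counted with multiplicity, number exactly $\dim\cN^u$.

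The step that I expect will demand most care is the extraction of the return sequence $(t_n)$ with the uniform gap $t_{n+1}-t_n>T$: this is the one place where regularity of $\mu$ and the isolating property of $V$ both enter, via $\mu(V^c)>0$ and a pigeonhole inside windows of length $T$. Once the exponent bounds are in place, Proposition~\ref{Pro:bundle-integral} delivers~\eqref{e.entropy}. For the final assertion on positively singular hyperbolic sets, the tangent splitting $T_\Lambda M=E^{ss}\oplus E^{cu}$ induces at each regular point the decomposition $E^{cu}=\RR.X\oplus (E^{cu}\cap\cN)$, and the second summand coincides with the $\cN^u$ of the induced linear Poincar\'e splitting. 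Writing $D\varphi_1|_{E^{cu}}$ in this block form and using $\int\log\|D\varphi_1|_{\RR.X}\|\,d\mu=0$, valid for any regular ergodic measure, converts~\eqref{e.entropy} into the stated $E^{cu}$ formula.
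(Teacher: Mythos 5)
Your proposal is correct and follows the route the paper intends: the paper derives this proposition directly from Proposition~\ref{Pro:bundle-integral}, leaving implicit exactly the verification you supply, namely that a regular ergodic measure on a multi-singular hyperbolic set has all $\cN^s$-exponents $\leq-\eta$ and all $\cN^u$-exponents $\geq\eta$ via the return times to $M\setminus V$ guaranteed by $\mu(M\setminus V)>0$ (which follows from regularity and the isolating property of $V$). Your handling of the return sequence and of the reduction of the positively singular hyperbolic case to the identity $E^{cu}=\RR.X\oplus(E^{cu}\cap\cN)$ is sound.
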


\subsection{Invariant manifolds and homoclinic classes}
Given a point $x$, its stable set is defined to be 
$$W^{s}(x)=\big\{y\in M:~\lim_{t\to+\infty}d(\varphi_t(x),\varphi_t(y))=0\big\}.$$
This stable set is not enough in the study of flow due to the existence of shears. One has to define the stable set  of an orbit: given a point $x$, define
$$W^s({\rm Orb}(x))=\big\{y\in M:~\exists \text{~a homeomorphism~} \theta:~\RR^+\to\RR^+~\textrm{s.t.}~\lim_{t\to+\infty}d(\varphi_t(x),\varphi_{\theta(t)}(y))=0\big\}.$$
When $X$ is $C^2$ and $\mu$ is a hyperbolic measure,
Pesin theory associates to $\mu$-almost every point $x$
a stable manifold which is tangent to the sum of the Oseledets spaces $E_i(x)$
associated to negative Lyapunov exponents and which coincides with the stable set $W^s(x)$;
moreover the stable set of the orbit $\gamma$ of $x$ is the union of stable sets of points in $\gamma$:
$$W^s(\gamma)=\bigcup_{y\in\gamma}W^{s}(y).$$
This applies in particular to hyperbolic periodic orbits.
Analogously, one defines the unstable set or manifold of points $W^u(x)$ and orbits $W^u(\gamma)$.

We say that a hyperbolic ergodic measure $\mu$ and a hyperbolic periodic orbit $\gamma$ are
\emph{homoclinically related} if for $\mu$-almost every point $x$,
the stable manifold $W^s(x)$ has non-empty transverse intersections with the unstable manifold of $\gamma$,
and the unstable manifold $W^u(x)$ has non-empty transverse intersections with the stable manifold of $\gamma$.
	
The next result is the flow version of the classical result of Katok \cite{K} whose proof can be found in~\cite{LL} (see also \cite[Theorem 5.6]{SGW} under star condition). In \cite{LL},  the authors carefully combine Pesin theory with Liao's theory on scaled tubular neighborhoods
due to a lack of uniform continuity caused by the singularities. One can also employee \cite[Section 2.3]{PYY} and Liao's shadowing lemma proven by Gan~\cite{G} to prove the following result.
\begin{Theorem}\label{Thm:hyperbolic-measure-ergodic-class}
Let $\mu$ be a regular hyperbolic  ergodic measure of a $C^2$ vector field $X$. 
Then there is a hyperbolic  periodic orbit $\gamma$ such that
\begin{itemize}
\item	$\supp(\mu)\subset H(\gamma)$.
\item $\mu$ is homoclinically related with $\gamma$; in particular, for $\mu$-almost every point $x$, the iterates $\varphi_t(W^u(x))$ accumulate on $W^u(\gamma)$ when $t\to+\infty$.
\end{itemize}
\end{Theorem}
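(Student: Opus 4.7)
The plan is to adapt Katok's classical horseshoe/closing construction from $C^{1+\alpha}$ diffeomorphisms to $C^2$ flows (compare~\cite{K}). First I would apply nonuniform hyperbolicity theory to $(\varphi_t,\mu)$: since $X$ is $C^2$, $\mu(\sing(X))=0$, and $\mu$ is ergodic with a single vanishing Lyapunov exponent (along $\RR.X$), Pesin theory attaches to $\mu$-a.e. point $x$ local stable/unstable manifolds $W^s_{\text{loc}}(x)$ and $W^u_{\text{loc}}(x)$, tangent respectively to $E^s(x)\oplus\RR.X(x)$ and $\RR.X(x)\oplus E^u(x)$, and coinciding with the orbit-stable/unstable sets locally. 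These vary measurably but are uniform on an increasing sequence of Pesin blocks $\Lambda_\ell$ with $\mu(\Lambda_\ell)\nearrow 1$: on each $\Lambda_\ell$ the sizes of the local manifolds and the hyperbolicity constants are uniform, and both foliations are absolutely continuous. Fix $\ell$ with $\mu(\Lambda_\ell)>0$ and pick a $\mu$-generic density point $x_0\in\Lambda_\ell$ together with a sequence of return times $t_n\to+\infty$ satisfying $\varphi_{t_n}(x_0)\in\Lambda_\ell$ and $\varphi_{t_n}(x_0)\to x_0$.

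Next I would apply the flow version of Katok's closing lemma at these returns: for $n$ large one obtains a hyperbolic periodic orbit $\gamma_n$ of period close to $t_n$, passing within $o(1)$ distance of $x_0$, with $\ind(\gamma_n)=\ind(\mu)$ and with uniform hyperbolic splitting close to the Pesin splitting at $x_0$. Continuity of transversality then yields, for $n$ large,
$$V^s(\gamma_n)\pitchfork W^u_{\text{loc}}(x_0)\neq\emptyset\quad\text{and}\quad V^u(\gamma_n)\pitchfork W^s_{\text{loc}}(x_0)\neq\emptyset.$$
Fix any such $\gamma=\gamma_n$; this is our candidate periodic orbit.

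To promote this to the homoclinic relation at $\mu$-a.e. point and to the support inclusion, I would exploit that every $\mu$-generic $x$ returns to $\Lambda_\ell$ with positive frequency and hence accumulates on $x_0$. Absolute continuity of the Pesin stable/unstable foliations transports the above transverse intersections: pushing $V^u(\gamma)$ forward along the flow (respectively $V^s(\gamma)$ backward) and applying the inclination lemma on neighborhoods of recurrence times shows that $\mu$-a.e. $x$ satisfies $V^u(\gamma)\pitchfork W^s(x)\neq\emptyset$ and $V^s(\gamma)\pitchfork W^u(x)\neq\emptyset$, which is exactly the homoclinic relation between $\mu$ and $\gamma$. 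For the support inclusion, each $x\in\supp(\mu)$ is approximated by $\mu$-generic points $y_k\to x$; invoking Katok closing again on long orbit segments of $y_k$ that pass near $x$ and concatenating with the transverse connections to $\gamma$ produces periodic orbits homoclinically related to $\gamma$ and accumulating on $x$, so $x\in H(\gamma)$.

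Finally, for the accumulation statement, fix a transverse intersection point $q\in W^u(x)\pitchfork V^s(\gamma)$. By definition of $V^s(\gamma)$ the forward orbit of $q$ is asymptotic, up to reparametrization, to $\gamma$; since $\gamma$ is uniformly hyperbolic, the classical $\lambda$-lemma applied to a small unstable disk $D\subset W^u(x)$ through $q$ forces $\varphi_t(D)$ to $C^1$-accumulate on $W^u(\gamma)$ as $t\to+\infty$, proving the second bullet. The main technical obstacle is the flow version of Katok's closing lemma itself: one must control the time reparametrization so that the closed orbit is genuinely hyperbolic of the correct index and so that the comparison between orbit-stable sets $V^{s/u}(\gamma)$ and the Pesin manifolds $W^{s/u}_{\text{loc}}(x_0)$ respects the reparametrization. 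Once this is carried through, all remaining steps are routine $\lambda$-lemma and shadowing arguments inside the uniformly hyperbolic orbit $\orb(\gamma)$.
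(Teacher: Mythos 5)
The paper does not prove this statement: it is quoted as ``the flow version of the classical result of Katok \cite{K}'' and used as a black box, and your sketch is exactly the standard Katok argument (Pesin blocks, closing lemma at recurrent returns, uniform local invariant manifolds on the block plus the $\lambda$-lemma to get the homoclinic relation and the support inclusion) that the citation delegates to. So your proposal is correct in outline and takes essentially the same route as the source the paper relies on; the only point to watch is the one you already flag, namely that for flows the relevant objects are the orbit-stable/unstable sets $V^{s/u}(\orb(x))$ (tangent to $E^s\oplus\RR.X$, resp.\ $\RR.X\oplus E^u$) rather than the strong manifolds $W^{s/u}(x)$, and the closing lemma must be run so that transversality is read in these saturated sets.
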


\subsection{Metric entropy and SRB measures}
Given a diffeomorphism $f$ on $M$ and an $f$-invariant measure $\mu$,
one associates the Kolmo\-go\-rov-\-Sina\"{\i}  metric entropy ${\rm h}_\mu(f)$. 
Given the flow $(\varphi_t)_{t\in\mathbb{R}}$ on $M$ and  a $(\varphi_t)_{t\in\mathbb{R}}$-inva\-riant measure $\mu$, the metric entropy of the flow $(\varphi_t)_{t\in\mathbb{R}}$ is defined as the entropy ${\rm h}_\mu(\varphi_1)$ of the time one map $\varphi_1$. Indeed, one has that ${\rm h}_\mu(\varphi_t)=|t|\cdot {\rm h}_\mu(\varphi_1)$ for each $t\in\RR$ (see~\cite[\S6,Theorem 3]{CFS}).
\medskip

We recall Ruelle's inequality \cite{R}.
\begin{Theorem}\label{Thm:Ruelle-inequality}
Let $X\in\mathcal{X}^1(M)$ and assume that $\mu$ is an invariant measure of $(\varphi_t)_{t\in\mathbb{R}}$.
Then,
$${\rm h}_{\mu}(\varphi_1)\le \int\sum\lambda^+(x)\ud\mu.$$
\end{Theorem}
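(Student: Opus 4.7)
The plan is to deduce the flow version directly from Ruelle's original inequality for diffeomorphisms, applied to the time-one map $\varphi_1$. The main point is then to check that the Lyapunov spectrum of $\varphi_1$ (viewed as a $C^1$ diffeomorphism) coincides $\mu$-almost everywhere with the Lyapunov spectrum of the flow as described in Theorem~\ref{Thm:oseledec}, so that the integrand on the right-hand side matches $\sum\lambda^+(x)$ in the sense of Definition~\ref{Def:positive-sum}.

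First, I would invoke the classical Ruelle inequality for the $C^1$ diffeomorphism $\varphi_1:M\to M$ and the $\varphi_1$-invariant Borel probability measure $\mu$. This yields
$${\rm h}_\mu(\varphi_1)\;\le\; \int \chi^+(x)\,{\rm d}\mu,$$
where $\chi^+(x)$ denotes the sum, counted with multiplicity, of the positive Lyapunov exponents of $\varphi_1$ at $x$. This step is exactly the statement proved in Ruelle's paper~\cite{R} using Lyapunov charts, a subexponential distortion control and a counting argument for Bowen $(n,\e)$-balls; it is entirely a statement about diffeomorphisms and carries over verbatim to the compact manifold $M$.

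Next, I would verify that $\chi^+(x)=\sum\lambda^+(x)$ on a set of full $\mu$-measure. Let $\Gamma$ be the Oseledets set of Theorem~\ref{Thm:oseledec} with its invariant splitting $T_xM=E_1(x)\oplus\cdots\oplus E_{k(x)}(x)$ and exponents $\lambda_1(x)<\cdots<\lambda_{k(x)}(x)$. For any $x\in\Gamma$, any $1\le i\le k(x)$ and any unit vector $v\in E_i(x)$ the flow statement
$$\lim_{t\to\pm\infty}\frac{1}{t}\log\|D\varphi_t(v)\|=\lambda_i(x)$$
specializes, along the subsequence $t=n\in\mathbb Z$, to
$$\lim_{n\to\pm\infty}\frac{1}{n}\log\|D\varphi_1^{\,n}(v)\|=\lambda_i(x),$$
so the Oseledets splitting and the exponents of the diffeomorphism $\varphi_1$ coincide with those of the flow, with the same multiplicities $\dim E_i(x)$. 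Consequently
$$\chi^+(x)\;=\;\sum_{1\le i\le k(x)}\max(\lambda_i(x),0)\cdot\dim E_i(x)\;=\;\sum\lambda^+(x).$$

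Combining the two previous steps yields ${\rm h}_\mu(\varphi_1)\le \int\sum\lambda^+(x)\,{\rm d}\mu$, which is the desired inequality. There is no real obstacle beyond quoting Ruelle's diffeomorphism inequality; the only thing to keep in mind is the standard comparison between the Oseledets data of the flow and of its time-one map, which is immediate from the pointwise limit in Theorem~\ref{Thm:oseledec}. Note that the flow direction $\RR.X$ contributes a zero exponent and therefore does not appear in $\sum\lambda^+(x)$, which is consistent with the fact that integrating $\log\|D\varphi_1|_{\RR.X}\|$ against any invariant measure gives zero, as already used in the proof of Proposition~\ref{Pro:bundle-integral}.
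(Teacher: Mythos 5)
Your proof is correct and is precisely the standard reduction one would expect: the paper itself does not prove Theorem~\ref{Thm:Ruelle-inequality} but simply recalls it with a citation to Ruelle~\cite{R} (whose result is stated for differentiable maps), and your argument—apply Ruelle's inequality to the time-one diffeomorphism $\varphi_1$ and observe that the Oseledets exponents and multiplicities of $\varphi_1$ agree $\mu$-a.e.\ with those of the flow, since the flow limits in Theorem~\ref{Thm:oseledec} restrict along integer times—is exactly what that citation implicitly invokes. The remark about the flow direction contributing a zero exponent is a useful sanity check, consistent with Definition~\ref{Def:positive-sum} and with the observation already used in Proposition~\ref{Pro:bundle-integral}.
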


An ergodic measure $\mu$ is \emph{SRB} if it has at least one positive Lyapunov exponent
and ${\rm h}_\mu(\varphi_1)=\sum\lambda^+(\mu)$.
A general invariant measure is SRB if almost all its ergodic components are SRB.

Ledrappier and Young have shown~\cite[Theorem A]{LY} that the above definition of an SRB measure 
is equivalent to another one, which is defined  more geometrically and  is usually taken as the definition
of an SRB measure. Here we state the version for vector fields.

\begin{Theorem}\label{Thm:Ledrappier-Young}
Let $X$ be a $C^2$ vector field  and $\mu$ be an ergodic measure exhibiting positive Lyapunov exponents. Then $\mu$  is SRB if and only if $\mu$ admits positive Lyapunov exponents and the conditional measures of $\mu$ along Pesin unstable manifolds are absolutely continuous with respect to Lebesgue measure.
\end{Theorem}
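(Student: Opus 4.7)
The statement is the classical Ledrappier--Young characterization of SRB measures, originally proved in~\cite{LY} for $C^2$ diffeomorphisms. My plan is to deduce the flow version by reducing to the time-one map $\varphi_1$, which, since $X$ is $C^2$, is a $C^2$ diffeomorphism of $M$ preserving $\mu$.

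\emph{Step 1: Dictionary between flow and time-one map.} The metric entropy of the flow is, by definition in the paper, ${\rm h}_\mu(\varphi_1)$, i.e.\ the Kolmogorov--Sina\"\i\ entropy of the $\mu$-preserving diffeomorphism $\varphi_1$. If $\mu$ happened to be concentrated on a hyperbolic singularity, the conclusion would reduce to a triviality about a fixed point, so we may assume $\mu$ is regular. For a regular ergodic $\mu$, Oseledets' theorem applied to $\varphi_1$ gives exactly the Lyapunov exponents of $\mu$ as a flow-invariant measure, together with one extra zero exponent in the direction $\RR.X$. Hence $\sum\lambda^+(\mu)$ has the same value whether computed from the flow spectrum or from the spectrum of $\varphi_1$, and the ``positive Oseledets subspace'' at $\mu$-a.e. $x$ is the same object in both pictures.

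\emph{Step 2: Pesin unstable manifolds coincide.} Applying Pesin's stable manifold theorem to the $C^2$ diffeomorphism $\varphi_1$ provides, at $\mu$-a.e.\ $x$, a local unstable manifold $W^u_{\varphi_1}(x)$ tangent to $\bigoplus_{\lambda_i(x)>0}E_i(x)$. Because the Oseledets splitting is $D\varphi_t$-invariant for every $t\in\RR$, this manifold is $\varphi_t$-equivariant, is transverse to $\RR.X$, and coincides locally with the flow Pesin unstable manifold $W^u(x)$ used in the excerpt; the conditional measures of $\mu$ along the two foliations therefore agree.

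\emph{Step 3: Apply Ledrappier--Young for $\varphi_1$ and combine.} Since $\varphi_1\in\mathrm{Diff}^2(M)$ and $\mu$ is an ergodic $\varphi_1$-invariant measure possessing a positive Lyapunov exponent, the diffeomorphism version~\cite[Theorem~A]{LY} gives the equivalence: ${\rm h}_\mu(\varphi_1)=\sum\lambda^+(\mu)$ if and only if the conditional measures of $\mu$ along Pesin unstable manifolds are absolutely continuous with respect to Lebesgue on those manifolds. By Steps~1 and~2 both sides of this equivalence translate verbatim to the flow, which is exactly the claimed theorem.

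\emph{Main obstacle.} The delicate point is Step~2: one must check that the Pesin unstable manifolds (and the disintegrations of $\mu$ along them) built from $\varphi_1$ genuinely coincide with those one would construct directly for the flow, and in particular that no confusion arises between the strong unstable foliation and the flow-saturated weak unstable foliation. Once the convention (Pesin unstable manifold tangent to strictly positive Oseledets exponents only) is fixed, as the paper does, this identification is routine and the reduction to the diffeomorphism case goes through.
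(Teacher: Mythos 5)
Your reduction to the time-one map $\varphi_1$ is correct and is exactly what the paper (implicitly) does: the text simply cites \cite[Theorem A]{LY} for the $C^2$ diffeomorphism case and states the flow version, having already defined the flow entropy as ${\rm h}_\mu(\varphi_1)$ and the Pesin unstable manifolds as tangent to the strictly positive Oseledets spaces. Your Steps 1--3 just make that routine translation explicit, including the correct handling of the zero exponent in the direction $\RR.X$.
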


The following consequence is similar to \cite[Proposition D]{T}.
	
\begin{Corollary}\label{Cor:SRB-ergodic-class}
Given a $C^2$ vector field $X$ and a hyperbolic SRB measure $\mu$, there is a hyperbolic  periodic orbit $\gamma$ such that
		$W^u(\gamma)\subset \supp(\mu)$.
\end{Corollary}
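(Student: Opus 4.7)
The plan is to combine Theorem \ref{Thm:hyperbolic-measure-ergodic-class} with Theorem \ref{Thm:Ledrappier-Young}. First, I apply Theorem \ref{Thm:hyperbolic-measure-ergodic-class} to the hyperbolic SRB measure $\mu$, obtaining a hyperbolic periodic orbit $\gamma$ such that $\supp(\mu)\subset H(\gamma)$ and such that for $\mu$-almost every $x$, the iterates $\varphi_t(W^u(x))$ accumulate on $W^u(\gamma)$ as $t\to +\infty$. This gives the inclusion $W^u(\gamma)\subset \overline{\bigcup_{t\geq 0}\varphi_t(W^u(x))}$, but a priori we do not know that this closure sits inside $\supp(\mu)$.

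The crucial step is to establish that $W^u(x)\subset \supp(\mu)$ for $\mu$-almost every $x$. Since $\mu$ is SRB, Theorem \ref{Thm:Ledrappier-Young} asserts that the conditional measures $\mu_x^u$ of $\mu$ along Pesin unstable manifolds are absolutely continuous with respect to the induced Lebesgue measure on $W^u(x)$. A standard Pesin-theoretic calculation shows that the corresponding density is strictly positive on every local unstable disk: there the density is given by a uniformly convergent infinite product of positive Jacobian ratios. Strict positivity then propagates to the global unstable manifold by flow-invariance of $\mu$ and invertibility of the time-$t$ maps. Hence $\supp(\mu_x^u) = W^u(x)$ for $\mu$-almost every $x$. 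Since $\supp(\mu)$ is a closed set of full $\mu$-measure, the disintegration of $\mu$ along the unstable partition forces $\mu_x^u(\supp(\mu))=1$ for typical $x$, and therefore $W^u(x)=\supp(\mu_x^u)\subset \supp(\mu)$ for $\mu$-almost every $x$.

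Finally, since $\supp(\mu)$ is closed and flow-invariant, the inclusion $W^u(x)\subset \supp(\mu)$ is preserved under $\varphi_t$ and under taking closures, yielding $\overline{\bigcup_{t\geq 0}\varphi_t(W^u(x))}\subset \supp(\mu)$. Combining this with the accumulation property from the first step gives $W^u(\gamma)\subset \supp(\mu)$, which is the desired conclusion.

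The main obstacle is the second step: one has to upgrade the absolute continuity of $\mu_x^u$ (which by itself only yields a \emph{subset} of $W^u(x)$ with positive Lebesgue measure lying in $\supp(\mu)$) to the equality $\supp(\mu_x^u)=W^u(x)$. This requires the strict positivity of the conditional density globally, which is not part of the bare absolute-continuity statement but is obtained from the explicit cocycle formula and flow-invariance; this is the flow analog of the argument behind \cite[Proposition D]{T}.
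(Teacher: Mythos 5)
Your proposal is correct and follows essentially the same route as the paper: apply Theorem~\ref{Thm:hyperbolic-measure-ergodic-class} to get $\gamma$ and the accumulation of $\varphi_t(W^u(x))$ on $W^u(\gamma)$, use Theorem~\ref{Thm:Ledrappier-Young} to show $W^u(x)\subset\supp(\mu)$ for $\mu$-typical $x$, and conclude by closedness and invariance of $\supp(\mu)$. Your extra care in upgrading absolute continuity to strict positivity of the conditional densities (so that $\supp(\mu^u_x)=W^u(x)$) addresses a point the paper's one-line justification glosses over, but it is the same standard Pesin-theoretic fact the paper implicitly invokes.
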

\begin{proof}
Let $\gamma$ be a hyperbolic periodic orbit given by Theorem~\ref{Thm:hyperbolic-measure-ergodic-class}
and $x$ be a typical point of $\mu$. Since $\mu$ is  SRB, by Theorem~\ref{Thm:Ledrappier-Young},    Lebesgue almost every point in the unstable manifold $W^u(x)$ of $x$ is typical for $\mu$. This implies that $W^u(x)\subset\supp(\mu)$. By Theorem~\ref{Thm:hyperbolic-measure-ergodic-class}, $W^u(\gamma)$ is accumulated by $\varphi_t(W^u(x))$ when $t\to\infty$. Thus, $W^u(\gamma)$ is contained in $\supp(\mu)$.
\end{proof}

\subsection{Singular hyperbolic attractors}
An invariant compact set $\Lambda$ is \emph{robustly transitive} for $X$
if it admits a neighborhood $U$ such that $\Lambda=\bigcap _{t\in \RR} \varphi^X_t(U)$
and for any vector field $Y$ that is $C^1$ close to $X$, the set
$\Lambda^Y=\bigcap _{t\in \RR} \varphi^Y_t(U)$ is transitive for $Y$.
\begin{Theorem}[\cite{CY}]~\label{thm.continuity-of-singular-hyperbolic-attractor}
There exists an open and dense subset $\cU$ of $\cX^1(M)$ such that for any $X\in\cU$,
if $\La$ is a singular hyperbolic Lyapunov stable chain recurrence class,  then $\Lambda$ is a robustly transitive attractor; moreover $\Lambda=H(\gamma)$ for any periodic orbit $\gamma\subset \Lambda$.
\end{Theorem}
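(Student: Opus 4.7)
The plan is to combine the robustness of singular hyperbolicity under $C^1$-perturbations with generic properties that identify chain recurrence classes with homoclinic classes. I would first locate a dense $G_\delta$ set $\cG$ where the desired conclusion holds for $X\in\cG$, and then use robustness to extend the conclusion to an open neighborhood of each such $X$, so that $\cU$ is the union of these neighborhoods.

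For the generic reduction, I would take $\cG \subset \cX^1(M)$ dense $G_\delta$ on which the following hold simultaneously: every Lyapunov stable chain recurrence class is a quasi-attractor (Bonatti--Crovisier~\cite{BC}); every chain recurrence class containing a hyperbolic periodic orbit coincides with its homoclinic class (connecting lemma arguments); and Theorem~\ref{Theo:generic} applies. Fix $X \in \cG$ and a singular hyperbolic Lyapunov stable chain recurrence class $\La$ that is not a hyperbolic sink (the trivial case being immediate). Using the sectional volume expansion on the center-unstable bundle $E^{cu}$ together with a Pliss-type argument, one produces a hyperbolic periodic orbit $\gamma \subset \La$; genericity then yields $\La = H(\gamma)$. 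Since $\La$ is a quasi-attractor, it admits arbitrarily small trapping neighborhoods, and every leaf of $W^u(\gamma)$ must remain trapped, so $\overline{W^u(\gamma)} \subset \La$. Combined with the general inclusion $H(\gamma) \subset \overline{W^u(\gamma)}$, this gives $\La = \overline{W^u(\gamma)}$, and a shadowing argument using $E^{cu}$-cone fields (analogous to Proposition~\ref{Thm:class-to-attractor}) shows that $\La$ is actually an attractor.

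For openness and robust transitivity: singular hyperbolicity is robust, so there are a trapping neighborhood $U$ of $\La$ and a $C^1$-neighborhood $\cV$ of $X$ such that for each $Y \in \cV$, the maximal invariant set $\La_Y = \bigcap_{t \in \RR} \varphi^Y_t(U)$ is singular hyperbolic, contains the continuation $\gamma_Y$, and satisfies $\overline{W^u(\gamma_Y)} \subset \La_Y$. For the reverse inclusion, given $y\in \La_Y$, the sectional expansion in $E^{cu}$ forces any small center-unstable disk through $y$ to grow arbitrarily large under forward iteration; by upper semi-continuity of $\La_Y$ in $Y$ and continuous dependence of $W^s(\gamma_Y)$, these iterates must meet $W^s(\gamma_Y)$ transversally, since this already happens robustly inside $\La_X = \overline{W^u(\gamma_X)}$. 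Hence $y \in H(\gamma_Y)$, so $\La_Y = H(\gamma_Y)$ is robustly transitive. Setting $\cU = \bigcup_{X \in \cG} \cV_X$ yields the desired open dense set. To upgrade $\La_Y = H(\gamma_Y)$ to the analogous equality for every periodic orbit of $\La_Y$, one uses that all periodic orbits of a singular hyperbolic set share a common index (dictated by the dominated splitting) and, inside a transitive set, are homoclinically related.

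The main obstacle is the uniform-in-$Y$ density of transverse intersections between $E^{cu}$-disks and $W^s(\gamma_Y)$ appearing in the third paragraph. Controlling center-unstable disks is delicate near Lorenz-like singularities, where they may develop long but thin filaments aligned with the flow direction; the sectional expansion prevents collapse but does not by itself guarantee recurrence to a prescribed transversal. One must combine the expansion with the open density of existing transverse intersections inside $\La_X$ together with their persistence under $C^1$-perturbation, in order to propagate the attractor-equals-homoclinic-class identity robustly across a whole neighborhood of $X$.
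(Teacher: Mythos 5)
This theorem is imported from \cite{CY} and the paper gives no proof of it, so your proposal can only be measured against the actual argument in that reference. Your overall architecture (settle the statement on a dense G$_\delta$ set, then spread it to an open set by robustness of the singular hyperbolic structure and of trapping regions) is the right shape, and the generic half of your argument is essentially sound: on a quasi-attractor one gets a periodic orbit $\gamma$ from a hyperbolic ergodic measure via a closing argument, genericity gives $\La=H(\gamma)$, Lyapunov stability gives $\overline{W^u(\gamma)}\subset\La$, and a quasi-attractor which is transitive is an attractor (no shadowing lemma is needed here, which is fortunate because singular hyperbolic sets with singularities do not satisfy the shadowing property; the appeal to Proposition~\ref{Thm:class-to-attractor} is out of place since that proposition is for singularity-free hyperbolic classes).

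The genuine gap is the step you yourself flag: proving $\La_Y=H(\gamma_Y)$ for \emph{every} $Y$ in a neighborhood of $X$, which is the entire content of \cite{CY}. Your proposed mechanism --- sectional expansion makes $E^{cu}$-disks through $y\in\La_Y$ grow, and they must meet $W^s(\gamma_Y)$ because ``this already happens robustly inside $\La_X$'' --- does not close the argument. First, the disks in question are attached to points of $\La_Y$, not continuations of objects living in $\La_X=\overline{W^u(\gamma_X)}$, so no persistence-of-transversality statement about $\La_X$ applies to them. Second, sectional (area) expansion on $E^{cu}$ is compatible with the disk degenerating into a filament stretched along the flow direction while its diameter transverse to the flow tends to zero; near a Lorenz-like singularity $\sigma$ the disk is moreover sliced by $W^s_{loc}(\sigma)$, and the pieces that shadow $W^u(\sigma)$ must be re-analyzed from scratch. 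This is precisely why \cite{CY} (and the Gan--Yang machinery \cite{GY} that the present paper reuses in Proposition~\ref{Pro:size-stable-Pliss}) works with the \emph{rescaled} (sectional) Poincar\'e flow and Pliss points to produce unstable manifolds of size proportional to $\|Y(y)\|$ at a definite density of points, and then runs a separate analysis of passages near the singularities to show these manifolds accumulate on $W^u(\sigma)$ or on $W^u(\gamma_Y)$. Without that quantitative control your third paragraph is an assertion of the theorem rather than a proof of it; the claim that all periodic orbits of $\La_Y$ are homoclinically related also needs this uniform-size input rather than just equality of indices.
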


\begin{Remark}\label{Rem:singular-hyperbolic-attractor}
A singular hyperbolic attractor is always positively singular hyperbolic, see for instance~\cite[Proposition 2.4]{CY}.
\end{Remark}

Under singular hyperbolicity, \cite{PYY} shows that the entropy is upper semi-continuous:

\begin{Theorem}\label{Thm;ups-entropy}
Let $\Lambda$ be a singular hyperbolic attractor of a $C^1$ vector field  $X$. Assume that $\{X_n\}$ is a sequence of vector fields such that $\lim_{n\to\infty}X_n=X$ in $\cX^1(M)$ and that each $X_n$ has an invariant measure $\mu_n$ such that $\mu=\lim_{n\to\infty}\mu_n$ is supported on $\Lambda$. Then,
$${\rm h}_{\mu}(\varphi_1^X)\ge\limsup_{n\to\infty}{\rm h}_{\mu_n}(\varphi_1^{X_n}).$$
\end{Theorem}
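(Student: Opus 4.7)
The plan is to exploit robust entropy-expansiveness of singular hyperbolic attractors and convert it into upper semi-continuity of entropy, jointly in the measure and in the vector field, via a partition argument.

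\emph{Step 1 (persistence of the singular hyperbolic structure).} Since $\Lambda$ is a singular hyperbolic attractor for $X$, by Remark~\ref{Rem:singular-hyperbolic-attractor} it is positively singular hyperbolic. By the openness of singular hyperbolicity and the continuity of dominated splittings under $C^1$-perturbations, there exist a $C^1$-neighborhood $\cV$ of $X$ and a trapping neighborhood $U$ of $\Lambda$ such that for every $Y\in\cV$, the maximal invariant set $\Lambda_Y:=\bigcap_{t\geq 0}\varphi_t^Y(U)$ is positively singular hyperbolic, with an invariant splitting $T_{\Lambda_Y}M=E^{ss}_Y\oplus E^{cu}_Y$ whose contraction and sectional expansion constants are uniform in $Y$. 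The continuation of each Lorenz-like singularity is defined for $Y\in\cV$, and since $\mu_n\to\mu$ weakly with $\supp(\mu)\subset\Lambda$, for all large enough $n$ the measure $\mu_n$ is supported in $\Lambda_{X_n}\subset U$.

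\emph{Step 2 (uniform entropy-expansiveness).} The key input is a constant $\e_0>0$, independent of $Y\in\cV$, such that for every $Y\in\cV$ and every $x\in\Lambda_Y$, the infinite Bowen ball
\[
\Gamma_{\e_0}(x,Y)\;:=\;\{y\in M\,:\, d(\varphi_t^Y(x),\varphi_t^Y(y))\leq \e_0\text{ for all }t\in\RR\}
\]
has zero topological entropy under $\varphi_1^Y$. This is a robust version of the entropy-expansiveness known for a fixed singular hyperbolic attractor. Its proof combines two ingredients. Away from singularities, the scaled linear Poincar\'e flow is uniformly hyperbolic on $\Lambda_Y$ with constants independent of $Y$, forcing two orbits that remain $\e_0$-close for all time to differ only by a short time re-parametrization. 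Near the finitely many Lorenz-like singularities, the uniformly contracting bundle $E^{ss}_Y$ together with bounded escape-time estimates control the spreading of nearby orbits, so that no new topological complexity is created while orbits traverse the singular region.

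\emph{Step 3 (from expansiveness to upper semi-continuity).} Fix a finite Borel partition $\cP$ of $M$ with $\operatorname{diam}(\cP)<\e_0$ and $\mu(\partial\cP)=0$. By Bowen's theorem on $h$-expansive maps, the uniform entropy-expansiveness yields ${\rm h}_\nu(\varphi_1^Y)={\rm h}_\nu(\varphi_1^Y,\cP)$ for every $Y\in\cV$ and every $\varphi^Y$-invariant measure $\nu$ supported in $\Lambda_Y$. Moreover, a standard argument using the $C^1$-convergence of the time-one maps together with $\mu(\partial\cP)=0$ shows that the functional $(\nu,Y)\mapsto {\rm h}_\nu(\varphi_1^Y,\cP)$ is upper semi-continuous at $(\mu,X)$. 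Combining these two facts,
\[
\limsup_{n\to\infty}{\rm h}_{\mu_n}(\varphi_1^{X_n})\;=\;\limsup_{n\to\infty}{\rm h}_{\mu_n}(\varphi_1^{X_n},\cP)\;\leq\;{\rm h}_\mu(\varphi_1^X,\cP)\;=\;{\rm h}_\mu(\varphi_1^X).
\]
The main obstacle is Step 2, since the classical flow-expansivity arguments break down near the Lorenz-like singularities; one must combine rescaled linear Poincar\'e flow analysis with careful escape-time control to upgrade the fixed-system entropy-expansiveness into a fully robust statement.
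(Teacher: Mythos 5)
Your proposal follows essentially the same route as the paper: the paper simply invokes \cite[Theorem A]{PYY} for the robust uniform $\delta$-entropy-expansiveness of singular hyperbolic sets (your Step 2) and then notes that this yields upper semi-continuity of entropy by the standard Bowen/Misiurewicz partition argument (your Step 3). Your Steps 1 and 2 just unpack the content of the cited result rather than citing it, so the argument is the same in substance.
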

Theorem~\ref{Thm;ups-entropy} is based on \cite[Theorem A]{PYY} which asserts that if $\Lambda$ is a singular hyperbolic set for a $C^1$ vector field, then there is $\delta>0$ such that on a neighborhood of  $\Lambda$, the dynamics is robustly $\delta$-entropy expansive. Then as commented in \cite{PYY}, a robust uniform entropy expansiveness implies the upper semi-continuity of the entropy as in Theorem~\ref{Thm;ups-entropy}.

\subsection{Generic properties of multi-singular hyperbolic vector fields}
The following theorem states some known generic properties.

\begin{Theorem}~\label{thm.generic-property-of-star}
There exists a dense G$_\delta$ subset $\cG\subset\cX^1(M)$ such that any $X\in\cG$ satisfies the following properties.
\begin{enumerate}
\item\label{i.attractor-star}    Any multi-singular hyperbolic  Lyapunov stable chain recurrence class of $X$ is an attractor; more precisely, it is  either a sink, or a singular hyperbolic attractor. 
\item\label{i.lyapunov-stable-star} If a multi-singular hyperbolic chain recurrence class $C$ contains the unstable manifold of a critical element, then $C$ is a singular hyperbolic attractor.
\end{enumerate}
\end{Theorem}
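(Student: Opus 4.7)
Define the generic set $\cG$ as the intersection of three previously established ones: the dense G$_\delta$ subset $\cG_0$ provided by Theorem~\ref{Thm:go-to-Lyapunov}, the dense G$_\delta$ subset $\cG_1$ from \cite[Proposition 1.7]{BC} on which the Lyapunov stable chain recurrence classes are exactly the quasi-attractors, and the open and dense subset $\cU$ from Theorem~\ref{thm.continuity-of-singular-hyperbolic-attractor} on which singular hyperbolic Lyapunov stable classes are already attractors. The intersection $\cG:=\cG_0\cap\cG_1\cap\cU$ is still a dense G$_\delta$ of $\cX^1(M)$.

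\textbf{Item (1).} Let $C$ be a multi-singular hyperbolic Lyapunov stable chain recurrence class of some $X\in\cG$. Because $X\in\cG_1$, the class $C$ is a quasi-attractor. If $C$ is reduced to a sink there is nothing to prove. Otherwise Lemma~\ref{Lem:quasi-attractor-singular-hyperbolic} upgrades the multi-singular hyperbolicity to singular hyperbolicity on $C$, and Theorem~\ref{thm.continuity-of-singular-hyperbolic-attractor} (applicable since $X\in\cU$) produces that $C$ is in fact a singular hyperbolic attractor.

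\textbf{Item (2).} Let $C$ be a multi-singular hyperbolic chain recurrence class containing $W^u(\gamma)$ for some critical element $\gamma$. Since $C$ is closed and invariant, $\gamma\subset C$. The element $\gamma$ is hyperbolic: if $\gamma$ is a singularity it is Lorenz-like (hence hyperbolic by Remark~\ref{Rem:Lorenz-like-singularity}), while if $\gamma$ is a periodic orbit it is a multi-singular hyperbolic set without singularity and is therefore hyperbolic by Proposition~\ref{Prop:multi-singular-hyperbolic-no-singularity}. Since $X\in\cG_0$, Theorem~\ref{Thm:go-to-Lyapunov} gives a point $x\in W^u(\gamma)$ whose $\omega$-limit set is a quasi-attractor. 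Now $\omega(x)\subset C$ because $C$ is closed and invariant and contains $x$; but $\omega(x)$ is itself a chain recurrence class (being a quasi-attractor), and two chain recurrence classes that intersect must coincide. Hence $\omega(x)=C$, so $C$ is a quasi-attractor, thus Lyapunov stable, and item~(1) finishes the argument (the degenerate case where $C$ happens to be a sink is absorbed there).

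\textbf{Main obstacle.} The subtle step is the identification $\omega(x)=C$: it uses both the fact that a quasi-attractor is a single chain recurrence class and the inclusion $\omega(x)\subset C$ obtained from closedness and invariance of $C$. This is what transfers the information encoded on the one-orbit level by Theorem~\ref{Thm:go-to-Lyapunov} into the global Lyapunov stability of $C$. A secondary point to check carefully is the hyperbolicity of the critical element $\gamma$, which one needs in order to invoke Theorem~\ref{Thm:go-to-Lyapunov}; as indicated above this is automatic inside a multi-singular hyperbolic class, by combining the Lorenz-like assumption at singularities with Proposition~\ref{Prop:multi-singular-hyperbolic-no-singularity} at periodic orbits.
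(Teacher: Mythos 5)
Your argument is correct, but it does not follow the paper's route: the paper proves this theorem only by a two-line remark, importing item~(1) wholesale from Corollary~E of \cite{PYY} and deducing item~(2) from item~(1) plus the connecting lemma for pseudo-orbits of \cite{BC}. You instead assemble item~(1) from the paper's internal ingredients -- \cite[Proposition 1.7]{BC} to pass from Lyapunov stability to the quasi-attractor property, Lemma~\ref{Lem:quasi-attractor-singular-hyperbolic} to upgrade multi-singular to singular hyperbolicity, and Theorem~\ref{thm.continuity-of-singular-hyperbolic-attractor} to get a (robustly transitive) attractor -- which makes the proof self-contained modulo results already stated in the text, at the cost of a slightly larger generic set. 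For item~(2) your use of Theorem~\ref{Thm:go-to-Lyapunov} is a legitimate instantiation of the paper's hint (that theorem is itself proved in the appendix from the \cite{BC} connecting lemma), and indeed it is exactly the mechanism the authors use later in Case~III of the proof of Theorem~\ref{p.unstable-manifold-attractor}; the key identification $\omega(x)=C$ via ``a quasi-attractor is a chain recurrence class, and distinct classes are disjoint'' is the right and complete argument. Two small points you should tighten rather than leave implicit: (i) to invoke Theorem~\ref{Thm:go-to-Lyapunov} you need $\gamma$ hyperbolic, and your appeal to Proposition~\ref{Prop:multi-singular-hyperbolic-no-singularity} presupposes that a periodic orbit inside a multi-singular hyperbolic class inherits the multi-singular hyperbolic structure -- true (restrict the splitting and apply \eqref{equ:hyperbolicity-away-from-singularity} at a point of $\gamma$ outside the isolating neighborhood $V$, over multiples of the period), but worth a sentence, or simply add Kupka--Smale genericity to $\cG$; (ii) in item~(2) the degenerate case where $\gamma$ is a sink gives $C=\gamma$, which is an attractor but is called a ``sink'' rather than a ``singular hyperbolic attractor'' in item~(1)'s dichotomy -- the paper sidesteps this by only ever applying item~(2) to non-sink critical elements, and you should note the same.
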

\begin{Remark}
The first item  comes from Corollary E in \cite{PYY}.  Combining the first item with  the connecting lemma for pseudo orbits in \cite{BC}, one gets the second item. 	
\end{Remark}

\section{Pliss points and the intersections of invariant manifolds}

\subsection{Pliss points}
For a periodic orbit $\gamma$, we denote by $\pi(\gamma)$ its period.
Liao has proved the following fundamental  property of star vector fields (with singularities or not).
We will apply it to multi-singular hyperbolic vector fields.

\begin{Theorem}[\cite{L79}]\label{Thm:fundamental}
	For any  $X\in\cX^*(M)$,  there exist $\eta,T>0$ and a $C^1$ neighborhood $\cU$ of $X$ with the following properties. For any $Y\in\cU$ and  any periodic orbit $\gamma$ of $Y$ with $\pi(\gamma)\geq T$:
\begin{itemize}
\item
the hyperbolic splitting $\cN_{\gamma}=\cN^s\oplus\cN^u$ is $(2\eta,T)$-dominated,
\item $\prod_{i=0}^{[\pi(\gamma)/T]-1}\|\Psi^Y_T|_{\cN^s(\varphi^Y_{iT}(p))}\|\leq e^{-\eta\pi(\gamma)} \textrm{\:~and~\:}\prod_{i=0}^{[\pi(\gamma)/T]-1}\|\Psi^Y_{-T}|_{\cN^u(\varphi^Y_{-iT}(p))}\|\leq e^{-\eta\pi(\gamma)}$
for each $p\in \gamma$.
\end{itemize}	
\end{Theorem}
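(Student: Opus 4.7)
The plan is to argue by contradiction, using the star assumption to forbid the creation of non-hyperbolic periodic orbits by $C^1$-small perturbations supported along a periodic orbit. Concretely, suppose the conclusion fails. Then there exist sequences $\eta_n \to 0$, $T_n \to +\infty$, vector fields $Y_n \to X$ in $\cX^1(M)$, and periodic orbits $\gamma_n$ of $Y_n$ with $\pi(\gamma_n) \geq T_n$ such that, for each $n$, either the hyperbolic splitting $\cN_{\gamma_n}=\cN^s\oplus \cN^u$ fails to be $(2\eta_n, T_n)$-dominated, or the products along the $T_n$-discretization exceed $e^{-\eta_n\pi(\gamma_n)}$ on $\cN^s$ (or symmetrically on $\cN^u$, after replacing $X$ by $-X$).

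The first step is to reduce both alternatives to the existence, along each $\gamma_n$, of a long arc of iterates on which some hyperbolicity measured by the linear Poincaré flow is weak. For the product alternative I would apply the Pliss lemma to the logarithms of $\|\Psi^{Y_n}_T|_{\cN^s(\varphi^Y_{iT}(\cdot))}\|$: if the geometric mean of these norms is at least $e^{-\eta_n T}$, one extracts a density of indices $i$ where every forward partial product stays above $e^{-2\eta_n T (k-i)}$. For the domination alternative I would iterate the failure: by concatenation, a single failure of $(2\eta_n, T_n)$-domination at some $p_n$ produces an arbitrarily long arc along which the ratio $\|\Psi^{Y_n}_{kT_n}|_{\cN^s}\|/m(\Psi^{Y_n}_{kT_n}|_{\cN^u})$ stays close to $1$. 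In both cases, one obtains a Pliss-type point $p_n\in\gamma_n$ together with an arc of tubular Poincaré return times on which the linear Poincaré flow is either almost neutral in some direction, or fails to dominate.

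The key step, and the main obstacle, is then Liao's perturbation lemma for the linear Poincaré flow. This is what replaces Franks' lemma in the singular flow setting: along a sufficiently long finite piece of orbit avoiding the singular set, any desired $C^0$-small perturbation of $(\Psi_t)$ on $\cN$ can be realized by a $C^1$-small perturbation of $Y_n$ supported in a tubular neighborhood of that arc. The delicate part is that the tube must shrink fast enough near singularities so that the $C^1$-norm stays small even though $\|X\|$ may be small on part of $\gamma_n$; this is precisely where one needs that the selected Pliss arc stays at definite Poincaré-time scale rather than flow-time scale. Using this lemma, one can rotate $\cN^s(p_n)$ into $\cN^u(\varphi^{Y_n}_{t}(p_n))$ in the domination case, or scale $\|\Psi^{Y_n}_T|_{\cN^s(p_n)}\|$ up to $1$ in the product case, thereby producing a periodic orbit of a vector field $Y_n' \in \cU$ with a zero (or non-real) Lyapunov multiplier.

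Finally, this new periodic orbit of $Y_n'$ is non-hyperbolic, contradicting the star property, since $Y_n' \to X$ and $X$ is star and hence has a $C^1$ neighborhood on which all critical elements are hyperbolic. The two halves of the statement (weak domination ruled out; weak stable/unstable products ruled out) are treated symmetrically by working with $X$ and $-X$. I would present this as a single contradiction argument extracting, from the assumed counterexamples, the perturbation data, and then invoking Liao's lemma and the star hypothesis in one stroke.
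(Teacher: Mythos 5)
First, a point of comparison: the paper does not prove this statement at all --- it is imported as Liao's theorem from [L79] (the diffeomorphism analogue is Ma\~n\'e's and Pliss's work), so there is no internal proof to measure yours against. Your overall architecture --- argue by contradiction, select Pliss-type points, realize a perturbation of the linear Poincar\'e flow by a $C^1$-small perturbation of the vector field, and contradict the star property --- is indeed the classical route, and you correctly identify the hardest point in the singular setting: a Franks-type lemma for $\Psi_t$ along orbit segments that may pass close to $\Sing(X)$, where $\|X\|$ degenerates and the perturbation tube must be rescaled.

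That said, two of your reduction steps have genuine gaps. (a) For the domination alternative, the negation only provides, for each $n$, a \emph{single} point $p_n\in\gamma_n$ with $\|\Psi_{T_n}|_{\cN^s(p_n)}\|\cdot\|\Psi_{-T_n}|_{\cN^u(\varphi_{T_n}(p_n))}\|>e^{-2\eta_n}$. This does not ``concatenate'' into an arbitrarily long arc of weak domination: domination can fail at one return and hold with a large margin at every other, so the product over $k$ returns is still strongly dominated. The classical repair is different --- from the one weak point one extracts vectors $u\in\cN^s(p_n)$ and $v\in\cN^u(p_n)$ that are comparably expanded over time $T_n$, and one exchanges them by composing many rotations of small angle distributed along the whole period (this is where $\pi(\gamma)\geq T$ enters); that distribution argument is the actual content and is absent from your sketch. (b) For the product alternative, a lower bound on $\prod_i\|\Psi_T|_{\cN^s(\varphi_{iT}(p))}\|$ gives no lower bound on the stable eigenvalues of the return map, since the product of norms dominates the norm of the product and hence the spectral radius --- the inequality goes the wrong way. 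Before you can push an eigenvalue to the unit circle by multiplying each return map by a factor $e^{\varepsilon T}$, you must first perturb to align the most-expanded singular directions along the orbit so that the norm of the composition essentially equals the product of the norms. Both repairs are standard in the Pliss--Ma\~n\'e--Liao literature, but as written your two reductions do not yet produce the non-hyperbolic periodic orbit on which the contradiction rests.
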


One also considers a \emph{rescaled linear Poincar\'e flow $(\Psi_t^*)_{t\in\mathbb{R}}$}: given any regular point $x$, a vector $v\in\cN_x$ and $t\in\mathbb{R}$, one defines
$$\Psi_t^*(v)=\frac{\Psi_t(v)}{\|D\varphi_t|_{\RR\cdot X(x)}\|}.$$

\begin{Remark}~\label{r.fundamental-property}
Theorem~\ref{Thm:fundamental} also holds for the rescaled linear Poincar\'e flow $(\Psi_t^*)_{t\in\RR}$.
\end{Remark}

Let $X$ be a $C^1$ vector field on $M$ and $\Lambda$ be an invariant compact set.
Assume that $\Lambda$ admits a dominated splitting $\cN_{\Lambda\setminus\sing(X)}=\cE\oplus\cF$.
Let $\eta,T>0$ and $x\in\La\setminus\sing(X)$.

We say that $x$  is an \emph{$(\eta,T,\cE)$-Pliss point} for the flow $(\Psi_t^*)_{t\in\RR}$ if it satisfies
$$\prod_{i=0}^{n-1}\|\Psi_T^*|_{\cE(\varphi_{iT}(x))}\|\leq e^{-n\eta}\textrm{ for all $n\geq 1$}.$$
We define similarly the notion of \emph{$(\eta,T,\cF)$-Pliss point}
by considering the backward orbit of $x$.

The  point $x$ is an \emph{$(\eta,T)$-bi-Pliss point} for $(\Psi^*_t)_{t\in\RR}$ (with respect to the splitting $\cE\oplus\cF$) if $x$ is both an $(\eta,T,\cE)$-Pliss point and an $(\eta,T,\cF)$-Pliss point.

We present a criterion for the existence of bi-Pliss points, proved in~\cite[Page 990]{PS}.
(The criterion there is stated for maps. In our case we consider the diffeomorphism $\varphi_T$.)

\begin{Lemma}~\label{l.bi-pliss}
For $X\in\cX^1(M)$, let $\La$ be an invariant compact set and $\eta,T>0$. Assume that  $\Lambda$ admits a $(2\eta,T)$-dominated splitting $\cN_{\La\setminus\sing(X)}=\cE\oplus \cF$ for  $(\Psi^*_t)_{t\in\RR}$. 	
If the forward orbit of some $x\in\La\setminus\sing(X)$ under
the map $\varphi_T$ contains an $(\eta, T, \cE)$-Pliss point and its backward orbit under
$\varphi_T$ contains an $(\eta,T,\cF)$-Pliss point for  $(\Psi^*_t)_{t\in\RR}$, then there exists an $(\eta,T)$-bi-Pliss point $y$ for $(\Psi^*_t)_{t\in\RR}$ in the orbit of $x$ under  $\varphi_T$.
\end{Lemma}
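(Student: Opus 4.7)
The plan is to recast both Pliss conditions in terms of extrema of cumulative sum sequences, and then combine them via the $(2\eta,T)$-domination. Write $x_j:=\varphi_{jT}(x)$ for $j\in\ZZ$ (all $x_j$ are regular since $\sing(X)$ is invariant), set
$$\alpha_j:=\log\|\Psi_T^*|_{\cE(x_j)}\|,\qquad \beta_j:=\log\|\Psi_{-T}^*|_{\cF(x_j)}\|,$$
and introduce
$$V_j:=\sum_{\ell=0}^{j-1}(\alpha_\ell+\eta),\qquad W_j:=\sum_{\ell=1}^{j}(\beta_\ell+\eta)$$
for $j\geq 0$, extended to $j<0$ by the obvious conventions so that $V_{j+1}-V_j=\alpha_j+\eta$ and $W_{j+1}-W_j=\beta_{j+1}+\eta$ for all $j$. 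Taking logarithms in the Pliss inequalities gives directly: $x_j$ is $(\eta,T,\cE)$-Pliss iff $V_{j+n}\leq V_j$ for all $n\geq 1$ (so $j$ is a \emph{forward maximum} of $V$), and $x_j$ is $(\eta,T,\cF)$-Pliss iff $W_{j-m}\geq W_j$ for all $m\geq 0$ (so $j$ is a \emph{past minimum} of $W$).

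Next I would exploit the domination. The hypothesis $\|\Psi_T^*|_{\cE(z)}\|\cdot\|\Psi_{-T}^*|_{\cF(\varphi_T(z))}\|\leq e^{-2\eta}$ rewrites as $\alpha_j+\beta_{j+1}\leq -2\eta$, so the combined quantity $U_j:=V_j+W_j$ satisfies $U_{j+1}-U_j=(\alpha_j+\eta)+(\beta_{j+1}+\eta)\leq 0$; that is, $U$ is non-increasing along the $\varphi_T$-orbit of $x$.

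Now let $j_+\geq 0$ be such that $x_{j_+}$ is $(\eta,T,\cE)$-Pliss and $j_-\leq 0$ be such that $x_{j_-}$ is $(\eta,T,\cF)$-Pliss, so $j_-\leq j_+$. Choose $j^*\in[j_-,j_+]$ to be an index at which $V$ attains its maximum on this finite interval. I would then verify that $x_{j^*}$ is bi-Pliss. \emph{Forward $\cE$-Pliss at $j^*$:} if $j^*+n\leq j_+$, maximality of $V_{j^*}$ gives $V_{j^*+n}\leq V_{j^*}$; if $j^*+n>j_+$, the Pliss property at $j_+$ gives $V_{j^*+n}\leq V_{j_+}\leq V_{j^*}$. \emph{Backward $\cF$-Pliss at $j^*$:} if $j^*-m\geq j_-$, the monotonicity of $U$ gives $V_{j^*-m}+W_{j^*-m}\geq V_{j^*}+W_{j^*}$, hence $W_{j^*-m}\geq W_{j^*}+(V_{j^*}-V_{j^*-m})\geq W_{j^*}$ by maximality of $V_{j^*}$; if $j^*-m<j_-$, past minimality of $W$ at $j_-$ gives $W_{j^*-m}\geq W_{j_-}$, and monotonicity of $U$ between $j_-$ and $j^*$ together with $V_{j_-}\leq V_{j^*}$ yields $W_{j_-}\geq W_{j^*}$. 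Thus $y:=x_{j^*}=\varphi_{j^*T}(x)$ is the desired $(\eta,T)$-bi-Pliss point on the $\varphi_T$-orbit of $x$.

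The conceptual heart of the argument, which I would flag as the main difficulty, is the choice of the $\eta$-shift built into $V$ and $W$: it turns the Pliss conditions into purely extremal properties, and the $2\eta$ built into the hypothesis of domination is exactly the budget needed to make $V+W$ non-increasing. Once this framing is fixed, the remaining work is elementary case-splitting at the endpoints of $[j_-,j_+]$, using maximality of $V_{j^*}$ inside the interval and the Pliss hypotheses at the endpoints outside of it.
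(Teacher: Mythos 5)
Your proof is correct. Note that the paper does not prove Lemma~\ref{l.bi-pliss} at all: it cites~\cite[Page 990]{PS}, where the statement is proved for diffeomorphisms, and applies it to $\varphi_T$. Your self-contained argument is essentially the Pujals--Sambarino one: after the $\eta$-shift, the Pliss conditions become ``forward maximum of $V$'' and ``past minimum of $W$'', the $(2\eta,T)$-domination is exactly the budget that makes $U=V+W$ non-increasing, and the maximizer $j^*$ of $V$ on $[j_-,j_+]$ inherits both extremal properties by a short case split. I checked the four cases, including the boundary situations $j^*=j_+$, $j^*=j_-$ and $j_-=j_+$, and they all go through. One small remark that could be added for completeness: since $\sing(X)$ is flow-invariant, the regularity of $x$ propagates to all $x_j$, so $\cE(x_j),\cF(x_j)$ and hence $\alpha_j,\beta_j$ are indeed defined for all $j\in\ZZ$; you state this parenthetically, which is enough.
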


\begin{Corollary}\label{Lem:bi-Pliss-exsitence}
Let $X\in\cX^1(M)$ be a multi-singular hyperbolic vector field.
There exist $T,\eta>0$ and a $C^1$ neighborhood $\cU$ of $X$ with the following properties. For any $Y\in\cU$ and  any periodic orbit $\gamma$ of $Y$ with $\pi(\gamma)\geq T$,  let $\cN_\gamma=\cN^{s}\oplus\cN^{u}$ be its hyperbolic splitting for $(\Psi_t)_{t\in\RR}$. Then there exists an $(\eta,T)$-bi-pliss point $x\in\gamma$
for  $(\Psi_t^*)_{t\in\RR}$ with respect to $\cN_\gamma=\cN^{s}\oplus\cN^{u}$.	
\end{Corollary}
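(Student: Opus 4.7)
The plan is to combine Liao's fundamental property (Theorem~\ref{Thm:fundamental} and Remark~\ref{r.fundamental-property}), the classical Pliss lemma, and Lemma~\ref{l.bi-pliss}. Since multi-singular hyperbolicity implies the star condition, I first invoke Theorem~\ref{Thm:fundamental} and Remark~\ref{r.fundamental-property} to produce constants $\eta_0>0$, $T>0$ and a $C^1$ neighborhood $\cU$ of $X$ such that for every $Y\in\cU$ and every periodic orbit $\gamma$ of $Y$ with $\pi(\gamma)\ge T$, the hyperbolic splitting $\cN_\gamma=\cN^s\oplus\cN^u$ is $(2\eta_0,T)$-dominated for $(\Psi^*_t)_{t\in\RR}$, and the two products
\[
\prod_{i=0}^{N-1}\|\Psi^*_T|_{\cN^s(\varphi_{iT}(p))}\|\quad\text{and}\quad \prod_{i=0}^{N-1}\|\Psi^*_{-T}|_{\cN^u(\varphi_{-iT}(p))}\|
\]
are both bounded above by $e^{-\eta_0\pi(\gamma)}$ for every $p\in\gamma$, where $N=[\pi(\gamma)/T]$. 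Along a periodic orbit the rescaling factors in $\Psi^*$ telescope to $1$ over a full period, so the Liao estimates pass from $\Psi$ to $\Psi^*$ without loss.

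Next, I fix any $\eta\in(0,\eta_0)$ and apply the Pliss lemma on $\gamma$. For an arbitrary base point $p_0\in\gamma$, the real sequence $\log\|\Psi^*_T|_{\cN^s(\varphi_{iT}(p_0))}\|$ is uniformly bounded (thanks to domination and the compactness of $\overline{\cU}$) and its average over one period is $\le-\eta_0$. The Pliss lemma therefore yields an iterate $p_1=\varphi_{i_1 T}(p_0)\in\gamma$ which is an $(\eta,T,\cN^s)$-Pliss point for $(\Psi^*_t)$. Applying the same argument to the backward sequence along $\cN^u$, starting at the same base point $p_0$, produces a point $p_2\in\gamma$ in the same discrete $\varphi_T$-orbit that is an $(\eta,T,\cN^u)$-Pliss point.

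Finally, I combine these through periodicity. On the periodic orbit $\gamma$, the forward and backward $\varphi_T$-orbits of $p_1$ both coincide, as sets, with the full discrete $\varphi_T$-orbit containing $p_0$. In particular, the forward $\varphi_T$-orbit of $p_1$ contains the $(\eta,T,\cN^s)$-Pliss point $p_1$ itself, and its backward $\varphi_T$-orbit contains $p_2$. Since $\eta<\eta_0$ guarantees the $(2\eta,T)$-domination hypothesis of Lemma~\ref{l.bi-pliss}, that lemma applied with $x=p_1$, $\cE=\cN^s$ and $\cF=\cN^u$ produces an $(\eta,T)$-bi-Pliss point in the $\varphi_T$-orbit of $p_1$, i.e.\ on $\gamma$, as required.

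The main (and rather mild) obstacle is bookkeeping in the Pliss step: one must accept a slight loss of exponent from $\eta_0$ to any prescribed $\eta<\eta_0$ and handle the possibility that $T$ does not divide $\pi(\gamma)$, so that the last (partial) $\varphi_T$-step within a period must be absorbed into the constants. Both issues are absorbed by shrinking $\eta$ and, if needed, enlarging $T$ at the outset; no genuinely new idea is required beyond the three tools quoted above.
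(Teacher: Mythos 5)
Your proposal is correct and follows essentially the same route as the paper's proof: invoke Theorem~\ref{Thm:fundamental} together with Remark~\ref{r.fundamental-property} to get the $(2\eta_0,T)$-domination and the Liao product estimates for $(\Psi^*_t)_{t\in\RR}$, apply the Pliss lemma forward along $\cN^s$ and backward along $\cN^u$, and conclude with Lemma~\ref{l.bi-pliss}. The extra remarks on telescoping of the rescaling over a period and on the partial last $\varphi_T$-step are harmless bookkeeping that the paper handles the same way.
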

\begin{proof}
Since $X$ is multi-singular hyperbolic, it is a star vector field.
Then Theorem~\ref{Thm:fundamental} and Remark ~\ref{r.fundamental-property}
give $\eta_0,T>0$
and a $C^1$ neighborhood $\cU$ of $X$ such that
for any periodic orbit $\gamma$ of any $Y\in \cU$,
the hyperbolic splitting $\cN_\gamma=\cN^{s}\oplus\cN^{u}$
is $(2\eta_0,T)$-dominated for $(\Psi^*_t)_{t\in\RR}$ and
$$\limsup_{n} \frac 1 n \sum_{i=0}^{n-1} \log \|\Psi^{*,Y}_T|_{\cN^s(\varphi^Y_{iT}(p))}\| \leq -\eta_0
\text{ \;and\; }
\limsup_{n} \frac 1 n \sum_{i=0}^{n-1} \log \|\Psi^{*,Y}_{-T}|_{\cN^u(\varphi^Y_{-iT}(p))}\| \leq -\eta_0.$$
Let $\eta\in (0,\eta_0)$ and $p\in \gamma$.
Note that $\|\Psi^*_T\|$ is bounded~\cite{GY}.
Pliss lemma~\cite{P} gives a forward iterate of $p$ for $\varphi_T$ which is
a $(\eta,T,\cN^s)$-Pliss point for $(\Psi^*_t)_{t\in\RR}$ and a backward iterate of $p$ for $\varphi_T$ which is
a $(\eta,T,\cN^u)$-Pliss point for $(\Psi^*_t)_{t\in\RR}$.
One concludes by Lemma~\ref{l.bi-pliss}.
\end{proof}

The following proposition gives the existence of bi-Pliss periodic points in a given region provided that one can get bi-Pliss periodic points by small perturbations. 
\begin{Proposition}\label{Pro:bi-Pliss}
Given  $\eta,T>0$, there exists a dense G$_\delta$ subset $\cG$ of $\cX^1(M)$ such that for any $X\in\cG$, the following properties are satisfied.
Let us assume that there exist a sequence of vector fields $\{X_n\}_{n\in\NN}$ converging to $X$,
a sequence $\{p_n\}_{n\in\NN}$ in $M$ converging to some point $z$ and numbers $\eta_n>\eta$ satisfying:
\begin{itemize}
\item $p_n$ is a hyperbolic periodic point for $X_n$ which is $(\eta_n,T)$-bi-Pliss for $(\Psi_t^{*,X_n})_{t\in\mathbb{R}}$,
\item the  hyperbolic splitting $\cN_{\orb(p_n)}=\cN^{s}\oplus \cN^{u}$ for $(\Psi^{*,X_n}_t)_{t\in\RR}$ is $(\eta_n,T)$-dominated.
\end{itemize}
Then for any neighborhood $U$ of $z$ there exist $p\in U$ and $\eta'\ge\eta$ satisfying:
\begin{itemize}
\item $p$ is a hyperbolic periodic point for $X$ which is $(\eta',T)$-bi-Pliss for $(\Psi_t^{*,X})_{t\in\mathbb{R}}$,
\item the hyperbolic splitting $\cN_{\orb(p)}=\cN^{s}\oplus \cN^{u}$ for $(\Psi^{*,X}_t)_{t\in\RR}$ is $(\eta',T)$-dominated.
\end{itemize}
\end{Proposition}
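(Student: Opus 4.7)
The plan is to apply a standard Baire category dichotomy: generically, either $X$ itself has a hyperbolic periodic orbit of the required type in a given open region, or no $C^1$-small perturbation of $X$ does. Fix a countable basis $\{U_k\}_{k\in\NN}$ of open subsets of $M$. For each $k$, let $B_k\subset\cX^1(M)$ be the set of vector fields $Y$ admitting a hyperbolic periodic orbit $\gamma$ and a point $q\in\gamma\cap U_k$ that is \emph{strictly} $(\eta,T)$-bi-Pliss for $(\Psi^{*,Y}_t)_{t\in\RR}$, and such that the hyperbolic splitting $\cN_\gamma=\cN^s\oplus\cN^u$ is \emph{strictly} $(\eta,T)$-dominated (all the defining inequalities being strict). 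I would then set
$$\cG_k := B_k \cup (\cX^1(M)\setminus\overline{B_k}),\qquad\cG:=\bigcap_{k\in\NN}\cG_k.$$

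The crucial step is to verify that $B_k$ is $C^1$-open. Given $Y\in B_k$ with orbit $\gamma$ and bi-Pliss point $q\in\gamma\cap U_k$, the classical persistence of hyperbolic periodic orbits provides, for every $Y'$ close to $Y$, a hyperbolic continuation $\gamma_{Y'}$ and a distinguished point $q_{Y'}\in\gamma_{Y'}\cap U_k$; the restrictions of $\Psi^{*,Y'}$ to $\gamma_{Y'}$ and of the invariant splitting on $\gamma_{Y'}$ depend continuously on $Y'$. The bi-Pliss and domination conditions, although a priori expressed by infinitely many inequalities, reduce on a single hyperbolic periodic orbit to a supremum attained on an initial segment whose length is controlled uniformly for $Y'$ near $Y$ (because the asymptotic slopes of the partial sums, i.e.\ the Lyapunov exponents of the linear Poincar\'e return map, depend continuously on $Y'$ and remain bounded strictly below $-\eta$). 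Hence these conditions become continuous functions of $Y'$ and their strictness persists. It follows that each $\cG_k$ is open and dense ($\partial B_k$ being closed with empty interior since $B_k$ is open), so $\cG$ is a dense $G_\delta$.

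To conclude, let $X\in\cG$, assume the hypotheses of the proposition, and let $U$ be any neighborhood of $z$. Choose a basic $U_k$ with $z\in U_k\subset U$. For $n$ large $p_n\in U_k$, and the strict inequality $\eta_n>\eta$ guarantees that the $(\eta_n,T)$-bi-Pliss property of $p_n$ and the $(\eta_n,T)$-domination of $\cN_{\orb(p_n)}$ become \emph{strict} $(\eta,T)$-conditions when $\eta$ is substituted for $\eta_n$; hence $X_n\in B_k$ for all large $n$. Therefore $X\in\overline{B_k}$, and since $X\in\cG_k$ the dichotomy forces $X\in B_k$, producing a hyperbolic periodic orbit of $X$ passing through $U_k\subset U$, with the required bi-Pliss and domination properties holding for $\eta'=\eta$. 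The main obstacle is the openness of $B_k$: one must argue carefully that the a priori infinite families of inequalities defining the bi-Pliss and domination properties really collapse to finitely many continuous and robust conditions along a periodic orbit, so that strict versions of the properties are stable under $C^1$ perturbation.
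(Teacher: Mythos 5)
Your proposal is correct and follows essentially the same route as the paper: a Baire dichotomy $\cG=\bigcap_k\big(B_k\cup(\cX^1(M)\setminus\overline{B_k})\big)$ with $B_k$ open (the paper's $\cH^i_k$, which additionally stratifies by the index of the periodic point — harmless either way), so that accumulation by the $X_n\in B_k$ forces $X\in B_k$. Your strict-inequality formulation is equivalent on a periodic orbit to the paper's ``there exists $\eta'>\eta$'', and your sketch of why the infinitely many Pliss/domination inequalities collapse to finitely many robust ones is exactly the openness argument the paper asserts without detail.
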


\begin{proof} 
Fix a countable basis $\{O_i\}_{i\in\mathbb{N}}$ of non-empty open sets of $M$ and let $\{U_k\}_{k\in\mathbb{N}}$  be the countable family consisting of all finite unions of elements in $\{O_i\}_{i\in\mathbb{N}}$. 
	
For any $1\le i\le \dim M-2$ and $k \in\NN$, let 
$\cH^i_{k}\subset \cX^1(M)$ be the set of vector fields $X$ such that 
there exist $\eta'>\eta$ and a hyperbolic periodic point $p\in U_k$ of index $i$ whose hyperbolic splitting $\cN_{{\rm Orb}(p)}=\cN^s\oplus\cN^u$ is $(\eta',T)$-dominated for $(\Psi_t^{*,X})_{t\in\mathbb{R}}$ and which is $(\eta',T)$-bi-Pliss for $(\Psi_t^{*,X})_{t\in\mathbb{R}}$.
By the robustness of the domination over hyperbolic periodic orbits, $\cH^i_{k}$ is an open set.
Let $\cN^i_{k}$ be the complement of the closure of $\cH^i_{k}$ in $\cX^1(M)$. By definition, $\cN^i_{k}$ is an open set and $\cN^i_{k}\cup \cH^i_{k}$ is dense in $\cX^1(M)$.
	
We define the following dense G$_\delta$ subset of $\cX^1(M)$:
$$\cG=\bigcap_{i=1}^{\dim(M)-2}\bigcap_{k\in\NN}(\cH_{k}^i\cup\cN_{k}^i).$$
We now can check $X\in\cG$ satisfies the required properties.
Let us consider $X_n\to X$ and $p_n\to z$ and $\eta_n>\eta$ as in the statement of the proposition
and let us fix a small neighborhood $U_k$ of $z$. Taking a subsequence, we can assume that
all the periodic points $p_n$ have the same index $i$.
This shows that $X$ belongs to the closure of $\cH^i_{k}$. Since $X$ belongs to $\cG$,
it belongs to the open set $\cH^i_{k}$, hence satisfies the conclusion of the proposition.
\end{proof}

The following result shows that the set of  Pliss points is closed. The proof goes just by taking a limit and by continuity, hence is omitted.
\begin{Proposition}\label{Pro:limit-bi-pliss}
Let $X\in\cX^1(M)$ and  assume that  there exist $\eta>0$, $T>0$,  a sequence of points $\{x_n\}$ and a sequence of $C^1$ vector fields $\{X_n\}$ satisfying
	\begin{itemize}
		\item $\lim\limits_{n\to\infty}X_n=X$, $\lim\limits_{n\to\infty}x_n=x$ is a regular point of $X$;
		\item for each $n$, there is an $(\eta,T)$-dominated splitting $\cN_{{\rm Orb}(x_n,X_n)}=\cE_n\oplus\cF_n$ for $(\Psi_t^{X_n})_{t\in\mathbb{R}}$.
	\end{itemize}
	Then there is an $(\eta,T)$-dominated splitting $\cN_{{\rm Orb}(x,X)}=\cE\oplus\cF$  for $(\Psi_t^{X})_{t\in\mathbb{R}}$ satisfying:
	\begin{itemize}
		\item If $x_n$ is an  $(\eta,T,\cE_n)$-Pliss point of $(\Psi_t^{*,X_n})_{t\in\mathbb{R}}$ for each $n$, then $x$ is an  $(\eta,T,\cE)$-Pliss point of $(\Psi_t^{*,X})_{t\in\mathbb{R}}$.
		\item If $x_n$ is an  $(\eta,T,\cF_n)$-Pliss point of $(\Psi_t^{*,X_n})_{t\in\mathbb{R}}$ for each $n$, then $x$ is an  $(\eta,T,\cF)$-Pliss point of $(\Psi_t^{*,X})_{t\in\mathbb{R}}$.
		\item If $x_n$ is an  $(\eta,T)$-bi-Pliss point of $(\Psi_t^{*,X_n})_{t\in\mathbb{R}}$ for each $n$, then $x$ is an  $(\eta,T)$-bi-Pliss point of $(\Psi_t^{*,X})_{t\in\mathbb{R}}$.
	\end{itemize}
\end{Proposition}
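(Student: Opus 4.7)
The plan is to extract a subsequential limit of the splittings $\cE_n\oplus\cF_n$ by compactness in the Grassmannian, use continuity of the (rescaled) linear Poincar\'e flow at the regular point $x$ to transport both the domination estimate and the Pliss inequalities to the limit, and then extend the limit splitting by invariance over the entire orbit $\orb(x,X)$.

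First I would exploit the regularity of $x$: because $X(x)\neq 0$, a neighborhood of $x$ is regular for all $X_n$ with $n$ large, so $(Y,y,t)\mapsto\varphi_t^Y(y)$ is continuous near $(X,x,t)$ for each fixed $t$, yielding $\varphi_{kT}^{X_n}(x_n)\to\varphi_{kT}^X(x)$ for every $k\in\ZZ$. Along this converging sequence of regular points, the normal bundle $\cN$ and the linear Poincar\'e flow $\Psi_T^Y$ vary continuously in $(Y,y)$. Using compactness of Grassmannians together with a diagonal argument in $k\in\ZZ$, extract a subsequence along which both $\cE_n(\varphi_{kT}^{X_n}(x_n))$ and $\cF_n(\varphi_{kT}^{X_n}(x_n))$ converge to subspaces $\cE(\varphi_{kT}^X(x))$ and $\cF(\varphi_{kT}^X(x))$ of complementary dimensions in $\cN(\varphi_{kT}^X(x))$. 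Invariance under $\Psi_T^{X_n}$ passes to the limit, and interpolating via the flow $\varphi_t^X$ gives a continuous $(\Psi_t^X)$-invariant splitting of $\cN_{\orb(x,X)}$.

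Next I would pass the inequalities to the limit. For domination, the estimate
\[\|\Psi_T^{X_n}|_{\cE_n(\varphi_{kT}^{X_n}(x_n))}\|\cdot\|\Psi_{-T}^{X_n}|_{\cF_n(\varphi_{(k+1)T}^{X_n}(x_n))}\|\le e^{-\eta}\]
is continuous in its data at a regular point, so it survives the limit and yields $(\eta,T)$-domination for $(\Psi_t^X)$. For the $(\eta,T,\cE_n)$-Pliss hypothesis, fix $m\ge 1$: the product
\[\prod_{i=0}^{m-1}\|\Psi_T^{*,X_n}|_{\cE_n(\varphi_{iT}^{X_n}(x_n))}\|\le e^{-m\eta}\]
has only finitely many factors, each of which converges because the rescaling factor $\|D\varphi_T^Y|_{\RR.X(y)}\|$ is continuous in $(Y,y)$ at regular points. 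Hence the inequality is preserved for every $m$, giving $(\eta,T,\cE)$-Pliss at $x$; the same argument applied to $\Psi_{-T}^*$ along the backward orbit handles $(\eta,T,\cF)$-Pliss, and the bi-Pliss case is the combination of the two.

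The proof is essentially routine, as the paper notes; the only minor obstacle is to ensure that the Grassmannian extraction produces a single subsequence valid for \emph{all} $k\in\ZZ$ simultaneously (handled by diagonalization), and to observe that although $\orb(x,X)$ may be unbounded in time or accumulate on $\sing(X)$, each individual Pliss check involves only finitely many iterates $\varphi_{iT}^X(x)$, at which the rescaled linear Poincar\'e flow is defined and depends continuously on $(Y,y)$.
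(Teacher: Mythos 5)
Your proposal is correct and is precisely the argument the paper has in mind: the authors omit the proof, stating only that it ``goes just by taking a limit and by continuity,'' and your compactness-in-the-Grassmannian extraction, diagonalization over $k\in\ZZ$, and passage of the finitely-many-factor Pliss products to the limit at regular points fill in exactly those details. The only (cosmetic) refinements worth noting are to first pass to a subsequence along which $\dim\cE_n$ is constant, and to observe that the domination estimate must hold at every point $\varphi_s^X(x)$ of the orbit, which follows by the same continuity argument applied to $\varphi_s^{X_n}(x_n)\to\varphi_s^X(x)$ together with the $\Psi$-invariance of the limit splitting.
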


\subsection{Stable manifolds and stable sets of orbits}
By \cite[Theorem B]{CYZ2},
for any $X\in \cX^1(M)$ and
any invariant compact set $\Lambda$ admitting a singular dominated splitting
$\cN_{\Lambda\setminus{\rm Sing}(X)}=\cE\oplus\cF$,
there exist a neighborhood $U$ of $\Lambda$ and a $C^1$ neighborhood $\cU$ of $X$
such that any invariant compact set $\Lambda^Y$ in $U$ for any $Y\in \cU$
admits a singular dominated splitting
$\cN_{\Lambda^Y\setminus{\rm Sing}(Y)}=\cE^Y\oplus\cF^Y$
such that $\dim(\cE^Y)=\dim(\cE)$ and $\dim(\cF^Y)=\dim(\cF)$.
The following has been essentially proved in~\cite{GY}.

\begin{Proposition}\label{Pro:size-stable-Pliss}
For $X\in\cX^1(M)$, let $\La$ be an invariant compact set
with a singular dominated splitting $\cN_{\Lambda\setminus{\rm Sing}(X)}=\cE\oplus\cF$.
Given $\eta>0$ and $T>0$, there are $\delta>0$, a neighborhood $U$ of $\Lambda$ and a $C^1$ neighborhood $\cU$ of $X$ such that for any $Y\in\cU$ and any $(\eta,T,\cE^Y)$-Pliss point $p$ for $(\Psi_t^{*,Y})_{t\in\mathbb{R}}$ satisfying ${\rm Orb}(p,Y)\subset U\setminus{\rm Sing}(Y)$,
\begin{enumerate} 
\item there is a $C^1$ embedding
$\kappa_p\colon \cE^Y_p(\delta\|Y(p)\|)\to M$
such that
$$\kappa_p(0)=p,\quad D_0\kappa_p\cdot\cE^Y_p=\cE^Y_p$$
and the image  $W^{\cE^Y}_{\delta\|Y(p)\|}(p,Y):={\rm Im}(\kappa_p)$
is contained in the stable set $W^s({\rm Orb}(p,Y))$.

\item\label{plaque2} for any sequences $Y_n\to Y$ in $\cX^1(M)$ and $p_n\to p$ in $M$
such that each $p_n$ is an $(\eta,T,\cE^{Y_n})$-Pliss point   for $(\Psi_t^{*,Y_n})_{t\in\mathbb{R}}$ satisfying ${\rm Orb}(p_n,Y_n)\subset U\setminus{\rm Sing}(Y_n)$,
the sequence of submanifolds $W^{\cE^{Y_n}}_{\delta\|Y_n(p_n)\|}(p_n,Y_n)$
converges towards $W^{\cE^Y}_{\delta\|Y(p)\|}(p,Y)$ for the $C^1$ topology.
 \end{enumerate}
An analogous property defines submanifolds $W^{\cF^Y}_{\delta\|Y(p)\|}(p,Y)$
at $(\eta,T,\cF^Y)$-Pliss points.

Moreover any $z\in \Lambda\setminus {\rm Sing}(X)$ admits a neighborhood $V_z$ such that,
for any $p,q\in V_z$ with $X$-orbits in $U\setminus{\rm Sing}(X)$,
if $p$ is $(\eta,T,\cE)$-Pliss for $(\Psi_t^{*,X})_{t\in\RR}$ and
$q$ is $(\eta,T,\cF)$-Pliss for $(\Psi_t^{*,X})_{t\in\RR}$,
$$\varphi_{(-1,1)}(W^{\cE}_{\delta\|X(p)\|}(p,X))\pitchfork W^{\cF}_{\delta\|X(q)\|}(q,X)\neq\emptyset.$$
\end{Proposition}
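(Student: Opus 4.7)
The strategy is to invoke Hirsch--Pugh--Shub plaque-family theory for the rescaled linear Poincar\'e flow, then transfer these normal plaques to the manifold via the flow tubular neighborhood. The choice of size $\delta\|Y(p)\|$ is exactly what makes this transfer uniform: in a small flow-box around a regular point, the natural normal coordinates are obtained by dividing by the flow speed, and the rescaled linear Poincar\'e flow $\Psi^*_t$ is precisely the linearization of the Poincar\'e return map in these rescaled coordinates. The $(\eta,T,\cE^Y)$-Pliss condition $\prod_{i=0}^{n-1}\|\Psi^*_T|_{\cE^Y(\varphi^Y_{iT}(p))}\|\le e^{-n\eta}$ together with the $(2\eta,T)$-domination from the singular dominated splitting provides uniform contraction and uniform exponential separation between $\cE^Y$ and $\cF^Y$, which is the hypothesis needed by Gan--Yang's construction in~\cite{GY}.

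First I would fix $U$, $\cU$ and the singular dominated splitting $\cE^Y\oplus\cF^Y$ using \cite[Theorem B]{CYZ2}, so that for every $Y\in\cU$ and every orbit staying in $U\setminus\sing(Y)$ the norm $\|Y\|$ is bounded below and the Poincar\'e return maps in time $T$ are well-defined on normal discs of radius proportional to $\|Y(p)\|$. On this family of discs I would run the graph transform associated to the $(\eta,T)$-dominated, uniformly contracted rescaled pseudo-cocycle $\Psi^*_T|_{\cE^Y}$: the fixed point of the contraction is a graph over $\cE^Y_p$, whose image under the rescaling gives the embedded disc $W^{\cE^Y}_{\delta\|Y(p)\|}(p,Y)$ tangent to $\cE^Y_p$ at $p$. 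Standard Pliss-point estimates then show this disc lies in the stable set $V^s(\orb(p,Y))$. Continuity under $(Y_n,p_n)\to(Y,p)$ (item~\eqref{plaque2}) is immediate from the uniform contracting character of the graph transform: the fixed-point operator depends continuously on the parameters $(Y,p)$, so its fixed point does too, in $C^1$ topology on the base disc of radius $\delta\|Y(p)\|$. The construction for $\cF^Y$ at $(\eta,T,\cF^Y)$-Pliss points is symmetric, obtained by running the argument for $-Y$.

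For the final transversality statement, I would shrink to a neighborhood $V_z$ of $z\in\Lambda\setminus\sing(X)$ so small that $\|X\|$ is almost constant on it, the bundles $\cE$ and $\cF$ are almost constant, the plaques $W^{\cE}_{\delta\|X(p)\|}(p,X)$ and $W^{\cF}_{\delta\|X(q)\|}(q,X)$ are nearly affine discs of a definite size tangent to translates of $\cE(z)$ and $\cF(z)$, and the flow $\varphi_{(-s,s)}$ on these discs is almost parallel to $X(z)$ for $s\in(-1,1)$. By the singular domination we have $T_zM=\cE(z)\oplus\cF(z)\oplus\RR.X(z)$ and the dimensions match: the thickened $\cE$-plaque $\varphi_{(-1,1)}(W^{\cE}_{\delta\|X(p)\|}(p,X))$ has dimension $\dim(\cE)+1$ and $W^{\cF}_{\delta\|X(q)\|}(q,X)$ has dimension $\dim(\cF)$, summing to $\dim(M)$; for plaques through a common nearby base point $z$ their tangent spaces are therefore transverse, and a compactness/continuity argument provides a genuine transverse intersection point.

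The step I expect to be most delicate is the uniform control of plaque size when $p$ approaches a singularity. The Pliss hypothesis is stated for $\Psi^*_t$, not $\Psi_t$, and the assumption $\orb(p,Y)\subset U\setminus\sing(Y)$ with a chosen isolating neighborhood $U$ is precisely what prevents the graph-transform domain, of radius $\delta\|Y(p)\|$, from shrinking too fast relative to the nonlinear error terms along the orbit. Keeping the constants $\delta$ and $\cU$ uniform, so that the same $\delta$ works for the limit plaques in item~\eqref{plaque2}, is the main quantitative bookkeeping; everything else is a parameterized version of the classical stable-manifold theorem.
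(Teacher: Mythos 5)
Your proposal follows essentially the same strategy as the paper: construct the rescaled non-linear sectional Poincar\'e flow on uniform normal balls, apply the Hirsch--Pugh--Shub plaque-family theorem (which you phrase equivalently as a graph transform) to the dominated splitting, project to $M$ via the exponential map to obtain discs of radius $\delta\|Y(p)\|$, use the Pliss hypothesis plus local invariance for the stable-set containment, read off continuity in $(Y,p)$ from the plaque family, and quote \cite[Corollary 2.16]{GY} (or reconstruct it via a dimension count) for the transverse intersection. One small imprecision: your phrase ``the norm $\|Y\|$ is bounded below'' on orbits in $U\setminus\sing(Y)$ is not correct and not needed --- the vector field can be arbitrarily small near singularities, and the whole point of the rescaling by $\|Y(p)\|$ is that the rescaled sectional Poincar\'e maps are then defined on balls of a \emph{uniform} radius $\beta$ in $\cN$ regardless of how small $\|Y\|$ gets; you essentially recognize this in your final paragraph, so it is a slip of wording rather than a gap.
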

\begin{proof}[Comments about the proof]
We follow the proof of~\cite{GY}.
We build a non-linear sectional Poincar\'e flow on a neighborhood of the zero-section
of the normal bundle $\cN_U$:
for any regular point $x\in \Lambda$
and any time $t$, the holonomy associates to the flow defines a map
from a neighborhood of $x$ in $\exp_x(\cN)$ to $\exp_{\varphi_t(x)}(\cN)$;
conjugating by the exponential maps, one obtains a local diffeomorphism
$\cP_{x,\varphi_t(x)}$ between neighborhoods of $0$ in $\cN_x$
and $\cN_{\varphi_t(x)}$. By rescaling
by $\|X(x)\|$, one defines local diffeomorphisms
$\cP^*_{x,\varphi_t(x)}$ which are defined on uniform balls $\cN_x(\beta)$
for times $t\in [-1,1]$. See~\cite[Section 2.2]{GY}.

We then apply the plaque family theorem~\cite{HPS}
to the sequence of local diffeomorphisms $\cP^*_{\varphi_{n}(x),\varphi_{n+1}(x)}$
and to the dominated splitting $\cE\oplus \cF$
and get a continuous family of locally invariant uniform $C^1$ plaques $\Delta^\cE_x\subset \cN_x$
tangent to $\cE_x$ at $0$.
Its projection by the exponential map $\exp_x$
is a submanifold of $M$ tangent to $\cE_x$ at $x$.
The local invariance of the plaques and the Pliss property imply
that when $x$ is a $(\eta,T,\cE)$-Pliss point, the projection $\exp_x(\Delta^\cE_x)$
is contained in the stable set of the orbit of $x$.
By definition of the rescaling, the projection has size proportional to $\|X(x)\|$.
Hence there exists $\delta>0$ such that
the projection contains a ball of radius $\delta \|X(x)\|$,
that we denote by $W^\cE_{\delta\|X(x)\|}(x)$.
The continuity of the plaque family implies the continuity of the submanifolds
$W^\cE_{\delta\|X(x)\|}(x)$ with respect to $x$ for the $C^1$ topology, as stated in
Item~\ref{plaque2} for a fixed vector field $X$.

Note that one can extend the definition of (non-linear) rescaled sectional Poincar\'e flow
for vector fields $Y$ that belong to a $C^1$ neighborhood $\cU$ of $X$
and regular $Y$-orbits contained in a neighborhood $U$ of $\Lambda$.
The plaque family theorem gives plaques $\Delta^{\cE^Y}_x$
that vary continuously with $x$ and $Y$.
After projection to $M$, one gets (at Pliss points)
projections $W^{\cE^Y}_{\delta\|Y(x)\|}(x)$
which vary continuously with $x$ and $Y$ for the $C^1$ topology.

The last property of the proposition is~\cite[Corollary 2.16]{GY}.
\end{proof}

\subsection{Intersections between invariant manifolds of periodic orbits and singularities}
The following theorem is stated in~\cite[Theorem 3.10]{GYZ} and generalizes a result by Liao~\cite{L}
(for sequences of uniform sinks accumulating on a singularity). It is proved in~\cite{Z}, see also~\cite[Section 3]{PYY2}.   

\begin{Theorem}\label{Thm:intersection-singular-periodic}
For $X\in\mathcal{X}^1(M)$, let $\sigma$ be a Lorenz like singularity with negative center Lyapunov exponent.
Let $\{X_n\}_{n\in\NN}$ be a sequence of vector fields with hyperbolic periodic points $p_n$ such that:
	\begin{itemize}
		\item $X_n\to X$ in $\cX^1(M)$ and $p_n\to \sigma$.
		\item There are $\eta,T>0$ such that the hyperbolic splitting $\cN_{{\rm Orb}(p_n,X_n)}=\cN^s\oplus \cN^u$ is $(\eta,T)$-dominated for $(\Psi_t^{*,X_n})_{t\in \RR}$
		and $\ind(\sigma)>\dim(\cN^s)$.
		\item Each $p_n$ is $(\eta,T,\cN^u)$-Pliss for $(\Psi_t^{*,X_n})_{t\in\RR}$.
	\end{itemize}
	Then for $n$ large enough, $W^u({\rm Orb}(p_n))\pitchfork W^s(\sigma^{X_n})\neq\emptyset$, where $\sigma^{X_n}$ is the continuation of $\sigma$ for $X_n.$
\end{Theorem}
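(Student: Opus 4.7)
The plan is to construct small unstable plaques at the $p_n$ via Proposition~\ref{Pro:size-stable-Pliss}, blow up the local geometry near $\sigma$ by rescaling, and turn the statement into a transversality question in the linearized Lorenz-like coordinates. First, I would apply Proposition~\ref{Pro:size-stable-Pliss} (with $\cF=\cN^u$) to obtain, for each $n$ large enough, a local unstable plaque $W^{\cN^u}_{\delta\|X_n(p_n)\|}(p_n,X_n)\subset V^u(\orb(p_n))$ of $C^1$ size $\delta\|X_n(p_n)\|$, tangent at $p_n$ to $\cN^u(p_n)$. Saturating this plaque by short flow segments produces a $(\dim\cN^u+1)$-dimensional immersed submanifold of $V^u(\orb(p_n))$ whose tangent space at $p_n$ is $\cN^u(p_n)\oplus\RR\cdot X_n(p_n)$; the index hypothesis $\ind(\sigma)>\dim\cN^s$ is exactly the dimensional condition $(\dim\cN^u+1)+\ind(\sigma)>\dim M$ needed for a transverse intersection with $W^s(\sigma^{X_n})$ to exist.

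Next, I would fix a small neighborhood $U$ of $\sigma$ with a $C^1$-linearizing chart for $X$, so that $W^s_{\text{loc}}(\sigma^{X_n})$ is $C^1$-close in $U$ to a translate of $E^{ss}_\sigma\oplus E^c_\sigma$, and rescale the chart by $\|X_n(p_n)\|^{-1}$. In the rescaled coordinates the plaque has uniform size $\delta$, the vector field on bounded regions $C^0$-converges to the linear flow $DX_\sigma$ with eigenvalue data $\lambda^{ss}_\sigma<\lambda^c_\sigma<0<\lambda^{uu}_\sigma$ satisfying the Lorenz-like gap $|\lambda^c_\sigma|<\min\{-\lambda^{ss}_\sigma,\lambda^{uu}_\sigma\}$, and up to a subsequence the tangent data $\cN^u(p_n)\oplus\RR\cdot X_n(p_n)$ converges to a $(\dim\cN^u+1)$-dimensional subspace $E$ of $T_\sigma M$. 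Using the $(\eta,T,\cN^u)$-Pliss condition for the rescaled linear Poincaré flow, the $(\eta,T)$-domination of $\cN^s\oplus\cN^u$ and the Lorenz-like spectral gap, one checks that the projection of $E$ onto $T_\sigma M/(E^{ss}_\sigma\oplus E^c_\sigma)\simeq E^{uu}_\sigma$ is surjective, so $E+(E^{ss}_\sigma\oplus E^c_\sigma)=T_\sigma M$.

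Once this transversality at the limit is established, item~\ref{plaque2} of Proposition~\ref{Pro:size-stable-Pliss} (continuity of the plaques in $C^1$) and the $C^1$ continuity of $W^s_{\text{loc}}(\sigma^{X_n})$ in $X_n$ convert the limit transverse configuration into an actual transverse intersection in the rescaled chart for all $n$ large enough, which translates directly (since rescaling is just a change of coordinates) into $V^u(\orb(p_n))\pitchfork W^s(\sigma^{X_n})\neq\emptyset$. The hard step is verifying the surjectivity of the projection $E\to E^{uu}_\sigma$: when $\dim E^{ss}_\sigma>\dim\cN^s$ the limit $\cN^u(p_n)$ has strictly more dimensions than $E^{uu}_\sigma$ and the extra directions have to be controlled against the eigenvalue $\lambda^c_\sigma$; ruling out that they sit inside $E^{ss}_\sigma$ is where the Lorenz-like spectral gap becomes essential, as otherwise the Pliss expansion rate would be incompatible with the spectral rates of $D\varphi_t|_{E^{ss}_\sigma}$ after rescaling by $\|D\varphi_t|_{\RR\cdot X(p_n)}\|$. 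This balancing of rates is the technical core of the arguments carried out in~\cite{Z} and~\cite[Section~3]{PYY2}.
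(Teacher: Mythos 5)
The paper does not prove this statement at all: it is imported wholesale from the literature, with the remark that it is stated in~\cite[Theorem~3.10]{GYZ}, generalizes a result of Liao~\cite{L}, and is proved in~\cite{Z} (see also~\cite[Section~3]{PYY2}). There is therefore no in-paper proof to compare your sketch against, and any assessment must be on the merits of the sketch itself.

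Your outline --- build unstable plaques at $p_n$ via Proposition~\ref{Pro:size-stable-Pliss}, rescale by $\|X_n(p_n)\|^{-1}$ near $\sigma$, and reduce to a transversality statement in the linearized Lorenz-like picture --- is a reasonable modern reformulation of the Liao-style argument, and your dimension count $(\dim\cN^u+1)+\ind(\sigma)>\dim M\Leftrightarrow\ind(\sigma)>\dim\cN^s$ is correct. The main gap is reachability: transversality of tangent data alone does not produce an intersection. The plaque at $p_n$ has $C^1$-size $\delta\|X_n(p_n)\|$, so after rescaling by $\|X_n(p_n)\|^{-1}$ it has size $\delta$, but in those same rescaled coordinates $\sigma^{X_n}$ sits at distance comparable to $1$ from $p_n$ (since $\|X_n(p_n)\|$ is comparable to $d(p_n,\sigma^{X_n})$), and $W^s(\sigma^{X_n})$ is an $O(1)$-size object through $\sigma^{X_n}$. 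Nothing in the hypotheses forces the rescaled $E^{uu}_\sigma$-component of $p_n-\sigma^{X_n}$ to be $\le\delta$, and ``saturating by short flow segments'' only adds an $O(1)$ length in the flow direction, not in $E^{uu}_\sigma$. So the plaque at $p_n$ need not meet $W^s(\sigma^{X_n})$ even if the limiting tangent spaces are transverse. The actual proof has to transport the plaque along $\gamma_n$ --- say backward to the entry point of $\gamma_n$ into a fixed neighborhood of $\sigma$, where $\|X_n\|$ is uniformly bounded below while the entry point is forced close to $W^s_{\rm loc}(\sigma^{X_n})$ by hyperbolicity of $\sigma$ --- and must simultaneously verify that the $(\eta,T,\cN^u)$-Pliss property (or a slightly weakened version of it) survives this transport; this propagation is itself nontrivial because the Pliss condition at $p_n$ does not automatically hold at earlier iterates, and it is here, not just in the tangent-space surjectivity you mention, that the Lorenz-like spectral gap $|\lambda^c_\sigma|<\lambda^{uu}_\sigma$ and the rescaling by $\|D\varphi_t|_{\RR X}\|$ enter. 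Finally, you explicitly defer the surjectivity of $E\to E^{uu}_\sigma$ to~\cite{Z,PYY2}; the paper defers the entire theorem to those same references, so with that step also black-boxed the sketch becomes essentially a gloss on the citations rather than an independent proof.
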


\section{Compactification of flows}
We define now the blowup of a manifold associated to a $C^1$ vector field $X\in \cX^1(M)$.

\subsection{Blow-up of  manifolds and of invariant sets}~\label{s.blowup-of-manifold}
Consider the Grassmannian manifold 
$$\cG^1(M):=\{L: \textrm{ $L$ is a 1-dimensional linear space in $T_xM$, for $x\in M$}\}$$
and let $\widetilde \theta: \cG^1(M)\to M$ be the natural  projection. Given a $C^1$ vector field $X$, the flow $(\varphi_t)_{t\in\RR}$ generated by $X$ induces a flow $(\widetilde\varphi_t)_{t\in\RR}$ on $\cG^1(M)$ defined by 
$\widetilde\varphi_t(L)=D\varphi_t(L).$ 
One introduces
$$\widehat M={\rm Closure}\big(\big\{\RR\cdot X(x):~x\in M\setminus{\rm Sing}(X)\big\}\big).$$
{
Following~\cite{BdL} (see also a related construction in~\cite{LGW}),
for any invariant compact set $\Lambda$ of $(\varphi_t)_{t\in\RR}$, one associates
a subset  $\widehat \Lambda\subset \widehat M$ as follows.

At each singularity $\sigma\in \Lambda$, one considers the Oseledets splitting
$T_\sigma M= E_1(\sigma)\oplus\dots\oplus E_k(\sigma)$ and the corresponding Lyapunov exponents
$\lambda_1<\dots<\lambda_k$. For each $1\leq i\leq k$ such that $\lambda_i<0$,
there exists an invariant (strong) stable manifold $W^{s}_i(\sigma)$ that is tangent to  the stable space $E^s_i(\sigma):=E_1(\sigma)\oplus\dots\oplus E_i(\sigma)$. Similarly if $\lambda_j>0$,
there exists an invariant (strong) unstable manifold $W^{u}_j(\sigma)$ that is tangent to
the unstable space $E^u_j(\sigma):=E_j(\sigma)\oplus\dots\oplus E_k(\sigma)$.

The \emph{escaping stable space} is the biggest
stable space $E^s_i(\sigma)$ such that $W^s_i(\sigma)\cap \Lambda=\{\sigma\}$.
The \emph{escaping unstable space} is the biggest
unstable space $E^u_j(\sigma)$ such that $W^u_j(\sigma)\cap \Lambda=\{\sigma\}$.
The \emph{active center space} of $\sigma$ is then the sum
$E^c(\sigma):=E_{i+1}(\sigma)\oplus \dots\oplus E_{j-1}(\sigma)$ of Oseledets subspaces that are not involved in the
escaping stable and unstable spaces. Hence
\begin{equation}\label{e.escaping-splitting}
T_\sigma M= E^s_i(\sigma)\oplus E^c(\sigma)\oplus E^u_j(\sigma).
\end{equation}

We then define:
$$ \widehat \Lambda=\big\{\RR\cdot X(x):~x\in \Lambda\setminus{\rm Sing}(X)\big\}\cup \bigcup_{\sigma\in \sing(X)\cap \Lambda} \big\{\RR\cdot v:~v\in E^c(\sigma)\setminus \{0\}\big\}.$$
Note that $\widehat M$ and $\widehat \Lambda$ are invariant under the restriction
$(\widehat\varphi_t)_{t\in\RR}$ of the flow $(\widetilde\varphi_t)_{t\in\RR}$.
We call $\widehat M$ (resp. $\widehat \Lambda$) the {\it blow-up} of $M$ (resp. $\Lambda$). 
We denote by $\theta: \widehat M\rightarrow M$ the restriction of the projection $\widetilde \theta: \cG^1(M)\to M$.
}

\begin{Remark}
{\rm (1)} For $x\in M\setminus \sing(X)$, there is a unique $\widehat x\in \widehat M$
satisfying $\theta(\widehat x)=x$: this is $\widehat x=\RR\cdot X(x)$.
And $\theta$ is a homeomorphism between neighborhoods of $\widehat x\in \widehat M$
and of $x\in M$.
\smallskip

\noindent
{\rm (2)} $\theta(\widehat M)=M$ when $\Sing(X)$ has empty interior
(e.g. when $X$ is multi-singular hyperbolic).

{
\noindent
{\rm (3)} \cite[Proposition 37]{BdL} shows that
when $\Lambda$ contains only finitely many singularities,
the set $\widehat \Lambda$ is compact
(it is in fact true for general sets $\Lambda$, but we will not need it).}
\end{Remark}

{
In the following we will discuss and use two important properties of this blow-up.
One of them states that if the linear Poincar\'e flow on $\Lambda$ admits a dominated splitting, then
this splitting extends to the blow-up $\widehat \Lambda$ under some conditions
(see Lemma~\ref{Lem:dominated-manifold-extended}).
The other one is the following (a similar proof has  appeared in \cite[Proposition 41]{BdL} and \cite[Lemma 4.4]{LGW}):

\begin{Lemma}\label{l.lift-omega-limit}
Let $\wh\Lambda\subset \wh M$ be the blow-up of an invariant compact set $\Lambda$ whose singularities are hyperbolic.
For any $x\in M\setminus W^s(\sing(X))$ such that $\omega(x)\subset \Lambda$, the
lift $\wh x$ satisfies $\omega(\wh x)\subset \wh \Lambda$.
\end{Lemma}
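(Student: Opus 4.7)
The lift $\widehat x = \RR\cdot X(x)$ is well defined because $x\notin W^s(\sing(X))$ in particular gives $x\notin\sing(X)$. Given any $\widehat y \in \omega(\widehat x)$, pick $t_n\to\infty$ with $\widehat\varphi_{t_n}(\widehat x)\to \widehat y$, and set $y:=\theta(\widehat y)=\lim\varphi_{t_n}(x)\in\omega(x)\subset\Lambda$. If $y\notin\sing(X)$, the unique lift $\widehat y = \RR\cdot X(y)$ lies in $\widehat\Lambda$ by definition of the blow-up. It therefore remains to treat the case $y=\sigma$ for some hyperbolic singularity $\sigma$ of $\Lambda$; here we must show $\widehat y\in\PP(E^c(\sigma))$. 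Working in local coordinates near $\sigma$ and writing $A:=DX_\sigma$ (invertible), the $C^1$ approximation $X(\sigma+z)=Az+o(\|z\|)$ applied to $z_n:=\varphi_{t_n}(x)-\sigma$ gives, after extracting a subsequence, $z_n/\|z_n\|\to w$ for some unit vector, and $\widehat y=\RR\cdot Aw$. Since $A$ preserves the escaping splitting $E^s_i(\sigma)\oplus E^c(\sigma)\oplus E^u_j(\sigma)$, it suffices to prove that $w$ has no component in $E^s_i(\sigma)$ nor in $E^u_j(\sigma)$.

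To exclude a nonzero $E^u_j$-component of $w$: fix a small $\epsilon>0$ and let $\tau_n\geq 0$ be the minimal time such that the $E^u_j$-component of $\varphi_{t_n+\tau_n}(x)-\sigma$ has norm $\epsilon$. Because the $E^u_j$-component grows exponentially faster than the $E^s_i$ and $E^c$ components in the linearization, $\tau_n$ exists for $n$ large and tends to infinity, and along a subsequence $\varphi_{t_n+\tau_n}(x)$ converges to a point $q$ on the strong unstable manifold $W^u_j(\sigma)$ (invariant and tangent to $E^u_j$) with $d(q,\sigma)=\epsilon$. Since $t_n+\tau_n\to\infty$ we have $q\in\omega(x)\subset\Lambda$, contradicting the defining property $W^u_j(\sigma)\cap\Lambda=\{\sigma\}$.

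To exclude a nonzero $E^s_i$-component of $w$, one applies the symmetric backward-iteration scheme: let $\tau_n\geq 0$ be minimal with $\|\varphi_{t_n-\tau_n}(x)-\sigma\|=\epsilon$; then $\tau_n\to\infty$ and, along a subsequence, $\varphi_{t_n-\tau_n}(x)\to q$ with $q\in W^s_i(\sigma)$ and $d(q,\sigma)=\epsilon$. The main obstacle is that the time $t_n-\tau_n$ need not tend to $+\infty$, so one distinguishes three sub-cases. If $t_n-\tau_n\to+\infty$ along a subsequence, then $q\in\omega(x)\subset\Lambda$, contradicting $W^s_i(\sigma)\cap\Lambda=\{\sigma\}$. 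If $t_n-\tau_n$ has a bounded subsequence converging to some $s_0\in\RR$, then continuity of the flow gives $q=\varphi_{s_0}(x)\in W^s(\sigma)$, so $x=\varphi_{-s_0}(q)\in W^s(\sigma)\subset W^s(\sing(X))$ by $\varphi$-invariance of the stable set, contradicting the hypothesis on $x$. If $t_n-\tau_n\to-\infty$, writing $x=\varphi_{\tau_n-t_n}(\varphi_{t_n-\tau_n}(x))$ and tracking in Hartman--Grobman style that the $E^c\oplus E^u_j$ components of $\varphi_{t_n-\tau_n}(x)-\sigma$ decay to zero forces the forward-iterated point $x$ to coincide in the limit with the forward orbit of $q\in W^s(\sigma)$, which converges to $\sigma$; this yields $x=\sigma$, contradicting $x\notin\sing(X)$. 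This last case is the technical heart of the argument and parallels the proofs in \cite{BdL} and \cite{LGW}. Having excluded both offending components, $w\in E^c(\sigma)$ and hence $\widehat y\in\PP(E^c(\sigma))\subset\widehat\Lambda$.
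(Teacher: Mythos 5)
Your proof is correct, but it is organized differently from the paper's. The paper works directly with the flow direction $\RR.X(\varphi_{t_n}(x))$: it introduces the \emph{entry} point $y=\lim\varphi_{s_n}(x)$ of the orbit into a small neighborhood of $\sigma$, shows $y\in (W^s(\sigma)\cap\Lambda)\setminus W^s_i(\sigma)$ using the escaping property of $W^s_i$, deduces that the flow direction along the forward orbit of $y$ tends to $E^c(\sigma)$, and then propagates this to time $t_n$ by domination of the tangent flow, obtaining $L\subset E^c(\sigma)\oplus E^u_j(\sigma)$; a symmetric argument gives the other inclusion. You instead normalize the displacement $z_n=\varphi_{t_n}(x)-\sigma$ and argue by contradiction, letting a putative bad component of $w$ force an \emph{exit} point $q$ on $W^u_j(\sigma)$ (forward) or $W^s_i(\sigma)$ (backward) that violates the escaping properties or the hypothesis $x\notin W^s(\sing(X))$. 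Both arguments rest on the same ingredients (hyperbolicity of $\sigma$, the escaping splitting, domination near $\sigma$, and $x\notin W^s(\sigma)$), and your reduction $\wh y=\RR. Aw$ via $X(\sigma+z)=Az+o(\|z\|)$ is sound. Two remarks on your write-up: first, your third sub-case ($t_n-\tau_n\to-\infty$) — the only genuinely hand-wavy step — is actually vacuous once you fix $\epsilon<d(x,\sigma)$, since then the backward orbit from $\varphi_{t_n}(x)$ must cross the $\epsilon$-sphere strictly between times $0$ and $t_n$; you should discard it rather than invoke a Hartman--Grobman-style tracking. Second, the assertions that the exit-point limits land exactly on the \emph{strong} manifolds $W^u_j(\sigma)$, resp.\ $W^s_i(\sigma)$ (and not merely on $W^u_{loc}(\sigma)$, resp.\ $W^s_{loc}(\sigma)$) require the standard invariant-cone argument around $E^u_j$, resp.\ $E^s_i$, for the displacement of a $C^1$ flow near a hyperbolic singularity; this is left implicit, but at a level of detail comparable to the paper's own appeal to ``the domination of the tangent flow near $\sigma$''.
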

\begin{proof}
Since $\theta(\omega(\widehat x))=\omega(x)$,
and $\theta^{-1}(z)$ is a singleton for non-singular points $z\in M$,
any point $\widehat z\in \omega(\widehat x)$ whose projection $\theta(\widehat z)$ is not singular
belongs to $\wh \Lambda$. We thus need to consider points $\widehat z$ in $\omega(\widehat x)$
whose projection is a singularity $\sigma$. By definition $\widehat z$ is a
line $L\in\cG^1(M)$.

We claim that $L\subset E^c(\sigma)\oplus E_j^u(\sigma)$. 
By definition, there is a sequence of times $t_n\to +\infty$ such that $\lim_{n\to+\infty}\varphi_{t_n}(x)=\sigma$ in $M$ and $\lim_{n\to+\infty}\RR\cdot X(\varphi_{t_n}(x))=L\in\cG^1(M)$. 
We choose a small compact neighborhood $U$ of $\sigma$.
Since $x$ does not belong to $W^s(\sigma)$,
for each $t_n$ there is $s_n>0$ such that $\varphi_{s_n}(x_n)$ belongs to the boundary of $U$, and $\varphi_{[s_n,t_n]}(x)\subset U$. It is clear that $t_n-s_n\to+\infty$ and $s_n\to+\infty$ as $n\to+\infty$. Any limit point $y$  of the sequence $(\varphi_{s_n}(x))$ satisfies $y\in\omega(x)\subset\Lambda$. Since the positive orbit of $y$ is contained in $U$ and $\sigma$ is hyperbolic, the point $y$ is contained in the stable manifold of $\sigma$. By the assumption, $y\notin W^s_i(\sigma)$. Thus, for any large time $T$, the direction $\RR\cdot X(\varphi_T(y))$ is very close to $E^c(\sigma)$. Thus, for $n$ large enough, $\RR\cdot X(\varphi_{s_n+T}(x))$ is close to $E^c(\sigma)$, and
we have $t_n>s_n+T$. By the domination of the tangent flow near $\sigma$, the direction $\RR\cdot X(\varphi_{t_n}(x)$ is close to $E^c(\sigma)\oplus E_j^u(\sigma)$. This implies the announced claim on $L$.

A symmetric argument gives $L\subset E_i^s(\sigma)\oplus E^c(\sigma)$. Hence $L\subset E^c(\sigma)$ and
$\widehat z=L$ belongs to $\wh \Lambda$.
\end{proof}
}

\subsection{Lifts of measures}
Let us denote by $\cM(K)$ the set of  probability measures on a compact metric space $K$. Given a dynamical system on $K$, let $\cM_{\rm inv}(K)$ denote  the set of invariant probability measures on $K$.
We explore the relationship between invariant measures on $M$ and on $\widehat M$.

For an invariant measure $\mu$ supported on $M$, an invariant measure $\widehat\mu$ supported on $\widehat M$ is said to be a \emph{lift} of $\mu$ if $\theta_*(\widehat\mu)=\mu$. The measure $\mu$ is called \emph{projection} of $\widehat\mu$. Since $\theta$ is continuous and since $\theta(\widehat M)=M$ when $\Sing(X)$ has empty interior, the following properties hold:
\begin{Lemma}\label{Lem:toM}
The projection of any ergodic measure $\widehat\mu$ of $(\widehat\varphi_t)_{t\in\RR}$
is ergodic. 
If all singularities are hyperbolic and $\Lambda$ is an invariant compact set,
any  $\mu\in\cM_{\rm inv}(\Lambda)$ has an invariant lift on $\widehat \Lambda$.	
	
\end{Lemma}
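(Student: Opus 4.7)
The plan is to treat both assertions using the semi-conjugacy
$$\theta \circ \widehat\varphi_t = \varphi_t \circ \theta,$$
which is immediate from the construction of $(\widehat\varphi_t)_{t\in\RR}$ and the fact that $\theta$ is continuous.

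For the first assertion, the pushforward $\theta_*\widehat\mu$ is manifestly $(\varphi_t)$-invariant. To pass ergodicity from $\widehat\mu$ to its projection, I would take any measurable $(\varphi_t)$-invariant $A\subset M$ and observe that $\theta^{-1}(A)$ is $(\widehat\varphi_t)$-invariant by the semi-conjugacy; hence $\theta_*\widehat\mu(A) = \widehat\mu(\theta^{-1}(A)) \in \{0,1\}$.

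For the second assertion, my plan is to split $\mu$ into its regular and singular parts. Since all singularities are hyperbolic they are isolated in $M$, so $\Sing(X)\cap\Lambda$ is finite and one can write
$$\mu = \mu_r + \sum_{\sigma \in \Sing(X)\cap\Lambda} c_\sigma\,\delta_\sigma,$$
where $\mu_r=\mu|_{M\setminus\Sing(X)}$ and $c_\sigma=\mu(\{\sigma\})$; both summands are separately $(\varphi_t)$-invariant. On the open set $M\setminus\Sing(X)$ the assignment $s\colon x\mapsto \RR.X(x)$ is a continuous section of $\theta$ that intertwines the two flows, so $s_*\mu_r$ is a $(\widehat\varphi_t)$-invariant measure supported in $\widehat\Lambda$ projecting onto $\mu_r$. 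For each $\sigma$ with $c_\sigma>0$ I would lift $\delta_\sigma$ by applying the Krylov--Bogolyubov theorem to the compact $(\widehat\varphi_t)$-invariant fiber $F_\sigma := \theta^{-1}(\sigma)\cap\widehat\Lambda$, obtaining an invariant probability $\widehat\nu_\sigma$ on $F_\sigma$ that necessarily projects to $\delta_\sigma$. Then $\widehat\mu := s_*\mu_r + \sum_\sigma c_\sigma\,\widehat\nu_\sigma$ is a $(\widehat\varphi_t)$-invariant probability on $\widehat\Lambda$ with $\theta_*\widehat\mu = \mu$.

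The one point requiring care is the non-emptiness of $F_\sigma$ when $c_\sigma>0$. If the active center space $E^c(\sigma)$ is non-trivial this is immediate, since the projectivization of $E^c(\sigma)\setminus\{0\}$ lies in $F_\sigma$ by definition of $\widehat\Lambda$. Otherwise $\sigma$ must be accumulated by non-singular orbits inside $\Lambda$, and by the compactness of $\widehat\Lambda$ (recalled in the remark preceding the lemma) any accumulation point of $\RR.X(x_n)$ for a sequence $x_n\in\Lambda\setminus\Sing(X)$ with $x_n\to\sigma$ lies in $F_\sigma$.
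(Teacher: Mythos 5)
Your overall strategy is the natural one, and in fact the paper states this lemma without proof, merely invoking the continuity of $\theta$, the semi-conjugacy $\theta\circ\widehat\varphi_t=\varphi_t\circ\theta$ and the finiteness of $\Sing(X)$; so you are supplying an argument the authors chose to omit. The first assertion and the treatment of the regular part $\mu_r$ via the measurable section $s(x)=\RR.X(x)$ are correct, as is the use of Krylov--Bogolyubov on the fibers $F_\sigma$.

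The one genuine flaw is in your last paragraph. The dichotomy you use --- either $E^c(\sigma)\neq\{0\}$, or $\sigma$ is accumulated by points of $\Lambda\setminus\Sing(X)$ --- is not exhaustive: the implication ``$E^c(\sigma)=\{0\}$ forces $\sigma$ to be accumulated by regular orbits of $\Lambda$'' is false. If $\sigma$ is an isolated point of $\Lambda$, then $W^s(\sigma)\cap\Lambda=W^u(\sigma)\cap\Lambda=\{\sigma\}$ (any other point of these intersections would be regular and its orbit would accumulate on $\sigma$ inside $\Lambda$), so the escaping stable and unstable spaces are the full stable and unstable spaces, $E^c(\sigma)=\{0\}$, and $F_\sigma=\theta^{-1}(\sigma)\cap\widehat\Lambda=\emptyset$. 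In that situation $\delta_\sigma$ admits no lift whatsoever, and the second assertion fails as literally stated (e.g.\ for $\Lambda=\{\sigma\}$ one has $\widehat\Lambda=\emptyset$). The correct dichotomy is ``$\sigma$ isolated in $\Lambda$ or not'': when $\sigma$ is not isolated, your closure/accumulation argument already gives $F_\sigma\neq\emptyset$ with no reference to $E^c(\sigma)$, so you should simply drop the appeal to the active center space and instead record that the lemma must be read for sets $\Lambda$ whose charged singularities are non-isolated --- which is how it is used, since the proof of Theorem~\ref{Thm:main-localized} discards isolated singularities before invoking it. Apart from this point your argument is complete.
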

Now, we present the relationship between the metric entropy of an invariant measure and of its lift. 
The following theorem follows from \cite[Theorem 1]{SV}, see also \cite{SS}.
\begin{Theorem}\label{Thm:same-entropy}
Let $X$ be a $C^1$ vector field whose 
singularities are all hyperbolic.
Then any invariant measure $\widehat\mu$ of $(\widehat\varphi_t)_{t\in\RR}$ and its projection $\mu$ satisfy
${\rm h}_{\widehat\mu}(\widehat\varphi_1)={\rm h}_{\mu}(\varphi_1)$.

As a consequence, if  $\supp(\mu)\subset {\rm Sing}(X)$, then  ${\rm h}_{\widehat\mu}(\widehat\varphi_1)=0$.
\end{Theorem}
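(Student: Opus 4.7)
The plan is to view $\theta \colon \widehat M \to M$ as a topological factor map between the time-one maps $\widehat\varphi_1$ and $\varphi_1$, and then to invoke a principal-extension / relative-variational type principle (the content of the cited \cite{SV}) which equates the entropies when the fibers of $\theta$ carry no dynamical complexity.

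First I would record the structural properties of the factor. Since every singularity of $X$ is hyperbolic, $\sing(X)$ is finite, so $M\setminus\sing(X)$ is open and dense; the construction of $\widehat M$ as the closure of $\{\RR.X(x) : x \notin \sing(X)\}$ inside $\cG^1(M)$ together with $\widehat\varphi_t(L)=D\varphi_t(L)$ yields $\theta\circ\widehat\varphi_t = \varphi_t\circ\theta$ on $\widehat M$ and $\theta(\widehat M) = M$, and $\theta$ is a homeomorphism between a regular point and its lift. The general factor-map inequality then gives the lower bound ${\rm h}_{\widehat\mu}(\widehat\varphi_1)\ge {\rm h}_\mu(\varphi_1)$.

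For the upper bound I would use the fiber-entropy inequality
$${\rm h}_{\widehat\mu}(\widehat\varphi_1) \;\le\; {\rm h}_\mu(\varphi_1) + \int h_{\rm top}\bigl(\widehat\varphi_1,\,\theta^{-1}(x)\bigr)\,\ud\mu(x),$$
and check that the integrand vanishes $\mu$-a.e. Off $\sing(X)$, the fiber $\theta^{-1}(x)$ reduces to the single line $\RR.X(x)$, so the topological entropy on it is zero. Over a singularity $\sigma$, the fiber $\theta^{-1}(\sigma)$ is a closed subset of the projective space $\PP(T_\sigma M)$, and $\widehat\varphi_t$ restricted to this fiber coincides with the projectivization of the linear flow $D\varphi_t|_{T_\sigma M}$. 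Because $\sigma$ is hyperbolic, this linearization has no eigenvalue on the imaginary axis; its induced flow on $\PP(T_\sigma M)$ has only finitely many minimal sets (fixed lines corresponding to real eigenvectors, and circle orbits coming from pairs of complex conjugate eigenvalues) and every orbit is attracted in forward or backward time to one of them. A standard volume-growth / covering-number estimate shows that such a projectivized hyperbolic linear flow has zero topological entropy.

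The main step to be careful with is this last topological-entropy computation on the singular fibers, and the measurability of the integrand required in the fiber formula; everything else is routine. Combining the two inequalities yields ${\rm h}_{\widehat\mu}(\widehat\varphi_1)={\rm h}_\mu(\varphi_1)$. The consequence is then immediate: if $\supp(\mu)\subset\sing(X)$ then $\mu$ is a convex combination of Dirac masses at hyperbolic fixed points of the flow, so ${\rm h}_\mu(\varphi_1)=0$ and hence ${\rm h}_{\widehat\mu}(\widehat\varphi_1)=0$.
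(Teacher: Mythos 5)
Your argument is correct and is essentially the content of the result the paper merely cites: the authors give no proof of Theorem~\ref{Thm:same-entropy}, deferring to [SV, Theorem 1] and [SS], whose proofs proceed exactly as you do, via the Bowen/Ledrappier--Walters fiber-entropy inequality for the factor map $\theta$ together with the vanishing of the topological entropy of the projectivized linear dynamics over each (of the finitely many, by hyperbolicity) singularities. One small imprecision, which does not affect the conclusion: your description of the projectivized flow on $\PP(T_\sigma M)$ as having finitely many minimal sets to which every orbit is attracted holds only when the eigenvalue moduli are distinct and simple; with repeated eigenvalues, equal moduli, or nontrivial Jordan blocks one can get continua of fixed lines or higher-dimensional minimal tori. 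The zero-entropy conclusion nevertheless holds for the projectivization of \emph{any} invertible linear map (hyperbolicity is not even needed for this step), by the standard filtration into generalized eigenspaces of equal modulus and the polynomial complexity of the induced dynamics on each piece --- which is the ``covering-number estimate'' you allude to, and is exactly what [SS] establishes.
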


\subsection{The extended linear Poincar\'e flow}\label{Sec:compactification}
\paragraph{The pull back bundle $\widetilde {TM}$ and flow $\widetilde \Phi$.}
Let us consider the pull-back bundle $\widetilde{TM}\to \cG^1(M)$
of the tangent bundle $TM\to M$ through the projection $\widetilde \theta\colon \cG^1\to M$.
It is endowed with a linear flow $(\widetilde \Phi_t)_{t\in \RR}$ over the flow $(\widetilde \varphi_{t})_{t\in \RR}$
which is simply the pull back of the tangent flow $(D\varphi_t)_{t\in \RR}$.
One gets a projection $\widetilde \Theta\colon \widetilde{TM}\to TM$
which is an isometry on each fiber.

\paragraph{The normal bundle $\widetilde {N}$ and flow $\widetilde \Psi$.}
We then introduce the normal bundle $\widetilde \cN\to \cG^1(M)$:
any $L\in \cG^1(M)$ is a one-dimensional  line $L\subset T_xM$, and
the fiber $\widetilde \cN_L$ is defined as
$$\widetilde \cN_L:=\{u\in \widetilde{TM}_L: \widetilde \Theta(u)\perp L\}.$$
Let us introduce the orthogonal projection $\widetilde p\colon \widetilde {TM}\to \widetilde \cN$:
for any $L\in \cG^1(M)$ and $u\in \widetilde \cN_L$,
$$
\widetilde p(u):=u-\frac{<u, v>}{\|v\|^2}\cdot v,$$
where $v$ is any non-zero vector in $\widetilde{TM}$
such that $\widetilde \Theta(v)\in L$ (the definition of $\widetilde p(u)$ does not depend on this choice).
One defines a linear flow $(\widetilde \Psi_t)_{t\in \RR}$ on $\widetilde \cN$ (see~\cite[page 195]{L})
over the flow $(\widetilde \varphi_t)_{t\in \RR}$:
$$
\widetilde \Psi_t(u):=\widetilde p\circ \widetilde \Phi_t(u).$$
These constructions are summarized as follows:
\begin{displaymath}
	\xymatrix{   (\widetilde \cN, \widetilde \Psi)\ar[rd]  &(\widetilde {TM},\widetilde \Phi) \ar[l]^{\widetilde p} \ar[r]^{\widetilde \Theta} \ar[d] & (TM,D\varphi) \ar[d]\\
		&(\cG^1(M),\widetilde \varphi)  \ar[r]^{\widetilde \theta}& (M,\varphi)}
\end{displaymath}

\paragraph{The extended bundle $\widehat {\cN}$ and flow $\widehat \Psi$.}
Let $\widehat \cN_{\widehat M}$ be the restriction of the bundle $\widetilde \cN$ to the subset $\widehat M\subset \cG^1(M)$
and let $(\widehat \Psi_t)_{t\in \RR}$ and $\Theta$ be the restriction to $\widehat \cN$ of the flow $(\widetilde \Psi_t)_{t\in \RR}$
and of the projection $\widetilde \Theta$.
Note that $\Theta$ identifies $\widehat \cN_{\RR\cdot X(x)}=\widehat \cN_{\theta^{-1}(x)}$ with $\cN_x$ for any $x\in M\setminus \sing(X)$,
and through this identification the flow $(\Psi_t)_{t\in \RR}$ and $(\widehat \Psi_t)_{t\in \RR}$ coincide.
Moreover
$\widehat \cN$ is the closure of $\Theta^*\cN$, hence $(\widehat \cN,\widehat \Psi)$
can be thought as a compactification of $(\cN,\Psi)$.
When one considers an invariant compact set $\Lambda\subset M$, the normal bundle $\widehat\cN_{\widehat \Lambda}$
with an extended flow $\widehat \Psi$ can be defined similarly.

The flow $(\widehat \Psi_t)_{t\in \RR}$, called \emph{extended linear Poincar\'e flow},
was defined by~Li-Gan-Wen \cite{LGW}.
\begin{displaymath}
	\xymatrix{   (\widehat {\cN},\widehat \Psi) \ar[d]&\\
		(\widehat M,\widehat \varphi)  \ar[r]^{\theta}& (M,\varphi)}
\end{displaymath}

\subsection{Dominated splittings of the extended linear Poincar\'e flow}
Let $\Lambda$ be a $(\varphi_t)_{t\in\RR}$ invariant compact  set and $\widehat \Lambda\subset \widehat M$
its blow-up. A $(\widehat\Psi_t)_{t\in\RR}$-invariant splitting $\widehat\cN_{\widehat\Lambda}=\widehat\cE\oplus\widehat\cF$  is \emph{dominated} if there are $\eta,T>0$ such that
$$\|\widehat\Psi_T|_{\widehat\cE({L})}\|\cdot \|\widehat\Psi_{-T}|_{\widehat\cF(\widehat\varphi_T({L}))}\|\leq  e^{-\eta},~~~\textrm{ for any ${L}\in\widehat\La$}.$$ 
	
{
\begin{Lemma}\label{Lem:dominated-manifold-extended}
If $\Lambda\subset M$ has a dominated splitting $\cN_{\Lambda\setminus{\rm Sing}(X)}=\cE\oplus\cF$ for $(\Psi_t)_{t\in\RR}$, if the singularities in $\Lambda$ are hyperbolic and if any singularity $\sigma\in \sing(X)\cap \Lambda$
satisfies $\dim(E^s_i(\sigma))\geq \dim(\cE)$ or $\dim(E^u_j(\sigma))\geq \dim(\cF)$, where
$T_\sigma M= E^s_i(\sigma)\oplus E^c(\sigma)\oplus E^u_j(\sigma)$ is the splitting
introduced in~\eqref{e.escaping-splitting},
then the blow-up $\widehat\Lambda\subset \widehat M$ admits a dominated splitting $\widehat\cN_{\widehat\Lambda}=\widehat\cE\oplus\widehat\cF$ for $(\widehat\Psi_t)_{t\in\RR}$ such that
\begin{equation}\label{e.domination-lift}
\Theta(\widehat\cE({\widehat x}))=\cE(x) \;\text{ and }\; \Theta(\widehat\cF({\widehat x}))=\cF(x)\;\;
\text{ for any $x\in\Lambda\setminus{\rm Sing}(X)$}.
\end{equation}
\end{Lemma}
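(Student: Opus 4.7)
The strategy is to first define $\widehat\cE\oplus\widehat\cF$ on the regular part of $\widehat\Lambda$ by pulling back $\cE\oplus\cF$ through the fiberwise isometry $\Theta$, and then to extend it to the lines lying over singularities by a continuity and compactness argument.

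On the regular part $\widehat\Lambda^{\mathrm{reg}}:=\widehat\Lambda\setminus\theta^{-1}(\sing(X))$, I set $\widehat\cE(\widehat x):=\Theta|_{\widehat\cN_{\widehat x}}^{-1}(\cE(x))$ and $\widehat\cF(\widehat x):=\Theta|_{\widehat\cN_{\widehat x}}^{-1}(\cF(x))$, where $x=\theta(\widehat x)$. As recalled in Section~\ref{Sec:compactification}, $\Theta$ is a fiberwise linear isometry that conjugates $(\widehat\Psi_t)_{t\in\RR}$ with $(\Psi_t)_{t\in\RR}$ above regular points. Consequently this produces a $(\widehat\Psi_t)$-invariant splitting on $\widehat\Lambda^{\mathrm{reg}}$ that inherits the $(\eta,T)$-domination from $\cE\oplus\cF$ and automatically satisfies~\eqref{e.domination-lift}.

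To extend $\widehat\cE$ and $\widehat\cF$ to a line $L\in\widehat\Lambda$ lying over a singularity $\sigma\in\Lambda$, I use an approximating sequence $\widehat x_n\in\widehat\Lambda^{\mathrm{reg}}$ with $\widehat x_n\to L$ (for lines that are limits of regular lifts), combined with flow-invariance of the splitting to propagate to the remaining lines in each orbit closure. After extracting a convergent subsequence in the relevant Grassmannians, I obtain candidate limit subspaces $E^\ast,F^\ast\subset\widehat\cN_L=L^\perp$ of dimensions $\dim\cE$ and $\dim\cF$. The $(\eta,T)$-domination inequality for $\widehat\Psi_T$ passes to the limit by continuity of the cocycle, which in turn forces $E^\ast\cap F^\ast=\{0\}$ and hence $L^\perp=E^\ast\oplus F^\ast$. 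Uniqueness of dominated splittings of a prescribed index on an invariant compact set (applied to the orbit closure of $L$ in $\widehat\Lambda$) then ensures that $E^\ast$ and $F^\ast$ do not depend on the chosen subsequence, yielding a well-defined continuous extension on all of $\widehat\Lambda$.

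The main obstacle is to control the dimensions of $E^\ast$ and $F^\ast$: a priori the Grassmannian limits could collapse, with $E^\ast$ or $F^\ast$ containing $L$ itself or losing dimension. This is precisely where the hypothesis on escaping spaces enters. By the proof of Lemma~\ref{l.lift-omega-limit} one has $L\subset E^c(\sigma)$, so $L^\perp$ contains both $E^s_i(\sigma)$ and $E^u_j(\sigma)$. Since orbits of $\Lambda$ accumulating on $\sigma$ stay away from $W^s_i(\sigma)\cup W^u_j(\sigma)$, the normal splitting $\cE(x_n)\oplus\cF(x_n)$ is forced to align with the tangent-flow decomposition $E^s_i\oplus E^c\oplus E^u_j$ as $x_n\to\sigma$. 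The hypothesis $\dim E^s_i(\sigma)\geq\dim\cE$ (respectively $\dim E^u_j(\sigma)\geq\dim\cF$) provides the necessary room in $L^\perp$ to realize $E^\ast$ (respectively $F^\ast$) as a non-degenerate limit of the correct dimension inside $E^s_i\oplus(E^c\cap L^\perp)$ (respectively $(E^c\cap L^\perp)\oplus E^u_j$), so that the direct-sum structure survives in the limit and~\eqref{e.domination-lift} is preserved by construction on the regular part.
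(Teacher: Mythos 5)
Your first step --- pulling back $\cE\oplus\cF$ through the fiberwise isometry $\Theta$ over $\wh\Lambda\setminus\theta^{-1}(\sing(X))$ and extending it to the closure of this set by Grassmannian limits, using that the domination inequality passes to the limit and forces transversality of the limit spaces --- is correct and is exactly how the paper's proof begins. The genuine gap is in the coverage of the fibers over singularities. By definition, $\wh\Lambda$ contains, over each $\sigma\in\sing(X)\cap\Lambda$, the \emph{entire} projectivization $\{\RR.v:\ v\in E^c(\sigma)\setminus\{0\}\}$, and in general most of these lines are neither limits of regular lifts $\RR.X(x_n)$ with $x_n\in\Lambda\setminus\sing(X)$ nor contained in the $(\wh\varphi_t)$-orbit closure of such limits (when $\dim E^c(\sigma)\geq 3$ the set of limit lines, saturated by the flow, can be a proper closed invariant subset of $\PP(E^c(\sigma))$). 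Your limit-plus-flow-invariance construction therefore leaves $\wh\cE,\wh\cF$ undefined on an invariant portion of $\wh\Lambda$, and no approximation from the regular part can reach it. The paper instead defines the splitting \emph{explicitly} at every line $L\subset E^c(\sigma)$: assuming say $\dim E^u_j(\sigma)\geq\dim(\cF)$, the argument of \cite[Lemma 5.1]{GY} refines the tangent splitting to $E^u_j(\sigma)=E^{wu}(\sigma)\oplus E^{uu}(\sigma)$ with $\dim E^{uu}(\sigma)=\dim(\cF)$; then $\wh\cF(L)$ is taken to be the orthogonal complement of $L$ in $L\oplus E^{uu}(\sigma)$ and $\wh\cE(L)$ the orthogonal complement of $L$ in $E^s_i(\sigma)\oplus E^c(\sigma)\oplus E^{wu}(\sigma)$, and invariance and domination are checked by direct norm estimates on $D\varphi_t|_{T_\sigma M}$.

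Relatedly, your account of where the dimension hypothesis enters is off. A limit in the Grassmannian of $k$-planes is again a $k$-plane, so $E^*$ and $F^*$ cannot ``collapse'' or lose dimension, and the transversality $E^*\oplus F^*=\wh\cN_L$ already follows from the domination passing to the limit; the hypothesis is not needed to make the limit non-degenerate. Its actual role is to produce the refined dominated splitting of the tangent flow at $\sigma$ described above, without which there is no candidate for $\wh\cF(L)$ (or, in the other case, $\wh\cE(L)$) at the lines $L$ that are not approximated from the regular part. Note also that the hypothesis is a disjunction, so one may only use one of the two inequalities, whereas your last paragraph implicitly invokes both.
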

\begin{Remark}\label{r.splitting-muti-singular}
If $\Lambda$ is multi-singular hyperbolic and if $\cN_{\Lambda\setminus \sing(X)}=\cN^{s}\oplus \cN^{u}$
is the associated dominated splitting, then for any singularity $\sigma\in \Lambda$,
either $E^{ss}(\sigma)$ or $E^{uu}(\sigma)$ is escaping, and
$\dim(E^{ss}(\sigma))=\dim(\cN^{s})$, $\dim(E^{uu}(\sigma))=\dim(\cN^{u})$.
So the Lemma can be applied and gives a dominated splitting
$\wh\cN_{\wh\Lambda}=\wh\cN^{s}\oplus \wh\cN^{u}$.
\end{Remark}
\begin{proof}[Proof of Lemma~\ref{Lem:dominated-manifold-extended}]
Since $\Theta$ is an isometry between fibers of
$\wh \cN_{\wh M\setminus \theta^{-1}(\sing(X))}$ and $\cN_{M\setminus \sing(X))}$,
the dominated splitting $\cN_{\Lambda\setminus{\rm Sing}(X)}=\cE\oplus\cF$
induces a dominated splitting
$\wh \cN_{\wh M\setminus \theta^{-1}(\sing(X))}=\wh \cE\oplus \wh \cF$
satisfying~\eqref{e.domination-lift}. This dominated splitting extends to the closure
of $\big\{\RR\cdot X(x):~x\in \Lambda\setminus{\rm Sing}(X)\big\}$, so by definition of
$\widehat \Lambda$, it remains to show that there exists a dominated splitting
over each compact set $\big\{\RR\cdot v:~v\in E^c(\sigma)\setminus \{0\}\big\}\subset \wh M$ for each $\sigma\in
\Lambda\cap \sing(X)$. We now fix such a singularity $\sigma$.
We can assume $\dim(E^c(\sigma))>0$ otherwise the set $\big\{\RR\cdot v:~v\in E^c(\sigma)\setminus \{0\}\big\}$
is empty and there is nothing to prove.

We will consider the case $\dim E_j^u(\sigma)\ge \dim (\cF)$;
the other case $\dim E_i^s(\sigma)\ge \dim (\cE)$ can be treated similarly.
Since $E^c(\sigma)$ is non-trivial and $\sigma$ is hyperbolic, $\sigma$ is accumulated by points of $\Lambda$
along some direction in $E^c(\sigma)$.
Then, the argument in \cite[Lemma 5.1]{GY} (using $\dim E_j^u(\sigma)\ge \dim (\cF)$)
implies that the Oseledets splitting at $\sigma$ satisfies
$E_j^u(\sigma)=E^{wu}(\sigma)\oplus E^{uu}(\sigma)$,
where $\dim E^{uu}(\sigma)=\dim(\cF)$ and the moduli of the eigenvalues associated to
$E^{wu}(\sigma)$ are smaller than the moduli of the eigenvalues associated to
$E^{uu}(\sigma)$. Possibly $\dim(E^{wu}(\sigma))=0$.

Now we consider any line $L\subset E^c(\sigma)$. We define $\widehat\cF(L)$ to be the orthogonal space of $L$ in $L\oplus E^{uu}(\sigma)$.
Since $E^{uu}(\sigma)$ and $E^c$ are invariant, if $L'=\wh \Psi_t(L)$ then
$\widehat\cF(L')=\wh \Psi_t(\wh \cF(L))$. We define in a similar way
$\widehat\cE(L)$ to be the orthogonal space of $L$ in {$E^{ss}_i(\sigma)\oplus E^c(\sigma)\oplus E^{wu}(\sigma)$.} 
	We also have
$\widehat\cE(L')=\wh \Psi_t(\wh \cE(L))$.

Now we check the domination. We have the following estimates:
\begin{itemize}
\item Let us write $E^{cs}(\sigma):=E^{ss}_i(\sigma)\oplus E^c(\sigma)\oplus E^{wu}(\sigma)$.
Since $\wh\cE(L)$ is contained in the invariant space $E^{cs}(\sigma)$,
the map $\widehat\Psi_t|_{\widehat\cE(L)}$ is obtained from
{$D\varphi_t|_{E^{cs}(\sigma)}$} by an orthogonal projection and
$\|\widehat\Psi_t|_{\widehat\cE(L)}\|\leq \|D\varphi_t|_{E^{cs}(\sigma)}\|$
for any $t\in \RR$.
\item Since the angle between $L$ and $E^{uu}(\sigma)$
is bounded away from zero,
there exists $C>0$ such that
for any $v\in \wh \cF(L)$ and any $t\in \RR$,
we have
$\|\wh \Psi_t(v)\|\leq C \|D\varphi_t|_{E^{uu}(\sigma)}\|\cdot \|v\|$.
\end{itemize}
Combining them, one gets:
$$\|\widehat\Psi_t|_{\widehat\cE(L)}\|\cdot {\|\widehat\Psi_{-t}|_{\widehat\cF(\wh \varphi_t(L))}\|}\le C\cdot {\|D\varphi_t|_{E^{cs}(\sigma)}\|}\cdot{\|D\varphi_{-t}|_{E^{uu}(\sigma)}\|}\le 
C{\rm e}^{-\eta t},$$
for some $\eta>0$ and any $t>0$ large.
This proves that the decomposition
$\wh \cN=\wh\cE\oplus \wh\cF$ is dominated above the set
$\theta^{-1}(\sigma)\cap \widehat \Lambda$.
\end{proof}
}

The sub-bundles $\widehat\cE,\widehat\cF$ of the dominated splitting vary continuously on $\widehat \Lambda$.
The following property is standard. (The extensions are in general not $(\widehat \Psi_t)_{t\in \RR}$-invariant.)
\begin{Lemma}\label{Lem:bundle-extended}
 If $\widehat \Lambda\subset \wh M$ has a dominated splitting $\widehat\cN_{\widehat\Lambda}=\widehat\cE\oplus\widehat\cF$ for $(\widehat\Psi_t)_{t\in\RR}$, then the bundles $\widehat\cE,\widehat\cF$ can be extended continuously to a neighborhood of
$\widehat \Lambda$.
\end{Lemma}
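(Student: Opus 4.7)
The plan is to encode the splitting as a continuous section of projections on the bundle $\wh\cN$, extend this section to a neighborhood by a standard bundle-extension argument, and correct the extension to a genuine projection using holomorphic functional calculus. Since the normal bundle $\wh\cN$ is defined and continuous on all of $\wh M$, only the splitting itself needs to be extended. Let $\pi\colon \wh\Lambda\to \mathrm{End}(\wh\cN|_{\wh\Lambda})$ denote the continuous section given by letting $\pi(L)$ be the linear projection of $\wh\cN_L$ onto $\wh\cE(L)$ along $\wh\cF(L)$; its continuity follows from that of the dominated splitting.

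First, I would cover $\wh M$ by finitely many open sets $U_i$ over which $\wh\cN$ is trivialized, so that in each trivialization $\pi$ corresponds to a continuous matrix-valued map on the closed set $\wh\Lambda\cap U_i$. Tietze's extension theorem extends each such map continuously to $U_i$, and a partition of unity subordinate to $(U_i)$ glues the local extensions into a continuous section $\tilde\pi$ of $\mathrm{End}(\wh\cN)$ over a neighborhood $U$ of $\wh\Lambda$ with $\tilde\pi|_{\wh\Lambda}=\pi$.

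The main obstacle is that $\tilde\pi$ need not satisfy $\tilde\pi^2=\tilde\pi$, so its image and kernel do not directly define a splitting of $\wh\cN$. I would resolve this via a Riesz spectral projection: since $\pi(L)^2=\pi(L)$ has spectrum contained in $\{0,1\}$, by continuity of the spectrum there is a smaller neighborhood $U'\subset U$ of $\wh\Lambda$ on which the spectrum of $\tilde\pi(L)$ is contained in small disjoint disks around $0$ and $1$. Then
$$P(L)\,:=\,\frac{1}{2\pi i}\oint_{|z-1|=1/2}(zI-\tilde\pi(L))^{-1}\,dz$$
defines a continuous family of genuine projections on $U'$ which coincides with $\pi$ on $\wh\Lambda$ and has constant rank equal to $\dim(\wh\cE)$. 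Setting $\wh\cE(L):=\mathrm{Im}\,P(L)$ and $\wh\cF(L):=\ker P(L)$ yields the required continuous extensions to a neighborhood of $\wh\Lambda$ in $\wh M$.
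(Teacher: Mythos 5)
Your argument is correct. The paper states this lemma without proof, calling it ``standard'', so there is no proof in the paper to compare against; your write-up supplies a valid justification. The two key moves are both well chosen: passing to the projection-valued section $\pi$ lets you glue local Tietze extensions by a partition of unity, since $\mathrm{End}(\wh\cN)$ is a vector bundle and convex combinations of endomorphisms make sense (this would fail if you tried to extend $\wh\cE$ directly as a Grassmannian-valued map); and the Riesz spectral projection cleanly restores idempotency without any case analysis. A couple of fine points worth having in mind. First, to get a uniform neighborhood $U'$ on which $\mathrm{spec}\,\tilde\pi(L)\subset D(0,\tfrac12)\cup D(1,\tfrac12)$, you should invoke compactness of $\wh\Lambda$ together with continuity of $\tilde\pi$; it does not follow pointwise alone. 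Second, the rank of $P(L)$ equals $\dim\wh\cE$ on $U'$ because $L\mapsto\mathrm{tr}\,P(L)$ is continuous and integer-valued, hence locally constant, and one can shrink $U'$ so that every connected component meets $\wh\Lambda$. (You also implicitly use that a dominated splitting of a continuous linear flow over a compact invariant set is automatically continuous, which is the standard fact justifying the continuity of $\pi$ at the start.) With those remarks the argument is complete.
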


One has the following integral presentation.
\begin{Lemma}\label{Lem:limit-integral} 
If $\widehat \Lambda\subset \wh M$ has a dominated splitting $\widehat\cN_{\widehat\Lambda}=\widehat\cE\oplus\widehat\cF$ for $(\widehat\Psi_t)_{t\in\RR}$, and if one fixes a continuous extension of $\widehat \cF$ then there exists a neighborhood $\widehat U$ of $\widehat \Lambda$ in $\widehat M$ such that for any point $L\in \widehat M$ whose forward orbit
is contained in $\widehat U$ and satisfies $\omega(L)\subset \widehat \Lambda$,
$$\lim_{t\to+\infty}\bigg(\frac{1}{t} \log\big|{\rm Det}\widehat\Psi_t|_{\widehat \cF({L})}\big|-\frac{1}{t}\int_{0}^{t}\log\big|{\rm Det}\widehat\Psi_1|_{\widehat \cF(\widehat\varphi_s({L}))}\big|{\rm d}s \bigg)=0.$$
\end{Lemma}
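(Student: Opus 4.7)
My plan is to introduce the shorthands $F(t):=\log\bigl|{\rm Det}\,\widehat\Psi_t|_{\widehat\cF(L)}\bigr|$, $B(t):=\int_0^t\log\bigl|{\rm Det}\,\widehat\Psi_1|_{\widehat\cF(\widehat\varphi_s(L))}\bigr|\,ds$, $V_s:=\widehat\Psi_s(\widehat\cF(L))\subset\widehat\cN_{\widehat\varphi_s(L)}$ and the auxiliary integral $\widetilde B(t):=\int_0^t\log\bigl|{\rm Det}\,\widehat\Psi_1|_{V_s}\bigr|\,ds$. The only reason $F$ and $B$ differ is that the integrand of $B$ uses the (continuous but non-invariant) extension $\widehat\cF(\widehat\varphi_s(L))$ in place of the actual image $V_s$ of $\widehat\cF(L)$ by the cocycle. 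I will show separately that $F(t)-\widetilde B(t)$ stays bounded (a Fubini/cocycle identity) and that $B(t)-\widetilde B(t)=o(t)$ (from convergence of subspaces), which together yield the claim.

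The second estimate rests on a classical cone-field argument for dominated splittings. Since $\widehat\cN_{\widehat\Lambda}=\widehat\cE\oplus\widehat\cF$ is dominated, $\widehat\cF$ is an attracting invariant section of the Grassmannian bundle of $\dim(\widehat\cF)$-planes over $\widehat\Lambda$. By continuity, after shrinking the neighborhood $\widehat U$ provided by Lemma~\ref{Lem:bundle-extended}, one gets $\delta>0$ such that the angular cone field $\cC^{\widehat\cF}_\delta$ around the continuous extension is forward invariant along orbits staying in $\widehat U$, with a definite contraction rate on the portion of an orbit close to $\widehat\Lambda$. Starting from $V_0=\widehat\cF(L)\in\cC_\delta^{\widehat\cF}(L)$ and using that $\omega(L)\subset\widehat\Lambda$ so the orbit eventually stays in any prescribed neighborhood of $\widehat\Lambda$, one obtains ${\rm dist}\bigl(V_s,\widehat\cF(\widehat\varphi_s(L))\bigr)\to 0$ as $s\to +\infty$. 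Since the function $(L',W)\mapsto\log|{\rm Det}\,\widehat\Psi_1|_W|$ is uniformly continuous on the compact Grassmannian bundle of $\dim(\widehat\cF)$-planes over the closure of $\widehat U$, the integrand of $\widetilde B-B$ tends to $0$ along the orbit, so $(\widetilde B(t)-B(t))/t\to 0$.

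The first estimate uses the cocycle identity. From $F(s+h)-F(s)=\log|{\rm Det}\,\widehat\Psi_h|_{V_s}|$ one gets $F'(s)=\rho(s)$, where $\rho(s):=\frac{d}{du}\bigl|_{u=0}\log|{\rm Det}\,\widehat\Psi_u|_{V_s}|$ is continuous on the compact Grassmannian bundle, hence bounded by some $M>0$. The same identity at $h=1$ reads $\log|{\rm Det}\,\widehat\Psi_1|_{V_s}|=\int_0^1\rho(s+u)\,du$, so Fubini gives
$$\widetilde B(t)\;=\;\int_0^t\!\!\int_0^1\rho(s+u)\,du\,ds\;=\;\int_0^1 \bigl(F(t+u)-F(u)\bigr)\,du.$$
Using $|F(t+u)-F(t)|,|F(u)|\le M$ for $u\in[0,1]$, this shows $|\widetilde B(t)-F(t)|\le 2M$. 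Dividing by $t$ and combining with the previous step completes the proof. The main technical point is the cone-field step: the extension $\widehat\cF$ is not $\widehat\Psi_1$-invariant off $\widehat\Lambda$, but the domination on $\widehat\Lambda$ combined with $\omega(L)\subset\widehat\Lambda$ still forces the pushed-forward subspaces $V_s$ to track $\widehat\cF(\widehat\varphi_s(L))$ asymptotically.
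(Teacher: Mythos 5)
Your proof is correct and rests on the same essential mechanism as the paper's: a cone-field argument exploiting the domination to show that the pushed-forward subspaces $V_s=\widehat\Psi_s(\widehat\cF(L))$ enter arbitrarily narrow cones around the continuous extension once the orbit settles near $\widehat\Lambda$, combined with uniform continuity of $W\mapsto\log|{\rm Det}\,\widehat\Psi_1|_W|$ on the compact Grassmannian bundle. Where you genuinely improve on the paper's bookkeeping is the comparison of $F(t)$ with the Birkhoff integral: the paper compares $F(t)$ directly with $B(t)$ by chopping $[0,t]$ into unit blocks, estimating each block by $\varepsilon$ via the cone containment, and tracking several boundary terms of size $K$; you instead introduce the intermediate quantity $\widetilde B(t)=\int_0^t\log|{\rm Det}\,\widehat\Psi_1|_{V_s}|\,ds$ and observe the exact identity $\widetilde B(t)=\int_t^{t+1}F-\int_0^1 F$, which gives $|F(t)-\widetilde B(t)|\le 2M$ with no approximation at all, isolating the cone-field argument in the single clean estimate $B(t)-\widetilde B(t)=o(t)$. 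This separation makes the proof shorter and more transparent; the paper's version interleaves the two sources of error. One minor remark: your detour through the derivative $\rho(s)$ is unnecessary (and requires $C^1$ dependence of $\widehat\Psi_u$ on $u$, which does hold for $C^1$ vector fields but need not be invoked), since the identity $\widetilde B(t)=\int_0^t\bigl(F(s+1)-F(s)\bigr)\,ds=\int_t^{t+1}F-\int_0^1 F$ follows from the cocycle relation and a change of variables alone.
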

\begin{proof}
We fix some continuous extensions, that we still denote by $\wh\cE,\wh\cF$.
For $\alpha>0$, one defines the \emph{normal $cu$-cone of angle $\alpha$} 
at any point $L$ close to $\widehat \Lambda$:
	$$\wh\cC^{\cF}_\alpha(L)=\big\{\widehat v\in\widehat\cN_{L}:\widehat v=\widehat v^{\cE}+\widehat v^{\cF},\widehat v^{\cE}\in\wh\cE,\widehat v^{\cF}\in\wh\cF,\|\widehat v^{\cE}\|\le \alpha\cdot \|\widehat v^{\cF}\|\big\}.$$
The dominated splitting implies:

\begin{Claim}\label{claim:invariant-cone}
In the setting of Lemma~\ref{Lem:bundle-extended},
there exist  $\tau> 0$ and positive numbers $\alpha_n\underset{n\to\infty}\longrightarrow0$ such that
for any $n$ large, any $L\in\widehat M$, any $t>\tau$ the following holds:  if 
the $1/n$ neighborhood of $\widehat\Lambda$ contains $\widehat\varphi_{[0,t]}(L)$, then
$$\widehat\Psi_t(\wh\cC^{\cF}_{\alpha_n}(L))\subset \wh\cC^{\cF}_{\alpha_n}(\widehat\varphi_t(L)).$$
\end{Claim}

As ${\widehat\cN}_{\widehat\Lambda}=\widehat\cE\oplus\widehat\cF$ is dominated for $(\wh\Psi_t)_{t\in \RR}$, by the claim, up to shrinking $\wh U$, there exists $\tau^\prime>1$ such that
$\wh\Psi_t(\wh\cC_\alpha^{\cF}(L))\subset \wh \cC_\alpha^{\cF}(\wh\varphi_t(L))$
for any $L\in\wh U$ and $t\geq\tau^\prime$ with $\wh\varphi_s(L)\in\wh U$ for $s\in[0,t]$.		
		By the uniform continuity of $(\wh\Psi_s)_{s\in[0,1]}$:
		\begin{itemize}
			\item There exists $K>0$ such that $\sup_{s\in[0,1]}\sup_{L\in\wh M}|\log\|\wedge^{\dim(\wh\cF)}\widehat\Psi_s(L)\||<K,$ where $\wedge^k \widehat\Psi_s $ denotes the action induced by $\wh \Psi_s$
			on the space of $k$-dimensional linear spaces.
			\item For any $\e>0$, there exists a cone field $  \wh\cC_\beta^{\cF}\subset \wh\cC_\alpha^{\cF}$ such that for any $L\in\wh U$ and any $\dim(\wh\cF)$-dimensional linear space $F\subset \wh\cC_\beta^{\cF}(L)$,
			$$\big|\log|{\rm Det}\wh\Psi_s|_F|-\log|{\rm Det}\wh\Psi_s|_{\wh\cF(L)}|\big|<\varepsilon.$$
		\end{itemize}
		By domination, 
		there exist  a neighborhood $\wh V\subset\wh U$ of $\wh\Lambda$ and $\tau >\tau^\prime$ such that for any $L\in\wh V$ and $t\geq \tau $ with $\wh\varphi_s(L)\in\wh V$ for $s\in[0,t]$, we get $\wh\Psi_t(\wh\cC_\alpha^{\cF}(L))\subset \wh\cC^{\cF}_\beta(\wh\varphi_t(L)).$   
		
		Given $L\in\wh U$ with $\omega(L)\subset\wh\Lambda$, there exists $t_0>\tau^\prime$ such that $\wh\varphi_t(L)\in\wh V$ for any $t\geq t_0$. Therefore, for $t\geq t_0+\tau$, one has $\wh\Psi_t(\wh\cF(L))\subset \wh\cC^{\cF}_\beta(\wh\varphi_t(L)).$
		Given $t>t_0+\tau$ large enough, for any $s\in[0,1]$, by setting $k=k(t,s)=[t-s-t_0-\tau]$,
		\begin{align*}
			\log\big|{\rm Det}\widehat\Psi_t|_{\widehat\cF(L)}\big|&=\log\big|{\rm Det}\widehat\Psi_{t_0+\tau}|_{\widehat\cF(L)}\big|+\log\big|{\rm Det}\widehat\Psi_{s}|_{\wh\Psi_{t_0+\tau}(\widehat\cF(L))}\big|\\
			&\hspace{5mm}+\sum_{i=0}^{k-1}\log\big|{\rm Det}\widehat\Psi_1|_{\wh\Psi_{s+i+t_0+\tau}(\widehat\cF( L))}\big|+\log\big|{\rm Det}\widehat\Psi_{t-s-k-t_0-\tau}|_{\wh\Psi_{s+k+t_0+\tau}(\widehat\cF( L))}\big|.
		\end{align*}
		The choices of $t_0$ and $\tau$ give
		$  \wh\Psi_{s+i+t_0+\tau}(\widehat\cF( L))\subset \wh\cC^{\cF}_\beta(\wh\varphi_{s+i+t_0+\tau}(L)),$ for $i=0,\cdots,k$ and $s\in[0,1]$.
		By the choice of the cone field $\wh\cC^{\cF}_\beta$,
		\begin{align*}
			&\bigg|	\log\big|{\rm Det}\widehat\Psi_t|_{\widehat\cF(L)}\big|-\log\big|{\rm Det}\widehat\Psi_{t_0+\tau}|_{\widehat\cF(L)}\big|-\log\big|{\rm Det}\widehat\Psi_{s}|_{\widehat\cF(\wh\varphi_{t_0+\tau}(L))}\big|\\
			&\hspace{3mm}-\sum_{i=0}^{k-1}\log\big|{\rm Det}\widehat\Psi_1|_{\wh\cF(\wh\varphi_{s+i+t_0+\tau}( L))}\big|-\log\big|{\rm Det}\widehat\Psi_{t-s-k-t_0-\tau}|_{\widehat\cF(\wh\varphi_{s+k+t_0+\tau} (L))}\big|\bigg|<(k+2)\e.
		\end{align*}
		Thus, we get
		\begin{align*}
			\bigg|\frac{1}{t} \log&\big|{\rm Det}\widehat\Psi_t|_{\widehat \cF( {L})}\big| -\frac{1}{t}\int_{t_0+\tau}^{[t-t_0-\tau]-1+t_0+\tau}\log\big|{\rm Det}\widehat\Psi_1|_{\widehat\cF(\widehat\varphi_{s}(L))}\big|{\rm d}s\bigg|\\
			&=
			\bigg|\frac{1}{t} \log\big|{\rm Det}\widehat\Psi_t|_{\widehat \cF( {L})}\big| -\frac{1}{t}\int_{0}^{[t-t_0-\tau]-1}\log\big|{\rm Det}\widehat\Psi_1|_{\widehat\cF(\widehat\varphi_{s+t_0+\tau}(L))}\big|{\rm d}s\bigg|\\
			&=	
			\bigg|\frac{1}{t} \log\big|{\rm Det}\widehat\Psi_t|_{\widehat \cF( {L})}\big| -\int_0^1\frac{1}{t}\sum_{i=0}^{k-1}\log\big|{\rm Det}\widehat\Psi_1|_{\widehat\cF(\widehat\varphi_{s+i+t_0+\tau}(L))}\big|{\rm d}s\bigg|
			\\
			&=\bigg|\int_0^1\frac{1}{t}\log\big|{\rm Det}\widehat\Psi_t|_{\widehat\cF(L)}\big|{\rm d}s-\int_0^1\frac{1}{t}\sum_{i=0}^{k-1}\log\big|{\rm Det}\widehat\Psi_1|_{\widehat\cF(\widehat\varphi_{s+i+t_0+\tau}(L))}\big|{\rm d}s\bigg|
			\\
			&<\frac{([t_0+\tau]+2)K}{t}+\frac{(k+2)\e}{t}.
		\end{align*}
		Hence, this gives
		\begin{align*}
			\big|\frac{1}{t} \log&\big|{\rm Det}\widehat\Psi_t|_{\widehat \cF( {L})}\big| -\frac{1}{t}\int_{0}^{t}\log\big|{\rm Det}\widehat\Psi_1|_{\widehat\cF(\widehat\varphi_{s}(L))}\big|{\rm d}s\big|\\
			&\leq
			\big|\frac{1}{t} \log\big|{\rm Det}\widehat\Psi_t|_{\widehat \cF( {L})}\big| -\frac{1}{t}\int_{t_0+\tau}^{[t-t_0-\tau]-1+t_0+\tau}\log\big|{\rm Det}\widehat\Psi_1|_{\widehat\cF(\widehat\varphi_{s}(L))}\big|{\rm d}s\big|\\
			&\hspace{5mm}+ \frac{1}{t}\big|\int_{0}^{t_0+\tau}\log\big|{\rm Det}\widehat\Psi_1|_{\widehat\cF(\widehat\varphi_{s}(L))}\big|{\rm d}s+\int_{[t-t_0-\tau]-1+t_0+\tau}^{t}\log\big|{\rm Det}\widehat\Psi_1|_{\widehat\cF(\widehat\varphi_{s}(L))}\big|{\rm d}s\big|
			\\
			&<\frac{([t_0+\tau]+2)K}{t}+\frac{(k+2)\e}{t}+\frac{(t_0+\tau)K}{t}+\frac{K}{t}.
		\end{align*}
		Letting $t$ tend to infinity and by the arbitrariness of $\e$, one has 
		$$\lim_{t\rightarrow+\infty}\bigg(\frac{1}{t} \log\big|{\rm Det}\widehat\Psi_t|_{\widehat \cF( {L})}\big| -\frac{1}{t}\int_{0}^{t}\log\big|{\rm Det}\widehat\Psi_1|_{\widehat\cF(\widehat\varphi_{s}(L))}\big|{\rm d}s\bigg)=0.$$
The Lemma~\ref{Lem:limit-integral} is now proved.
\end{proof}

One can compare the integrated Jacobian along subbundles of $\cN$ and $\widehat \cN$.
	
\begin{Lemma}\label{Lem:extended-measure}
If $\Lambda\subset M$ has a dominated splitting $\cN_{\Lambda\setminus{\rm Sing}(X)}=\cE\oplus\cF$ for $(\Psi_t)_{t\in\RR}$ which lifts as a dominated splitting $\widehat\cN_{\widehat\Lambda}=\widehat\cE\oplus\widehat\cF$ for $(\widehat\Psi_t)_{t\in\RR}$,
if $\mu$ is an invariant measure supported on $\Lambda$ such that $\mu({\rm Sing}(X))=0$ and if $\widehat\mu$ is the lift of $\mu$, then
$$\int \log\big|{\rm Det}\widehat\Psi_1|_{\widehat \cF}\big|{\rm d}\widehat\mu=\int \log\big|{\rm Det}\Psi_1|_{\cF}\big|{\rm d}\mu.$$
\end{Lemma}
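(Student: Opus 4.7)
The plan is to reduce the integral on $\widehat\Lambda$ to an integral on $\Lambda\setminus{\rm Sing}(X)$ via the projection $\theta$, using the fact that $\theta$ restricts to a homeomorphism between $\widehat\Lambda\setminus\theta^{-1}({\rm Sing}(X))$ and $\Lambda\setminus{\rm Sing}(X)$, that $\Theta$ is a fiberwise isometry intertwining $\widehat\Psi_t$ and $\Psi_t$, and that the hypothesis $\mu({\rm Sing}(X))=0$ kills the ``exceptional'' fiber $\theta^{-1}({\rm Sing}(X))$.

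First I would observe that since $\widehat\mu$ is a lift of $\mu$, one has $\theta_*\widehat\mu=\mu$ and hence
$\widehat\mu\bigl(\theta^{-1}({\rm Sing}(X))\bigr)=\mu({\rm Sing}(X))=0$. Therefore the integral on the left-hand side is unchanged if one restricts it to $\widehat\Lambda\setminus\theta^{-1}({\rm Sing}(X))$, and similarly for the right-hand side restricted to $\Lambda\setminus{\rm Sing}(X)$. On the non-singular part the preimage $\theta^{-1}(x)=\{\RR.X(x)\}$ is a singleton, so $\theta$ is a Borel isomorphism onto its image and the change of variables formula applies.

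Next I would identify the two integrands pointwise. For $x\in\Lambda\setminus{\rm Sing}(X)$ and $\widehat x=\theta^{-1}(x)\in\widehat\Lambda$, the map $\Theta$ restricts to a linear isometry from $\widehat\cN_{\widehat x}$ onto $\cN_x$ which, by the construction of $\widehat\Psi$ in Section~\ref{Sec:compactification}, conjugates $\widehat\Psi_t$ to $\Psi_t$. By the hypothesis that the dominated splitting lifts (cf.\ Lemma~\ref{Lem:dominated-manifold-extended} and~\eqref{e.domination-lift}), $\Theta$ sends $\widehat\cF(\widehat x)$ onto $\cF(x)$. Since the determinant of a linear map between inner product spaces is preserved under conjugation by fiberwise isometries, one obtains
$$\bigl|{\rm Det}\widehat\Psi_1|_{\widehat\cF(\widehat x)}\bigr|=\bigl|{\rm Det}\Psi_1|_{\cF(x)}\bigr|
\qquad\text{for every }x\in\Lambda\setminus{\rm Sing}(X).$$

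Finally, combining the two observations and applying $\theta_*\widehat\mu=\mu$:
$$\int\log\bigl|{\rm Det}\widehat\Psi_1|_{\widehat\cF}\bigr|\,{\rm d}\widehat\mu
=\int_{\widehat\Lambda\setminus\theta^{-1}({\rm Sing}(X))}\log\bigl|{\rm Det}\Psi_1|_{\cF}\bigr|\circ\theta\,{\rm d}\widehat\mu
=\int\log\bigl|{\rm Det}\Psi_1|_{\cF}\bigr|\,{\rm d}\mu,$$
which is the desired equality. The only point requiring care is the measurability/integrability bookkeeping near the singular fiber, but since $\widehat\mu$ gives zero mass to $\theta^{-1}({\rm Sing}(X))$ and $\log|{\rm Det}\Psi_1|_{\cF}|$ is continuous on $\Lambda\setminus{\rm Sing}(X)$, this is unproblematic; no genuine obstacle is expected.
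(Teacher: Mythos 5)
Your proof is correct, and it takes a slightly more direct route than the paper's. Both arguments hinge on the same key pointwise identity, namely that $\Theta\colon\widehat\cF(\widehat x)\to\cF(x)$ is an isometry conjugating $\widehat\Psi_1$ to $\Psi_1$, whence $\bigl|{\rm Det}\widehat\Psi_1|_{\widehat\cF(\widehat x)}\bigr|=\bigl|{\rm Det}\Psi_1|_{\cF(x)}\bigr|$ at every regular point. Where you differ is in how you pass from this to the equality of integrals: the paper first reduces to ergodic regular measures via the ergodic decomposition theorem, then picks a generic point $x$ whose empirical measures equidistribute to $\mu$ and whose lift equidistributes to $\widehat\mu$, and identifies the two Birkhoff averages. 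You instead observe that $\widehat\mu(\theta^{-1}({\rm Sing}(X)))=\mu({\rm Sing}(X))=0$ and apply the change-of-variables formula $\int h\circ\theta\,{\rm d}\widehat\mu=\int h\,{\rm d}(\theta_*\widehat\mu)$ directly. Your version is more elementary (no ergodic decomposition, no appeal to generic points) and arguably cleaner; note also that the injectivity of $\theta$ off the singular fiber, which you invoke, is not actually needed for the pushforward identity — only $\theta_*\widehat\mu=\mu$ is. The integrability point you flag at the end is indeed harmless, since $\log\bigl|{\rm Det}\widehat\Psi_1|_{\widehat\cF}\bigr|$ is continuous on the compact set $\widehat\Lambda$ and hence bounded, so its restriction to the regular part is bounded as well.
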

\begin{proof}
For any regular point $x$ and its lift $\widehat x$, the map
$\Theta\colon \widehat \cF_{\widehat x}\to \cF_x$ is an isometry. Hence:
$$\big|{\rm Det}\widehat\Psi_1|_{\widehat \cF(\widehat x)}\big|=\big|{\rm Det}\Psi_1|_{\cF(x)}\big|.$$
Let $\mu$ be an ergodic regular measure with lift $\widehat \mu$.
Let $x$ be a regular point whose forward orbit equidistributes towards $\mu$.
Its lift $\widehat x$ equidistributes towards $\widehat\mu$. Thus the equality holds for any ergodic regular measure.
One concludes with the ergodic decomposition theorem.
\end{proof}

\subsection{Dynamics at Lorenz like singularities}
We recall that the blow-up of a multi-singular hyperbolic set $\Lambda$
admits a dominated splitting for $(\wh \Psi_t)_{t\in \RR}$, see Lemma~\ref{Lem:dominated-manifold-extended} and Remark~\ref{r.splitting-muti-singular}.
\begin{Lemma}\label{l.splitting-singularity}
Let $\sigma$ be a singularity in a multi-singular hyperbolic set $\Lambda$ with the splitting $\cN_{\Lambda\setminus{\rm Sing}(X)}=\cN^s\oplus\cN^u$
and let $L\in \theta^{-1}(\sigma)\cap\widehat \Lambda$. Then
\begin{itemize}
\item either $L\oplus \widetilde \Theta(\widehat \cN^u_L)=E^c_\sigma\oplus E^{uu}_\sigma$,
\item or $L\subset E^{ss}_\sigma\oplus E^{c}_\sigma$ and $\widetilde \Theta(\widehat \cN^u_L)\cap (E^{ss}_\sigma\oplus E^{c}_\sigma)=\{0\}$.
\end{itemize}
\end{Lemma}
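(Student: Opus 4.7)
The plan is to split on the dichotomy of Remark~\ref{r.splitting-muti-singular}: at the Lorenz-like singularity $\sigma$, either $E^{ss}_\sigma$ or $E^{uu}_\sigma$ is escaping. In either case, the hypothesis $L\in\theta^{-1}(\sigma)\cap\widehat\Lambda$ forces $L\subset E^c(\sigma)$, and because $E^c(\sigma)$ is a sum of Oseledets subspaces, it respects the splitting $E^{ss}_\sigma\oplus E^c_\sigma\oplus E^{uu}_\sigma$.

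Suppose first that $E^{uu}_\sigma$ is escaping. Then no Oseledets subspace of $E^{uu}_\sigma$ meets $E^c(\sigma)$, so $L\subset E^{ss}_\sigma\oplus E^c_\sigma$; this is the first half of conclusion~(b). Moreover $\dim E^u_j(\sigma)\geq\dim\widehat\cN^u$, so the Case~1 construction in the proof of Lemma~\ref{Lem:dominated-manifold-extended} applies and identifies $\widetilde\Theta(\widehat\cN^u_L)$ as the orthogonal complement of $L$ inside $L\oplus E^{uu}_\sigma$ (the spectral gap at the Lorenz-like singularity forces Lemma~\ref{Lem:dominated-manifold-extended}'s subspace $E^{uu}(\sigma)$ to coincide with our $E^{uu}_\sigma$). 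Since $E^{uu}_\sigma$ is transverse to $E^{ss}_\sigma\oplus E^c_\sigma$, a direct subspace computation gives $(L\oplus E^{uu}_\sigma)\cap(E^{ss}_\sigma\oplus E^c_\sigma)=L$; combined with $\widetilde\Theta(\widehat\cN^u_L)\perp L$, the trivial intersection required by~(b) follows.

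Now suppose $E^{ss}_\sigma$ is escaping. Symmetrically $L\subset F_\sigma:=E^c_\sigma\oplus E^{uu}_\sigma$, and I identify $\widehat\cN^u_L$ by an approximation limit. Since $\widehat\Lambda$ is compact, one can write $L=\lim_n\RR.X(x_n)$ with $x_n\in\Lambda\setminus\sing(X)$ tending to $\sigma$. Definition~\ref{d.singular-domination} provides a dominated tangent splitting $T_\sigma M=E^{ss}_\sigma\oplus F_\sigma$, which extends continuously to $T_xM=E^{ss}(x)\oplus F(x)$ on a neighborhood of $\sigma$. The Lorenz-like structure keeps $L$ away from $E^{ss}_\sigma$, so the angles between $X(x)$, $E^{ss}(x)$ and $F(x)$ are uniformly bounded away from zero near $\sigma$; projecting this tangent splitting orthogonally onto $\cN(x)=X(x)^\perp$ then yields a splitting that is dominated for $\Psi_t$ and has the same dimensions as $\cN^s(x)\oplus\cN^u(x)$. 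Uniqueness of dominated splittings of prescribed dimensions forces the two splittings to coincide, so that $\cN^u(x)$ equals the orthogonal projection of $F(x)$ onto $\cN(x)$.

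Passing to the limit $n\to\infty$, with $F(x_n)\to F_\sigma$, $\RR.X(x_n)\to L\subset F_\sigma$ and $\cN(x_n)\to L^\perp$, one obtains $\widetilde\Theta(\widehat\cN^u_L)=F_\sigma\cap L^\perp$. The orthogonal decomposition $F_\sigma=L\oplus(F_\sigma\cap L^\perp)$ (valid since $L\subset F_\sigma$) gives $L\oplus\widetilde\Theta(\widehat\cN^u_L)=F_\sigma=E^c_\sigma\oplus E^{uu}_\sigma$, which is conclusion~(a). The main technical difficulty is the identification of $\cN^u(x)$ with the orthogonal projection of $F(x)$ onto $\cN(x)$: this reduces to checking domination of the projected tangent splitting for $\Psi_t$ (using the uniform lower bound on the relevant angles) and invoking uniqueness of dominated splittings of prescribed dimensions.
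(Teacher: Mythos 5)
Your Case A is correct and follows essentially the paper's route: one recognizes $\widehat\cN^u_L$ as the subspace built in the proof of Lemma~\ref{Lem:dominated-manifold-extended}, namely the orthogonal complement of $L$ inside $L\oplus E^{uu}_\sigma$, and then the subspace computation you give yields the second alternative of the lemma. Your preliminary observation — that $L\subset E^c(\sigma)$ is automatically contained in one of $E^{ss}_\sigma\oplus E^c_\sigma$ or $E^c_\sigma\oplus E^{uu}_\sigma$ because exactly the corresponding strong bundle escapes — is in fact a cleaner derivation of the paper's first claim than the paper's own accumulation-sequence argument.

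Case B, however, has a genuine gap. You write ``Since $\widehat\Lambda$ is compact, one can write $L=\lim_n\RR.X(x_n)$ with $x_n\in\Lambda\setminus\sing(X)$ tending to $\sigma$.'' This does not follow: $\widehat\Lambda$ is \emph{defined as a union}, not as a closure, and compactness only says that the closure of $\{\RR.X(x):x\in\Lambda\setminus\sing(X)\}$ is \emph{contained in} $\widehat\Lambda$. It does not say every line in $E^c(\sigma)$ is accumulated by lifts of regular points of $\Lambda$; a line $L\subset E^c(\sigma)$ may lie in $\widehat\Lambda$ without being such a limit, and your Case B argument then has nothing to work with. A secondary (fixable but non-trivial) issue is the step where you project the local tangent dominated splitting $E^{ss}(x)\oplus F(x)$ onto $\cN(x)$ and invoke ``uniqueness of dominated splittings of prescribed dimensions'': uniqueness applies over an invariant compact set, while the projected splitting is only defined on a neighborhood of $\sigma$ that the orbit of $x_n$ leaves, so this comparison requires a more careful cone/continuity argument rather than a direct appeal to uniqueness.

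The paper avoids both problems by treating $L\subset E^c_\sigma\oplus E^{uu}_\sigma$ exactly as the other case: one writes down the explicit $\widehat\Psi_t$-invariant decomposition $\cF^s_L\oplus\cF^u_L$ of $\widehat\cN_L$ with $\cF^u_L=(E^c_\sigma\oplus E^{uu}_\sigma)\cap L^\perp$, checks domination of this splitting over $\theta^{-1}(\sigma)\cap\widehat\Lambda$ (this is what the proof of Lemma~\ref{Lem:dominated-manifold-extended} establishes), and concludes by uniqueness of dominated splittings over that invariant compact set; since $L\subset E^c_\sigma\oplus E^{uu}_\sigma$, one then has $L\oplus\cF^u_L=E^c_\sigma\oplus E^{uu}_\sigma$ directly. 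You should replace your approximation argument in Case B by the symmetric version of your Case A argument (using the other branch of Lemma~\ref{Lem:dominated-manifold-extended}), which works for all $L\in\theta^{-1}(\sigma)\cap\widehat\Lambda$ with no limit-point hypothesis.
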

\begin{proof}
We first claim that $L\subset (E^{ss}_\sigma\oplus E^{c}_\sigma) \cup (E^{c}_\sigma\oplus E^{uu}_\sigma)$.
Indeed if this is not the case, then there exists a sequence $x_n\to \sigma$ in $\Lambda\setminus \sing(X)$
such that $\RR\cdot X(x_n)\to L$. This implies that $x_n$ accumulates $\sigma$ along a direction
not included in $(E^{ss}_\sigma\oplus E^{c}_\sigma) \cup (E^{c}_\sigma\oplus E^{uu}_\sigma)$.
Using the dominated splitting $T_\sigma M=E^{ss}_\sigma\oplus E^{c}_\sigma\oplus E^{uu}_\sigma$
one deduces that $x_n$ have iterates converging to some point in $W^{ss}(\sigma)$
and other iterates converging to some point in $W^{uu}(\sigma)$.
This contradicts the definition of singular domination on $\Lambda$
(Definition~\ref{d.singular-domination}).

In the case where $L\subset E^{c}_\sigma\oplus E^{uu}_\sigma$,
let $\cF^s_L$ be the orthogonal complement to $E^{c}_\sigma\oplus E^{uu}_\sigma$ inside $T_\sigma M$
and let $\cF^u_L$ be the orthogonal complement to $L$ inside $E^{c}_\sigma\oplus E^{uu}_\sigma$.
After identification by $\widehat \Theta$, we get a decomposition $\cF^s_L\oplus \cF^u_L$
of $\widehat \cN_L$ which is mapped by $\widehat \Psi_t$
to the corresponding decomposition of $\widehat \cN_{\widehat \varphi_t(L)}$ for each $t$ since
$E^{c}_\sigma\oplus E^{uu}_\sigma$ is invariant.
This shows that $\cF^s_L=\widehat \Theta(\cN^s_L)$ and $\cF^u_L=\widehat\Theta(\cN^u_L)$.
The first item of the lemma holds.

In the case where $L\subset E^{ss}_\sigma\oplus E^{c}_\sigma$, the same argument shows that
$\widehat \Theta(\cN^u_L)$ is the orthogonal complement of $E^{ss}_\sigma\oplus E^{c}_\sigma$,
hence the second item holds.
\end{proof}

\begin{Lemma}\label{Lem:project-expanding}
Let $\sigma$ be a Lorenz like singularity with its splitting $T_\sigma M=E^{ss}_\sigma\oplus E^c_\sigma\oplus E^{uu}_\sigma$,
let $L\subset T_\sigma M$ be a $1$-dimensional space and $F\subset \widetilde \cN_L$ be a subspace with dimension
$\dim(E^{uu})$.

If one of the following setting is satisfied:
\begin{itemize}
\item either $L\oplus \widetilde \Theta(F)=E^c_\sigma\oplus E^{uu}_\sigma$ and $\sigma$ does not have index $d-1$,
\item or $L\subset E^{ss}_\sigma\oplus E^{c}_\sigma$ and $\widetilde \Theta(F)\cap (E^{ss}_\sigma\oplus E^{c}_\sigma)=\{0\}$,
\end{itemize}
then $$\liminf_{t\to+\infty}\frac{1}{t}\log\big|{\rm Det}(\widetilde \Psi_t|_{F})\big|>0.$$
\end{Lemma}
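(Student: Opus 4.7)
The plan is to reduce the Jacobian of $\widetilde\Psi_t$ on $F$ to the Jacobian of the tangent map $A_t:=D\varphi_t|_{T_\sigma M}$ on the subspace $L\oplus\widetilde\Theta(F)\subset T_\sigma M$, and then to exploit the $A_t$-invariance of the Lyapunov splitting $T_\sigma M=E^{ss}_\sigma\oplus E^c_\sigma\oplus E^{uu}_\sigma$. The starting point will be the volume identity
$$|{\rm Det}(A_t|_{L\oplus\widetilde\Theta(F)})|=\|A_t|_L\|\cdot|{\rm Det}(\widetilde\Psi_t|_F)|,$$
which I will establish by a Gram determinant calculation: if $e_L\in L$ is a unit vector and $f_1,\dots,f_k$ is an orthonormal basis of $F$, then the vectors $\widetilde\Theta(f_i)$ lie in $L^\perp$, and the orthogonal decomposition of each $A_t\widetilde\Theta(f_i)$ along $A_t(L)$ and its orthogonal complement produces, on the complement, precisely $\widetilde\Theta(\widetilde\Psi_t f_i)$ by the very definition of $\widetilde\Psi_t$.

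In the first case the subspace $L\oplus\widetilde\Theta(F)=E^c_\sigma\oplus E^{uu}_\sigma$ is $A_t$-invariant, so the left-hand side of the identity equals $e^{t(\lambda^c_\sigma+\Sigma^{uu})}$, where $\Sigma^{uu}$ is the sum of the Lyapunov exponents along $E^{uu}_\sigma$. Since $L\subset E^c_\sigma\oplus E^{uu}_\sigma$, decomposing $e_L$ in an Oseledets basis shows that $\lambda_L:=\lim_{t\to+\infty}\frac{1}{t}\log\|A_t|_L\|$ is either $\lambda^c_\sigma$ (when $L=E^c_\sigma$) or one of the Lyapunov exponents along $E^{uu}_\sigma$, so in any case $\lambda_L\le\lambda^{uu}_{\max}$. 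When $\dim E^{uu}_\sigma\ge 2$, the Lorenz-like inequality $|\lambda^c_\sigma|<\lambda^{uu}_\sigma$ yields $\lambda^c_\sigma+\Sigma^{uu}-\lambda^{uu}_{\max}\ge\lambda^c_\sigma+\lambda^{uu}_\sigma>0$. When $\dim E^{uu}_\sigma=1$, the assumption that $\sigma$ does not have index $d-1$ forces $\lambda^c_\sigma>0$ (otherwise $\ind(\sigma)=\dim E^{ss}_\sigma+1=d-1$), and the bound reduces to $\lambda^c_\sigma>0$.

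For the second case, dimensional reasons make $\widetilde\Theta(F)$ the graph of a linear map $\phi\colon E^{uu}_\sigma\to E^{ss}_\sigma\oplus E^c_\sigma$, so that $\{v_i+\phi(v_i)\}$ forms a basis of $\widetilde\Theta(F)$ for an Oseledets basis $(v_i)$ of $E^{uu}_\sigma$. Expanding $A_t e_L\wedge\bigwedge_i A_t(v_i+\phi(v_i))$ multilinearly, the Lorenz-like domination $\max(\lambda^c_\sigma,\lambda^{ss}_\sigma)<\lambda^{uu}_\sigma$ ensures that every term involving at least one $A_t\phi(v_j)$ has strictly smaller exponential rate than the purely expanding term $A_t e_L\wedge A_t v_1\wedge\dots\wedge A_t v_k$; the latter has nonzero norm since $A_t e_L\in E^{ss}_\sigma\oplus E^c_\sigma$ and $\operatorname{span}(A_t v_i)\subset E^{uu}_\sigma$ lie in complementary subspaces whose angle is uniformly bounded below. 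Hence $\frac{1}{t}\log|{\rm Det}(A_t|_{L\oplus\widetilde\Theta(F)})|\to\lambda_L+\Sigma^{uu}$, and the volume identity yields $\lim_{t\to+\infty}\frac{1}{t}\log|{\rm Det}(\widetilde\Psi_t|_F)|=\Sigma^{uu}>0$. The most delicate point of the whole argument is the borderline subcase $\dim E^{uu}_\sigma=1$ of Case 1, where the Lyapunov budget is tight and the index hypothesis must be invoked to recover positivity; Case 2 is essentially bookkeeping once the domination inequalities are exploited.
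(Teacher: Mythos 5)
Your proof is correct and follows essentially the same route as the paper: the same volume identity $|{\rm Det}(D\varphi_t|_{L\oplus\widetilde\Theta(F)})|=\|D\varphi_t|_L\|\cdot|{\rm Det}(\widetilde\Psi_t|_F)|$ (which the paper asserts directly from the definition of $\widetilde\Psi_t$ and you justify by a Gram/wedge computation), the same Lyapunov-exponent bookkeeping in the invariant case including the index-$(d-1)$ dichotomy, and in the second case the same two geometric inputs (domination pushing $D\varphi_t(\widetilde\Theta(F))$ toward $E^{uu}_\sigma$ and the uniform angle with $D\varphi_t(L)\subset E^{ss}_\sigma\oplus E^c_\sigma$), merely written out more explicitly via the graph representation.
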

\begin{proof}
In the first case,
since $L\oplus \widetilde \Theta(F)=E^c_\sigma\oplus E^{uu}_\sigma$, the definition of $\widetilde \Psi_t$ gives
$$\log\big|{\rm Det}(\widetilde \Psi_t|_{F})\big|=\log\big|{\rm Det}(D\varphi_t|_{E^c\oplus E^{uu}})\big|-\log\|D\varphi_t|_{L}\|.$$
Thus,
$\liminf_{t\to+\infty}\frac{1}{t}\log\big|{\rm Det}(\widetilde\Psi_t|_{F})\big|$
is larger than or equal to the sum of Lyapunov exponents of $D\varphi_t$ along $E^c_\sigma\oplus E^{uu}_\sigma$ minus the maximal Lyapunov exponent of $D\varphi_t$ along $E^c_\sigma\oplus E^{uu}_\sigma$.
When $\dim E^{uu}_\sigma\ge 2$, it is larger than $\lambda^c_\sigma+\lambda_\sigma^{uu}>0$,
as in the definition of a Lorenz like singularity. When $\dim(E^{uu})=1$, it is larger than $\lambda^c$,
which is positive since $\sigma$ does not have index $d-1$.
	
In the second case,
since $\widetilde \Theta(F)\cap (E^{ss}_\sigma\oplus E^c_\sigma)=\{0\}$,
the image $D\varphi_t(\widetilde \Theta(F))$ is  close to $E^{uu}_\sigma$ for $t>0$ large.
Since $D\varphi_t(L)\subset E^{ss}_\sigma\oplus E^c_\sigma$, its angle with $D\varphi_t(\widetilde \Theta(F))$
is bounded away from $0$. Then
$\liminf_{t\to+\infty}\frac{1}{t}\log\big|{\rm Det}(\widetilde \Psi_t|_{F})\big|$
is the sum of Lyapunov exponents of $D\varphi_t$ along $E^{uu}_\sigma$ and is positive.
\end{proof}

 Recall that a singularity $\sigma$ contained in an invariant compact set $\Lambda$ is \emph{active} in $\Lambda$ if $\sigma$ is hyperbolic and  both $W^s(\sigma)\cap(\Lambda\setminus\{\sigma\})$ and $W^u(\sigma)\cap(\Lambda\setminus\{\sigma\})$ are non-empty, see~\cite[Definition 1.2]{CYZ2}.
Arguing as in the proof of Lemma~\ref{l.splitting-singularity}, we get the following result (see also~\cite[Lemma 4.4]{LGW}).
\begin{Lemma}\label{l.limit-singularity}
Let $\Lambda$ be a multi-singular hyperbolic set for some $X\in \cX^1(M)$
containing an active singularity $\sigma$ such that $\lambda^c_\sigma>0$.
Let $p_n\to \sigma$ be regular periodic points of vector fields $X_n\to X$
such that ${\rm Orb}(p_n)$ converges to a subset of $\Lambda$ for the Hausdorff topology.
Then the accumulation set of the sequence of unit vectors $X_n(p_n)/\|X_n(p_n)\|$ is contained in
$E^{ss}_\sigma\oplus E^c_\sigma$.
\end{Lemma}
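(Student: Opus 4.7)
I would argue by contradiction, following the strategy of Lemma~\ref{l.splitting-singularity}. Suppose that, after extracting a subsequence, $X_n(p_n)/\|X_n(p_n)\|\to v$ with $v\notin E^{ss}_\sigma\oplus E^c_\sigma$, so that $v$ has a nonzero $E^{uu}_\sigma$-component. Since $\sigma$ is hyperbolic, $X_n$ admits a continuation $\sigma_n\to \sigma$ with $D_{\sigma_n}X_n\to D_\sigma X$, and the $C^1$-convergence $X_n\to X$ provides a uniform first-order Taylor expansion $X_n(p_n)=D_{\sigma_n}X_n\cdot (p_n-\sigma_n)+o(\|p_n-\sigma_n\|)$ (using $X_n(\sigma_n)=0$). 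Extracting further, the unit vectors $u_n:=(p_n-\sigma_n)/\|p_n-\sigma_n\|$ converge to some $u\in T_\sigma M$ for which $v$ is proportional to $D_\sigma X\cdot u$. Since $D_\sigma X$ preserves the splitting $E^{ss}_\sigma\oplus E^c_\sigma\oplus E^{uu}_\sigma$, the vector $u$ also has a nonzero $E^{uu}_\sigma$-component.

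I would then transport the accumulation argument from the proof of Lemma~\ref{l.splitting-singularity} to the $X_n$-orbits of $p_n$. Because $E^{uu}$ is the strongest expanding direction at $\sigma$ and the $E^{uu}_\sigma$-component of $u_n$ is bounded away from zero, the first exit time $T_n$ of $\varphi^{X_n}_t(p_n)$ from a fixed small ball $B_r(\sigma)$ satisfies $T_n\to \infty$, and the spectral gap $\lambda^{uu}_\sigma>\lambda^c_\sigma>0$ pushes the exit point $\varphi^{X_n}_{T_n}(p_n)$ to accumulate, along a further subsequence, on some $q\in W^{uu}_{\mathrm{loc}}(\sigma)\setminus\{\sigma\}$. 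Hausdorff convergence of $\mathrm{Orb}(p_n)$ to a subset of $\Lambda$ then yields $q\in \Lambda\cap W^{uu}(\sigma)\setminus\{\sigma\}$. On the other hand, $\lambda^c_\sigma>0$ gives $W^s(\sigma)=W^{ss}(\sigma)$, and activeness of $\sigma$ gives $W^{ss}(\sigma)\cap \Lambda\neq \{\sigma\}$; the singular-domination alternative of Definition~\ref{d.singular-domination} then forces $W^{uu}(\sigma)\cap \Lambda=\{\sigma\}$, a contradiction. Hence $v\in E^{ss}_\sigma\oplus E^c_\sigma$.

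The main technical obstacle is the intermediate claim that $\varphi^{X_n}_{T_n}(p_n)$ accumulates on $W^{uu}_{\mathrm{loc}}(\sigma)\setminus\{\sigma\}$ rather than on some generic point of $\partial B_r(\sigma)$. Because $T_n\to \infty$ while the vector fields $X_n$ themselves vary with $n$, the flow must be tracked over a diverging time-window in a continuously-varying system. I would handle this via the continuously-varying local invariant manifolds $W^{cs}_{\mathrm{loc}}(\sigma_n)$ and $W^{uu}_{\mathrm{loc}}(\sigma_n)$, using the spectral gap $\lambda^{uu}_\sigma>\lambda^c_\sigma$ provided by the Lorenz-like structure at $\sigma$, as in the related argument of~\cite[Lemma 4.4]{LGW}.
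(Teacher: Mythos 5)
Your argument is correct and follows essentially the same strategy as the paper: assume by contradiction a limit direction with a nonzero $E^{uu}_\sigma$-component, push the forward orbits of $p_n$ out of a small ball along a direction close to $W^{uu}(\sigma)$, conclude that $W^{uu}(\sigma)\cap\Lambda\neq\{\sigma\}$, and derive a contradiction with the alternative in Definition~\ref{d.singular-domination}. Two small deviations are worth noting, both harmless. First, you pass through the Taylor expansion to reduce to the position direction $u=\lim(p_n-\sigma_n)/\|p_n-\sigma_n\|$; the paper instead observes that the accumulation set of the directions $X_n(\cdot)/\|X_n(\cdot)\|$ along the orbits near $\sigma$ is $\wh\varphi$-invariant, which lets one choose an iterate $z_n$ of $p_n$ whose velocity direction is already close to $E^{uu}_\sigma$ and makes the ``escape close to $W^{uu}$'' step transparent; your version requires a slightly more careful tracking of the flow over the diverging exit time, which you rightly flag, but it does go through thanks to the spectral gap $\lambda^{uu}_\sigma>\lambda^c_\sigma>0$. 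Second, to obtain $W^{ss}(\sigma)\cap\Lambda\neq\{\sigma\}$ you appeal to the activeness hypothesis together with $W^s(\sigma)=W^{ss}(\sigma)$ (valid since $\lambda^c_\sigma>0$); the paper instead extracts it from the backward orbits of $p_n$ escaping along $W^{ss}(\sigma_{X_n})$ (again since $\lambda^c_\sigma>0$), which in fact shows that the activeness hypothesis is redundant here. Both routes are correct; yours is shorter because it uses the stated hypothesis directly, while the paper's is marginally more self-contained.
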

\begin{proof}
The accumulation set is invariant by iterations. If one assumes by contradiction that it is not contained in
$E^{ss}_\sigma\oplus E^c_\sigma$, then it intersects $E^{uu}_\sigma$.
Hence for $n$ large, there is an iterate $z_n=\varphi^{X_n}_{t_n}(p_n)$, close to $\sigma_{X_n}$, with a direction
$X_n(z_n)/\|X_n(z_n)\|$ close to $E^{uu}_{\sigma_{X_n}}$. The orbit of such a point escapes a neighborhood
of $\sigma_{X_n}$ close to $W^{uu}(\sigma_{X_n})$. Passing to the limit,
one deduces that there exists a point in $W^{uu}(\sigma)\setminus \{\sigma\}$
which is limit of points in the periodic orbits ${\rm Orb}(p_n)$.
Consequently, $\Lambda$ intersects $W^{uu}(\sigma)\setminus \{\sigma\}$.
The singular domination on $\Lambda$ implies that
$W^{ss}(\sigma)\cap \Lambda=\{\sigma\}$.
But since $\lambda^c_\sigma>0$,
the backward orbits of $p_n$ escape a neighborhood of $\sigma_{X_n}$ along $W^{ss}(\sigma_{X_n})$:
by passing to the limit this implies that
$W^{ss}(\sigma)\setminus \{\sigma\}$ intersects $\Lambda$, a contradiction.
\end{proof}

\section{SRB measures on multi-singular hyperbolic sets}
In this section, we prove Theorem~\ref{Thm:main-localized} and discuss some consequences on SRB measures. 

\subsection{Growth rate of volumes}
To a homeomorphism  $f$ of $M$ and $x\in M$,  one associates the set $\cM_x(f)$ of measures which
are the accumulation points of the sequence $\big\{\frac{1}{n}\sum_{i=0}^{n-1}\delta_{f^i(x)}\big\}_{n\in\NN}$. Given a continuous flow $\varphi=(\varphi_t)_{t\in\RR}$ one associates the set $\cM_x(\varphi)$ of accumulation points of 
the arc $\big(\frac{1}{t}\int_0^t\delta_{\varphi_s(x)}\ud s\big)_{t>0}$  when $t$ tends to infinity.  
\medskip

Using Yomdin theory, Burguet has obtained \cite{Bu} the following
result relating the growth rate of the volume and the entropy for $C^\infty$ diffeomorphisms.
	
\begin{Theorem}\label{Thm:burguet}
Let $f$ be a $C^\infty$ diffeomorphism on $M$. Then for Lebesgue almost every point $x\in M$, there is a measure $\mu\in\cM_x(f)$ such that for any $1\le k\le d$,
$${\rm h}_\mu(f)\ge\limsup_{n\rightarrow+\infty}\frac{1}{n}\log\|\wedge^kDf^n(x)\|.$$
\end{Theorem}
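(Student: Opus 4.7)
The result is a pointwise refinement of the classical Yomdin-type bound relating $k$-volume growth to topological entropy, and the proof requires the full strength of Yomdin's $C^{\infty}$ reparametrization theory. The quantity $\limsup_{n}\frac{1}{n}\log\|\wedge^{k}Df^{n}(x)\|$ records the asymptotic exponential rate at which $f^{n}$ expands infinitesimal $k$-dimensional volumes at $x$. First I would interpret this geometrically: for any $\varepsilon>0$ and $n$ large, one finds a $C^{\infty}$ $k$-disk $D_{n}$ centered at $x$, of exponentially small radius, such that $\mathrm{vol}_{k}(f^{n}(D_{n}))\ge \|\wedge^{k}Df^{n}(x)\|\cdot \mathrm{vol}_{k}(D_{n})\cdot e^{-\varepsilon n}$, with $D_{n}$ tangent to the $k$ most expanded directions of $Df^{n}(x)$.

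Next I would invoke Yomdin's reparametrization lemma. Fixing $r$ large in terms of $\varepsilon$ and of the $C^{r}$-norm of $f$, one covers $f^{n}(D_{n})$ by at most $e^{\varepsilon n}$ images of maps $\phi\colon [0,1]^{k}\to M$ of bounded $C^{r}$-norm and of diameter at most $1$. Pulling back to $D_{n}$ and using the volume lower bound, this produces an $(n,\delta_{0})$-separated set $E_{n}\subset D_{n}$ of cardinality $\#E_{n}\ge \|\wedge^{k}Df^{n}(x)\|\cdot e^{-2\varepsilon n}$. Forming empirical measures $\nu_{n}=(\#E_{n})^{-1}\sum_{y\in E_{n}}\frac{1}{n}\sum_{i=0}^{n-1}\delta_{f^{i}(y)}$ and extracting a weak-$*$ accumulation point $\mu$, the Misiurewicz--Katok principle applied to separated sets gives $\mathrm{h}_{\mu}(f)\ge \limsup_{n}\frac{1}{n}\log\|\wedge^{k}Df^{n}(x)\|-O(\varepsilon)$. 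A diagonal procedure over $k=1,\dots,d$ and $\varepsilon_{m}\to 0$ produces a single $\mu$ satisfying the inequality for all $k$ simultaneously.

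The measure $\mu$ constructed above lies in the closure of empirical measures supported on many orbits accumulating near $x$, not a priori in $\cM_{x}(f)$. The decisive step is to promote $\mu$ to $\cM_{x}(f)$ for \emph{Lebesgue almost every} $x$. The idea is to carry out the previous construction measurably in $x$ on a ball $B\subset M$, integrate against Lebesgue measure, and apply a Fubini argument together with Borel--Cantelli to show that along a subsequence $n_{j}$ the orbit of $x$ itself $(n_{j},\delta_{0})$-shadows one of the orbits used to build $\nu_{n_{j}}$ closely enough to share the same weak-$*$ accumulation. The summability required at this step is exactly what the super-polynomial gain in the $C^{\infty}$ Yomdin bound provides.

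The main obstacle is precisely this last step: transferring the entropy estimate from a family of nearby orbits that together realize the $k$-volume growth to the \emph{single} orbit of the base point $x$, on a Lebesgue-full set. The topological Yomdin bound is not sufficient; one needs a measurable version of the reparametrization lemma (available only in the $C^{\infty}$ category) and a careful Fubini--Borel-Cantelli argument across scales. Once this is in place, the inequality $\mathrm{h}_{\mu}(f)\ge \limsup_{n}\frac{1}{n}\log\|\wedge^{k}Df^{n}(x)\|$ holds for the chosen $\mu\in \cM_{x}(f)$, as desired.
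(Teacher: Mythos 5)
The paper does not prove this statement; it is quoted verbatim from Burguet~[Bu], which the authors simply cite. So your proposal must be judged as an independent reconstruction, and there is a genuine gap at exactly the step you flag as "the main obstacle", together with a quantitative inconsistency earlier on.

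First, steps~2--3 are mutually inconsistent. You assert that $f^n(D_n)$ is covered by at most $e^{\varepsilon n}$ bounded-$C^r$ pieces of diameter $\le 1$, and simultaneously that pulling back produces a set $E_n\subset D_n$ of cardinality $\ge \|\wedge^k Df^n(x)\|e^{-2\varepsilon n}$. Since each piece has $k$-volume $O(1)$, a subexponential number of pieces forces $\mathrm{vol}_k(f^n(D_n))$ to be subexponential, hence $\mathrm{vol}_k(D_n)\lesssim e^{\varepsilon n}/\|\wedge^k Df^n(x)\|$, which makes $D_n$ exponentially small. But then one cannot extract $\sim\|\wedge^k Df^n(x)\|e^{-2\varepsilon n}$ many $(n,\delta_0)$-separated points from it: a subexponential number of bounded pieces yields only subexponentially many separated points. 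To obtain exponentially many separated points you need $D_n$ of fixed size, in which case the number of reparametrizing pieces is $\mathrm{vol}_k(f^n(D_n))\cdot e^{\varepsilon n}$, not $e^{\varepsilon n}$. Either way one side of your size budget breaks.

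Second, and more seriously, the promotion of $\mu$ to $\cM_x(f)$ is not achieved by the mechanism you describe. The separated set $E_n\subset D_n$ is \emph{designed} so that the orbits of its points are pairwise $(n,\delta_0)$-separated; this forces those orbits to diverge from the orbit of $x$ long before time $n$ (at time roughly $(\chi_k/\chi_1)n$ if $\mathrm{diam}(D_n)\sim e^{-\chi_k n}$, where $\chi_1$ is the top exponent), because separation after $n$ iterations requires the orbits to spread out. Consequently the averaged empirical measure $\nu_n=(\#E_n)^{-1}\sum_{y\in E_n}\frac1n\sum_{i<n}\delta_{f^i(y)}$ is not controlled by the empirical measure of $x$, and a weak-$*$ limit of $\nu_n$ has no reason to lie in $\cM_x(f)$. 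The proposed Fubini--Borel--Cantelli remedy does not repair this: even if, for a.e.\ $x$, the orbit of $x$ $(n_j,\delta_0)$-shadows \emph{one} orbit in $E_{n_j}$ along some subsequence, the measure $\nu_{n_j}$ is an average over \emph{all} the exponentially many orbits in $E_{n_j}$, most of which are $(n_j,\delta_0)$-separated from $x$; shadowing a single one of them does not make $\frac1{n_j}\sum\delta_{f^i(x)}$ close to $\nu_{n_j}$. This is the core tension (separation of orbits versus their convergence to the empirical measure of $x$), and it cannot be resolved by the argument as sketched. Burguet's proof in~[Bu] resolves it by a genuinely local, pointwise refinement of Yomdin's reparametrization bound which controls the complexity of the orbit of $x$ itself, not of an averaged cloud of nearby orbits; this is a different and substantially more delicate mechanism than the one you propose.
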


This immediately extends to vector fields, and it helps us  to exclude the negative Lyapunov exponents for the lift of $\delta_\sigma$ which comes from the flow direction.
\begin{Proposition}\label{Pro:flow-version-burguet}
Let  $\varphi=(\varphi_t)_{t\in\RR}$ be the flow defined by a $C^\infty$ vector field $X$ over $M$. Then for Lebesgue almost every point $x\in M$, there is a measure $\mu\in\cM_x(\varphi)$ such that for any $1\le k\le d$,
$${\rm h}_\mu(\varphi_1)\ge\limsup_{t\to+\infty}\frac{1}{t}\log\|\wedge^kD\varphi_t(x)\|.$$
\end{Proposition}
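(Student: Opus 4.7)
The plan is to reduce the proposition to Burguet's discrete-time Theorem~\ref{Thm:burguet} applied to the time-one map $f=\varphi_1$, and then to pass from a $\varphi_1$-invariant to a flow-invariant measure by symmetrization. Concretely, I would first invoke Theorem~\ref{Thm:burguet} with $f=\varphi_1\in \diff^\infty(M)$: for Lebesgue almost every $x$, this yields a $\varphi_1$-invariant measure $\mu\in\cM_x(\varphi_1)$ such that
$${\rm h}_\mu(\varphi_1)\geq \limsup_{n\to+\infty}\tfrac1n\log\|\wedge^k D\varphi_n(x)\|\quad\text{for every } 1\leq k\leq d.$$
The issue is that $\mu$ is only $\varphi_1$-invariant (not flow invariant) and lives in $\cM_x(\varphi_1)$ rather than $\cM_x(\varphi)$, so it cannot yet serve as the measure promised by the proposition.

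To fix this I would set $\bar\mu:=\int_0^1 (\varphi_s)_*\mu\,\ud s$, which is manifestly $(\varphi_t)_{t\in\RR}$-invariant. To verify $\bar\mu\in\cM_x(\varphi)$, pick a sequence $n_j\to\infty$ along which $\frac{1}{n_j}\sum_{i=0}^{n_j-1}\delta_{\varphi_i(x)}\to \mu$. For any continuous test function $g$, Fubini gives
$$\int g\,\ud\left(\tfrac{1}{n_j}\int_0^{n_j}\delta_{\varphi_s(x)}\,\ud s\right)=\int_0^1\left(\int g\circ\varphi_s\,\ud\tfrac{1}{n_j}\sum_{i=0}^{n_j-1}\delta_{\varphi_i(x)}\right)\ud s\longrightarrow \int g\,\ud\bar\mu$$
by bounded convergence. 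Since the continuous-time Ces\`aro averages $\frac1t\int_0^t\delta_{\varphi_s(x)}\,\ud s$ and their restrictions to integer times differ in total variation by $O(1/t)$, one concludes $\bar\mu\in\cM_x(\varphi)$.

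Next I would check that the symmetrization does not decrease the time-one entropy. Because each $\varphi_s$ commutes with $\varphi_1$, the map $\varphi_s$ is a measure-theoretic conjugacy from $(M,\mu,\varphi_1)$ to $(M,(\varphi_s)_*\mu,\varphi_1)$, hence ${\rm h}_{(\varphi_s)_*\mu}(\varphi_1)={\rm h}_\mu(\varphi_1)$ for every $s\in [0,1]$. Concavity (affinity on the ergodic decomposition) of the entropy map $\nu\mapsto{\rm h}_\nu(\varphi_1)$ then yields
$${\rm h}_{\bar\mu}(\varphi_1)\geq \int_0^1 {\rm h}_{(\varphi_s)_*\mu}(\varphi_1)\,\ud s={\rm h}_\mu(\varphi_1).$$

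Finally I would compare the discrete and continuous growth rates. Writing $t=n+r$ with $r\in[0,1)$, one has $D\varphi_t(x)=D\varphi_r(\varphi_n(x))\circ D\varphi_n(x)$, and the operator norm of $\wedge^k D\varphi_r$ is uniformly bounded on $M\times[0,1]$ by compactness and continuity. Consequently
$$\limsup_{t\to+\infty}\tfrac{1}{t}\log\|\wedge^k D\varphi_t(x)\|=\limsup_{n\to+\infty}\tfrac{1}{n}\log\|\wedge^k D\varphi_n(x)\|.$$
Combining this equality with the two displays above and with the choice of $\mu$ from Theorem~\ref{Thm:burguet} gives the announced inequality for $\bar\mu\in\cM_x(\varphi)$. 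The only non-routine step is the entropy comparison ${\rm h}_{\bar\mu}(\varphi_1)\geq{\rm h}_\mu(\varphi_1)$, but it is handled cleanly by the commutation of $\varphi_s$ with $\varphi_1$ together with concavity of metric entropy.
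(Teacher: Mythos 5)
Your proposal follows exactly the same route as the paper: apply Burguet's discrete-time Theorem~\ref{Thm:burguet} to $f=\varphi_1$, symmetrize the resulting $\varphi_1$-invariant measure to $\int_0^1(\varphi_s)_*\mu\,\mathrm{d}s$, and compare the discrete and continuous-time growth rates of $\wedge^k D\varphi_t$. The paper states the sharper equality $h_{\bar\mu}(\varphi_1)=h_\mu(\varphi_1)$ (which holds because metric entropy is affine and each $\varphi_s$ is a conjugacy), whereas you only extract the inequality $h_{\bar\mu}(\varphi_1)\geq h_\mu(\varphi_1)$; that is all the proposition needs, so your argument is correct and is simply a more detailed write-up of the paper's proof.
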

\begin{proof}
Let $\mu_1\in \cM_x(\varphi_1)$ given by Theorem~\ref{Thm:burguet}.
Then the measure $\mu=\int_0^1 (\varphi_s)_*\mu_1{\rm d}s$ belongs to $\cM_x(\varphi)$
and satisfies $h_\mu(\varphi_1)=h_{\mu_1}(\varphi_1)$.
The result then follows from
$$\limsup_{t\to+\infty}\frac{1}{t}\log\|\wedge^kD\varphi_t(x)\|=
\limsup_{n\in \NN}\frac{1}{n}\log\|\wedge^kD\varphi_n(x)\|.$$
\end{proof}

Since the linear Poincar\'e flow is the projection of the tangent flow to the normal bundle, one gets the following corollary.	
\begin{Corollary}\label{Cor:growth-linear-poincare}
Let $X\in \cX^\infty(M)$ defining the flows $\varphi=(\varphi_t)_{t\in\RR}$ and $(\Psi_t)_{t\in\RR}$.
Then for Lebesgue almost every $x\in M\setminus \Sing(X)$, there is a measure $\mu\in\cM_x(\varphi)$ such that for any $1\le k\le d-1$,
$${\rm h}_\mu(\varphi_1)\ge\limsup_{t\to+\infty}\frac{1}{t}\log\|\wedge^k\Psi_t(x)\|.$$
\end{Corollary}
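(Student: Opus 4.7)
The plan is to reduce this to Proposition~\ref{Pro:flow-version-burguet} by showing that the wedge norms of the linear Poincar\'e flow are bounded above by those of the tangent flow.

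First, I would recall the definition of the linear Poincar\'e flow: for $x\in M\setminus\sing(X)$ and $v\in\cN(x)$, one has $\Psi_t(v)=p_{\varphi_t(x)}(D\varphi_t(v))$, where $p_y\colon T_yM\to \cN(y)$ denotes the orthogonal projection with kernel $\RR.X(y)$. In other words, $\Psi_t|_{\cN(x)}=p_{\varphi_t(x)}\circ D\varphi_t|_{\cN(x)}$.

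Second, I would prove the pointwise inequality
\begin{equation*}
\|\wedge^k \Psi_t(x)\|\le \|\wedge^k D\varphi_t(x)\|\qquad\text{for every }1\le k\le d-1.
\end{equation*}
Indeed, fix a $k$-dimensional subspace $E\subset\cN(x)$. The map $\Psi_t|_E$ factors as $D\varphi_t|_E$ followed by the orthogonal projection $p_{\varphi_t(x)}$. Since $p_{\varphi_t(x)}$ has operator norm at most $1$, every singular value of $p_{\varphi_t(x)}|_{D\varphi_t(E)}$ is bounded by $1$, and therefore
\begin{equation*}
|\Det(\Psi_t|_E)|=\mathrm{vol}(p_{\varphi_t(x)}(D\varphi_t(E)))/\mathrm{vol}(E)\le \mathrm{vol}(D\varphi_t(E))/\mathrm{vol}(E)=|\Det(D\varphi_t|_E)|.
\end{equation*}
Taking the supremum over all $k$-dimensional subspaces $E\subset\cN(x)\subset T_xM$ (whose dimension $d-1\ge k$ makes this supremum well-defined) yields the claim.

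Third, I would apply Proposition~\ref{Pro:flow-version-burguet} at a Lebesgue-typical point $x\in M\setminus\Sing(X)$ (the singular set has zero Lebesgue measure because its singularities are isolated when $X$ is $C^\infty$, or at worst are contained in a proper closed set for a generic smooth vector field, and in any event Proposition~\ref{Pro:flow-version-burguet} holds Lebesgue almost everywhere). This produces a single measure $\mu\in\cM_x(\varphi)$ such that $\mathrm{h}_\mu(\varphi_1)\ge \limsup_{t\to+\infty}\tfrac{1}{t}\log\|\wedge^k D\varphi_t(x)\|$ simultaneously for every $1\le k\le d$. Combining this with the pointwise bound established in the previous step gives
\begin{equation*}
\mathrm{h}_\mu(\varphi_1)\ge \limsup_{t\to+\infty}\tfrac{1}{t}\log\|\wedge^k D\varphi_t(x)\|\ge \limsup_{t\to+\infty}\tfrac{1}{t}\log\|\wedge^k \Psi_t(x)\|
\end{equation*}
for every $1\le k\le d-1$, which is the desired conclusion. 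There is no substantial obstacle here; the only point worth flagging is the need to invoke the proposition with its ``single $\mu$, every $k$'' formulation so that the same measure serves all exponents $k$.
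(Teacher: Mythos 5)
Your proof is correct and takes the same route the paper intends: the only justification the paper gives for Corollary~\ref{Cor:growth-linear-poincare} is the one-line remark that the linear Poincar\'e flow is the projection of the tangent flow to the normal bundle, which is exactly the pointwise inequality $\|\wedge^k\Psi_t(x)\|\le\|\wedge^k D\varphi_t(x)\|$ you establish, combined with Proposition~\ref{Pro:flow-version-burguet}. One minor remark: your parenthetical claim that $\Sing(X)$ has measure zero because singularities of a $C^\infty$ vector field are isolated is false in general (a smooth vector field can vanish on a set of positive measure), but this is harmless since, as you note, Proposition~\ref{Pro:flow-version-burguet} already holds for Lebesgue almost every point of $M$, hence a fortiori for Lebesgue almost every point of $M\setminus\Sing(X)$.
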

	
The growth of the volume in the blow-up is related to invariant measures as follows.
\begin{Lemma}\label{Lem:burguet-Lemma-1}
Let us consider an invariant compact set $\wh \Lambda\subset \wh M$ for $(\wh \varphi_t)_{t\in \RR}$
which has a dominated splitting $\wh \cN_{\wh \Lambda}=\wh \cE\oplus\wh \cF$ for $(\wh \Psi_t)_{t\in\RR}$. Then there exists a neighborhood $\widehat U$ of $\widehat \Lambda$ in $\widehat M$ such that for any point $L\in \widehat M$ whose forward orbit
is contained in $\widehat U$ and which satisfies $\omega(L)\subset \widehat \Lambda$, one has
$$\limsup_{t\to+\infty}\frac{1}{t}\log\big|{\rm Det}\widehat\Psi_t|_{{\widehat \cF}(L)}\big|=\sup_{\widehat\nu\in\cM_L(\widehat\varphi)}\int \log\big|{\rm Det}\widehat\Psi_1|_{{\widehat \cF}}\big|{\rm d}\widehat\nu.$$
\end{Lemma}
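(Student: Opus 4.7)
My strategy is to reduce the geometric quantity $\frac{1}{t}\log|{\rm Det}\,\widehat\Psi_t|_{\widehat\cF(L)}|$ to a Birkhoff-type time average, then read off the $\limsup$ from the weak-$*$ accumulation points of the empirical measures along the $\widehat\varphi$-orbit of $L$.

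First I would invoke Lemma~\ref{Lem:bundle-extended} to fix a continuous (not necessarily invariant) extension of $\widehat\cF$ to a neighborhood $\widehat U$ of $\widehat\Lambda$, and let $\phi(K):=\log|{\rm Det}\,\widehat\Psi_1|_{\widehat\cF(K)}|$ for $K\in \widehat U$. Since $\widehat\Psi_1$ is a fiberwise linear isomorphism on $\widehat \cN$ and $\widehat\cF$ is a continuous subbundle on $\widehat U$, the function $\phi$ is continuous and bounded on $\widehat U$. Shrinking $\widehat U$ if necessary, I can further assume the conclusion of Lemma~\ref{Lem:limit-integral} holds on $\widehat U$, so that for any $L$ whose forward orbit lies in $\widehat U$ and with $\omega(L)\subset\widehat\Lambda$,
$$\lim_{t\to+\infty}\bigg(\tfrac{1}{t} \log\big|{\rm Det}\,\widehat\Psi_t|_{\widehat \cF({L})}\big|-\tfrac{1}{t}\int_{0}^{t}\phi\big(\widehat\varphi_s({L})\big){\rm d}s \bigg)=0.$$
Hence the $\limsup$ on the left of the claimed equality equals $\limsup_{t\to+\infty}\int \phi\, {\rm d}\mu^L_t$, where $\mu^L_t:=\tfrac{1}{t}\int_0^t\delta_{\widehat\varphi_s(L)}\,{\rm d}s$.

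Next I would analyze the family $\{\mu^L_t\}_{t>0}$. Since $\widehat M$ is compact, the set $\cM(\widehat M)$ is weak-$*$ compact. Because $\omega(L)\subset \widehat\Lambda\subset \widehat U$, for every $\varepsilon>0$ there is $T$ such that $\widehat\varphi_s(L)$ lies in the $\varepsilon$-neighborhood $\widehat U_\varepsilon$ of $\widehat\Lambda$ for all $s\geq T$; the Portmanteau theorem then forces every weak-$*$ accumulation point of $\mu^L_t$ to give mass $1$ to $\widehat U_\varepsilon$, hence, letting $\varepsilon\to 0$, to be supported in $\widehat\Lambda$. Thus every $\nu\in\cM_L(\widehat\varphi)$ is supported in $\widehat\Lambda\subset \widehat U$, and $\phi$ is continuous on a neighborhood of each such $\nu$.

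It remains to match $\limsup_{t\to+\infty}\int\phi\, {\rm d}\mu^L_t$ with $\sup_{\nu\in \cM_L(\widehat\varphi)}\int\phi\,{\rm d}\nu$. For any $\nu\in\cM_L(\widehat\varphi)$, pick $t_k\to+\infty$ with $\mu^L_{t_k}\to\nu$; since $\phi$ is continuous and bounded on the compact set $\overline{\widehat U}$ that contains the supports of all $\mu^L_{t_k}$ and of $\nu$, one has $\int\phi\,{\rm d}\mu^L_{t_k}\to\int\phi\,{\rm d}\nu$, giving the lower bound $\limsup\geq\sup$. Conversely, choose $s_k\to+\infty$ realizing the $\limsup$ on the left; by weak-$*$ compactness extract a subsequence $\mu^L_{s_{k_j}}\to\nu^*$, which lies in $\cM_L(\widehat\varphi)$, and again by continuity of $\phi$ the value $\int\phi\,{\rm d}\nu^*$ equals the $\limsup$, giving the reverse inequality. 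The only delicate point in this plan is ensuring that $\phi$ is continuous on a neighborhood that contains both the supports of the empirical measures and of the limit measures; this is handled precisely by the choice of $\widehat U$ in the first paragraph.
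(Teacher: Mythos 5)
Your proposal is correct and follows essentially the same route as the paper: reduce via Lemma~\ref{Lem:limit-integral} to the Birkhoff average of the continuous function $L\mapsto\log|{\rm Det}\widehat\Psi_1|_{\widehat\cF(L)}|$ (using the continuous extension of $\widehat\cF$ from Lemma~\ref{Lem:bundle-extended}), then identify the $\limsup$ of these averages with the supremum over $\cM_L(\widehat\varphi)$ by weak-$*$ compactness of the empirical measures. Your explicit verification that the test function is continuous on a neighborhood containing the supports of both the empirical and the limit measures is a welcome precision, but it does not change the argument.
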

\begin{proof}
By Lemma~\ref{Lem:limit-integral}, it suffices to prove
$$\limsup_{t\to+\infty}\frac{1}{t}\int_0^t\log\big|{\rm Det}\widehat\Psi_1|_{\widehat\cF(\widehat\varphi_s(L))}\big|{\rm d}s=\sup_{\widehat\nu\in\cM_{L}(\widehat\varphi)}\int \log\big|{\rm Det}\widehat\Psi_1|_{{\widehat \cF}}\big|{\rm d}\widehat\nu.$$
One first proves the inequality ``$\le$''. Let $T_i\to+\infty$ be a sequence realizing the limsup in the left hand side. Taking a subsequence if necessary, one gets a limit measure
$$\widehat\nu=\lim_{i\to+\infty}\frac{1}{T_i}\int_0^{T_i}\delta_{\widehat\varphi_s(L)}{\rm d}s.$$
One concludes by computing:
\begin{align*}
\int \log\big|{\rm Det}\widehat\Psi_1|_{{\widehat \cF}}\big|{\rm d}\widehat\nu&=\lim_{i\to\infty}\frac{1}{T_i}\int_0^{T_i}\log\big|{\rm Det}\widehat\Psi_1|_{\widehat\cF(\widehat\varphi_s({L}))}\big|{\rm d}s\\
&=\limsup_{t\to+\infty}\frac{1}{t}\int_0^t\log\big|{\rm Det}\widehat\Psi_1|_{\widehat\cF(\widehat\varphi_s({L}))}\big|{\rm d}s.
\end{align*}
\medskip
		
Now we prove the inequality``$\ge$''. For any $\varepsilon>0$, let $\widehat\mu\in\cM_{L}(\widehat\varphi)$ such that 
$$\int \log\big|{\rm Det}\widehat\Psi_1|_{{\widehat \cF}}\big|{\rm d}\widehat\mu>\sup_{\widehat\nu\in\cM_{L}(\widehat\varphi)}\int \log\big|{\rm Det}\widehat\Psi_1|_{{\widehat \cF}}\big|{\rm d}\widehat\nu-\varepsilon.$$
There exists a sequence $S_i\to+\infty$ satisfying
\begin{align*}\int \log\big|{\rm Det}\widehat\Psi_1|_{{\widehat \cF}}\big|{\rm d}\widehat\mu&=\lim_{i\to+\infty}\frac{1}{S_i}\int_0^{S_i}\log\big|{\rm Det}\widehat\Psi_1|_{\widehat\cF(\widehat\varphi_s({L}))}\big|{\rm d}s\\
&\leq \limsup_{t\to+\infty}\frac{1}{t}\int_0^t\log\big|{\rm Det}\widehat\Psi_1|_{\widehat\cF(\widehat\varphi_s({L}))}\big|{\rm d}s.
\end{align*}
Letting $\varepsilon\to 0$, one concludes.
\end{proof}

\subsection{Entropy formulas on the blow-up of a multi-singular hyperbolic set}
The following inequality holds for the metric entropy on the blow-up $\widehat \Lambda$
of a multi-singular hyperbolic set. We recall that there exists a dominated splitting
$\wh \cN_{\wh \Lambda}=\wh \cN^{s}\oplus\wh \cN^{u}$ (see Remark~\ref{r.splitting-muti-singular}).

\begin{Proposition}\label{Prop:area-expanded}
Let $X\in \cX^1(M)$, $\Lambda$ be a multi-singular hyperbolic set and $\widehat \mu$
be a $(\widehat \varphi_t)_{t\in \RR}$-invariant measure on $\widehat\Lambda$
such that $\widehat \mu(\theta^{-1}(\sigma))=0$ for any singularity $\sigma$ with index $d-1$. Then,
$${\rm h}_{\widehat\mu}(\widehat\varphi_1)\le \int \log\big|{\rm Det}\widehat\Psi_1|_{{\widehat \cN}^{u}}\big|{\rm d}\widehat\mu.$$
The equality holds if and only if the projection $\mu=\theta_*(\widehat\mu)$ is an SRB measure.  
\end{Proposition}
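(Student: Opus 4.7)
My plan is to combine Ruelle's inequality with the entropy preservation of Theorem~\ref{Thm:same-entropy} and the positivity of the singular contribution provided by Lemmas~\ref{l.splitting-singularity} and~\ref{Lem:project-expanding}.

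Set $a=\widehat\mu(\theta^{-1}(\sing(X)))$ and decompose $\widehat\mu=a\,\widehat\mu_s+(1-a)\,\widehat\mu_r$, where $\widehat\mu_s$ is the conditional measure on $\theta^{-1}(\sing(X))$ and $\widehat\mu_r$ the conditional measure on its complement. The projection $\mu=\theta_*\widehat\mu$ splits accordingly as $\mu=a\mu_s+(1-a)\mu_r$, with $\mu_s$ a convex combination of Dirac masses at the (finitely many, hyperbolic) singularities of $\Lambda$. Theorem~\ref{Thm:same-entropy} and the affinity of entropy yield
\[
{\rm h}_{\widehat\mu}(\widehat\varphi_1)={\rm h}_{\mu}(\varphi_1)=(1-a)\,{\rm h}_{\mu_r}(\varphi_1),
\]
since Diracs at fixed points carry zero entropy.

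For the regular part, I apply Ruelle's inequality (Theorem~\ref{Thm:Ruelle-inequality}) to $\mu_r$, decompose it ergodically (each component being regular and, by Proposition~\ref{Prop:sum-positive-jacobian}, hyperbolic with $\sum\lambda^+=\int\log|{\rm Det}(\Psi_1|_{\cN^u})|$), and then move the integrand to the blow-up via Lemma~\ref{Lem:extended-measure}:
\[
{\rm h}_{\mu_r}(\varphi_1)\le\int\textstyle\sum\lambda^+\,d\mu_r=\int\log\bigl|{\rm Det}(\Psi_1|_{\cN^u})\bigr|\,d\mu_r=\int\log\bigl|{\rm Det}(\widehat\Psi_1|_{\widehat\cN^u})\bigr|\,d\widehat\mu_r.
\]
For the singular part I claim $\int\log|{\rm Det}(\widehat\Psi_1|_{\widehat\cN^u})|\,d\widehat\mu_s\ge 0$. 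Indeed, for every singularity $\sigma$ charged by $\widehat\mu$ and every $L\in\theta^{-1}(\sigma)\cap\widehat\Lambda$, Lemma~\ref{l.splitting-singularity} places the pair $(L,\widetilde\Theta(\widehat\cN^u_L))$ in one of the two configurations covered by Lemma~\ref{Lem:project-expanding}; the hypothesis that $\widehat\mu$ charges no fiber above an index-$(d-1)$ singularity is precisely what makes the first bullet of that lemma applicable. Consequently $\liminf_{t\to+\infty}\tfrac{1}{t}\log|{\rm Det}(\widehat\Psi_t|_{\widehat\cN^u_L})|>0$ $\widehat\mu_s$-almost surely, and since $\log|{\rm Det}(\widehat\Psi_1|_{\widehat\cN^u})|$ is bounded on the compact set $\widehat\Lambda$, Birkhoff's ergodic theorem identifies this liminf with the conditional expectation of the integrand along the $\widehat\varphi_1$-invariant $\sigma$-algebra, yielding the strict positivity of the integral.

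Summing the regular and singular contributions proves the announced inequality. For the equality clause, saturating both bounds forces $a=0$ and turns Ruelle's inequality into an equality for $\mu_r=\mu$, which is exactly the SRB property. Conversely, an SRB measure has no singular ergodic component, since a Dirac at a hyperbolic singularity has zero entropy but a strictly positive sum of positive Lyapunov exponents and hence cannot be SRB; so $a=0$, $\mu=\mu_r$, Ruelle's equality holds by definition of SRB, and reversing the chain of identities recovers the equality. The main obstacle is controlling the singular contribution: the structural Lemma~\ref{l.splitting-singularity} is needed to know how $\widehat\mu_s$ sits in $\widehat\Lambda$ above each $\sigma$, and the exclusion of index-$(d-1)$ singularities is exactly the hypothesis that makes the Jacobian $|{\rm Det}(\widehat\Psi_t|_{\widehat\cN^u})|$ grow in every remaining configuration.
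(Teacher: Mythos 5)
Your proof is correct and follows essentially the same route as the paper's: entropy preservation under $\theta$, Ruelle's inequality and Proposition~\ref{Prop:sum-positive-jacobian} combined with Lemma~\ref{Lem:extended-measure} for the regular part, and positivity of the unstable Jacobian over singular fibers via Lemmas~\ref{l.splitting-singularity} and~\ref{Lem:project-expanding}, with the index-$(d-1)$ hypothesis used exactly as in the paper. The only cosmetic differences are that you split off the singular mass before taking the ergodic decomposition, and that you replace the paper's appeal to Lemma~\ref{Lem:limit-integral} by the cocycle identity for the $\widehat\Psi$-invariant bundle $\widehat\cN^u$ on $\widehat\Lambda$ together with Birkhoff's theorem, which is equally valid.
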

\begin{proof} We first prove a preliminary property.
\begin{Claim}
$\displaystyle\int \log\big|{\rm Det}\widehat\Psi_1|_{{\widehat \cN}^{u}}\big|{\rm d}\widehat\mu>0$.
\end{Claim}
\begin{proof}
By the ergodic decomposition theorem, one can assume that $\widehat\mu$ is ergodic.
By Lemma~\ref{Lem:toM} its projection $\mu=\theta_*(\widehat \mu)$ is ergodic.
When $\mu$ is regular, Lemma~\ref{Lem:extended-measure} and Proposition~\ref{Prop:sum-positive-jacobian} give
$$\int \log\big|{\rm Det}\widehat\Psi_1|_{{\widehat \cN}^{u}}\big|{\rm d}\widehat\mu=\int \log\big|{\rm Det}\Psi_1|_{{\cN}^{u}}\big|{\rm d}\mu=\sum\lambda^+(\mu)>0.$$
		
Now we consider the case where $\mu$ is the Dirac measure at a singularity $\sigma$.
Let $L\in {\rm Supp}(\widehat \mu)$ be a typical point of $\widehat\mu$.
By Lemma~\ref{l.splitting-singularity}, and since $\sigma$ does not have index $d-1$,
the subspace $F:=\widehat \cN^u_L$ satisfies the assumptions of Lemma~\ref{Lem:project-expanding}.
Hence,
$$\liminf_{t\to+\infty}\frac{1}{t}\log\big|{\rm Det}(\widehat \Psi_t|_{\widehat {\cN^u_L}})\big|>0.$$
Lemma~\ref{Lem:limit-integral} gives $\displaystyle\frac{1}{t}\int_{0}^{t}\log\big|{\rm Det}\widehat\Psi_1|_{{\widehat \cN}^{u}(\widehat\varphi_s({L}))}\big|{\rm d}s>0$ and
the ergodic theorem concludes.
\end{proof}
	
For almost all ergodic components $\widehat\nu$ of $\widehat\mu$, the projection $\nu$ is ergodic by Lemma~\ref{Lem:toM}.
If $\nu$ is regular, then $\nu$ is hyperbolic by Proposition~\ref{Prop:sum-positive-jacobian}. 
Then, 
\begin{align*}
			{\rm h}_{\widehat\nu}(\varphi_1)
			&={\rm h}_{\nu}(\varphi_1) 
			&\text{\scriptsize by Theorem~\ref{Thm:same-entropy},} \\
			&\le \sum\lambda^+(\nu) 
			&\text{\scriptsize by Ruelle's inequality (Theorem~\ref{Thm:Ruelle-inequality}),} \\
			&=\int \log\big|{\rm Det}\Psi_1|_{{\cN}^{u}}\big|{\rm d}\nu &\text{\scriptsize by Proposition~\ref{Prop:sum-positive-jacobian},} \\
			&=\int \log\big|{\rm Det}\widehat\Psi_1|_{{\widehat \cN}^{u}}\big|{\rm d}\widehat\nu
			&\text{\scriptsize by Lemma~\ref{Lem:extended-measure}.}
\end{align*}
If $\nu$ is singular, by the claim and Theorem~\ref{Thm:same-entropy}
the inequality holds also (and is strict in this case). Thus, the inequality holds for $\widehat\mu$ by the ergodic decomposition theorem.
		
Recall that any   ergodic measure on $\Lambda$ has positive Lyapunov exponents.  By the arguments above, the equality holds if and only if for almost any ergodic component $\widehat\nu$ of $\wh\mu$, the projection $\nu$ is regular
and satisfies ${\rm h}_{\nu}(\varphi_1) =\sum\lambda^+(\nu)$, i.e. is an SRB measure.
Hence the equality holds if and only if  the projection $\mu$ of $\widehat\mu$ is an SRB measure.  
\end{proof}

For limit measures, one allows singularities of index $d-1$ and we obtain:

\begin{Proposition}\label{e.empirical-singularity}
Let $X\in \cX^1(M)$, let $\Lambda$ be a multi-singular hyperbolic set with no isolated singularities,
let $x\in \Basin(\Lambda)$ be a regular point and
$\widehat \mu$ be a limit measure of
$\big(\frac{1}{t}\int_0^t\delta_{\widehat \varphi_s(\wh x)}\ud s\big)_{t>0}$
on $\wh \Lambda$.
If $\mu:=\theta_*(\widehat \mu)$ gives positive mass to a singularity of index $d-1$,
then
$${\rm h}_{\widehat\mu}(\widehat\varphi_1)< \int \log\big|{\rm Det}\widehat\Psi_1|_{{\widehat \cN}^{u}}\big|{\rm d}\widehat\mu.$$
\end{Proposition}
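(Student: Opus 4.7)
The plan is to split $\widehat\mu$ into its restriction to fibres over index-$(d-1)$ singularities and the complement, apply Proposition~\ref{Prop:area-expanded} to the complement, and show that the restriction contributes strictly positively to $\int\log\big|{\rm Det}\widehat\Psi_1|_{\widehat\cN^u}\big|{\rm d}\widehat\mu$. I would set $\Sigma:=\{\sigma\in\sing(X)\cap\Lambda:\ind(\sigma)=d-1\}$ and $\alpha:=\widehat\mu(\theta^{-1}(\Sigma))$, which is positive by hypothesis, and write $\widehat\mu=\alpha\widehat\mu_1+(1-\alpha)\widehat\mu_2$ with $\widehat\mu_1$ supported on $\theta^{-1}(\Sigma)$ and $\widehat\mu_2$ on its complement in $\widehat\Lambda$. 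Then $h_{\widehat\mu_1}(\widehat\varphi_1)=0$ by Theorem~\ref{Thm:same-entropy} (since $\theta_*\widehat\mu_1$ is supported on $\sing(X)$), while Proposition~\ref{Prop:area-expanded} applies to $\widehat\mu_2$ and gives $h_{\widehat\mu_2}(\widehat\varphi_1)\le\int\log\big|{\rm Det}\widehat\Psi_1|_{\widehat\cN^u}\big|{\rm d}\widehat\mu_2$. By affinity of entropy, the proposition then reduces to
\begin{equation}\label{e.star-plan}
\int\log\big|{\rm Det}\widehat\Psi_1|_{\widehat\cN^u}\big|{\rm d}\widehat\mu_1>0.
\end{equation}

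To prove~\eqref{e.star-plan}, I would first describe $\widehat\mu_1$ restricted to each fibre $\theta^{-1}(\sigma)$, $\sigma\in\Sigma$. Since $\Lambda$ has no isolated singularity, $\sigma$ is active; as $\dim E^{uu}_\sigma=1$, $W^u(\sigma)=W^{uu}(\sigma)$ meets $\Lambda\setminus\{\sigma\}$, so Definition~\ref{d.singular-domination} forces $W^{ss}(\sigma)\cap\Lambda=\{\sigma\}$. The escaping stable space is thus $E^{ss}_\sigma$, the escaping unstable is trivial, and the active centre space is $E^c(\sigma)=E^c_\sigma\oplus E^{uu}_\sigma$, giving $\widehat\Lambda\cap\theta^{-1}(\sigma)=\mathbb{P}(E^c_\sigma\oplus E^{uu}_\sigma)$. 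The flow $\widehat\varphi_t$ on this projective line has north--south dynamics with attracting fixed point $L^{uu}_\sigma:=\mathbb{P}(E^{uu}_\sigma)$ and repelling fixed point $L^c_\sigma:=\mathbb{P}(E^c_\sigma)$, so its only ergodic invariant probabilities are $\delta_{L^c_\sigma}$ and $\delta_{L^{uu}_\sigma}$; hence $\widehat\mu_1=\sum_{\sigma\in\Sigma}(\beta_\sigma\delta_{L^c_\sigma}+\gamma_\sigma\delta_{L^{uu}_\sigma})$ with $\sum(\beta_\sigma+\gamma_\sigma)=1$. Using Lemma~\ref{l.splitting-singularity} combined with the fact that $\widehat\cN^u$ is the most-expanded direction of the dominated splitting on $\widehat\Lambda$ (Remark~\ref{r.splitting-muti-singular}), one identifies $\widetilde\Theta(\widehat\cN^u_{L^c_\sigma})=E^{uu}_\sigma$ and $\widetilde\Theta(\widehat\cN^u_{L^{uu}_\sigma})=E^c_\sigma$, so the pointwise values of $\log\big|{\rm Det}\widehat\Psi_1|_{\widehat\cN^u}\big|$ at these fixed points are $\lambda^{uu}_\sigma$ and $\lambda^c_\sigma$, and
$$\int\log\big|{\rm Det}\widehat\Psi_1|_{\widehat\cN^u}\big|{\rm d}\widehat\mu_1=\sum_{\sigma\in\Sigma}\bigl(\beta_\sigma\lambda^{uu}_\sigma+\gamma_\sigma\lambda^c_\sigma\bigr).$$

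The crux will be to determine $\beta_\sigma/\gamma_\sigma$ for each $\sigma$ with $\beta_\sigma+\gamma_\sigma>0$. The exceptional case $x\in W^s(\sigma)\setminus W^{ss}(\sigma)$ gives $\omega(\widehat x)=\{L^c_\sigma\}$, so $\gamma_\sigma=0$ and the contribution is $\lambda^{uu}_\sigma>0$. Otherwise $\varphi_t(x)$ makes infinitely many entry-exit visits to a small neighbourhood of $\sigma$, and for each such visit I would track the direction $\mathbb{R}\cdot X(\varphi_t(x))$ by linearization at the hyperbolic saddle: decomposing the entry point in $T_\sigma M$ as $v_{ss}+v_c+v_{uu}$ with $|v_{uu}|\to 0$ in deep visits, the direction stays within $o(1)$ of $L^c_\sigma$ for time $\tau^c\sim\log(|v_c|/|v_{uu}|)/(\lambda^{uu}_\sigma-\lambda^c_\sigma)$ before transitioning, and then within $o(1)$ of $L^{uu}_\sigma$ for time $\tau^u\sim\log(|v_c|/|v_{uu}|)\cdot(-\lambda^c_\sigma)/[\lambda^{uu}_\sigma(\lambda^{uu}_\sigma-\lambda^c_\sigma)]$, plus an $O(1)$ transient. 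Summing over visits and passing to the limit yields $\beta_\sigma/\gamma_\sigma=\lambda^{uu}_\sigma/(-\lambda^c_\sigma)$, and a short algebraic manipulation gives
$$\beta_\sigma\lambda^{uu}_\sigma+\gamma_\sigma\lambda^c_\sigma=(\beta_\sigma+\gamma_\sigma)(\lambda^{uu}_\sigma+\lambda^c_\sigma)>0$$
by the Lorenz-like inequality $\lambda^{uu}_\sigma>|\lambda^c_\sigma|$; summing over $\sigma\in\Sigma$ then establishes~\eqref{e.star-plan}.

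The hard part will be the rigorous, visit-uniform justification of the ratio $\tau^c/\tau^u\to\lambda^{uu}_\sigma/(-\lambda^c_\sigma)$ in the $C^1$ category, where only topological conjugacy at $\sigma$ is available. This reduces to controlling the ODE satisfied by $v_{uu}(t)/v_c(t)$ along the orbit near $\sigma$, with uniform error estimates obtained from the existence of $C^1$ strong invariant manifolds; the transient phase during which the direction is close to neither fixed point is $O(1)$ per visit and therefore contributes vanishingly in the normalized limit, while the logarithmic time scales $\tau^c,\tau^u$ are forced by the eigenvalues $\lambda^c_\sigma,\lambda^{uu}_\sigma$ of $DX(\sigma)$.
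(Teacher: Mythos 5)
Your reduction is exactly the paper's: split off the part of $\widehat\mu$ living over the index-$(d-1)$ singularities, give it zero entropy via Theorem~\ref{Thm:same-entropy}, control the rest by Proposition~\ref{Prop:area-expanded}, and reduce to the strict positivity of $\int\log\big|{\rm Det}\widehat\Psi_1|_{\widehat\cN^u}\big|{\rm d}\widehat\mu_1$. Where you diverge is in how you prove that positivity. You identify $\widehat\mu_1$ on each fibre $\theta^{-1}(\sigma)\cap\widehat\Lambda=\mathbb{P}(E^c_\sigma\oplus E^{uu}_\sigma)$ as a combination $\beta_\sigma\delta_{L^c_\sigma}+\gamma_\sigma\delta_{L^{uu}_\sigma}$ (correct: north--south dynamics, and the pointwise values $\lambda^{uu}_\sigma$, $\lambda^c_\sigma$ of the integrand at the two fixed points are right), and then compute the weights via the asymptotic time ratio $\tau^c/\tau^u\to\lambda^{uu}_\sigma/|\lambda^c_\sigma|$. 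The paper never identifies the fibre measure: it proves a per-visit inequality $\int_a^t\log\big|{\rm Det}\widehat\Psi_1|_{\widehat\cN^u(\widehat\varphi_s(\widehat x))}\big|\,{\rm d}s\ge \tfrac12(\lambda^c_\sigma+\lambda^{uu}_\sigma)(t-a)$ for every deep entry--exit segment and sums over visits. Your route buys a cleaner conceptual picture (an exact description of $\widehat\mu_1$) at the cost of needing two-sided asymptotics for the time ratio; note, though, that a one-sided bound $\beta_\sigma/\gamma_\sigma\ge\lambda^{uu}_\sigma/|\lambda^c_\sigma|-o(1)$ already suffices for $\beta_\sigma\lambda^{uu}_\sigma+\gamma_\sigma\lambda^c_\sigma>0$, and truncated final visits only skew the ratio in the favourable direction, so the extra precision is not needed.

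On the step you flag as hard: your worry about only having a topological conjugacy at $\sigma$ in the $C^1$ category is misplaced, and the paper's device shows why. No linearization of the flow is used. The vector $v(t)=X(\varphi_t(x))$ solves the variational equation $\dot v=DX(\varphi_t(x))\,v$, whose coefficient matrix is $C^0$-close to the constant hyperbolic matrix $DX(\sigma)$ throughout a deep visit; hence $\tfrac{d}{dt}\log\|X(\varphi_t(x))\|$ is $\varepsilon$-close to $\lambda^c_\sigma$ during the phase where $\RR X$ is near $E^c_\sigma$ and $\varepsilon$-close to $\lambda^{uu}_\sigma$ during the phase where it is near $E^{uu}_\sigma$, the transient between the two phases having uniformly bounded length. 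Since $\log\|X\|$ takes comparable values at the entry and exit points (both lie on $\partial V$, away from $\sing(X)$), one gets the linear constraint $\lambda^c_\sigma\,\tau^c+\lambda^{uu}_\sigma\,\tau^u=O(1)$, which is exactly your ratio. This replaces your invariant-manifold/ODE-for-$v_{uu}/v_c$ argument by a softer and fully $C^1$ one, and is the only missing ingredient in your sketch.
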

\begin{proof}
Consider a singularity $\sigma$ of index $d-1$ with
$\mu(\{\sigma\})>0$,
and decompose $\wh\mu$ as $\wh \mu_1 +\wh\mu_2$ where
$\theta_*(\wh\mu_1)$ is supported on $\sigma$ and $\theta_*(\wh\mu_2)(\{\sigma\})=0$.
The normalisation of $\widehat\mu_1$ has zero entropy
(by Theorem~\ref{Thm:same-entropy}). By Proposition~\ref{Prop:area-expanded},
it suffices to prove
$\displaystyle\int \log\big|{\rm Det}\widehat\Psi_1|_{{\widehat \cN}^{u}}\big|{\rm d}\widehat\mu_1>0$.

We set $C=(\lambda^c_\sigma+\lambda^{uu}_\sigma)/2$ which is positive since $\sigma$
is a Lorenz like singularity. We also choose $\eta>0$ much smaller than $C$
and satisfying
\begin{equation}\label{e.choice-eta}
\mu(\{\sigma\})>\eta(1+4/C).
\end{equation}

By assumption $\sigma$ is Lorenz like with a splitting
$T_\sigma M=E_\sigma^{ss}\oplus E_\sigma^c\oplus E_\sigma^{uu}$
such that $\dim(E_\sigma^{uu})=\dim(E_\sigma^c)=1$ and $\lambda^c_\sigma<0$.
Since $\sigma$ is not isolated in $\Lambda$,
the strong manifold $W^{uu}(\sigma)$ intersects $\Lambda\setminus \{\sigma\}$.
The definition of singular domination then implies that
$W^{ss}(\sigma)\cap \Lambda=\{\sigma\}$.
In particular, if $y=\varphi_t(x)$
is close to $\sigma$, with $t>T$ for some $T>0$, then
$\RR\cdot X(y)$ is close to $E^{cu}_\sigma=E^c_\sigma\oplus E^{uu}_\sigma$.

Let us fix $\varepsilon>0$ small and a small closed neighborhood $V$ of $\sigma$ such that $\varphi_T(x)\not\in V$.
We can require $\theta_*\wh \mu(\partial V)=0$ and
by our choice of $\wh \mu_2$,
\begin{equation}\label{e.choiceV}
\int_{\theta^{-1}(V)} \bigg|\log\big|{\rm Det}\widehat\Psi_1|_{{\widehat \cN}^{u}(\wh x)}\big|\bigg|{\rm d}\wh \mu_2 <\eta.
\end{equation}

One considers the connected components of the set
$\{\varphi_t(x),t>T\}\cap V$.
They are defined by their entry and exit times $a\leq b$ in $V$ (possibly $b=+\infty$).
By \cite[Lemma 3.2]{PYY2} (see  also~\cite[Lemma 3.1]{SYY}),  the interval  $[a,b]$ contains two disjoint subintervals
$[a_1,b_1]$, $[a_2,b_2]$ with $a_2>b_1$, such that:
\begin{itemize}
\item their complement in $[a,b]$ has uniformly bounded length,
\item for $s\in [a_1,b_1]$,
$\RR\cdot X(\varphi_s(x))$ is close to $E^{c}_\sigma$,
and $\log\big|{\rm Det}\widehat\Psi_1|_{{\widehat \cN}^{u}(\wh \varphi_s(\wh x))}\big|$
is $\tfrac \varepsilon 2$-close to $\lambda^{uu}_\sigma$,
\item for $s\in [a_2,b_2]$,
$\RR\cdot X(\varphi_s(x))$ is close to $E^{uu}_\sigma$,
and $\log\big|{\rm Det}\widehat\Psi_1|_{{\widehat \cN}^{u}(\wh \varphi_s(\wh x))}\big|$
is $\tfrac \varepsilon 2$-close to $\lambda^{c}_\sigma$.
\end{itemize}

\begin{Claim}
There exists $t_0>0$
such that for any connected component $\{\varphi_t(x), t\in [a,b]\}$ of the set
$\{\varphi_t(x),t>T\}\cap V$,
and for any $t\in [a+t_0,b]$,
$$\int_a^t\log\big|{\rm Det}\widehat\Psi_1|_{{\widehat \cN}^{u}(\wh \varphi_s(\wh x))}\big| {\rm d}s > C\cdot(t-a).$$
\end{Claim}
\begin{proof}
For any connected component, the point $\varphi_a(x)$ is uniformly far from the singular set,
hence the quantity $\log(\|X(\varphi_t(x))\|/\|X(\varphi_a(x))\|)$ is bounded from above by some uniform constant $C_0>0$.

When $t\in [a_1,b_1]$, 
the quantity
$$\log(\|X(\varphi_t(x))\|/\|X(\varphi_{a_1}(x))\|)=\int_{a_1}^t
\tfrac{1}{\|X(\varphi_s(x))\|^2}
\left<X(\varphi_s(x)),DX(\varphi_s(x))\cdot X(\varphi_s(x))\right>{\rm d}s$$
is bounded from below by $(\lambda^c_\sigma-\tfrac \varepsilon 2)\cdot(t-a_1)$ because for $s\in[a_1,b_1]$, $DX(\varphi_s(x))$ is close to $DX(\sigma)$ and $\RR\cdot X(\varphi_s(x))$ is close to $E_\sigma^c$. 
Similarly when $t\in [a_2,b_2]$, we have
$$\log(\|X(\varphi_t(x))\|/\|X(\varphi_{a_2}(x))\|)\geq (\lambda^{uu}_\sigma-\tfrac \varepsilon 2)\cdot(t-a_2).$$

When $t\in [a+t_0,b]$ is large, we decompose $[a,t]$ as two subintervals of $[a_1,b_1]$
and $[a_2,b_2]$ and sets with bounded length. 
Note that $[a_1,b_1]$ is always non-empty.
And if $[a_2,b_2]$ is taken empty, we just let $a_2=b_2=t$.
Hence there exists a uniform $C_1>0$
such that
\begin{align*}
\log(\|X(\varphi_t(x))\|/\|X(\varphi_{a}(x))\|)&\geq
\lambda^c_\sigma(\min(b_1,t)-a_1)+\lambda^{uu}_\sigma(\max(a_2,t)-a_2)-\tfrac \varepsilon 2(t-a)-C_1\\
&\geq
\lambda^c_\sigma(b_1-a_1)+\lambda^{uu}_\sigma(t-a_2)-\tfrac \varepsilon 2(t-a)-C_1,
\end{align*}
since $\lambda^c_\sigma<0$ and $\lambda^{uu}_\sigma>0$.
We thus get
\begin{equation}\label{e.time-estimate}
C_0\geq \lambda^c_\sigma(b_1-a_1)+\lambda^{uu}_\sigma(t-a_2)-\tfrac \varepsilon 2(t-a)-C_1.
\end{equation}
Since $\lambda^c_\sigma<0$, this gives from~\eqref{e.time-estimate},
$$(|\lambda^c_\sigma|+\lambda^{uu}_\sigma)(b_1-a_1)\geq
\lambda^{uu}_\sigma(t-a)-
\lambda^{uu}_\sigma [(a_2-b_1)+(a_1-a)]
-\tfrac \varepsilon 2(t-a)-C_1-C_0.$$
By choosing $t_0$ large enough, $t-a$ is large. Hence we get 
\begin{equation}\label{e.ineq1}
 b_1-a_1\geq \tfrac{\lambda^{uu}_\sigma-\varepsilon}{|\lambda^c_\sigma|+\lambda^{uu}_\sigma}(t-a).
 \end{equation}
Similarly from~\eqref{e.time-estimate},
$$(|\lambda^c_\sigma|+\lambda^{uu}_\sigma)(t-a_2)
\leq
|\lambda^c_\sigma|(t-a)-
|\lambda^c_\sigma|[(a_1-a)+(a_2-b_1)]+
\tfrac \varepsilon 2(t-a)+C_0+C_1$$
and hence
\begin{equation}\label{e.ineq2}
t-a_2\leq \tfrac{|\lambda^{c}_\sigma|+\varepsilon}{|\lambda^c_\sigma|+\lambda^{uu}_\sigma}(t-a).
\end{equation}

Since $\log\big|{\rm Det}\widehat\Psi_1|_{{\widehat \cN}^{u}(\wh \varphi_s(\wh x))}\big|$
is $\tfrac \varepsilon 2$-close to $\lambda^{uu}_\sigma$ when $s\in[a_1,b_1]$ and
$\log\big|{\rm Det}\widehat\Psi_1|_{{\widehat \cN}^{u}(\wh \varphi_s(\wh x))}\big|$
is $\tfrac \varepsilon 2$-close to $\lambda^{c}_\sigma$ when $s\in[a_2,b_2]$ 
and using again that $[a,b]\setminus ([a_1,b_1]\cup[a_2,b_2])$ has bounded length,
there exists $C_2>0$ such that
when $t\geq a_2$ one gets with~\eqref{e.ineq1} and~\eqref{e.ineq2}:
\begin{align*}
\int_a^t\log\big|{\rm Det}\widehat\Psi_1|_{{\widehat \cN}^{u}(\wh \varphi_s(\wh x))}\big| {\rm d}s
&\geq \tfrac{\lambda^{uu}_\sigma-\varepsilon}{|\lambda^c_\sigma|+\lambda^{uu}_\sigma}(\lambda^{uu}_\sigma-\tfrac \varepsilon 2)(t-a)
-\tfrac{|\lambda^{c}_\sigma|+\varepsilon}{|\lambda^c_\sigma|+\lambda^{uu}_\sigma}(|\lambda^c_\sigma|+\tfrac \varepsilon 2)(t-a)-C_2\\
&\geq \big(\lambda^{c}_\sigma+\lambda^{uu}_\sigma-\tfrac 3 2\varepsilon
\big)(t-a)
-C_2.
\end{align*}
Having chosen $\varepsilon>0$ small and since $t-a$ is large, this
concludes since $C=(\lambda^c_\sigma+\lambda^{uu}_\sigma)/2>0$.
\end{proof}

Let $W\subset V$ be a small neighborhood of $\sigma$ such that
$$
\varphi_{[-t_0,t_0]}(\partial V)\cap W=\emptyset
\quad \text{and} \quad \theta_*\wh \mu(\partial W)=0.$$
As $\theta_*(\wh\mu_1)$ is supported on $\sigma$, thus 
\[\int \log\big|{\rm Det}\widehat\Psi_1|_{{\widehat \cN}^{u}}\big|{\rm d}\widehat\mu_1=\int_{\theta^{-1}(\{\sigma\})} \log\big|{\rm Det}\widehat\Psi_1|_{{\widehat \cN}^{u}}\big|{\rm d}\widehat\mu_1.\]
As a consequence, to show that $\displaystyle\int \log\big|{\rm Det}\widehat\Psi_1|_{{\widehat \cN}^{u}}\big|{\rm d}\widehat\mu_1>0$, it suffices to prove that $$\displaystyle\int_{\theta^{-1}(W)}\log\big|{\rm Det}\widehat\Psi_1|_{{\widehat \cN}^{u}(\wh x)}\big|{\rm d}\wh \mu_1>0.$$
Let us consider $t>0$ large such that
\begin{align}
\label{e.approximation-measure1-new}
\frac 1 t \int_{s\in [0,t], \varphi_s(x)\in W}1
{\rm d}s> \wh \mu(\{\sigma\})&-\eta,\\
\label{e.approximation-measure2-new}\bigg|\frac 1 t \int_{s\in [0,t], \varphi_s(x)\in V}
\log\big|{\rm Det}\widehat\Psi_1|_{{\widehat \cN}^{u}(\wh \varphi_s(\wh x))}\big|{\rm d}s
&-\int_{\theta^{-1}(V)} \log\big|{\rm Det}\widehat\Psi_1|_{{\widehat \cN}^{u}(\wh x)}\big|{\rm d}\wh \mu
\bigg| <\eta,\\
\label{e.approximation-measure2-norm}\bigg|\frac 1 t \int_{s\in [0,t], \varphi_s(x)\in V\setminus W}
\log\big|{\rm Det}\widehat\Psi_1|_{{\widehat \cN}^{u}(\wh \varphi_s(\wh x))}\big|{\rm d}s
&-\int_{\theta^{-1}(V\setminus W)} \log\big|{\rm Det}\widehat\Psi_1|_{{\widehat \cN}^{u}(\wh x)}\big|{\rm d}\wh \mu
\bigg| <\eta.
\end{align}
From~\eqref{e.choiceV} and~\eqref{e.approximation-measure2-new}, one has that
\begin{equation}\label{e.average-V}
\bigg|\frac 1 t \int_{s\in [0,t], \varphi_s(x)\in V}
\log\big|{\rm Det}\widehat\Psi_1|_{{\widehat \cN}^{u}(\wh \varphi_s(\wh x))}\big|{\rm d}s
-\int_{\theta^{-1}(V)}\log\big|{\rm Det}\widehat\Psi_1|_{{\widehat \cN}^{u}(\wh x)}\big|{\rm d}\wh \mu_1
\bigg| <2\eta.
\end{equation}
Let $\mathcal{J}_V$ be the set of maximal subintervals $[a,b]\subset [0,t]$
such that  $\varphi_{[a,b]}(x)\subset V$, and let $\mathcal{J}_W$ be the subset of $\mathcal{J}_V$ such that for any $[a,b]\in \mathcal{J}_W$, one has $\varphi_{[a,b]}(x)\cap W\neq\emptyset$. By the definition of $W$, one has that $b-a\ge t_0$ in this case.

By the definition of $\wh \mu_1$, one notices that $$\int_{\theta^{-1}(W)}\log\big|{\rm Det}\widehat\Psi_1|_{{\widehat \cN}^{u}(\wh x)}\big|{\rm d}\wh \mu_1=\int_{\theta^{-1}(V)}\log\big|{\rm Det}\widehat\Psi_1|_{{\widehat \cN}^{u}(\wh x)}\big|{\rm d}\wh \mu_1.$$
Thus, from the above equality and~\eqref{e.average-V}, one has that
\begin{align*}
\bigg|\frac 1 t \sum_{[a,b]\in {\mathcal J}_V}
\int_a^b
\log\big|{\rm Det}\widehat\Psi_1|_{{\widehat \cN}^{u}(\wh \varphi_s(\wh x))}\big|{\rm d}s
-\int_{\theta^{-1}(W)} \log\big|{\rm Det}\widehat\Psi_1|_{{\widehat \cN}^{u}(\wh x)}\big|{\rm d}\wh \mu_1
\bigg|\le 2\eta.
\end{align*}
By~\eqref{e.choiceV} and \eqref{e.approximation-measure2-norm}, one has that
\begin{align*}
&~~~~\bigg|\frac 1 t \sum_{[a,b]\in {\mathcal J}_V\setminus{\mathcal J}_W}\int_a^b
\log\big|{\rm Det}\widehat\Psi_1|_{{\widehat \cN}^{u}(\wh \varphi_s(\wh x))}\big|{\rm d}s\bigg|
\le \frac 1 t \sum_{[a,b]\in {\mathcal J}_V\setminus{\mathcal J}_W}\int_a^b
\bigg|\log\big|{\rm Det}\widehat\Psi_1|_{{\widehat \cN}^{u}(\wh \varphi_s(\wh x))}\big|\bigg|{\rm d}s\\
&\le \int_{\theta^{-1}(V\setminus W)}\bigg|\log\big|{\rm Det}\widehat\Psi_1|_{{\widehat \cN}^{u}(\wh x)}\big|\bigg|{\rm d}{\wh \mu}+\eta= \int_{\theta^{-1}(V\setminus W)}\bigg|\log\big|{\rm Det}\widehat\Psi_1|_{{\widehat \cN}^{u}(\wh x)}\big|\bigg|{\rm d}{\wh \mu_2}+\eta<2\eta.
\end{align*}
On the other hand, one has that by the claim and ~\eqref{e.approximation-measure1-new},
\begin{align*}
\frac 1 t \sum_{[a,b]\in {\mathcal J}_W}\int_a^b
\log\big|{\rm Det}\widehat\Psi_1|_{{\widehat \cN}^{u}(\wh \varphi_s(\wh x))}\big|{\rm d}s&\ge \frac 1 t \sum_{I\in {\mathcal J}_W}C.{\rm Length}(I)\\
&\ge C\cdot\frac 1 t \int_{s\in [0,t], \varphi_s(x)\in W}1
{\rm d}s>C\cdot( \wh \mu(\{\sigma\})-\eta).
\end{align*}
The estimates above and the choice of $\eta$ imply
$$\int_{\theta^{-1}(W)}\log\big|{\rm Det}\widehat\Psi_1|_{{\widehat \cN}^{u}(\wh x)}\big|{\rm d}\wh \mu_1>C\cdot( \wh \mu(\{\sigma\})-\eta)-4\eta>0.$$
\end{proof}

\subsection{Existence of SRB measures: proof of Theorem~\ref{Thm:main-localized}}
The following theorem from \cite[Appendix B]{CYZ}  gives the existence of physical measures on singular hyperbolic attractors. Some related works can be found in \cite{ALM,LeYa}.
\begin{Theorem}[Corollary B.1 and Theorem I in~\cite{CYZ}]~\label{Thm.pseudo-physical-measure-attractor}
Let $X\in\mathcal{X}^1(M)$ and $\Lambda$ be a singular hyperbolic set with the splitting $T_{\Lambda} M=E^{ss}\oplus E^{cu}$ such that ${\rm Leb}({\rm Basin}(\Lambda))>0$.
Then for Lebesgue almost every point $x\in \Basin(\Lambda)$, each limit measure $\mu\in\cM_x(\varphi)$  satisfies	\begin{equation}\label{e.entropy-formula}
{\rm h}_{\mu}(\varphi_1 )=\int\log|\Det D\varphi_1 |_{E^{cu}}|\ud\mu.
\end{equation}
If moreover $X\in\cX^{2}(M)$ and $\Lambda$ is an attractor, then $\Lambda$ supports a unique measure $\mu$ satisfying \eqref{e.entropy-formula}. In this case, $\mu$ is an ergodic  physical measure with $\Leb(\Basin(\mu))=\Leb(\Basin(\Lambda))$.
\end{Theorem}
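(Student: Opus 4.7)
The plan is to establish the two assertions of Theorem~\ref{Thm.pseudo-physical-measure-attractor} in turn: (i) for Lebesgue almost every $x\in\Basin(\Lambda)$, every limit measure $\mu\in\cM_x(\varphi)$ satisfies the Pesin-type entropy formula $h_\mu(\varphi_1)=\int\log|\Det D\varphi_1|_{E^{cu}}|\,{\rm d}\mu$; and (ii) under $C^2$ regularity with $\Lambda$ an attractor, uniqueness and the physical measure property. The upper bound in (i) is cheap: any $\mu\in\cM_x(\varphi)$ is $\varphi$-invariant and supported on $\Lambda$, and Ruelle's inequality (Theorem~\ref{Thm:Ruelle-inequality}) combined with Proposition~\ref{Prop:sum-positive-jacobian} gives
$$h_\mu(\varphi_1)\le \sum\lambda^+(\mu)=\int\log|\Det D\varphi_1|_{E^{cu}}|\,{\rm d}\mu,$$
using only that $\Lambda$ admits the positively singular hyperbolic splitting $E^{ss}\oplus E^{cu}$.

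The main work is the reverse inequality, where I would exploit the volume growth of center-unstable disks. For Lebesgue-typical $x\in\Basin(\Lambda)$, I would extend $E^{cu}$ continuously as a cone field on a neighborhood of $\Lambda$ and embed through $x$ a small $\dim(E^{cu})$-dimensional disk $D_x$ tangent to this cone. Sectional expansion on $E^{cu}$, together with the domination $E^{ss}\oplus E^{cu}$, keeps $\varphi_t(D_x)$ close to the orbit and forces its induced volume to grow like $\exp\bigl(\int_0^t\log|\Det D\varphi_1|_{E^{cu}}(\varphi_s(x))|\,{\rm d}s\bigr)$. A Misiurewicz--Newhouse--Cowieson--Young type counting argument (covering $\varphi_t(D_x)$ by Bowen $(t,\varepsilon)$-balls and extracting a maximal separated subset) then turns this exponential growth into a lower bound on the number of $(t,\varepsilon)$-separated orbits in a neighborhood of $\omega(x)$, and a standard variational extraction yields an invariant measure $\mu$ with $h_\mu(\varphi_1)\ge \int\log|\Det D\varphi_1|_{E^{cu}}|\,{\rm d}\mu$. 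Upgrading this to \emph{every} element of $\cM_x(\varphi)$ would use that the integrand is continuous on measures (by continuity of $E^{cu}$) and that the volume growth is realized along every convergent subsequence of empirical averages, a statement I would get at Lebesgue-typical $x$ from the absolute continuity of Lebesgue on $D_x$.

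For (ii), Theorem~\ref{Thm:Ledrappier-Young} identifies any $\mu$ saturating the entropy formula as an SRB measure with absolutely continuous conditionals on Pesin unstable manifolds. Since $\Lambda$ is a singular hyperbolic attractor, Theorem~\ref{thm.continuity-of-singular-hyperbolic-attractor} gives transitivity and $\Lambda=H(\gamma)$. A Hopf-type argument using the continuous $E^{ss}$-foliation shows that the basin of any such $\mu$ is open mod zero and saturated by stable leaves; transitivity forces any two such measures to coincide, and part (i) then implies $\Leb(\Basin(\mu))=\Leb(\Basin(\Lambda))$. The main obstacle is the lower bound on entropy at $C^1$ regularity: one cannot simply apply Proposition~\ref{Pro:flow-version-burguet}, and two issues must be handled carefully. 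First, the extended cone field around $E^{cu}$ is only continuous, so a priori disks can drift out of the cone after long times---controlling this requires a non-trivial invariant plaque family argument near $\Lambda$. Second, the singularities inside $\Lambda$ can dominate the integrated Jacobian of orbits making long visits to their neighborhoods; the blow-up and extended Poincar\'e flow $\widehat\Psi$ developed earlier in the paper (together with the sectional estimates near Lorenz-like singularities from Lemma~\ref{Lem:project-expanding}) would be the natural tool to quantify these singular contributions and ensure the entropy lower bound survives.
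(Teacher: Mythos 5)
The paper does not prove this statement: it is imported verbatim as ``Corollary B.1 and Theorem I in \cite{CYZ}'', so there is no internal proof to compare against, and any complete argument here would have to reproduce the work of that reference. With that caveat, your outline does follow the strategy actually used in \cite{CYZ}: Ruelle's inequality plus Proposition~\ref{Prop:sum-positive-jacobian} for the upper bound; volume growth of plaques tangent to a $cu$-cone field, converted into an entropy lower bound by a Misiurewicz/Catsigeras--Cerminara--Enrich type counting argument (valid at $C^1$ regularity, where Yomdin theory is unavailable); and, for the $C^2$ attractor case, the Ledrappier--Young characterization together with absolute continuity of the strong stable foliation and transitivity to get uniqueness and the physical property. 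Two points deserve correction or emphasis. First, your proposed appeal to the blow-up and the extended linear Poincar\'e flow $\widehat\Psi$ is misplaced for this theorem: a (positively) singular hyperbolic set carries the dominated splitting $T_\Lambda M=E^{ss}\oplus E^{cu}$ of the \emph{tangent} flow over all of $\Lambda$, singularities included, so $\log|\Det D\varphi_1|_{E^{cu}}|$ is already continuous on $\Lambda$ and no desingularization is needed; the sectional expansion makes this integral strictly positive even for Dirac measures at singularities, and it is precisely the equality \eqref{e.entropy-formula} (equality in Ruelle's inequality being impossible for a singular ergodic component) that forces limit measures of Lebesgue-typical points to give zero weight to $\Sing(X)$. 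The blow-up machinery is needed in this paper only for the multi-singular case (Theorem~\ref{Thm:main-localized}), where the domination lives on the normal bundle.

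Second, the step you describe as ``upgrading to every element of $\cM_x(\varphi)$'' is where the real content of \cite[Appendix~B]{CYZ} lies and cannot be dispatched by continuity of the integrand alone: one must prove the entropy lower bound along \emph{each} convergent subsequence of empirical averages, i.e.\ that $h_\mu(\varphi_1)\ge\lim_n\frac1{t_n}\int_0^{t_n}\log|\Det D\varphi_1|_{E^{cu}}(\varphi_s(x))|\,\mathrm{d}s$ for the specific times $t_n$ defining $\mu$, which requires the separated-set construction to be carried out relative to that subsequence and a careful reparametrization of the flow plaques near singularities. As a blind reconstruction of the cited result your sketch identifies the correct architecture, but the $C^1$ entropy lower bound remains a black box exactly as it is in the paper itself.
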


Now we can prove Theorem~\ref{Thm:main-localized} which addresses the multi-singular hyperbolic case.
\begin{proof}[Proof of Theorem~\ref{Thm:main-localized}]
Let $X\in \cX^\infty(M)$ and $\Lambda$ be a multi-singular hyperbolic set that does not contain any sink and satisfies ${\rm Leb}(\rm Basin(\Lambda))>0$.
If $\Lambda$ is a hyperbolic singularity or a periodic orbit, then it is a sink contradicting our assumption that
$\Lambda$ does not contain any sink.
If $\Lambda$ admits an isolated singularity $\sigma$ such that $\Leb\big(\Basin(\{\sigma\})\big)>0$,
then $\sigma$ is a sink, contradicting again our assumptions.
If $\Lambda$ admits an isolated singularity $\sigma$ such that $\Leb\big(\Basin(\{\sigma\})\big)=0$,
then $\Lambda':=\Lambda\setminus \{\sigma\}$ still satisfies the assumptions of Theorem~\ref{Thm:main-localized}.
We can thus reduce to the case where there exists a multi-singular hyperbolic splitting $\cN_{\Lambda\setminus{\rm Sing}(X)}=\cN^{s}\oplus \cN^u$ with $\dim(\cN^u)>0$ and where $\Lambda$ has no isolated singularities.

Let $U$ be a small neighborhood of $\Lambda$. For Lebesgue almost every point $x\in U\cap {\rm Basin}(\Lambda)$,
one considers a measure $\mu\in\mathcal{M}_x(\varphi)$ given by Corollary~\ref{Cor:growth-linear-poincare}.
Then for $k=\dim\cN^u$,
$${\rm h}_\mu(\varphi_1)\ge\limsup_{t\to+\infty}\frac{1}{t}\log\|\wedge^k\Psi_t(x)\|.$$
		
We are going to work on the blow-up $\widehat \Lambda\subset \widehat M$.
By Lemma~\ref{Lem:dominated-manifold-extended} {and Remark~\ref{r.splitting-muti-singular}}, the lifts of bundles $\widehat\cN^s$ and $\widehat\cN^u$ form a dominated splitting for the extended linear Poincar\'e flow $(\widehat\Psi_t)_{t\in\RR}$. By Lemma~\ref{Lem:bundle-extended}, the bundle $\widehat\cN^u$ can be extended continuously to an open neighborhood $\widehat U\subset \widehat M$ of $\widehat\Lambda$.
The map $\theta$ is a homeomorphism between the open sets $\widehat M\setminus \theta^{-1}(\sing(X))$
and $M\setminus \sing(X)$. Hence the bundle $\widehat\cN^u$ induces
a continuous extension of $\cN^u$ to the open set $U':=\theta(\widehat U)\setminus \sing(X)$, that we still denote by $\cN^u$.
Note that $U'$ contains every regular point of $\Lambda$.
		
Since $\Leb\big(\bigcup_{\sigma\in\Sing(X)\cap\Lambda}W^s(\sigma)\big)=0$, for Lebesgue almost every $x\in {\rm Basin}(\Lambda)$, there is $\tau>0$ such that $\varphi_\tau(x)\in U'$. One considers the fiber $\cN^u(x):=\Psi_{-\tau}(\cN^u(\varphi_\tau(x)))$ and its lift $\widehat\cN^u({\widehat x})$.

From now on we fix the point $x$ and the measure $\mu\in \cM_x$. There exists $t_n\to +\infty$
such that
$\displaystyle\mu=\lim_{n\to\infty} \frac{1}{t_n}\int_0^{t_n}\delta_{\varphi_s(x)}\ud s$.
Let $\widehat \mu$ be a limit measure of the sequence $\big(\frac{1}{t_n}\int_0^{t_n}\delta_{\widehat \varphi_s(\widehat x)}\ud s\big)$.
{By Lemma~\ref{l.lift-omega-limit}, $\wh \mu$ is a lift of $\mu$ on $\widehat \Lambda$.} One then gets the following estimates.
\begin{align}
			{\rm h}_{\widehat\mu}(\widehat\varphi_1)
			&={\rm h}_{\mu}(\varphi_1)  
			&\text{\scriptsize (by Theorem~\ref{Thm:same-entropy})} \nonumber \\
			&\ge\limsup_{t\to\infty}\frac{1}{t}\log\big|{\rm Det}\Psi_t|_{{ \cN}^u({x})}\big| &\text{\scriptsize (by Corollary~\ref{Cor:growth-linear-poincare})} \nonumber\\
			&=\limsup_{t\to\infty}\frac{1}{t}\log\big|{\rm Det}\widehat\Psi_t|_{{\widehat \cN}^u({\widehat x})}\big|
			&\text{\scriptsize (each $\Theta\colon \widehat \cN^u(\widehat y)\to \cN^u(y)$ is an isometry)} \nonumber\\
			&=\limsup_{t\to\infty}\frac{1}{t}\log\big|{\rm Det}\widehat\Psi_t|_{{\widehat \cN}^u(\widehat\varphi_\tau({\widehat x}))}\big| & \nonumber\\
			&=\sup_{\widehat\nu\in\cM_{\widehat x}(\varphi)}\int \log\big|{\rm Det}\widehat\Psi_1|_{{\widehat \cN}^{u}}\big|{\rm d}\widehat\nu
			&\text{\scriptsize (by Lemma~\ref{Lem:burguet-Lemma-1})} \nonumber\\
			&\ge\int \log\big|{\rm Det}\widehat\Psi_1|_{{\widehat \cN}^{u}}\big|{\rm d}\widehat\mu.
\label{e.ineq-entropy}
\end{align}
In particular, the conclusion of Proposition~\ref{e.empirical-singularity} fails
and $\widehat \mu(\theta^{-1}(\sigma))=0$ for any singularity
$\sigma$ with index $d-1$. By Proposition~\ref{Prop:area-expanded},
$\displaystyle\int \log\big|{\rm Det}\widehat\Psi_1|_{{\widehat \cN}^{u}}\big|{\rm d}\widehat\mu
\ge {\rm h}_{\widehat\mu}(\widehat\varphi_1).$
With inequality~\eqref{e.ineq-entropy}, one gets
$\displaystyle{\rm h}_{\widehat\mu}(\widehat\varphi_1)=\int \log\big|{\rm Det}\widehat\Psi_1|_{{\widehat \cN}^{u}}\big|{\rm d}\widehat\mu$. 
Proposition~\ref{Prop:area-expanded} then implies that $\mu$ is an SRB measure. 
\end{proof}

\section{Physical measures for multi-singular hyperbolic flows}

In this section, we first discuss the uniqueness of physical measures on singular hyperbolic attractors.
We then give a criterion for an accumulation of chain recurrence classes to be an attractor.
At last, we prove the following statement which clearly implies Theorems~\ref{Thm:main-ergodic} and~\ref{Thm:main-generic-ergodic}.

\begin{Theorem}\label{Thm:main-restated}
There exists a $C^1$ open and dense subset $\cU$ of $\cX^*(M)$ and a dense G$_\delta$ subset
$\cG\subset \cU$ such that for any vector field $X\in \cU$, there exist finitely many attractors $\Lambda_1,\dots,\Lambda_k$. Each of them is either a sink or a singular hyperbolic homoclinic class.

If $X\in\cU$ is $C^\infty$ or $X\in \cG$, then it admits finitely many ergodic physical measures $\mu_1,\dots,\mu_k$, such that:
\begin{enumerate}
\item their supports coincide with $\Lambda_1,\dots,\Lambda_k$ respectively,
\item $\Leb\big(\bigcup_{i=1}^k\Basin(\mu_i)\big)=\Leb(M)$,
\item each $\mu_i$ which is not supported on a sink is an SRB measure.
\end{enumerate}
\end{Theorem}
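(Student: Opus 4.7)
My plan is to take $\cU$ as the intersection of $\cX^*(M)$ with the $C^1$ open and dense subset produced by Theorem~\ref{thm.continuity-of-singular-hyperbolic-attractor}, and to define $\cG$ as the further intersection of $\cU$ with the dense $G_\delta$ subsets from Theorems~\ref{Theo:generic}, \ref{Thm:go-to-Lyapunov}, and~\ref{thm.generic-property-of-star}. For $X\in\cU$, any attractor is Lyapunov stable, hence a quasi-attractor that is multi-singular hyperbolic; if it contains a sink it coincides with that sink, and otherwise Lemma~\ref{Lem:quasi-attractor-singular-hyperbolic} together with Theorem~\ref{thm.continuity-of-singular-hyperbolic-attractor} identifies it as a robustly transitive singular hyperbolic homoclinic class. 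Finitely many sinks appear by Theorem~\ref{thm.finite-sink}, and finiteness of the singular hyperbolic attractors follows from the fact that their trapping regions are pairwise disjoint and each one is $C^1$-isolated by robust transitivity. Label these attractors $\Lambda_1,\dots,\Lambda_k$.

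The central task is to prove that $\Leb(M\setminus B)=0$, where $B:=\{x\in M:\omega(x)\subset\Lambda_i\text{ for some }i\}$. If this failed, then by countability of chain recurrence classes (Proposition~\ref{Prop:contable-class}) some class $C\notin\{\Lambda_1,\dots,\Lambda_k\}$ would satisfy $\Leb(\Basin(C))>0$; since sinks form their own chain recurrence classes, $C$ would contain no sink. In the $C^\infty$ case I would then apply Theorem~\ref{Thm:main-localized} to obtain an SRB measure $\mu$ supported on $C$, and Corollary~\ref{Cor:SRB-ergodic-class} would produce a hyperbolic periodic orbit $\gamma\subset C$ with $W^u(\gamma)\subset\supp(\mu)\subset C$. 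Combining Corollary~\ref{Lem:bi-Pliss-exsitence} (which supplies bi-Pliss points on $\gamma$) with the attractor criterion of the announced Theorem~\ref{p.unstable-manifold-attractor} would then force $C$ to be a singular hyperbolic attractor, contradicting $C\notin\{\Lambda_i\}$. For $X\in\cG$ the same conclusion follows more directly from Theorem~\ref{thm.generic-property-of-star}(ii) applied to the inclusion $W^u(\gamma)\subset C$.

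For each sink $\Lambda_i$, the unique invariant probability supported on $\Lambda_i$ is clearly its unique physical measure $\mu_i$. For each non-sink $\Lambda_i$, which is a singular hyperbolic attractor, Theorem~\ref{Thm.pseudo-physical-measure-attractor} in the $C^\infty$ (hence $C^2$) case directly yields a unique ergodic physical SRB measure $\mu_i$ with $\Leb(\Basin(\mu_i))=\Leb(\Basin(\Lambda_i))$, and $\supp(\mu_i)=\Lambda_i$ then follows from Corollary~\ref{Cor:SRB-ergodic-class} together with the transitivity of $\Lambda_i$. For $X\in\cG$ only the first (entropy-formula) part of Theorem~\ref{Thm.pseudo-physical-measure-attractor} applies directly in the $C^1$ category; uniqueness of the physical measure on each $\Lambda_i$ would then be derived from a $C^\infty$-approximation argument exploiting the upper semi-continuity of entropy on singular hyperbolic attractors (Theorem~\ref{Thm;ups-entropy}). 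Combined with the covering step above, this yields $\Leb\big(\bigcup_i\Basin(\mu_i)\big)=\Leb(M)$.

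The hard step is the covering argument, specifically the upgrading of ``$W^u(\gamma)\subset C$'' to ``$C$ is attracting'' in the open-dense (rather than merely generic) setting, since in the generic setting this is just Theorem~\ref{thm.generic-property-of-star}(ii). This is precisely where the bi-Pliss characterization of attractors in the multi-singular hyperbolic setting (Theorem~\ref{p.unstable-manifold-attractor}), combined with Proposition~\ref{Pro:size-stable-Pliss} (which produces stable plaques of definite size at Pliss periodic points), is essential: Pliss information along the iterates of $\gamma$ is converted into a trapping neighborhood of $C$, which contradicts $C$ not being one of the $\Lambda_i$.
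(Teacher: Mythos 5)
Your outline reproduces the overall architecture (finiteness of attractors, then a covering argument, then uniqueness of the physical measure on each attractor), but two steps as written would fail.

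First, the finiteness of the non-sink attractors. The observation that their trapping regions are pairwise disjoint and that each attractor is robustly transitive does not bound their number: a compact manifold admits infinitely many pairwise disjoint open sets, and isolation of each individual attractor says nothing about the cardinality of the whole family. This is exactly the content of Theorem~\ref{Thm:main-topology}, whose proof is a genuine Baire argument: the number of quasi-attractors is lower semi-continuous in $X$, so the sets $\cV_k$ of fields with at least $k$ quasi-attractors are open, and the density of $\bigcup_k\cU_k$ is proved by contradiction --- a generic field with infinitely many quasi-attractors would have infinitely many singularity-free non-sink ones, each a hyperbolic homoclinic class $H(\gamma_n)$ containing $W^u(\gamma_n)$, and an accumulation chain class of these would be a new attractor by Theorem~\ref{p.unstable-manifold-attractor}, contradicting the isolation of attractors. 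You need this accumulation argument; it cannot be replaced by the disjointness remark.

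Second, the covering step. Theorem~\ref{p.unstable-manifold-attractor} is only valid for $X$ in a dense G$_\delta$ subset, so you cannot invoke it for an arbitrary $C^\infty$ field $X$ in the open and dense set $\cU$: such an $X$ need not belong to any prescribed residual set, and your closing paragraph acknowledges the tension without resolving it. The paper's resolution is to run the contradiction across a sequence of vector fields: if no open dense $\cU$ works, one finds $C^\infty$ fields $Y_n\to X$ with $X\in\cG$ generic, applies Theorem~\ref{Thm:main-localized} and Corollary~\ref{Cor:SRB-ergodic-class} to each $Y_n$ to get periodic orbits $\gamma_n\subset C_n$ with $W^u(\gamma_n)\subset C_n$, and then applies Theorem~\ref{p.unstable-manifold-attractor} at the generic \emph{limit} $X$ with $X_n=Y_n$ --- this is precisely why that criterion is formulated for sequences of vector fields. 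Relatedly, your generic-case covering argument routes through Theorem~\ref{Thm:main-localized}, which requires $X\in\cX^\infty(M)$; a $C^1$-generic field is not even $C^2$, so no SRB measure (hence no periodic orbit with $W^u(\gamma)\subset C$) is produced this way. The paper instead transfers the full-basin property from nearby $C^\infty$ fields to the generic field via the lower semi-continuity of $X\mapsto\Leb\big(\bigcup_i\Basin(\Lambda_i^X)\big)$, and it needs a further Baire argument (the sets $\cP_{j,i,n}$) to identify $\supp(\mu_i)$ with $\Lambda_i$ in the generic case.
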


\subsection{Physical measures for generic singular hyperbolic attractors}
We will show  the existence of physical measures on singular hyperbolic transitive attractors for $C^1$ generic vector fields, as a consequence of Theorem~\ref{Thm.pseudo-physical-measure-attractor} for $C^2$ vector fields.
Note that Qiu \cite{Q} got the result for uniformly hyperbolic attractors of $C^1$-generic diffeomorphisms.
\begin{Theorem}~\label{Thm:unique-physical-on-attractor}
	There exists a dense G$_\delta$ subset $\cG$ of $\cX^1(M)$ such that for any $X\in\cG$ and for any singular hyperbolic attractor $\Lambda$ of $X$,
	there exists a unique SRB measure $\mu$ on $\Lambda$. Moreover, $\mu$ is physical and $\Leb(\Basin(\mu))=\Leb(\Basin(\Lambda))$.
\end{Theorem}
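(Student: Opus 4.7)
I would combine approximation by $C^\infty$ vector fields (where Theorem~\ref{Thm.pseudo-physical-measure-attractor} applies) with upper semi-continuity of entropy to construct a limit SRB on each singular hyperbolic attractor, and then use a Baire-category argument over a countable basis of the weak-$*$ topology to secure uniqueness. Starting from the open dense set $\cU_0\subset\cX^1(M)$ given by Theorem~\ref{thm.continuity-of-singular-hyperbolic-attractor}, on which every singular hyperbolic Lyapunov-stable chain recurrence class is a robustly transitive attractor with persistent isolating block, I fix countable bases $\{O_i\}_i$ of open sets of $M$ (encoding isolating blocks) and $\{V_j\}_j$ of the weak-$*$ topology on probability measures. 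Let $\cC_{i,j}$ be the closure \emph{inside $\cU_0$} of the set of $Y\in\cU_0\cap\cX^\infty(M)$ with $\Lambda^Y\subset O_i$ and $\mu^Y\in V_j$, where $\mu^Y$ denotes the unique SRB from Theorem~\ref{Thm.pseudo-physical-measure-attractor}. Each $\cC_{i,j}$ is closed in $\cU_0$, so $\cG_{i,j}:=\operatorname{int}(\cC_{i,j})\cup(\cU_0\setminus\cC_{i,j})$ is open and dense, and $\cG:=\bigcap_{i,j}\cG_{i,j}$ is a dense G$_\delta$ subset of $\cX^1(M)$.

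\textbf{Existence of the limit SRB.} Fix $X\in\cG$ with a singular hyperbolic attractor $\Lambda$ isolated in some $O_i$, and pick $X_n\to X$ in $\cX^\infty\cap\cU_0$ with $\Lambda^{X_n}\subset O_i$. The unique SRB $\mu_n:=\mu^{X_n}$ satisfies the entropy identity $h_{\mu_n}(\varphi_1^{X_n})=\int\log|\Det D\varphi_1^{X_n}|_{E^{cu,X_n}}|\,d\mu_n$. Any weak-$*$ accumulation point $\mu$ of $(\mu_n)$ is supported in $\Lambda$, and continuous dependence of $E^{cu}$ on the vector field (via the dominated splitting), combined with Theorem~\ref{Thm;ups-entropy} (upper semi-continuity of entropy on singular hyperbolic attractors), Ruelle's inequality (Theorem~\ref{Thm:Ruelle-inequality}), and Proposition~\ref{Prop:sum-positive-jacobian}, forces
\[
h_\mu(\varphi_1^X)\;=\;\int\log\big|\Det D\varphi_1^X|_{E^{cu,X}}\big|\,d\mu\;=\;\sum\lambda^+(\mu),
\]
so $\mu$ is an SRB measure on $\Lambda$.

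\textbf{Uniqueness and basin.} For any weak-$*$ open $V_j\ni\mu$, eventually each $X_n$ lies in the defining set of $\cC_{i,j}$, so $X\in\cC_{i,j}$; since $X\in\cG$, this forces $X\in\operatorname{int}(\cC_{i,j})$, meaning a whole $C^1$-neighborhood $W_{i,j}$ of $X$ is contained in $\cC_{i,j}$. Given another $C^\infty$-sequence $X'_m\to X$, for $m$ large $X'_m\in W_{i,j}\subset\cC_{i,j}$, so there exist $C^\infty$ vector fields $Y_k\to X'_m$ with $\mu^{Y_k}\in V_j$; applying the Existence argument \emph{at the $C^\infty$ point $X'_m$} and invoking Theorem~\ref{Thm.pseudo-physical-measure-attractor}, any weak-$*$ limit of $(\mu^{Y_k})$ equals $\mu^{X'_m}$, hence $\mu^{X'_m}\in\overline{V_j}$. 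Shrinking $V_j$ through the basis to $\{\mu\}$ yields $\mu^{X'_m}\to\mu$, so every $C^\infty$-limit equals $\mu$. To upgrade to uniqueness among all ergodic SRBs on $\Lambda$, I would show any ergodic SRB $\nu$ on $\Lambda$ is itself a $C^\infty$-limit: by Corollary~\ref{Cor:SRB-ergodic-class} and Theorem~\ref{Thm:hyperbolic-measure-ergodic-class}, $\nu$ is supported on the homoclinic class of a periodic orbit in $\Lambda$, which equals $\Lambda$ by robust transitivity, and a $C^1$-small perturbation toward a $C^\infty$ vector field realizes $\nu$ as the weak-$*$ limit of the corresponding unique $C^\infty$ SRBs. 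The basin equality $\Leb(\Basin(\mu))=\Leb(\Basin(\Lambda))$ then follows by combining $\Basin(\mu_n)=\Basin(\Lambda^{X_n})$ mod Lebesgue null (Theorem~\ref{Thm.pseudo-physical-measure-attractor}) with continuity of the attractor's basin under $C^1$-perturbation on $\cU_0$ and the uniqueness just obtained.

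\textbf{Main obstacle.} The crux is the last step: showing that every ergodic SRB on $\Lambda^X$ is realizable as a $C^\infty$-limit. Without this, the Baire-category argument controls only $C^\infty$-limit SRBs and not all SRBs on $\Lambda^X$. Establishing it requires a delicate perturbation argument that tunes a Pesin block together with a periodic orbit (produced by the vector-field version of Katok's theorem) into a $C^\infty$ vector field whose unique SRB approximates the given $\nu$ in the weak-$*$ topology, leveraging the dominated splitting and the robust transitivity of $\Lambda^X$ as the key structural inputs.
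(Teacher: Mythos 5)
Your plan correctly identifies the two essential inputs ($C^\infty$ approximation via Theorem~\ref{Thm.pseudo-physical-measure-attractor}, upper semi-continuity of entropy via Theorem~\ref{Thm;ups-entropy}, and a Baire-category argument over a countable basis), but the uniqueness step has a genuine gap that your ``main obstacle'' paragraph does not close. Your Baire argument is set up over weak-$*$ open sets of \emph{measures}, so it only controls measures that arise as weak-$*$ limits of the SRB measures $\mu^{Y_k}$ of $C^\infty$ approximating vector fields. To conclude, you would need to know a priori that \emph{every} ergodic SRB measure on $\Lambda^X$ can be realized as such a limit; but that statement is essentially as hard as the theorem itself, and the sketched fix (tuning a Pesin block with a Katok periodic orbit into a $C^\infty$ vector field whose \emph{unique} SRB approximates the prescribed $\nu$) is not a known perturbation result --- there is no lemma that lets you prescribe the SRB measure of a nearby smooth system. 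The paper avoids this obstruction entirely by replacing the weak-$*$ basis of measures with the set $\cE_V(X)$ of invariant measures satisfying the entropy \emph{inequality} $h_\mu(\varphi_1^X)\ge\int\log|\Det(D\varphi_1^X|_{E^{cu}})|\,d\mu$: by Lemma~\ref{Lem:equivalent-description} this set is \emph{exactly} the set of SRB measures on the attractor. Since $X\mapsto\cE_V(X)$ is compact-valued and upper semi-continuous (via Theorem~\ref{Thm;ups-entropy}), its set $\cR_V$ of continuity points is a dense $G_\delta$, and at a continuity point the singleton $\cE_V(Y)=\{\mu^Y\}$ for nearby $C^2$ vector fields $Y$ forces $\cE_V(X)$ itself to be a singleton. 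No realization lemma is needed.

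Your treatment of the basin equality is also not on solid ground: you invoke ``continuity of the attractor's basin under $C^1$-perturbation,'' which is not available (Lebesgue measure of a basin is at best lower semi-continuous, and $\Basin(\Lambda^{X_n})$ need not converge to $\Basin(\Lambda^X)$). The correct route, used in the paper, is to apply the \emph{first} part of Theorem~\ref{Thm.pseudo-physical-measure-attractor}, which holds for $C^1$ vector fields directly: Lebesgue-almost every $x\in\Basin(\Lambda)$ has all of its $\omega$-limit measures satisfying~\eqref{e.entropy-formula}, i.e.\ lying in $\cE_V(X)$. Once you know $\cE_V(X)=\{\mu\}$, this gives $\frac{1}{t}\int_0^t\delta_{\varphi_s(x)}\,ds\to\mu$ for Lebesgue-a.e.\ $x\in\Basin(\Lambda)$, hence $\Leb(\Basin(\mu))=\Leb(\Basin(\Lambda))$ without any limiting-of-basins argument. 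In short: your proposal needs Lemma~\ref{Lem:equivalent-description} (or an equivalent characterization of SRBs on singular hyperbolic attractors by an entropy inequality) as the missing key lemma; with it, the Baire argument should be run on the compact-valued map $X\mapsto\cE_V(X)$ rather than on weak-$*$ basic open sets of measures.
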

To prove Theorem~\ref{Thm:unique-physical-on-attractor}, we need the following result.

\begin{Lemma}\label{Lem:equivalent-description}
Let $X\in\cX^1(M)$ and $\Lambda$ be a singular hyperbolic attractor of $X$ with the splitting $T_\Lambda M=E^{ss}\oplus E^{cu}$. 
For any invariant measure $\mu$ on $\Lambda$, the following properties are equivalent.
\begin{itemize}
\item  $\displaystyle{\rm h}_{\mu}(\varphi_1 )\ge\int\log|\Det(D\varphi_1 |_{E^{cu}})|\ud\mu,$
\item $\displaystyle{\rm h}_{\mu}(\varphi_1)= \int\log|\Det(D\varphi_1|_{E^{cu}})|\ud\mu,$
\item $\mu$ is an SRB measure.
\end{itemize}
\end{Lemma}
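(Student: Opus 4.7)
The plan is to establish a \emph{master inequality} for every invariant measure $\mu$ on $\Lambda$,
\begin{equation*}
h_\mu(\varphi_1) \le \int \log |\Det(D\varphi_1|_{E^{cu}})|\ud\mu,
\end{equation*}
with equality holding if and only if $\mu$ is an SRB measure. Once this is in hand, the three stated conditions are equivalent: (2) trivially implies (1); the master inequality combined with (1) forces (2); and the equality case of the master inequality is precisely (2) $\Leftrightarrow$ (3).

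To prove the master inequality, I would use the ergodic decomposition of $\mu$: both sides decompose linearly, so it is enough to handle each ergodic component. Since singularities in $\Lambda$ are hyperbolic and hence form a finite set, an ergodic invariant measure on $\Lambda$ is either regular (assigns zero mass to $\sing(X)$) or a Dirac $\delta_\sigma$ at a singularity $\sigma\in\Lambda$. For a regular ergodic component $\nu$, Remark~\ref{Rem:singular-hyperbolic-attractor} guarantees that $\Lambda$ is positively singular hyperbolic, so Proposition~\ref{Prop:sum-positive-jacobian} (second part) gives $\sum\lambda^+(\nu) = \int\log|\Det(D\varphi_1|_{E^{cu}})|\ud\nu$; combined with Ruelle's inequality (Theorem~\ref{Thm:Ruelle-inequality}) this yields the master inequality for $\nu$, and sectional expansion of $E^{cu}$ forces $\sum\lambda^+(\nu) > 0$, so equality for $\nu$ is exactly the definition of $\nu$ being an ergodic SRB measure.

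For a Dirac component $\delta_\sigma$, the argument of Lemma~\ref{Lem:quasi-attractor-singular-hyperbolic} applies (singular hyperbolicity is a special case of multi-singular hyperbolicity) and shows that $\sigma$ is Lorenz-like with negative center Lyapunov exponent; hence $h_{\delta_\sigma}(\varphi_1)=0$, while evaluating the integral directly gives $\lambda^c_\sigma + (\text{sum of Lyapunov exponents on } E^{uu}_\sigma)$, which is strictly positive by applying sectional expansion to a $2$-plane in $E^{cu}_\sigma$ spanned by $E^c_\sigma$ and the weakest direction of $E^{uu}_\sigma$. The master inequality is therefore strict at every $\delta_\sigma$, and such a measure is never SRB. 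Combining the two cases, the master inequality holds globally with equality iff $\mu(\sing(X))=0$ and each regular ergodic component of $\mu$ is SRB; by the definition of an SRB measure for non-ergodic $\mu$, this is exactly saying $\mu$ is SRB. The main technical point — and the only one not immediate from Ruelle's inequality — is the strict inequality on Dirac components: it is crucially what forces any $\mu$ satisfying (1) to be supported on the regular set, and it relies on the specific fact that singular hyperbolic \emph{attractors} only contain Lorenz-like singularities with negative center exponent, so that the excess $|\lambda^c_\sigma|$ of $\sum\lambda^+(\delta_\sigma)$ over $\int\log|\Det(D\varphi_1|_{E^{cu}})|\ud\delta_\sigma$ cannot be absorbed by the vanishing entropy at $\sigma$.
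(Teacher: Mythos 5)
Your proof is correct and follows essentially the same route as the paper's: ergodic decomposition, Ruelle's inequality combined with Proposition~\ref{Prop:sum-positive-jacobian} for regular components, strict inequality at singular Dirac measures, and affinity of entropy to pass to general $\mu$. The only cosmetic difference is that you spell out why $\int\log|\Det(D\varphi_1|_{E^{cu}})|\ud\delta_\sigma>0$ (via the Lorenz-like structure and sectional expansion), which the paper simply asserts; both arguments are valid, though your appeal to Lemma~\ref{Lem:quasi-attractor-singular-hyperbolic} is heavier than strictly needed, since sectional expansion of the $\dim\ge2$ bundle $E^{cu}_\sigma$ already makes $\log|\Det(D\varphi_1|_{E^{cu}_\sigma})|$ positive directly.
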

Note that SRB measures supported on a singular hyperbolic attractor are regular,
have only one zero Lyapunov exponent along $E^{cu}$ and the other Lyapunov exponents along $E^{cu}$ are positive.
\begin{proof}
	By Ruelle's inequality (Theorem~\ref{Thm:Ruelle-inequality}) and Proposition~\ref{Prop:sum-positive-jacobian}, each regular ergodic measure $\nu$ satisfies
	$${\rm h}_{\nu}(\varphi_1 )\le\sum\lambda^+(\nu)=\int\log|\Det(D\varphi_1 |_{E^{cu}})|\ud\nu.$$
	And each Dirac measure $\nu$ supported on a singularity satisfies 
	$${\rm h}_{\nu}(\varphi_1 )=0<\int\log|\Det(D\varphi_1 |_{E^{cu}})|\ud\nu.$$
	Since the entropy is an affine function of the measure,
	for any invariant measure $\mu$, one has $\displaystyle{\rm h}_{\mu}(\varphi_1 )\le\int \sum\lambda^+(x)d\mu(x)=\int\log|\Det(D\varphi_1|_{E^{cu}})|\ud\mu.$
	The lemma follows.
\end{proof}

Now, we are ready to prove Theorem~\ref{Thm:unique-physical-on-attractor}.

\begin{proof}[Proof of Theorem~\ref{Thm:unique-physical-on-attractor}]
	Recall Theorem~\ref{thm.continuity-of-singular-hyperbolic-attractor} that for $C^1$ open and dense vector fields, a singular hyperbolic attractor is robustly transitive  and  Remark~\ref{Rem:singular-hyperbolic-attractor} that a singular hyperbolic attractor is always positively singular hyperbolic.
	For any open set $V\subset M$,
	let $\cU_V$ be the (open) set of vector fields $X$, such that
	for any $Y$ in a neighborhood of $X$, the open set $V$ is trapping and its maximal invariant set $\Lambda_V^Y$ is transitive
	and singular hyperbolic.
	For $X\in \cU_V$ we denote
	
	$$\cE_V(X)=\big\{\mu\in\cM_{\rm inv}(X):~\mu(V)=1 \textrm{~and~}
	{\rm h}_\mu(\varphi_1^X)\ge \int\log|\Det(D\varphi_1^X|_{E^{cu}})|\ud\mu\big\}.$$
	
	\begin{Claim}
		The set  $\cE_V(X)$ is compact in $\cM_{\rm inv}(X)$ and
		$X\mapsto \cE_V(X)$ is upper semi-continuous on $\cU_V$
		(for the Hausdorff topology).
		In particular the set $\cR_V$ of continuity points of the map $\cE_V$ is a dense G$_\delta$ subset of $\cU_V$.
	\end{Claim}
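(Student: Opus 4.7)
The plan is to prove compactness of each $\cE_V(X)$ and upper semi-continuity of $X\mapsto \cE_V(X)$ together, then deduce the dense G$_\delta$ of continuity points from a classical theorem of Fort. Both properties will rely on two ingredients specific to singular hyperbolic attractors: the upper semi-continuity of the metric entropy provided by Theorem~\ref{Thm;ups-entropy}, and the fact that the integrand $\log|\Det(D\varphi_1^X|_{E^{cu,X}})|$ is jointly continuous in the point and in the vector field, which follows from the robustness and continuity of the dominated splitting $E^{ss}\oplus E^{cu}$ around singular hyperbolic attractors.

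For the compactness of $\cE_V(X)$, I first observe that, since $V$ is trapping, any invariant measure $\mu$ with $\mu(V)=1$ is supported on the compact attractor $\Lambda_V^X\subset V$. Given a weak-$*$ convergent sequence $\mu_n\to\mu$ in $\cE_V(X)$, the limit $\mu$ remains supported on $\Lambda_V^X$ and is $\varphi^X$-invariant. The continuity of $E^{cu}$ on $\Lambda_V^X$ yields $\int\log|\Det(D\varphi_1^X|_{E^{cu}})|\,\ud\mu_n\to\int\log|\Det(D\varphi_1^X|_{E^{cu}})|\,\ud\mu$, while Theorem~\ref{Thm;ups-entropy} gives ${\rm h}_\mu(\varphi_1^X)\geq\limsup_n{\rm h}_{\mu_n}(\varphi_1^X)$. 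Chaining these inequalities places $\mu$ in $\cE_V(X)$, so the set is closed and hence compact in $\cM_{\rm inv}(X)$.

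For the upper semi-continuity of $X\mapsto\cE_V(X)$, take $X_n\to X$ in $\cU_V$, $\mu_n\in\cE_V(X_n)$ converging weakly-$*$ to $\mu$, and aim to show $\mu\in\cE_V(X)$. Since $V$ is trapping for $X$ and all nearby $X_n$, the compact sets $\Lambda_V^{X_n}$ accumulate on $\Lambda_V^X$ in Hausdorff distance, so $\mu$ is supported on $\Lambda_V^X$ and $\varphi^X$-invariant. The bundles $E^{cu,X_n}$ extend to continuous dominated sub-bundles on a common neighborhood $U$, converging uniformly to the extension of $E^{cu,X}$ by the standard invariant cone-field description of a robust dominated splitting. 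Thus $\log|\Det(D\varphi_1^{X_n}|_{E^{cu,X_n}})|$ converges uniformly on $U$ to $\log|\Det(D\varphi_1^X|_{E^{cu,X}})|$, which combined with $\mu_n\to\mu$ forces the integrals to converge. Applying Theorem~\ref{Thm;ups-entropy} to the sequence $(X_n,\mu_n)$ gives ${\rm h}_\mu(\varphi_1^X)\geq\limsup_n{\rm h}_{\mu_n}(\varphi_1^{X_n})$, and chaining the two inequalities shows $\mu\in\cE_V(X)$. Finally, the dense G$_\delta$ statement follows from Fort's theorem: every upper semi-continuous set-valued map with compact values from a Baire space into a metric space has a dense G$_\delta$ subset of continuity points, and $\cU_V$ is open in $\cX^1(M)$ hence a Baire space. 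The main delicate point is the joint continuity of $E^{cu,X_n}$ in $(x,X)$ near the singularities of $\Lambda_V^X$, but this is classical once one expresses the dominated splitting via invariant cone-fields that depend continuously on the vector field.
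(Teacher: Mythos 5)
Your proposal is correct and follows essentially the same route as the paper: both reduce compactness and upper semi-continuity to the single statement that if $X_n\to X$ in $\cU_V$ and $\mu_n\in\cE_V(X_n)$ converges to $\mu$, then $\mu\in\cE_V(X)$, using Theorem~\ref{Thm;ups-entropy} for the entropy side and the continuation/continuity of the singular hyperbolic splitting $E^{cu}$ for the convergence of the integrals, with the dense G$_\delta$ of continuity points then following from the standard fact about semi-continuous compact-valued maps. Your extra care about the joint continuity of $E^{cu,X_n}$ near singularities via cone fields is a useful elaboration of what the paper leaves implicit, but it is not a different argument.
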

	\begin{proof}
To prove the claim, it suffices to show that
for any vector fields $X_n\in \cU_V$ with measures $\mu_n\in\cE_V(X_n)$,
if $X_n\to X$ and $\mu_n\to \mu$, then $\mu\in\cE_V(X)$.
Since each measure $\mu_n$ is supported on $\Lambda_V^{X_n}$, the measure
$\mu$ is supported on $\Lambda_V^X$.
		Moreover $\Lambda_V^{X_n}$ admits a singular hyperbolic splitting $E^{ss,X_n}\oplus E^{cu,X_n}$, which is the continuation of the singular hyperbolic splitting $E^{ss}\oplus E^{cu}$ over $\Lambda_V^X$. By the upper semi-continuity of metric entropy (Theorem~\ref{Thm;ups-entropy})
$$
			{\rm h}_{\mu}(\varphi_1^X)\ge\limsup_{n\to\infty}{\rm h}_{\mu_n}(\varphi_1^{X_n})
			\ge\lim_{n\to\infty}\int\log|\Det(D\varphi_1^{X_n}|_{E^{cu,X_n}})|{\rm d}\mu_n = \int\log|\Det(D\varphi_1^{X}|_{E^{cu}})|{\rm d}\mu.
$$
		This implies $\mu\in\cE_V(X)$.
	\end{proof}
	
	Let $\cG_0$ be the  open and dense subset of $\cX^1(M)$ provided by Theorem~\ref{thm.continuity-of-singular-hyperbolic-attractor}:
	for any $X\in \cG_0$, any singular hyperbolic attractor is robustly transitive.
	
	Let $\cV$ be a countable family of open sets of $M$
	such that for any $V_0\subset M$ open and $K_0\subset V_0$ compact, there exists $V\in \cV$
	with $K_0\subset V\subset V_0$. We introduce the dense G$_\delta$ subset:
	$$\cG:=\cG_0\cap \bigcap_{V\in\cV} \bigg(\cR_V\cup (\cX^1(M)\setminus \overline{\cU_V})\bigg)\subset \cX^1(M).$$
	
	Let us consider $X\in \cG$ with a singular hyperbolic attractor $\Lambda$. Since $X\in \cG_0$,
	the set $\Lambda$ is robustly transitive, hence there exists $V\in \cV$ such that for any $Y$ close to $X$,
	the domain $V$ is trapping and
	the maximal invariant set $\Lambda_V^Y$ in $V$ is transitive and singular hyperbolic.
	In particular $X$ belongs to $\cU_V$, and thus in $\cR_V$ (by definition of $\cG$).
	
	Now we take a $C^2$ vector field $Y$ that is close to $X$. By Theorem~\ref{Thm.pseudo-physical-measure-attractor}
	and Lemma~\ref{Lem:equivalent-description}, $\Lambda^Y_V$ supports a unique measure $\mu^Y$  such that ${\rm h}_{\mu^Y}(\varphi_1^Y)\geq\int\log|\Det(D\varphi_1^Y|_{E^{cu}})|\ud\mu^Y$.
	This gives $\cE_V(Y)=\{\mu^Y\}$.
	Since the map $\cE_V$ is continuous at $X$, one deduces that $\cE_V(X)$ is a singleton.
Together with Lemma~\ref{Lem:equivalent-description}, one deduces that 
$\Lambda=\Lambda_V^X$ supports exactly one SRB measure.
By~Theorem~\ref{Thm.pseudo-physical-measure-attractor}, Lebesgue-almost every point $x$ in the basin of $\Lambda$
satisfies \[\displaystyle\lim_{t\rightarrow+\infty}\frac{1}{t}\int_{0}^t\delta_{\varphi_s(x)}\ud s=\mu.\]
In particular $\mu$ is a physical measure and we have $\Leb(\Basin(\mu))=\Leb(\Basin(\Lambda))$.
Note that for any physical measure $\nu$ supported on $\Lambda$,
$\Basin(\nu)\subset \Basin(\Lambda)$. This implies that $\mu$ is the unique physical measure
supported on $\Lambda$.
\end{proof}

\subsection{Existence and accumulation of attractors}
We now give a new criterion for the existence of attractors, based on invariant manifolds of bi-Pliss points. 
\begin{Theorem}~\label{p.unstable-manifold-attractor}
There exists a dense G$_\delta$ subset $\cG\subset\cX^1(M)$ such that for any $X\in\cG$ with a multi-singular hyperbolic chain recurrence class $C$,
if there are vector fields $X_n\to X$ with hyperbolic periodic orbit $\gamma_n$ satisfying:
\begin{itemize}
\item[] any neighborhood $U$ of $C$ contains $W^u(\gamma_n)$ for some arbitrarily large $n$,
\end{itemize}
then $C$ is an attractor of $X$.
\end{Theorem}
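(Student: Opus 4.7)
The plan is to show that $C$ contains the unstable manifold of a critical element of $X$; Theorem~\ref{thm.generic-property-of-star}(\ref{i.lyapunov-stable-star}) then implies that $C$ is a singular hyperbolic attractor. The residual set $\cG$ is taken as the intersection of the G$_\delta$ subsets provided by Theorems~\ref{Theo:generic}, \ref{Thm:go-to-Lyapunov}, \ref{thm.generic-property-of-star}, together with Proposition~\ref{Pro:bi-Pliss} applied to each rational pair $(\eta,T)\in(0,+\infty)^2$.

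By \cite[Theorem B]{CYZ2}, the singular dominated splitting $\cN^s\oplus\cN^u$ on $C$ extends to an open neighborhood of $C$ and to all vector fields $C^1$-close to $X$. Since $\gamma_n\subset W^u(\gamma_n)$ is eventually contained in any prescribed neighborhood of $C$, its hyperbolic splitting coincides, for $n$ large, with the restriction of this extended splitting. A localized application of Theorem~\ref{Thm:fundamental} supplies uniform $\eta,T>0$ such that the splitting over $\gamma_n$ is $(2\eta,T)$-dominated for $(\Psi^{*,X_n}_t)_{t\in\RR}$ and satisfies the averaged Pliss inequalities; Pliss's lemma together with Lemma~\ref{l.bi-pliss} then produces an $(\eta,T)$-bi-Pliss point $p_n\in\gamma_n$. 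Extracting a subsequence, $p_n$ converges to some $z\in C$, and Proposition~\ref{Pro:limit-bi-pliss} ensures that $z$ is itself $(\eta,T)$-bi-Pliss for $(\Psi^{*,X}_t)_{t\in\RR}$.

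Assume first that $z$ is a regular point. Proposition~\ref{Pro:size-stable-Pliss}(\ref{plaque2}) yields $C^1$-convergence of the local unstable plaques $W^{\cN^u}_{\delta\|X_n(p_n)\|}(p_n,X_n)\subset W^u(\gamma_n)$ towards $W^{\cN^u}_{\delta\|X(z)\|}(z,X)$; since $W^u(\gamma_n)$ lies in a neighborhood of $C$ that shrinks to $C$, we infer that $W^{\cN^u}_{\delta\|X(z)\|}(z,X)\subset C$. Applying Proposition~\ref{Pro:bi-Pliss} to $X\in\cG$ along the sequence $(X_n,p_n)$, one obtains, arbitrarily close to $z$, a hyperbolic periodic point $q$ of $X$ that is $(\eta',T)$-bi-Pliss for $(\Psi^{*,X}_t)_{t\in\RR}$ with $\eta'\geq\eta$. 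Taking $q$ close enough to $z$, the transverse intersections between the local invariant manifolds of $z$ and $\orb(q)$ guaranteed by the final assertion of Proposition~\ref{Pro:size-stable-Pliss} produce heteroclinic connections in both directions between $\orb(z)$ and $\orb(q)$, placing $\orb(q)$ in the same chain recurrence class as $\orb(z)\subset C$, i.e.\ $\orb(q)\subset C$. The same $C^1$-convergence and the closedness of $C$ also give $W^{\cN^u}_{\delta\|X(q)\|}(q,X)\subset C$; invariance of $C$ then yields
\[
W^u(\orb(q))=\bigcup_{t\in\RR}\varphi_t\big(W^{\cN^u}_{\delta\|X(q)\|}(q,X)\big)\subset C,
\]
and Theorem~\ref{thm.generic-property-of-star}(\ref{i.lyapunov-stable-star}) concludes.

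If instead $z\in\sing(X)\cap C$, the plaque above collapses since $\delta\|X(z)\|=0$. We then invoke Theorem~\ref{Thm:intersection-singular-periodic} at the Lorenz-like singularity $z$ to produce transverse intersections $V^u(\orb(p_n,X_n))\pitchfork W^s(\sigma^{X_n})$; combined with Lemma~\ref{l.limit-singularity} and the multi-singular hyperbolic structure, this allows us to extract within $W^u(\gamma_n)\subset U_n$ a regular bi-Pliss limit $z'\in C$ away from $\sing(X)$, reducing the discussion to the previous case. The principal obstacles are precisely this reduction from a singular to a regular bi-Pliss limit, and, within the regular case, verifying that the perturbed periodic orbit $\orb(q)$ sits in $C$ rather than in a nearby chain recurrence class; the Pliss-based transverse intersections provided by Proposition~\ref{Pro:size-stable-Pliss} are the crucial ingredient enabling this step.
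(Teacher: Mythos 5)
Your overall strategy (bi-Pliss points on $\gamma_n$, a limit point $z$, then a case split between regular and singular $z$) matches the paper's, and your Case~I is close to correct, but there are genuine gaps. First, a local one in Case~I: you assert that $W^{\cN^u}_{\delta\|X(q)\|}(q,X)\subset C$ ``by the same $C^1$-convergence''. The convergence in Proposition~\ref{Pro:size-stable-Pliss}(\ref{plaque2}) gives $W^{\cN^u}_{\delta\|X_n(p_n)\|}(p_n,X_n)\to W^{\cN^u}_{\delta\|X(z)\|}(z,X)$, so it is the plaque of $z$, not of the perturbed periodic point $q$, that is known to lie in $C$. The correct route (the paper's) is to use that the plaque of $z$ meets $W^s(\orb(q))$ transversely and invoke the $\lambda$-lemma to conclude $W^u(\orb(q))\subset\overline{\bigcup_{t>0}\varphi_t\big(W^{\cN^u}_{\delta\|X(z)\|}(z,X)\big)}\subset C$. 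Also, your use of Theorem~\ref{Thm:fundamental} and the bi-Pliss machinery presupposes $\pi(\gamma_n)\geq T$; the case of uniformly bounded periods (where $\gamma_n$ is the continuation of a fixed critical element of $X$, possibly a sink with trivial $\cN^u$) must be treated separately, as the paper does.

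The more serious gap is the singular case, which you essentially defer rather than prove. Your plan to ``extract a regular bi-Pliss limit $z'$ and reduce to the previous case'' does not work as stated, and the two sub-cases behave very differently. If $z=\sigma$ has \emph{negative} center exponent, Theorem~\ref{Thm:intersection-singular-periodic} applies (its hypothesis $\ind(\sigma)>\dim(\cN^s)$ holds precisely in this sub-case), but one does not then return to the regular case: the transverse intersection $W^u(\gamma_n)\pitchfork W^s(\sigma^{X_n})$ plus the $\lambda$-lemma give directly $W^u(\sigma)\subset C$, and Theorem~\ref{thm.generic-property-of-star}(\ref{i.lyapunov-stable-star}) concludes. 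If $z=\sigma$ has \emph{positive} center exponent, Theorem~\ref{Thm:intersection-singular-periodic} is not applicable at all, and the points $q_n$ one can extract on the orbits of $p_n$ (last iterates before leaving a neighborhood of $\sigma$) are only $(\eta',T',\cN^u)$-Pliss, not bi-Pliss --- forward iteration near the singularity destroys the $\cN^s$-Pliss property --- so their limit $x\in W^u(\sigma)\setminus\{\sigma\}$ cannot feed back into Case~I. The paper instead shows that the $\cN^u$-plaque of $x$ is an open piece of $W^u(\sigma)$ contained in $C$ and then uses Theorem~\ref{Thm:go-to-Lyapunov} to find a point of this open set whose $\omega$-limit lies in a Lyapunov stable class, forcing $C$ itself to be Lyapunov stable, hence an attractor by Theorem~\ref{thm.generic-property-of-star}(\ref{i.attractor-star}). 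This argument, and the dichotomy on the sign of $\lambda^c_\sigma$ together with Lemma~\ref{l.limit-singularity}, are the missing ideas in your proposal.
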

\begin{proof}
	Let $\cG$ be a dense G$_\delta$ subset of $\cX^1(M)$ such that every $X\in\cG$ satisfies Theorem~\ref{thm.generic-property-of-star}, Theorem~\ref{Thm:go-to-Lyapunov} and Proposition~\ref{Pro:bi-Pliss}.
	Let $X\in\cG$ and let $C$ be a chain recurrence class of $X$ that satisfies the assumptions in Theorem~\ref{p.unstable-manifold-attractor}. 
	
	Let us first assume that the period of $\gamma_n$ is uniformly bounded from above by some $\tau>0$.
	By the finiteness and hyperbolicity of singularities and periodic orbits of period less than $\tau$, up to considering a subsequence $\gamma_n$ is the hyperbolic continuation for $X_n$ of a hyperbolic periodic orbit $\gamma$ of $X$ which is contained in $C$.
	If $\gamma$ is a sink, then $C=\gamma$ is an attractor of $X$. Otherwise $\gamma$ and  $\gamma_n$
	have non-trivial unstable manifolds; by our assumptions $\limsup W^u(\gamma_n)\subset C$, one gets
	$W^u(\gamma)\subset C$. By   Item~\ref{i.lyapunov-stable-star} of Theorem~\ref{thm.generic-property-of-star}, the chain recurrence class $C$ is an attractor.
	
	From now on, we assume that the period of $\gamma_n$ tends to infinity. Thus, $C$ is not reduced to a singularity nor to a periodic orbit.
	By Theorem~\ref{Thm:fundamental} and Corollary~\ref{Lem:bi-Pliss-exsitence},
	there are $\eta,T>0$ such that for $n$ large the hyperbolic splitting $\cN_{\gamma_n}=\cN^s\oplus \cN^u$ on the periodic orbit $\gamma_n$
	is $(\eta,T)$-dominated; moreover $\gamma_n$ contains an $(\eta,T)$-bi-Pliss point $p_n$
	for $(\Psi^{*,X_n}_t)_{t\in \RR}$.
	
Taking a subsequence, $(p_n)$ converges to a point $z$.
Several cases will be considered.
	
\paragraph{Case I:  $z$ is not a singularity of $X$.}
By Proposition~\ref{Pro:limit-bi-pliss}, $z$ is $(\eta,T)$-bi-Pliss for $(\Psi^{*,X}_t)_{t\in \RR}$.
	Since each $p_n$ is $(\eta,T)$-bi-Pliss for $X_n$,
	by Proposition~\ref{Pro:bi-Pliss},   arbitrarily close to $z$, there is an $(\eta/2,T)$-bi-Pliss periodic point $p$ of $X$. By  Proposition~\ref{Pro:size-stable-Pliss},  {there is $\delta>0$ associated to $\eta/2$ and $T$ such that there are sub-manifolds 
		$$W^{\cN^s}_{\delta\|X(p)\|}(p,X),~W^{\cN^u}_{\delta\|X(p)\|}(p,X),~W^{\cN^s}_{\delta\|X(z)\|}(z,X),~W^{\cN^u}_{\delta\|X(z)\|}(z,X)$$
		satisfying the following properties:
		\begin{itemize}
			\item $W^{\cN^s}_{\delta\|X(p)\|}(p,X), W^{\cN^s}_{\delta\|X(z)\|}(z,X)$ are contained in the stable sets of the orbit of $p$ and $z$ respectively;
			\item $W^{\cN^u}_{\delta\|X(p)\|}(p,X), W^{\cN^u}_{\delta\|X(z)\|}(z,X)$ are contained in the unstable sets of the orbit of $p$ and $z$ respectively.
		\end{itemize}
		As $p$ can be chosen arbitrarily close to $z$, by the last part of Proposition~\ref{Pro:size-stable-Pliss},
		$W^{\cN^s}_{\delta\|X(p)\|}(p,X)$ intersects $\varphi_{(-1,1)}(W^{\cN^u}_{\delta\|X(z)\|}(z,X))$ transversely and $W^{\cN^s}_{\delta\|X(z)\|}(z,X)$ intersects $\varphi_{(-1,1)}(W^{\cN^u}_{\delta\|X(p)\|}(p,X))$ transversely. This implies that $p$ and $z$ are in the same chain recurrence class.
	}Moreover,
		by Proposition~\ref{Pro:size-stable-Pliss}, $W^{\cN^u}_{\delta\|X_n(p_n)\|}(p_n,X_n)$ is contained in the unstable set of $\gamma_n$, hence

		$$W^{\cN^u}_{\delta\|X(z)\|}(z,X)\subset\limsup_{n\to\infty}W^{\cN^u}_{\delta\|X_n(p_n)\|}(p_n,X_n)\subset\limsup_{n\to\infty}\overline{W^u(\gamma_n)}\subset C.$$
	
	Since $\varphi_{(-1,1)}(W^{\cN^u}_{\delta\|X(z)\|}(z,X))$ intersects $W^s({\rm Orb}(p))$ transversely, by the $\lambda$-lemma,  the unstable manifold of $ {\rm Orb}(p)$ is accumulated by forward iterates of  $\varphi_{(-1,1)}(W^{\cN^u}_{\delta\|X(z)\|}(z,X))$. Thus $W^u(\orb(p))$ is contained in $C$. By  Item~\ref{i.lyapunov-stable-star} of Theorem~\ref{thm.generic-property-of-star}, $C$ is an attractor.
	
\paragraph{Case II:	$z=\sigma_+$ is a Lorenz like singularity with negative center Lyapunov exponent.}
By the multi-singular hyperbolicity, $\ind(\sigma_+)$ is larger than the stable dimension of $\gamma_n$.
	By Theorem~\ref{Thm:intersection-singular-periodic}, $W^s(\sigma_+^{X_n})$
	intersects $W^u(\gamma_n)$ transversely for $n$ large enough, where $\sigma_+^{X_n}$ is the continuation of $\sigma_+$ for $X_n$.
	By the $\lambda$-lemma,
	one deduces that $W^u(\sigma_+^{X_n})\subset \overline{W^u(\gamma_n)}$,
	and by our assumptions,  $\limsup \overline{W^u(\sigma_+^{X_n})}\subset C$.
	By the continuity of the unstable manifold of a hyperbolic singularity, one gets $W^u(\sigma_+)\subset C$.
	One concludes as in the previous case with  Theorem~\ref{thm.generic-property-of-star} that the chain recurrence class $C$ is an attractor.
	
\paragraph{Case III: $z=\sigma_-$ is a Lorenz like singularity with positive center Lyapunov exponent.}
The unstable space of $\sigma_-$ decomposes $E^u_{\sigma_-}=E^c_{\sigma_-}\oplus E^{uu}_{\sigma_-}$, with $\dim(E^c_{\sigma_-})=1$.
\begin{Claim}
Up to considering a subsequence, there are $\eta',T'>0$ such that the orbit of each $p_n$ contains 
some $(\eta',T',\cN^{u})$-Pliss point $q_n$ for $(\Psi^{*,X_n}_t)_{t\in \RR}$
and the sequence $(q_n)$ converges to some point $x\in W^u(\sigma_-)\setminus\{\sigma_-\}$.
\end{Claim}
\begin{proof}
By Lemma~\ref{l.limit-singularity},
the accumulation set of the unit vectors $\{X_n(p_n)/\|X_n(p_n)\|\}$ is contained
in $E^s_{\sigma_-}\oplus E^{c}_{\sigma_-}$.
Since the splitting $\cN_{\gamma_n}=\cN^s\oplus \cN^u$ is uniformly bounded,
if $V$ is a small neighborhood of $\sigma_-$, to compare with splitting in $T_{\sigma_-}M$, one has 
 for any point $z\in \gamma_n\cap V$, $\RR\cdot X(z)\oplus \cN^s(z)$ is close to $E^s_{\sigma_-}\oplus E^{c}_{\sigma_-}$, hence the angle between $\cN^u(z)$ and $E^s_{\sigma_-}\oplus E^{c}_{\sigma_-}$
is uniformly bounded away from below. Hence there are $\eta',T'>0$ such that for any
$z\in \gamma_n$ satisfying $\varphi_{[-T',0)}(x)\subset V$,
\begin{equation}\label{e.domination-sigma}
\|\Psi_{-T'}^{*,X_n}|_{\cN^u(x)} \|=\frac{\|\Psi_{-T'}^{X_n}|_{\cN^u(x)} \|}{\|D\varphi^{X_n}_{-T'}|_{X_n(x)}\|}\leq e^{-\eta'T'/T}.
\end{equation}
We can furthermore require that $T'$ is a multiple of $T$ and $\eta'<\eta$.

Let $q_n$ be the largest iterate $\varphi_{kT'}^{X_n}(p_n)$ of $p_n$
under $\varphi_{T'}^{X_n}$ such that
$\varphi_{t}^{X_n}(p_n)\in V$ for each $0\leq t\leq kT'$.
Since $p_n$ is an $(\eta',T')$-bi-Pliss point for $(\Psi^{*,X_n}_t)_{t\in \RR}$,
\eqref{e.domination-sigma} implies that
$q_n$ is an $(\eta',T',\cN^u)$-Pliss point for $(\Psi^{*,X_n}_t)_{t\in \RR}$ as well.

By construction, the points $q_n$ avoid a small neighborhood of $\sigma_-$.
As $p_n\to \sigma_-$, by taking a subsequence, the points $q_n$ converge to some point
$x$ whose negative orbit under $\varphi^{X}$ is contained in $\overline{V}$.
This implies that $x\in W^u(\sigma_-)\setminus \{\sigma_-\}$.
\end{proof}
	By Proposition~\ref{Pro:limit-bi-pliss}, $x$ is an $(\eta',T',\cN^{u})$-Pliss point for $(\Psi^{*,X_n}_t)_{t\in \RR}$. By Proposition~\ref{Pro:size-stable-Pliss}, there is $\delta'>0$ such that 
	$$W^{\cN^u}_{\delta'\|X(x)\|}(x,X)=\lim_{n\to\infty}W^{\cN^u}_{\delta'\|X_n(q_n)\|}(q_n,X_n)$$ 
	is contained in the unstable set of the orbit of $x$.
	Moreover $W^{\cN^u}_{\delta'\|X_n(q_n)\|}(q_n,X_n)$ is contained in the unstable set of $q_n$ for $X_n$.
	Hence by our assumptions we get:
	$$W^{\cN^u}_{\delta'\|X(x)\|}(x,X)\subset\limsup_{n\to\infty}W^{\cN^u}_{\delta'\|X_n(q_n)\|}(q_n,X_n)\subset C(\sigma_-)=C.$$
	
	Since $x$ is contained in $W^u(\sigma_-)$, one has $W^{\cN^u}_{\delta'\|X(x)\|}(x,X)\subset W^u(\sigma_-)$.  On the other hand, $W^{\cN^u}_{\delta'\|X(x)\|}(x,X)$ has dimension $\dim\cN^u=(\dim E^u(\sigma_-)-1)$
	and  is  transverse to the vector field $X$ inside $W^u(\sigma_-)$. It is clear that $\varphi^X_\RR(W^{\cN^u}_{\delta'\|X(x)\|}(x,X))$ forms an open subset of $W^u(\sigma_-)$. Thus, by Theorem~\ref{Thm:go-to-Lyapunov}, there is a point $y\in \varphi_\RR(W^{\cN^u}_{\delta'\|X(x)\|}(x,X))$ such that $\omega(y)$ is contained in a Lyapunov stable chain recurrence class. This implies that $C$ is Lyapunov stable and hence is an attractor by Item~\ref{i.attractor-star} of Theorem~\ref{thm.generic-property-of-star}.
	However, by Remark~\ref{Rem:singular-hyperbolic-attractor}, all singularities in $C$ are  Lorenz like with negative center Lyapunov exponent which contradicts with $\sigma^-\in C$, and thus Case III does not happen.
\end{proof}

\subsection{Finiteness of attractors for generic multi-singular hyperbolic vector fields}
In~\cite{PYY}, the authors obtain the finiteness of attractors for $C^1$ generic star vector fields. In this section, we provide another proof different from that in~\cite{PYY}. By Combining Theorems~\ref{thm.continuity-of-singular-hyperbolic-attractor} and~\ref{p.unstable-manifold-attractor}, we can improve the statement a little bit.

\begin{Theorem}\label{Thm:main-topology}
There exists a $C^1$ open and dense subset $\cU$ of $\cX^*(M)$ such that
any $X\in\cU$ admits only finitely many quasi-attractors.
Each of them is a robustly transitive attractor and is either a singular hyperbolic homoclinic class or a sink.
The number of attractors is locally constant 
and the attractors vary continuously on $\cU$ for the Hausdorff topology.
\end{Theorem}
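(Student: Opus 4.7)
The plan is to first establish the topological structure of quasi-attractors on a dense G$_\delta$ subset of $\cX^*(M)$ using Theorems~\ref{thm.continuity-of-singular-hyperbolic-attractor}, \ref{thm.generic-property-of-star}, and~\ref{p.unstable-manifold-attractor}, and then to extend it to an open dense subset by a perturbation argument driven by Theorem~\ref{p.unstable-manifold-attractor}.

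First I would intersect the dense G$_\delta$ sets provided by Theorems~\ref{thm.continuity-of-singular-hyperbolic-attractor}, \ref{thm.generic-property-of-star}, and~\ref{p.unstable-manifold-attractor} to obtain a dense G$_\delta$ subset $\cG_0\subset\cX^*(M)$. For any $X\in\cG_0$, Item~\ref{i.attractor-star} of Theorem~\ref{thm.generic-property-of-star} combined with Lemma~\ref{Lem:quasi-attractor-singular-hyperbolic} ensures that every quasi-attractor of $X$ is either a sink or a robustly transitive singular hyperbolic attractor; finiteness of sinks is provided by Theorem~\ref{thm.finite-sink}. To bound the number of singular hyperbolic attractors I would proceed by contradiction: assume $X$ has infinitely many such attractors $\Lambda_n$ with trapping neighborhoods $V_n$. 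The $V_n$ are automatically pairwise disjoint, since any common point would have forward orbit trapped in both $V_n$ and $V_m$, forcing its $\omega$-limit to lie in the disjoint sets $\Lambda_n\cap\Lambda_m$. Picking a periodic orbit $\gamma_n\subset\Lambda_n$ (which exists by Theorem~\ref{thm.continuity-of-singular-hyperbolic-attractor}) and a convergent subsequence $p_n\in\gamma_n$ with $p_n\to z$, the Hausdorff accumulation set of $(\Lambda_n)$ is chain-transitive for $X$ and contains $z$, hence is contained in the chain recurrence class $C$ of $z$. Thus $W^u(\gamma_n)\subset\Lambda_n$ eventually enters every neighborhood of $C$, and Theorem~\ref{p.unstable-manifold-attractor} applied with $X_n=X$ forces $C$ to be an attractor of $X$, so $C=\Lambda_{n_0}$ for some $n_0$. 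This puts $p_n\in V_{n_0}$ for $n$ large, contradicting $\Lambda_n\cap V_{n_0}=\emptyset$ for $n\ne n_0$.

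Next I would define $\cU\subset\cX^*(M)$ as the set of vector fields whose quasi-attractors form a finite family of sinks and singular hyperbolic attractors carrying pairwise disjoint trapping neighborhoods, so that $\cG_0\subset\cU$ and $\cU$ is dense. Openness is obtained by fixing $X\in\cG_0$ with quasi-attractors $\Lambda_1,\dots,\Lambda_k$ and trapping neighborhoods $V_1,\dots,V_k$: Theorem~\ref{thm.continuity-of-singular-hyperbolic-attractor} and persistence of hyperbolic sinks furnish a neighborhood $\cU_X$ of $X$ on which each $\Lambda_i^Y:=\bigcap_{t>0}\varphi_t^Y(V_i)$ is either a sink or a robustly transitive singular hyperbolic attractor. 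To rule out further quasi-attractors $Q_n$ of $Y_n\to X$ with $Q_n\not\subset\bigcup V_i$, Lemma~\ref{Lem:quasi-attractor-singular-hyperbolic} splits into cases: in the singular hyperbolic case one selects a periodic orbit $\gamma_n\subset Q_n$ meeting $M\setminus\bigcup V_i$, and arguing exactly as above (applying Theorem~\ref{p.unstable-manifold-attractor} to $Y_n\to X$), the chain recurrence class of any limit point of the $\gamma_n$ must be an attractor of $X$ and hence some $\Lambda_i$, giving a contradiction with $z\notin V_i$. The sink case follows similarly from continuity of hyperbolic singularities and of hyperbolic periodic orbits of bounded period, together with the same application of Theorem~\ref{p.unstable-manifold-attractor} for sinks of unbounded period.

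Local constancy of $k$ and Hausdorff upper semi-continuity of $Y\mapsto\Lambda_i^Y$ on $\cU$ follow directly from the identity $\Lambda_i^Y=\bigcap_{t>0}\varphi_t^Y(V_i)$, while lower semi-continuity is a consequence of the robust transitivity provided by Theorem~\ref{thm.continuity-of-singular-hyperbolic-attractor}. The hard part is the openness step: besides verifying that the Hausdorff accumulation set of a sequence of chain-transitive invariant sets is chain-transitive (a standard but essential fact), one must check the hypotheses of Theorem~\ref{p.unstable-manifold-attractor} carefully, in particular guaranteeing the existence of periodic orbits $\gamma_n\subset Q_n$ exiting $\bigcup V_i$, and handling the case where the limit $z$ is a singularity of $X$, which relies on the description of active center spaces from Section~\ref{s.blowup-of-manifold}.
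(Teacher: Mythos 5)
Your proposal reaches the same conclusion and uses the same central dynamical tool (Theorem~\ref{p.unstable-manifold-attractor} applied to a Hausdorff-accumulating sequence of attractors containing unstable manifolds of periodic orbits), but it organizes the openness part quite differently from the paper. The paper never runs a perturbative argument for openness: it observes that the number of quasi-attractors is lower semi-continuous, so the set $\cV_k$ of fields with at least $k$ quasi-attractors is open, and then carves out open sets $\cU_k=\cV_k\setminus(\overline{\cV_{k+1}}\cup\overline{\cU_1}\cup\dots\cup\overline{\cU_{k-1}})$ of fields with \emph{exactly} $k$ quasi-attractors; openness is then automatic, and the only dynamical work is the density of $\bigcup_k\cU_k$, proved by the contradiction argument you give in your first step (with $X_n=X$). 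You instead prove finiteness on a residual set and then establish openness directly by ruling out new quasi-attractors $Q_n$ for $Y_n\to X$; this is where your argument carries extra load, because to feed Theorem~\ref{p.unstable-manifold-attractor} you need hyperbolic periodic orbits $\gamma_n\subset Q_n$ with $W^u(\gamma_n)\subset Q_n$ (not merely $\gamma_n\subset Q_n$, as you write), for the \emph{perturbed} fields $Y_n$. This can be repaired: the subset of Theorem~\ref{thm.continuity-of-singular-hyperbolic-attractor} is open, so $Y_n$ eventually lies in it, $Q_n$ is then a singular hyperbolic attractor which is a homoclinic class, and attractors contain the unstable manifolds of their periodic orbits; also note that this same theorem (via $\Lambda=H(\gamma)$), rather than robust transitivity alone, is what gives lower semi-continuity of the attractors. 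So your route is valid but requires invoking the robust-transitivity/homoclinic-class structure of the perturbed attractors, a step the paper's purely topological $\cV_k$-construction sidesteps entirely; the last point you raise about limits at singularities is internal to the proof of Theorem~\ref{p.unstable-manifold-attractor} and does not need to be revisited when using it as a black box.
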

\begin{proof}
The number of quasi-attractors is an element of
$\NN\cup\{\infty\}$ which vary semi-continuously with the vector field $X$.
So for each integer $k$, the set $\cV_k\subset \cX^*(M)$ of vector fields with at least $k$
quasi-attractors is open. We then define inductively for $k\in \NN$
$$\cU_1=\cX^*(M)\setminus \overline{\cV_2}=\cV_1\setminus \overline{\cV_2},$$
$$\cU_k=\cV_k\setminus (\overline{\cV_{k+1}}\cup \overline{\cU_1}\cup\dots\cup \overline{\cU_{k-1}}).$$
Each set $\cU_k$ is open and for any $Y\in \cU_k$
there exists exactly $k$ quasi-attractors.

\begin{Claim}
$\bigcup_{k\in\NN} \cU_k$ is dense in $\cX^*(M)$.
\end{Claim}
\begin{proof}
Let us assume by contradiction that the open set
$$\cU_\infty=\cX^*(M)\setminus \overline{\bigcup \cU_k}$$
is non empty.
Since $\cX^*(M)=\overline{(\cU_1\cup\dots\cU_k)\cup \cV_{k+1}}$,
the set $\cG_0:=\cU_\infty\setminus (\cup_k \overline{\cV_k}\setminus \cV_k)$
is contained in $\cap _k \cV_k$.
Let $\cG_1$ be the dense G$_\delta$ subset of $\cX^*(M)$
which satisfies the properties in Theorem~\ref{p.unstable-manifold-attractor}.
The set $\cG=\cG_0\cap \cG_1$ is a dense G$_\delta$ subset of $\cU_\infty$.

Let us consider $X\in \cG$. By Theorem~\ref{thm.finite-sink}, $X$ admits at most finitely many sinks and finitely many quasi-attractors with singularities. Hence $X$ has infinitely many quasi-attractors $\{\Lambda_n\}_{n\in\NN}$ without singularities and which are not sinks. By Propositions~\ref{Prop:multi-singular-hyperbolic-no-singularity} and~\ref{Prop:hyperbolic}, each $\Lambda_n$
is a hyperbolic homoclinic class $H(\gamma_n)$ of a hyperbolic periodic orbit $\gamma_n$ of saddle type and moreover $W^u(\gamma_n)\subset\Lambda_n$.	
Up to taking a subsequence, one will assume that the sequence $(\Lambda_n)$ accumulates on a subset of a chain recurrence class $\Lambda$. Applying Theorem~\ref{p.unstable-manifold-attractor} to $X_n=X$ and $\gamma_n$, one deduces that $\Lambda$ is an attractor, which contradicts to the isolation of attractors. Therefore
$\cU_\infty$ is empty.
\end{proof}

Let $\cU'\subset \cX^1(M)$ be the subset
of vector fields provided by Theorem~\ref{thm.continuity-of-singular-hyperbolic-attractor}
and let us define the open and dense subset of $\cX^*(M)$:
$$\cU=\cU'\cap \big(\bigcup \cU_k\big).$$
Take $X\in \cU$. By the previous construction, $X$ has finitely many quasi-attractors and their
number is locally constant. Each quasi-attractor $\Lambda$ which is not a sink is singular hyperbolic
by Lemma~\ref{Lem:quasi-attractor-singular-hyperbolic}.
Then Theorem~\ref{thm.continuity-of-singular-hyperbolic-attractor} implies that $\Lambda$
is a robustly transitive attractor which is a homoclinic class.
In particular $\Lambda$ is the maximal invariant set in a compact neighborhood $U$.
Then the map $Y\mapsto \cap_t \varphi_t^Y(U)$ associates to any $Y$ close to $X$
an attractor $\Lambda^Y$ which is the continuation of $\Lambda$.
By definition this map is upper semi-continuous.
Since each $\Lambda^Y$ is a homoclinic class, this map is also
lower semi-continuous. Hence the attractors $\Lambda_i^Y$ vary continuously with $Y$.
\end{proof}

\subsection{Proof of Theorem~\ref{Thm:main-restated}}
Now, we are ready to prove the Theorem~\ref{Thm:main-restated}
(which implies Theorems~\ref{Thm:main-ergodic} and~\ref{Thm:main-generic-ergodic}).

Let $\cU_0\subset \cX^*(M)$ be the $C^1$ open and dense subset given by
Theorem~\ref{Thm:main-topology}.
Any vector field $X\in \cU_0$ has finitely many attractors $\Lambda_1,\dots,\Lambda_k$,
and each of them is a sink or a singular hyperbolic homoclinic class.

\begin{Claim}
For any $C^2$ vector field $X\in \cU_0$,
each attractor $\Lambda_i$ which is not a sink supports exactly one SRB measure $\mu_i$.
This measure is physical, its support coincides with $\Lambda_i$
and
\begin{equation}\label{e.basin}
\Leb(\Basin(\mu_i))=\Leb(\Basin(\Lambda_i^Y)).
\end{equation}
\end{Claim}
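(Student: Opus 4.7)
The plan is to piece together results that have already been established in the paper, with Theorem~\ref{Thm.pseudo-physical-measure-attractor} (the existence/uniqueness of physical measures on singular hyperbolic attractors in the $C^2$ setting) doing the heavy lifting, and Lemma~\ref{Lem:equivalent-description} translating the entropy formula into the SRB property. First, since $X\in \cU_0\subset \cU'$ where $\cU'$ is the open dense set of Theorem~\ref{thm.continuity-of-singular-hyperbolic-attractor}, and since $X\in\cU_0$ satisfies the conclusions of Theorem~\ref{Thm:main-topology}, each non-sink attractor $\Lambda_i$ is a robustly transitive singular hyperbolic attractor with $\Lambda_i=H(\gamma)$ for any periodic orbit $\gamma\subset \Lambda_i$. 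Let $T_{\Lambda_i}M=E^{ss}\oplus E^{cu}$ be the associated positively singular hyperbolic splitting (Remark~\ref{Rem:singular-hyperbolic-attractor}).

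Next, since $X$ is $C^2$ and $\Leb(\Basin(\Lambda_i))>0$ (as $\Lambda_i$ is attracting), Theorem~\ref{Thm.pseudo-physical-measure-attractor} supplies a unique invariant measure $\mu_i$ supported on $\Lambda_i$ satisfying the entropy formula $h_{\mu_i}(\varphi_1)=\int \log|\Det(D\varphi_1|_{E^{cu}})|\,d\mu_i$, which is moreover an ergodic physical measure with $\Leb(\Basin(\mu_i))=\Leb(\Basin(\Lambda_i))$. Lemma~\ref{Lem:equivalent-description} then tells us that on the singular hyperbolic attractor $\Lambda_i$, this entropy formula is equivalent to the SRB property; hence $\mu_i$ is the unique SRB measure supported on $\Lambda_i$. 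This immediately yields the physical and basin statements.

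It remains to check that $\supp(\mu_i)=\Lambda_i$. The inclusion $\supp(\mu_i)\subset\Lambda_i$ is trivial since $\mu_i$ is an invariant measure on $\Lambda_i$. For the reverse, observe that by Proposition~\ref{Prop:sum-positive-jacobian} any regular ergodic measure on a singular hyperbolic set is hyperbolic, and in fact $\mu_i$ is regular and hyperbolic (it cannot be a singular Dirac mass since such a measure has zero entropy but a positive value of $\int \log|\Det(D\varphi_1|_{E^{cu}})|$, violating the entropy formula). Applying Corollary~\ref{Cor:SRB-ergodic-class} to the hyperbolic SRB measure $\mu_i$ produces a hyperbolic periodic orbit $\gamma$ with $W^u(\gamma)\subset\supp(\mu_i)\subset \Lambda_i$. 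In particular $\gamma\subset\Lambda_i$, so by the homoclinic class characterization from Theorem~\ref{thm.continuity-of-singular-hyperbolic-attractor} we have $\Lambda_i=H(\gamma)\subset\overline{W^u(\gamma)}\subset\supp(\mu_i)$, giving the equality $\supp(\mu_i)=\Lambda_i$.

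No serious obstacle is expected: the argument is essentially an assembly of previously established results, with the only non-trivial point being the support identification, which proceeds via the standard route (SRB $\Rightarrow$ contains an unstable manifold of a periodic orbit $\Rightarrow$ contains the whole homoclinic class).
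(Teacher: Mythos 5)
Your proof is correct and follows essentially the same route as the paper: invoke Theorem~\ref{Thm.pseudo-physical-measure-attractor} for the existence and uniqueness of a physical measure satisfying the entropy formula, translate that via Lemma~\ref{Lem:equivalent-description} and Proposition~\ref{Prop:sum-positive-jacobian} into the hyperbolic SRB property, then apply Corollary~\ref{Cor:SRB-ergodic-class} together with the homoclinic class characterization from Theorem~\ref{thm.continuity-of-singular-hyperbolic-attractor} to identify the support with $\Lambda_i$. The only difference from the paper's one-paragraph argument is that you spell out explicitly why the measure is regular (no singular Dirac mass can satisfy the entropy formula), which the paper leaves implicit; this is a harmless elaboration, not a different method.
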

\begin{proof}
By Theorem~\ref{Thm.pseudo-physical-measure-attractor}, $\Lambda_i$ admits a unique physical measure $\mu_i$.
The measure satisfies~\eqref{e.basin} and~\eqref{e.entropy-formula}, hence is a hyperbolic SRB
by Proposition~\ref{Prop:sum-positive-jacobian} and Lemma~\ref{Lem:equivalent-description}.
By Corollary~\ref{Cor:SRB-ergodic-class}, there exists a hyperbolic periodic orbit $\gamma$ such that
$H(\gamma)\subset \overline{W^u(\gamma)}\subset \supp(\mu_i)\subset \Lambda_i$. By Theorem~\ref{thm.continuity-of-singular-hyperbolic-attractor},
$H(\gamma)=\Lambda_i^Y$, so $\supp(\mu_i)	= \Lambda_i$.
\end{proof}

\begin{Claim}
There exists a $C^1$ open and dense subset $\cU\subset \cU_0$ such that
for any $C^\infty$ vector field $X\in \cU$,
the attractors $\Lambda_1,\dots,\Lambda_k$ satisfy:
$${\rm Leb}\big(\bigcup_{1\le i\le k}{\rm Basin}(\Lambda_i)\big)={\rm Leb}(M).$$\end{Claim}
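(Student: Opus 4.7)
The plan is to define
$$\cU:=\big\{X\in\cU_0: \Leb\big(\textstyle\bigcup_{i=1}^k \Basin(\Lambda_i^X)\big)=\Leb(M)\big\},$$
and to establish that $\cU$ is $C^1$ open and dense in $\cU_0$. The strategy proceeds in two steps: (i) for every $C^\infty$ vector field in a suitable residual subset of $\cU_0$, the conclusion holds, proved by contradiction combining the existing machinery; (ii) the map $Y\mapsto \Leb(\bigcup_i \Basin(\Lambda_i^Y))$ is lower semi-continuous on $\cU_0$, which together with (i) will give the $C^1$ open-dense property of $\cU$.

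For step (i), let $\cG\subset \cX^*(M)$ be the residual subset provided by Theorem~\ref{thm.generic-property-of-star} (on which Item~\ref{i.lyapunov-stable-star} is valid), and take any $X\in \cG\cap\cU_0\cap\cX^\infty(M)$. Assume toward a contradiction that $\Leb(\bigcup_i\Basin(\Lambda_i^X))<\Leb(M)$. The countability of chain recurrence classes (Proposition~\ref{Prop:contable-class}) yields a chain recurrence class $\Lambda$ distinct from every attractor $\Lambda_i^X$ with $\Leb(\Basin(\Lambda))>0$; in particular $\Lambda$ is not a sink. If $\Lambda$ has no singularities, Proposition~\ref{Prop:multi-singular-hyperbolic-no-singularity} makes it hyperbolic and Proposition~\ref{Prop:lebesgue-basin} (valid for $C^2$ vector fields) gives the contradiction. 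Otherwise $\Lambda$ contains a singularity; Theorem~\ref{Thm:main-localized}, whose application requires $X\in\cX^\infty(M)$, then produces an SRB measure $\mu$ supported on $\Lambda$, and Corollary~\ref{Cor:SRB-ergodic-class} provides a hyperbolic periodic orbit $\gamma\subset\Lambda$ with $W^u(\gamma)\subset \supp(\mu)\subset\Lambda$. Item~\ref{i.lyapunov-stable-star} of Theorem~\ref{thm.generic-property-of-star} now forces $\Lambda$ to be a singular hyperbolic attractor, contradicting $\Lambda\neq\Lambda_i^X$.

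For step (ii), Theorem~\ref{Thm:main-topology} provides, for each $X\in\cU_0$, trapping open neighborhoods $U_1,\dots,U_k$ of the attractors, chosen with $\Leb(\partial U_i)=0$, which remain trapping for every $Y$ in a $C^1$-neighborhood of $X$. Since each $U_i$ is trapping,
$$\Basin(\Lambda_i^Y)=\bigcup_{T>0}\varphi_{-T}^Y(U_i)$$
is an increasing union of open sets, and $Y\mapsto \Leb(\varphi_{-T}^Y(U_i))$ is continuous for each fixed $T$. Taking the supremum over $T$ and summing over $i$ shows that $Y\mapsto\Leb(\bigcup_i\Basin(\Lambda_i^Y))$ is lower semi-continuous on $\cU_0$. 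Combined with (i) (which shows that $\cU$ contains the dense subset $\cG\cap\cU_0\cap\cX^\infty(M)$), this produces the $C^1$-open-dense $\cU\subset\cU_0$; the remaining point, namely the $C^1$-density of $\cG\cap\cU_0\cap\cX^\infty(M)$ inside $\cU_0$, is handled by a standard $C^\infty$-approximation argument preserving the genericity conditions defining $\cG$.

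The main obstacle is the singular case of step (i): extracting, from the sole positivity of $\Leb(\Basin(\Lambda))$, a hyperbolic periodic orbit $\gamma\subset\Lambda$ with $W^u(\gamma)\subset\Lambda$. This is precisely where the $C^\infty$ hypothesis is decisive, via Theorem~\ref{Thm:main-localized}; once the unstable manifold is secured, the generic attractor criterion of Theorem~\ref{thm.generic-property-of-star} closes the contradiction, and the lower semi-continuity of the basin volume transports the conclusion to a $C^1$-open set.
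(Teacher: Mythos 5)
Your proposal has the right ingredients (Proposition~\ref{Prop:contable-class}, Proposition~\ref{Prop:lebesgue-basin}, Theorem~\ref{Thm:main-localized}, Corollary~\ref{Cor:SRB-ergodic-class}, the attractor criterion from Theorem~\ref{thm.generic-property-of-star}), and the inner contradiction you run in step (i) is essentially the one the paper runs. But the global structure has two genuine gaps.

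First, in step (i) you need $X$ to be simultaneously $C^\infty$ \emph{and} in the residual set $\cG$ from Theorem~\ref{thm.generic-property-of-star}. You acknowledge that the density of $\cG\cap\cU_0\cap\cX^\infty(M)$ needs justification, but dismiss it as "a standard $C^\infty$-approximation argument preserving the genericity conditions defining $\cG$." This is not standard and not automatic: $\cX^\infty(M)$ is a proper dense vector subspace of $\cX^1(M)$, hence meager, and a residual set can be disjoint from a given meager set. So the intersection of $\cG$ with $\cX^\infty(M)$ being $C^1$-dense must actually be proved; it does not follow from the abstract Baire category formalism. The paper sidesteps exactly this difficulty: it never asks a single vector field to be both $C^\infty$ and generic. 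Instead, it takes a sequence $Y_n$ of $C^\infty$ (but non-generic) vector fields converging in $C^1$ to a generic $X\in\cG$, applies Theorem~\ref{Thm:main-localized} and Corollary~\ref{Cor:SRB-ergodic-class} to the $Y_n$, and then uses the \emph{perturbative} attractor criterion of Theorem~\ref{p.unstable-manifold-attractor} (which takes as input a sequence $X_n\to X$ with periodic orbits $\gamma_n$ for $X_n$, and outputs a conclusion about the generic $X$). This decoupling of smoothness and genericity is precisely what the new criterion buys, and your proof does not use it at all.

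Second, step (ii) does not yield openness. Lower semi-continuity of $Y\mapsto L(Y):=\Leb\big(\bigcup_i\Basin(\Lambda_i^Y)\big)$ gives that $\{Y:L(Y)>\Leb(M)-\varepsilon\}$ is open for each $\varepsilon>0$; hence the set $\{Y:L(Y)=\Leb(M)\}=\bigcap_{\varepsilon>0}\{L>\Leb(M)-\varepsilon\}$ is only a $G_\delta$, not open. (Lower semi-continuity allows $L$ to jump \emph{up} at a point, so $L$ can equal $\Leb(M)$ on a dense set while being strictly smaller elsewhere.) Thus even if step (i) were repaired, combining it with the lower semi-continuity you prove would only give a residual, not an open dense, subset where the basins cover. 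The paper obtains an open set by a different device: implicitly it defines $\cU$ to be the interior of the set of vector fields $X$ all of whose $C^\infty$ $C^1$-neighbours have full basin coverage (this is open by construction), and shows density by the contradiction argument with Theorem~\ref{p.unstable-manifold-attractor}. The lower semi-continuity of $L$ \emph{is} used in the paper, but in the subsequent paragraph, to transfer the conclusion from $C^\infty$ vector fields in $\cU$ to $C^1$ generic ones via continuity points of $L$ — a different step from the present Claim.
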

\begin{proof}
Let $\cG$ be the dense G$_\delta$ subset of $\cU_0$ of vector fields satisfying
Theorem~\ref{p.unstable-manifold-attractor} and let us assume by contradiction that
the claim fails. Then there exists a sequence $Y_n$ of $C^\infty$ vector fields which converge
in the $C^1$ topology to some $X\in \cG$ such that 
		$${\rm Leb}\big(\bigcup_{1\le i\le k}{\rm Basin}(\Lambda^{Y_n}_i)\big)<{\rm Leb}(M).$$
The attractors $\Lambda^{Y_n}_i$ are the continuations of the attractors
$\Lambda^{X}_1,\dots,\Lambda^{X}_k$ for $X$ and admit trapping neighborhood
$U_1,\dots,U_k$ that do not depend on $Y_n$ for $Y_n$ close to $X$.

		Each vector field $Y_n$ is multi-singular hyperbolic. By Proposition~\ref{Prop:contable-class}, each $Y_n$ has at most countably many chain recurrence classes.
		Moreover the $\omega$-limit set of any point is contained in a chain recurrence class.
		Hence there is a chain recurrence class $C_n\subset M\setminus\bigcup\limits_{1\le i\le k}{\rm Basin}(\Lambda_i^{Y_n})$ of $Y_n$ such that ${\rm Leb}(\Basin(C_n))>0$ and $C_n$ is not an attractor of $Y_n$.
		By Proposition~\ref{Prop:lebesgue-basin}, $C_n$ cannot be hyperbolic. Consequently, $C_n$ contains a singularity $\sigma_n$ of $Y_n$. Since all singularities of $X$ are hyperbolic, $\sigma_n$ is the continuation of a singularity of $X$. By taking a subsequence if necessary, one can assume that $C_n$ is the continuation of the
		chain recurrence class $C(\sigma)$ of some $\sigma\in{\rm Sing}(X)$.

By Theorem~\ref{Thm:main-localized} and Corollary~\ref{Cor:SRB-ergodic-class}, since $Y_n$ is $C^\infty$, there exists a periodic orbit $\gamma_n\subset C_n$ for $Y_n$ with $W^u(\gamma_n)\subset C_n$. Taking a subsequence if necessary, one can assume that the sequence $(C_n)$ accumulates on a subset of a chain recurrence class of $C$ of $X$. By the choice of $C_n$, the set $C$ is disjoint from
the union of neighborhoods $\bigcup_{1\le i\le k}U_i$, hence $C$ is disjoint from the attractors
$\Lambda^X_1,\dots,\Lambda^X_k$. By Theorem~\ref{p.unstable-manifold-attractor}, the class $C$ is an attractor which is a contradiction since $\Lambda_1^X,\dots,\Lambda_k^X$ are the only attractors of $X$.
\end{proof}

The two previous claims conclude the proof of Theorem~\ref{Thm:main-restated} for $C^\infty$ vector fields in $\cU$.
\bigskip

	Let us consider for any $X\in \cX^1(M)$, the Lebesgue measure $L(X)$
	of the union of the basins of all the robustly transitive attractors of $X$.
	Since the robustly transitive attractors persist for vector fields $Y$ that are $C^1$ close to $X$
	and since any compact set contained in the basin of a robustly transitive attractor for $X$
	is still contained in the basin of
	an attractor for $Y$, the map $L\colon \cX^1(M)\to \RR$ is lower semi-continuous.
	Hence there is a dense  G$_\delta$ subset $\cG_0\subset \cX^1(M)$ of continuity points of $L$.

	Let $\cG_1\subset \cG_0\cap \cU$ be a dense G$_\delta$ subset of $\cX^*(M)$ such that every vector field in $\cG_1$ satisfies Theorem~\ref{Thm:unique-physical-on-attractor}.
Let us fix such a $X\in \cG_1$.
By the previous claim, for any $C^\infty$ vector field $Y$ that is $C^1$ close to $X$,
	the continuations of the attractors of $X$ satisfy
	${\rm Leb}(\bigcup_{i=1}^k{\rm Basin}(\Lambda_i^Y))={\rm Leb}(M)$.
	In particular $L(Y)={\rm Leb}(M)$. Since $X$ is a continuity point of the map $L$,
	one deduces $L(X)={\rm Leb}(M)$, which gives
	$${\rm Leb}(\bigcup_{i=1}^k{\rm Basin}(\Lambda^X_i))={\rm Leb}(M).$$

Since $X$ satisfies Theorem~\ref{Thm:unique-physical-on-attractor},
any singular hyperbolic attractor $\Lambda^X_i$ supports exactly one SRB measure $\mu_i$; moreover $\mu_i$ is physical and ${\rm Leb}({\rm Basin}(\mu_i))={\rm Leb}({\rm Basin}(\Lambda_i))$.
The properties 2 and 3 of Theorem~\ref{Thm:main-restated} are thus satisfied by any $X\in \cG_1$.
It remains to describe the support of the SRB measures $\mu_i$.
\medskip

Fix a countable basis $\{O_\ell\}_{\ell\in\mathbb{N}}$ of open subset of $M$ and consider  the countable family $\{V_n\}_{n\in\mathbb{N}}$  which consists of all the possible unions of finite elements in $\{O_\ell\}_{\ell\in\mathbb{N}}$.

By the robustness and continuity of the attractors, there exist
countably many open and disjoint subsets $\cU(j)\subset \cU$
such that  $\bigcup_j \cU(j)$ is (open and) dense in $\cU$
with the following properties:
for each of $\cU(j)$ there exist $k\geq 1$ and continuous maps
$Y\mapsto \Lambda_{i}^Y$, with $1\leq i \leq k$, associating the attractors of the vector fields in $\cU(j)$.

Let $\cP_j$ be the set of $Y\in \cU_j$
such that for each $1\leq i\leq k$
there exists a unique SRB measure $\mu_i^Y$ supported on $\Lambda_i^Y$.
For each open set $V_n$ and each $i$ we define
$$\cP_{j,i,n}=\{Y\in \cP_j, \mu_i^Y(V_n)>0\}.$$
By the claim in the proof of Theorem~\ref{Thm:unique-physical-on-attractor}, this is an open set of $\cP_j$.
We know that $\cP_j$ contains a dense G$_\delta$ subset of $\cU(j)$.
Hence the following set is residual in $\cU\subset \cX^*(M)$:
$$\cG= \cG_1\cap \bigcup_j \bigcap_{i,n} \big(\cP_{j,i,n}\cup (\cU(j)\setminus \overline{\cP_{j,i,n}})\big).$$

Let us fix $X\in \cG$ (contained in some $\cU(j)$) and consider an SRB measure $\mu_i$ which is supported
on an attractor $\Lambda_i^X$ which is not a sink.
Let $V_n$ be an open set which intersects $\Lambda^X_i$.
Let us consider a sequence of $C^\infty$ vector fields $Y_m$ which converge to $X$ in the $C^1$ topology.
By the first claim of this section, $Y_m$ belongs to $\cP_j$.
By continuity of the attractors, for $m$ large the set $\Lambda_i^{Y_m}$ intersects $V_n$.
The SRB measure for $Y_m$ supported on $\Lambda_i^{Y_m}$ has full support,
hence gives positive mass to $V_n$. So the vector fields $Y_m$ for $m$ large belong to $\cP_{j,i,n}$.
Since $X\in \cG$, one deduces $X\in \cP_{j,i,n}$, hence $\mu_i(V_n)>0$.
This proves that the support of $\mu_i$ coincides with $\Lambda_i^X$,
giving property 1 for any $X\in \cG$.

The proof of Theorem~\ref{Thm:main-restated} is complete.
\qed

\appendix

\section{Limit sets of generic points}
In this section we prove Theorem~\ref{Thm:go-to-Lyapunov}.

We first state the connecting lemma for vector fields from  ~\cite{WX,W02}.
\begin{Theorem}\label{Lem:connecting}
Given $X\in\cX^1(M)$ and any $C^1$ neighborhood $\cU$ of $X$, there exist $T>0$, $\rho\in(0,1)$
and for any point $x\in M$ which is not critical there exists $\delta_0>0$
such that for any $\delta\leq\delta_0$, the orbit-tube  $$\Delta_{T,\delta}(x):=\bigcup\limits_{t\in[0,T]}\varphi^X_t(B_\delta(x))$$ satisfies  the following property. For any two points $p,q\notin\Delta_{T,\delta}$, if both the forward $\varphi^X$-orbit of $p$ and the backward $\varphi^X$-orbit of $q$ intersect
	$B_{\rho\cdot\delta}(x)$, then there exists $Y\in\cU$ such that $q$ belongs to the forward $\varphi^Y$-orbit of $p$. Moreover, $Y(y)=X(y)$ for $y\in M\setminus\Delta_{T,\delta}(x)$.
\end{Theorem}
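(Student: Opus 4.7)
The plan is to reduce the flow version of the connecting lemma to Hayashi's connecting lemma for diffeomorphisms, following the standard argument of Wen--Xia and Wen. First, since $x$ is not a critical point, $X(x)\neq 0$, so one can pick a small codimension-one disk $\Sigma \subset M$ transverse to $X$ at $x$ and a small $\tau>0$ such that the flow box map $F\colon \Sigma\times[-\tau,\tau]\to M$, $F(y,s)=\varphi^X_s(y)$, is a $C^1$-diffeomorphism onto an open neighborhood of $x$. One then chooses $T>0$ to be an integer multiple of $\tau$ (depending on the desired $C^1$-neighborhood $\cU$) so that a quantitative Hayashi-type perturbation can be realized inside a tube made of finitely many disjoint flow boxes along $\varphi^X_{[0,T]}(x)$.

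Next, I would introduce the return map to $\Sigma$. For $\delta$ small enough, the set of points in $B_\delta(x)$ whose forward orbits leave and return to $\Sigma$ near $x$ defines a local return map $P$, which is a $C^1$ diffeomorphism from a neighborhood of $x$ in $\Sigma$ onto its image. The key dictionary is: any $C^1$-small perturbation of $P$, supported in a small subdisk $\Sigma_\delta\subset \Sigma$, can be realized by a $C^1$-small perturbation $Y$ of $X$, supported inside the tube $\Delta_{T,\delta}(x)$, with $Y=X$ outside. This is the "lift" step, and the parameters $T$ and $\rho$ appear here: one needs enough transit time ($T$) and a geometric factor ($\rho$) so that perturbations of the Poincaré map of size $O(\delta)$ lift to vector-field perturbations of $C^1$-size controlled by a preassigned constant determined by $\cU$.

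Then I would apply Hayashi's connecting lemma for the diffeomorphism $P$ on $\Sigma$ at the point $x$. The hypotheses on $p$ and $q$ translate into the existence of forward $P$-iterates of some point near $p\cap\Sigma$ and backward $P$-iterates of some point near $q\cap\Sigma$, both entering $B_{\rho\delta}(x)\cap \Sigma$. Hayashi's lemma then produces a $C^1$-small perturbation $\widetilde P$ of $P$, supported in $\Sigma_\delta$, such that some forward $\widetilde P$-orbit starting near $p\cap\Sigma$ reaches the backward orbit near $q\cap\Sigma$. Lifting $\widetilde P$ back to a perturbation $Y$ of $X$ via the previous step yields $Y\in\cU$ agreeing with $X$ outside $\Delta_{T,\delta}(x)$, and along which $q$ lies on the forward orbit of $p$.

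The main technical obstacle, and the content of the papers cited, is the quantitative lifting procedure: one must arrange that finitely many small pushes of the Poincaré map (which is what Hayashi's argument produces) can be simultaneously realized by a single $C^1$-small, compactly supported perturbation of the vector field inside the tube. This requires careful bookkeeping of the number of flow boxes used, the choice of bump functions in the flow direction, and the fact that the supports of the individual perturbations remain disjoint; the constants $T$ and $\rho$ are precisely the parameters that make this bookkeeping work uniformly in $\delta\leq\delta_0$.
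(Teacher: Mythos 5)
The paper does not prove Theorem~\ref{Lem:connecting} at all: it is quoted verbatim as a known result of Wen--Xia and Wen (\cite{WX,W02}), so there is no internal proof to compare against. Judged on its own terms, your plan has the right general shape (cross-sections transverse to $X$ along the orbit segment $\varphi^X_{[0,T]}(x)$, lifting perturbations of section maps to perturbations of the vector field supported in flow boxes), but it contains two genuine problems.

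First, the first-return map $P$ to $\Sigma$ is the wrong object. Nothing in the hypotheses guarantees that points of $B_\delta(x)$ return to $\Sigma$; only the orbits of $p$ and $q$ are assumed to visit $B_{\rho\delta}(x)$, and those visits can occur after arbitrarily long excursions far from the tube. Consequently a $C^1$-small perturbation of $P$ is \emph{not} realizable by a perturbation of $X$ supported in $\Delta_{T,\delta}(x)$: the return time is unbounded and the returning orbit leaves the tube. What one actually perturbs are the finitely many transition (holonomy) maps between consecutive cross-sections placed along $\varphi^X_{[0,T]}(x)$; there the time of transit is bounded by $T$ and the flow-box lifting with a ratio between the $C^1$-size of the section perturbation and that of the vector-field perturbation is legitimate. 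Second, invoking ``Hayashi's connecting lemma for the diffeomorphism $P$'' as a black box cannot yield the statement, because the statement being proved is precisely the \emph{uniform, localized} connecting lemma: the constants $T$ and $\rho$ depending only on $\cU$, the requirement that $p,q\notin\Delta_{T,\delta}(x)$ while only their orbits meet $B_{\rho\delta}(x)$, and the support confined to the tube are exactly what is absent from Hayashi's original lemma. The essential missing idea is the selection argument: the forward orbit of $p$ and the backward orbit of $q$ may enter $B_{\rho\delta}(x)$ many times, possibly re-entering the tube, and one must choose which visits to connect and realize the connection by a chain of small pushes in pairwise disjoint sub-boxes tiling $\Delta_{T,\delta}(x)$ (this tiling is where $\rho$ comes from, and the number of pushes is where $T$ comes from). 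You relegate this to ``bookkeeping,'' but it is the actual content of \cite{WX,W02}; without it the proposal does not prove the theorem.
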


The following is the flow version of the connecting lemma for pseudo orbits in~\cite{BC}.
We recall that the notation $x\dashv y$ has been defined in Section~\ref{ss.chain-recurrence}.
\begin{Theorem}[Th\'eor\`eme 1.1 in \cite{BC}]\label{Thm:connecting-pseudo}
	There exists a dense G$_\delta$ subset $\cG_0$ of $\cX^1(M)$ such that for any $X\in\cG_0$ and any two points $x,y\in M$, if $x\dashv y$ under the flow $(\varphi^X_t)_{t\in\RR}$, then for any neighborhood $U$ of $x$ and any neighborhood $V$ of $y$, there exist $z\in U$ and $t_0>0$ such that $\varphi^X_{t_0}(z)\in V$.
\end{Theorem}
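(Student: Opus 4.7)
The plan is a Baire category argument on the manifold $W^u(\gamma)$, combining the global generic result Theorem~\ref{Theo:generic} with the connecting lemma for pseudo-orbits Theorem~\ref{Thm:connecting-pseudo} and the inclination lemma applied at $\gamma$.

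First, I will take $\cG\subset\cX^1(M)$ to be a dense $G_\delta$ made of Kupka--Smale vector fields $X$ which also satisfy the conclusions of Theorem~\ref{Theo:generic} and Theorem~\ref{Thm:connecting-pseudo}. Fix $X\in\cG$ and a hyperbolic critical element $\gamma$ of $X$. The invariant manifold $W^u(\gamma)$ is an injectively immersed submanifold, which can be exhausted by an increasing sequence of compact embedded disks $D_k$; it therefore suffices to show that $R_\gamma\cap D_k$ is a dense $G_\delta$ subset of $D_k$ for the intrinsic topology, for each $k$.

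Next, I will express $R_\gamma$ as a countable intersection of residual sets. Fix a countable basis $\{V_n\}$ of open subsets of $M$. For each $n$, define
\[
\cO_n:=\{x\in W^u(\gamma) : \omega(x)\cap\overline{V_n}=\emptyset \text{ or } x \text{ is eventually contained in a trapping neighborhood } U\subset V_n\}.
\]
Each $\cO_n$ is a $G_\delta$ subset of $W^u(\gamma)$: the first alternative is $\bigcup_{t_0\in\NN}\bigcap_{t\geq t_0}\{\varphi_t(x)\notin\overline{V_n}\}$, and the second is a countable union of forward-entrance conditions into open trapping sets inside $V_n$. For $X\in\cG$, quasi-attractors coincide with Lyapunov stable chain recurrence classes, so the equality $R_\gamma=\bigcap_n\cO_n$ follows from the characterization of Lyapunov stability via nested trapping neighborhoods applied to $\omega(x)$.

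The crux of the proof, and what I expect to be the main obstacle, is the density of each $\cO_n$ in $W^u(\gamma)$. Given $x\in W^u(\gamma)$ with $\omega(x)\cap\overline{V_n}\neq\emptyset$ but $\omega(x)$ admitting no trapping neighborhood contained in $V_n$, $\omega(x)$ fails to be Lyapunov stable inside $V_n$, so chain-attainability produces pseudo-orbits leaving $\overline{V_n}$ from points of $\omega(x)$. Theorem~\ref{Thm:connecting-pseudo} converts these pseudo-orbits into true orbits of $X$; concatenating with the forward orbit of $x$ from $\gamma$ through $\omega(x)$ yields a true orbit from an arbitrarily small neighborhood of $\gamma$ that eventually escapes $\overline{V_n}$. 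The inclination ($\lambda$-)lemma at $\gamma$ then produces points $y\in W^u(\gamma)$ arbitrarily close to $x$ whose forward orbit shadows this escape orbit, placing $y\in\cO_n$. Applying Baire's theorem on each $D_k$ delivers the density of $R_\gamma=\bigcap_n\cO_n$ in $W^u(\gamma)$, and by construction $\omega(x)$ is a quasi-attractor for each $x\in R_\gamma$. The delicate point is to arrange, in the density argument, that the true orbit supplied by Theorem~\ref{Thm:connecting-pseudo} genuinely passes close enough to $\gamma$ to be realized as the forward orbit of a point of $W^u(\gamma)$ near $x$ via the inclination lemma; this is where the fact that $x\in W^u(\gamma)$ already produces a true orbit from $\gamma$ through $\omega(x)$ is used.
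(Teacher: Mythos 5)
You have addressed the wrong statement. Theorem~\ref{Thm:connecting-pseudo} (Th\'eor\`eme~1.1 of Bonatti--Crovisier) is a \emph{citation} in this paper: it is the $C^1$-generic connecting lemma for pseudo-orbits, proved in~\cite{BC}, and the paper offers no proof of it. Your proposal instead targets Theorem~\ref{Thm:go-to-Lyapunov} (the residual set $R_\gamma\subset W^u(\gamma)$ of points whose $\omega$-limit set is a quasi-attractor). Indeed your argument invokes Theorem~\ref{Thm:connecting-pseudo} as a tool, so read as a proof of that very theorem it would be circular.

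Even interpreted as a proof of Theorem~\ref{Thm:go-to-Lyapunov}, the density argument for your sets $\cO_n$ has a genuine gap. Theorem~\ref{Thm:connecting-pseudo} only produces \emph{some} point $z$ near $x$ whose forward orbit enters the target open set; it does not produce a point of $W^u(\gamma)$, and ``concatenation with the forward orbit of $x$ through $\omega(x)$'' is not an operation it supports --- the orbit it supplies is a different orbit from the one through $x$, and the inclination lemma does not upgrade it to an orbit inside $W^u(\gamma)$ near $x$. The paper's actual proof circumvents exactly this obstruction by working on the space of vector fields: it applies the connecting lemma for orbits (Theorem~\ref{Lem:connecting}) to \emph{perturb} $X$ to a nearby $Z$ possessing a point of $W^u(\gamma_Z)$ whose forward orbit visits the target set $U_n$, records this through the open dichotomy $\cI_{k,m,n}\cup\cN_{k,m,n}$ on $\cX^1(M)$, and then transfers the conclusion back to the unperturbed $X$ by genericity, obtaining $x\in W_n(\gamma)$ and hence $\orb^+(x,\varphi^X)\cap U_n\neq\emptyset$. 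Your proposal stays entirely at the fixed $X$ and therefore cannot close the connection between the pseudo-orbit escape and a genuine point of $W^u(\gamma)$.
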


Now we manage to prove Theorem~\ref{Thm:go-to-Lyapunov}.

\begin{proof}[Proof of Theorem~\ref{Thm:go-to-Lyapunov}]
	
	Fix a countable basis $\{O_n\}_{n\in\mathbb{N}}$ of open subset of $M$ and consider  the countable family $\{U_n\}_{n\in\mathbb{N}}$  which consists of all the possible   unions of finite elements in $\{O_n\}_{n\in\mathbb{N}}$. For each $k,m,n\in\mathbb{N}$, one defines the following open and disjoint subsets of $\cX^1(M)$:
	\begin{itemize}
		
		\item $\cI_{k,m,n}$: the vector field $Y$ belongs to $\cI_{k,m,n}$ if and only if  $Y$ has a hyperbolic critical element $\gamma_Y$ in $U_k$  such that there exists $x\in W^u(\gamma_Y)\cap U_m$ satisfying $\orb^+(x,\varphi^Y)\cap U_n\neq\emptyset$, where $\orb^+(x,\varphi^Y)$ denotes the positive orbit of $x$ under the flow $(\varphi^Y_t)_{t\in\RR}$.
		
		\item $\cN_{k,m,n}$: the vector field $Y$ belongs to $\cN_{k,m,n}$ if and only if  there exists a $C^1$ neighborhood $\cV_Y$ of $Y$ such that for any $Z\in\cV_Y$, \\
		-- either $Z$ has no hyperbolic critical elements in $U_k$, \\
		-- or for each  hyperbolic  critical element  $\gamma_Z$ of $Z$ in $U_k$, any point $x$ in the set $ W^u(\gamma_Y)\cap U_m$ (maybe empty) satisfies $\orb^+(x,\varphi^Y)\cap U_n=\emptyset$.
		
	\end{itemize}
Note that $\cN_{k,m,n}$ is the complement of $\overline{\cI_{k,m,n}}$,
hence $\cI_{k,m,n}\cup\cN_{k,m,n}$ is an open and dense subset of  $\cX^1(M)$.
Let  $\cG_0$   be the dense G$_\delta$ set such that any $X\in\cG_0$ satisfies Theorem~\ref{Thm:connecting-pseudo}  and let $\cG_{KS}$ be the set of Kupka-Smale vector fields.
We define the following dense G$_\delta$ subset of $\cX^1(M)$:
	$$\cG=\cG_0\cap\cG_{KS}\cap\big(\bigcap_{k,m,n\in\NN}(\cI_{k,m,n}\cup\cN_{k,m,n})\big).$$
We show that every $Y\in \cG$ satisfies the announced properties in Theorem~\ref{Thm:go-to-Lyapunov}.
	
	Let  $Y\in \cG$ and  $\gamma$ be a hyperbolic critical element of $Y$. There exist  a neighborhood $U_\gamma$ of $\gamma$ and a neighborhood $\cU_Y$ of $Y$ such that any $Z\in\cU_Y$ has only one critical element $\gamma_Z$ in $U_\gamma$: it is the maximal invariant set in $U_\gamma$ and the hyperbolic continuation of $\gamma$. Up to reducing $\cU_{Y}$, one can take $k\in\NN$ such that $U_k\subset U_\gamma$ is a neighborhood of  $\gamma_Z$  for all $Z\in\cU_{Y}$.
	For any integer $n\in\mathbb{N}$, we will define the following sets
	$$W_{n}(\gamma)=\big\{x\in W^u(\gamma):\orb^+(x,\varphi^Y)\cap U_n\neq\emptyset\big\} \textrm{~and~}R_\gamma^{n}=W_{n}(\gamma)\cup \big(W^u(\gamma)\setminus \overline{W_{n}(\gamma)}\big).$$
By definition, the set $R_\gamma^{n}$ is an open and dense subset of $W^u(\gamma)$,
and	therefore the set  $$R_\gamma=\bigcap_{n\in\NN}R_\gamma^{n}$$
	is a dense G$_\delta$ subset of $W^u(\gamma)$. To prove Theorem~\ref{Thm:go-to-Lyapunov}, it is sufficient to prove that $\omega(x)$ is a quasi-attractor for any $x\in R_\gamma$.

	\begin{Claim}
		For any $x\in R_\gamma$, any $y\in M$ satisfying $x\dashv y$ and any neighborhood $V$ of $y$, we have $\orb^+(x,\varphi^Y)\cap V\neq\emptyset$.
	\end{Claim}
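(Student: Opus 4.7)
The approach is by contradiction, exploiting the dichotomy $\cI_{k,m,n}\cup\cN_{k,m,n}$ built into the definition of $\cG$. Suppose the conclusion fails, so that there exist $x\in R_\gamma$, $y\in M$ with $x\dashv y$, and an open neighborhood $V$ of $y$ with $\orb^+(x,\varphi^Y)\cap V=\emptyset$. First I would pick an index $n$ with $y\in U_n\subset V$; then $x\notin W_n(\gamma)$, and the hypothesis $x\in R_\gamma^n$ forces $x\in W^u(\gamma)\setminus\overline{W_n(\gamma)}$. Next choose an index $m$ so that $x\in U_m$ and $U_m\cap W^u(\gamma)\cap\overline{W_n(\gamma)}=\emptyset$; then no point of $W^u(\gamma)\cap U_m$ has its forward $Y$-orbit meeting $U_n$, so $Y\notin\cI_{k,m,n}$, and therefore $Y\in\cN_{k,m,n}$, yielding a $C^1$ neighborhood $\cV_Y$ of $Y$ on which this obstruction is robust.

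To reach a contradiction, I would exhibit a perturbation $Y'\in\cV_Y$ that lies in $\cI_{k,m,n}$. Applying Theorem~\ref{Thm:connecting-pseudo} (available because $Y\in\cG_0$ and $x\dashv y$) with sufficiently small open neighborhoods $U_0\subset U_m$ of $x$ and $V_0\subset U_n$ of $y$ produces $z\in U_0$ and $t_0>0$ with $\varphi^Y_{t_0}(z)\in V_0$; by continuity of $\varphi^Y_{t_0}$, a whole open ball $B$ around $z$ is still mapped into $U_n$. The remaining step is to use Hayashi's connecting lemma (Theorem~\ref{Lem:connecting}) to perturb $Y$ into $Y'\in\cV_Y$ so that $W^u(\gamma_{Y'})$ meets $B$: I pick a non-critical pivot on the backward $Y$-orbit of $x$ (which lies in $W^u(\gamma)$ and accumulates on $\gamma$), choose a thin orbit tube around it disjoint from both $\gamma$ and the forward segment $\varphi^Y_{[0,t_0]}(z)$, and use the connecting lemma to route a piece of $W^u(\gamma)$ through $B$. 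The resulting point $z'\in W^u(\gamma_{Y'})\cap U_m$ then satisfies $\varphi^{Y'}_{t_0}(z')\in U_n$ (since $Y'$ coincides with $Y$ on the forward segment from $z$, and $z'$ can be taken arbitrarily close to $z$), so $Y'\in\cI_{k,m,n}\cap\cV_Y$, contradicting $Y'\in\cN_{k,m,n}$.

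The main obstacle is the Hayashi perturbation step: one must simultaneously keep the support of the perturbation disjoint from $\gamma$ (so that $\gamma_{Y'}=\gamma$ and $W^u(\gamma_{Y'})$ remains $C^1$-close to $W^u(\gamma)$) and from the finite-time orbit segment $\varphi^Y_{[0,t_0]}(z)$ (so that this segment, and in particular its endpoint in $U_n$, persists after perturbation), while keeping the $C^1$-size of the perturbation small enough to stay within $\cV_Y$. These requirements are standard but delicate; they can be met by first choosing a non-critical pivot on the backward orbit of $x$ living deep inside a small neighborhood of $\gamma$ and then shrinking the initial neighborhood $U_0$ of $x$ to guarantee the required separation of the supports.
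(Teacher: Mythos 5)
Your proposal is correct and uses essentially the same argument as the paper: the open--dense dichotomy $\cI_{k,m,n}\cup\cN_{k,m,n}$, the connecting lemma for pseudo-orbits to find a point $z$ near $x$ whose forward orbit enters $U_n$, and Hayashi's connecting lemma to route $W^u(\gamma)$ through $z$, producing a nearby vector field in $\cI_{k,m,n}$. The only difference is organizational: the paper argues directly (showing $Y\in\cI_{k,m_i,n}$ for a sequence of neighborhoods $U_{m_i}$ shrinking to $x$, so that $x\in\overline{W_n(\gamma)}$ and then $x\in W_n(\gamma)$ since $x\in R_\gamma^n$), whereas you run the mirror-image contradiction by invoking $x\in R_\gamma^n$ at the outset to place $Y$ in $\cN_{k,m,n}$; both elide the same "standard" details of the Hayashi perturbation.
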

	\begin{proof}
		By the choice of the countable family $\{U_n\}_{n\in\NN}$, there exists a sequence of nested neighborhoods $\{U_{m_i}\}_{i\in\mathbb{N}}$ of $x$ such that
		$$\bigcap_i U_{m_i}=\{x\}.$$
		Also, there exists $U_n$ such that $y\in U_n\subset V$. 
	
We first show that for any $i\in\mathbb{N}$, $Y\in\cI_{k,m_i,n}$.
Since $Y\in \cG_0$, by Theorem~\ref{Thm:connecting-pseudo}, for any neighborhood $V'$ of $x$, there exists $z\in V'$ such that $\orb^+(z,\varphi^Y)\cap U_n\neq\emptyset$.
By a standard argument, Theorem~\ref{Lem:connecting} gives,
		for any neighborhood $\mathcal{U}\subset\mathcal{U}_{\gamma}$ of $Y$, some vector field $Z\in \mathcal{U}$, and a point $x_Z\in W^u(\gamma_Z)\cap U_{m_i}$ satisfying $\orb^+(x_Z,\varphi^Z)\cap U_n\neq\emptyset$. 
		This implies that $Y$ is accumulated  by vector fields in $\cI_{k,m_i,n}$. As $Y\in \cI_{k,m_i,n}\cup \cN_{k,m_i,n}$, we have $Y\in\cI_{k,m_i,n}$.
		
Hence $x$ is accumulated by points in $W_n(\gamma)$ since $\bigcap_i U_{m_i}=\{x\}$. By the fact that $x\in R_{\gamma}^n=W_n(\gamma) \cup (W^u(\gamma)\setminus \overline{W_n(\gamma)})$, we have that $x\in W_n(\gamma)$. This implies $\orb^+(x,\varphi^Y)\cap U_n\neq\emptyset$ and thus $\orb^+(x,\varphi^Y)\cap V\neq\emptyset$.
	\end{proof}

Conley's theory shows that for any $x\in M$, there exists a quasi-attractor $\Lambda$ of $Y$ such that $x\dashv y$
for any $y\in \Lambda$ (note that for $C^1$ generic vector fields it can be deduced from Theorem~\ref{Theo:generic}.)
It remains to show that if $x\in R_\gamma$ then $\omega(x)=\Lambda$. 

By the claim above $\Lambda\subset \omega(x)$. But since $\Lambda$ is a quasi-attractor, it admits arbitrarily small
trapping neighborhoods, hence $\omega(x)=\Lambda$.
\end{proof}

\vskip 5pt

\flushleft{\bf Sylvain Crovisier} \\
\small Laboratoire de Math\'ematiques d'Orsay,  CNRS - Universit\'e Paris-Saclay, Orsay 91405, France\\
\textit{E-mail:} \texttt{Sylvain.Crovisier@universite-paris-saclay.fr}\\

\flushleft{\bf Xiaodong Wang} \\
\small School of Mathematical Sciences,  CMA-Shanghai, Shanghai Jiao Tong University, Shanghai, 200240, P.R. China\\
\textit{E-mail:} \texttt{xdwang1987@sjtu.edu.cn}\\

\flushleft{\bf Dawei Yang} \\
\small School of Mathematical Sciences,  Soochow University, Suzhou, 215006, P.R. China\\
\textit{E-mail:} \texttt{yangdw1981@gmail.com,yangdw@suda.edu.cn}\\

\flushleft{\bf Jinhua Zhang} \\
\small School of Mathematical Sciences,  Beihang University, Beijing 100191, P.R. China\\
\textit{E-mail:} \texttt{Jinhua$\_$zhang@buaa.edu.cn}\\

\end{document}